\documentclass[reqno,12p]{amsart}
\usepackage[T1]{fontenc}
\usepackage{geometry}
\usepackage{mathtools,amssymb,amsthm,mathrsfs,color,lineno,paralist,graphicx,float}
\usepackage[colorlinks,
linkcolor=red,
anchorcolor=green,
citecolor=blue,
]{hyperref}
\usepackage{amsmath}
\usepackage{pdfpages}
\usepackage{amstext}
\usepackage{stmaryrd}
\usepackage{bbm}
\usepackage{comment}
\usepackage{faktor,url}
\usepackage{xfrac}
\usepackage{mathabx}
\usepackage{enumitem}
\usepackage{refcount}
\usepackage{amssymb}
\graphicspath{{Dibujos/}}

\numberwithin{equation}{section}
\numberwithin{figure}{section}
\theoremstyle{plain}
\newtheorem{thm}
{\protect\theoremname}[section]
  \theoremstyle{definition}
  \newtheorem{defn}[thm]{\protect\definitionname}
  \theoremstyle{remark}
  \newtheorem{rem}{\protect\remarkname}
  \theoremstyle{plain}
  \newtheorem{lem}[thm]{\protect\lemmaname}
  \theoremstyle{plain}
  \newtheorem{prop}[thm]{\protect\propositionname}
  \theoremstyle{plain}
  \newtheorem{cor}[thm]{\protect\corollaryname}

\newtheorem{thmx}{Theorem}

\newcommand{\bs}{\boldsymbol}

  \providecommand{\definitionname}{Definition}
  \providecommand{\lemmaname}{Lemma}
  \providecommand{\propositionname}{Proposition}
  \providecommand{\remarkname}{Remark}
\providecommand{\theoremname}{Theorem}
\providecommand{\corollaryname}{Corollary}

\title{Robustly transitive behavior in symplectic dynamics}

\author[J. Paradela]{J. Paradela}
\address[JP]{ Department of Mathematics, University of Maryland, 4176 Campus Drive, 20782, MD, US}
\email{paradela@umd.edu}

\begin{document}

\begin{abstract}
We consider the direct product of two symplectomorphisms, one of which exhibits a basic set and the other one a non-degenerate elliptic equilibrium. Under a domination condition we show that a broad class of \textit{real-analytic}  deformations of this system  display large robustly transitive sets. As a corollary of our construction we also obtain new examples of real-analytic robustly transitive symplectomorphisms which are not uniformly hyperbolic.

To establish these results we develop perturbation techniques to create \textit{blender} horseshoes in the real-analytic setting and import ideas from control theory which show that, typically, these objects have a large domain of influence. 
\end{abstract}

\maketitle

\tableofcontents

\section{Introduction}

  A longstanding problem in classical mechanics is to understand the topological and measure theoretic properties of typical symplectic dynamics. After  Oxtoby and Ulam established  genericity of ergodicity among  conservative homeomorphisms \cite{MR5803},
  the groundbreaking work of Kolmogorov, Arnold and Moser, showed  that for sufficiently regular topologies, there exist open sets of non-ergodic symplectomorphisms (see \cite{MR209436,MR2269239} and the references therein). Since then, a great body of work has been devoted to describe the set of ergodic symplectomorphisms  (see for instance \cite{MR3682778,MR4422613}).
  
On the topological counterpart,  genericity of transitivity was established in the $C^1$ symplectic setting by Arnaud, Bonatti and Crovisier \cite{MR2173426} (following earlier work of  Bonatti and Crovisier \cite{MR2090361} in the conservative scenario). Despite these spectacular early advances, understanding the mechanisms that give rise to transitive behavior in symplectic dynamics remains today a central challenge for more \textit{regular topologies}. For instance, in the smooth setting, not only the tools used in \cite{MR2090361,MR2173426} (based on a $C^1$ version of the closing lemma) are not available,  but, we know after the work of Herman, there exist open sets of symplectomorphisms which are not transitive  (see the survey article by Yoccoz  \cite{MR1206072}).

\medskip

The phenomenon which better highlights the subtleties hindering transitivity (and ergodicity) in the symplectic smooth scenario is the \textit{robust} persistence of \textit{elliptic behavior}. On one hand, under mild non-degeneracy conditions and suitable regularity assumptions, it is well known that elliptic equilibria are robustly accumulated by invariant tori whose union forms a set of positive measure (we refer the interested reader to \cite{MR3146593}). We call this union the KAM set, as the existence of these tori is deduced by means of the machinery developed by Kolmogorov, Arnold and Moser. Note that, in view of this phenomenon, a positive measure set of orbits (which indeed becomes of relative measure one as we approach the equilibrium), are confined forever within a neighbourhood of the equilibrium. On the other hand, even if some orbits may dodge the KAM set surrounding them, elliptic equilibria are typically very \textit{sticky} \cite{MR4072793}, and orbits which approach them must spend very large times confined therein before being able to exit (if they do so).

\medskip

Despite these obstacles, a mechanism to construct orbits  leaking between the KAM set and connecting far apart regions of the phase space was already proposed by Arnold in \cite{MR163026}. Roughly speaking, the global instability in Arnold's mechanism relies on the accumulation of local instability created by a chain of partially hyperbolic objects (i.e. their tangent space splits into expanding, contracting and center subspaces, the latter one being ``dominated'' by the other two)  located in the complement of the KAM set (see Figure \ref{fig:fig1}).   Although the mechanism was already implemented by Arnold in a simple but representative toy model \cite{MR163026}, realizing this construction in practice is, nevertheless,  rather challenging. We refer the interested reader to \cite{MR2981810,MR3646879,GelfreichTuraev,MR4298716,MR4033892,MR4509324,MR4495839,MR4729212,MR4913967} and the references therein for recent advances on this topic. Some of these results include  real-analytic systems and  are well suited  to be applied to concrete systems (just to cite a few recent applications see \cite{DiffusionEllipticKaloshin,MR4544807,MR4624370,Clarke25,MR5013757}).

\begin{figure}
    \centering
    \includegraphics[scale=0.55]{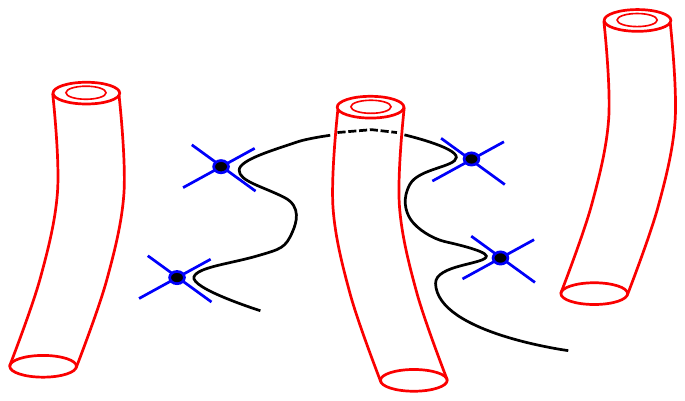}
    \caption{Sketch of an orbit gaining momentum from a chain of partially hyperbolic objects (in blue) and  leaking between the KAM tori (in red).}
    \label{fig:fig1}
\end{figure}
\medskip

The main feature shared by the different variations of Arnold's instability mechanism  is the extensive use of partially hyperbolic objects. It is then natural to ask if some of the techniques developed within the general theory of partially hyperbolic dynamics could be adapted to  produce robustly transitive behavior in the symplectic framework. One of the first contributions to this program can be found in \cite{NassiriPujalsTransitivity}, where Nassiri and Pujals   developed a symplectic version of the so-called \textit{blender horseshoes} (introduced by   Bonatti and Díaz in \cite{BonattiDiazblenders}) to obtain $C^\infty$ arcs of symplectomorphisms exhibiting large robustly transitive sets in the complement of the KAM set (see also the related work \cite{MR3917797}).

Loosely speaking, a blender is a thick part of a basic set such that its stable (unstable) manifold meets every unstable (stable) manifold that comes near it and, moreover, it does so in a persistent fashion. In this way, these objects (as well as their symplectic counterpart in \cite{NassiriPujalsTransitivity})  fix the lack of transversality between the strong stable and unstable directions due to the presence of a center subspace. By means of this phenomenon, blenders act as a robust semi-local source of transitivity in the sense that any two objects connected to the blender  can be joined by a path alternating segments of stable and unstable manifolds.
\medskip

Blenders have found numerous applications within smooth dynamics. Recently, a number of works have proposed new mechanisms for their construction in symplectic settings  where the center subspace is two-dimensional \cite{guardia2025partiallyhyperbolicdynamics3body,LiTuraevPreprint}. Importantly, these mechanisms do not avoid regions with elliptic behavior but exploit instead  the presence of quasi-elliptic points\footnote{One indeed expects that  quasi-elliptic points are abundant within the set of non-Anosov symplectomorphisms. This fact was indeed established in the $C^1$ topology \cite{MR455049} but, to the best of the author's knowledge, remains a conjecture in more regular topologies.}   (also called saddle-center). On one hand, the rather elementary construction in \cite{guardia2025partiallyhyperbolicdynamics3body} is tailored for the application to given one-parametric families and was used to construct symplectic blenders in the 3-body problem of celestial mechanics. On the other hand, the mechanism in \cite{LiTuraevPreprint} is much more general and allowed Li and Turaev to show that symplectic blenders typically emerge around symplectic maps displaying a one-dimensional KAM curve with a transverse homoclinic orbit to it.

The purpose of this work is to develop a higher dimensional version of the elementary mechanism in \cite{guardia2025partiallyhyperbolicdynamics3body} and construct symplectic blenders in scenarios with arbitrarily large center dimension. By doing so we  extend the main result  in \cite{NassiriPujalsTransitivity} to the real-analytic category and even obtain a more general statement. In order to state our main results in a precise way and give some more intuition on our construction, we first need to introduce some definitions.

\subsection{Direct products of symplectomorphisms}

Given a  symplectic manifold $(X,\varpi)$  we denote by $\mathrm{Symp}^\omega(X,\varpi)$  the group of real-analytic symplectomorphisms of $X$ equipped with the real-analytic topology (see Appendix \ref{sec:realanalytictop} for the precise definition of this topology). Consider two symplectic manifolds $(N,\varpi_N)$ and $(M,\varpi_M)$ together with symplectomorphisms $F_N\in\mathrm{Symp}^\omega(N,\varpi_N)$ and $F_M\in \mathrm{Symp}^\omega(M,\varpi_M)$. Then, we define their \textit{direct product} 
    \begin{equation}\label{eq:directproductmap}
\begin{aligned}
  F_0:M\times N&\to M\times N\\
  (x_M,x_N)&\mapsto (F_M(x_M),F_N(x_N)).
\end{aligned}
\end{equation}
In Theorem \ref{thm:mainham} below we will show that robustly transtive sets appear for typical real-analytic unfoldings of direct products  satisfying certain assumptions.

\subsubsection*{Non-degenerate elliptic equilibria}

Our first assumption (see \ref{it:Assumption1} below) requires a mild amount of elliptic behavior for the dynamics on $N$. Recall that a  fixed point $q\in N$ of a  symplectic map $F:N\to N$ is \textit{elliptic} if the spectrum of the linearization of $F$ at $q$ is simple and it is contained  in the unit circle in $\mathbb C$. In addition, we say that an elliptic fixed point $q$ is  \textit{non-degenerate} if there exists a neighbourhood  $U\subset N$ of $q$ and a local coordinate system $\psi:z\in \mathbb B^d\subset\mathbb C^{d}\to U\subset N$ such that $\psi^*\varpi_N=i \mathrm d\bar z\wedge \mathrm dz$ and conjugates $F$ to 
    \[
    \psi^{-1}\circ F\circ\psi:z\mapsto  z(\exp (i \Omega(|z|^2))+O(|z|^3))
    \]
    where $|z|^2$ is shorthand notation for $(|z_1|^2,\dots,|z_d|^2)\in \mathbb R^d$ and  $\Omega(|z|^2)=\mathrm{diag}(\Omega_1(|z|^2),\dots,\Omega_d(|z|^2))$ is a diagonal matrix satisfying
    \[
    \mathrm{det} D\Omega(0)\neq 0.
    \]
Namely, up to higher order terms, the map $F$ is conjugated to a product of rotations with frequencies $\{\Omega_i(|z|^2)\}_i$ and the frequency map $|z|^2\mapsto \Omega(|z|^2)$ is a local diffeomorphism. 

\medskip

Given a direct product $F_0=F_M\times F_N$ as in \eqref{eq:directproductmap} we say that they satisfy the property:
\begin{enumerate}[label={[H\arabic*]}]
    \item \label{it:Assumption1} if the map $F_N:N\to N$ exhibits a non-degenerate elliptic equilibrium.
    \end{enumerate}

\subsubsection*{Normally hyperbolic submanifolds of direct products}

Our second assumption (\ref{it:Assumption2} below) asks for a mild amount of hyperbolicity for the dynamics on $M$. To state it precisely we need the following definition.

\begin{defn}\label{defn:bunchingpinching}
   Let $F_0=F_M\times F_N$ be the direct product of two diffeomorphisms $F_M:M\to M$ and $F_N:N\to N$. Let $a\in M$ be a hyperbolic fixed point for $F_M$ and let $T_a M=E^s_a\oplus E^u_a$ be the corresponding splitting of the tangent space of $M$ at $a$. For $\kappa>0$ we say that the invariant submanifold $\Lambda=\{a\}\times N\subset M\times N$ is $\kappa$-normally hyperbolic  if there exist metrics $\lVert\cdot\rVert_M$ on $M$ and $\lVert\cdot\rVert_N$ on  $N$ such that
       \begin{equation}\label{eq:bunchingdefn}
         \lVert  DF_N^{-1}\rVert_N^{\kappa+1} \lVert DF_M|_{E^s_a}\rVert_M < 1\qquad\qquad\text{and}\qquad\qquad \lVert DF_N\rVert^{\kappa+1}_N \lVert DF_M^{-1}|_{E^u_a}\rVert_M< 1.
       \end{equation}
\end{defn}

In words, the stable  forward contraction (unstable backwards contraction) dominates strongly the center backwards (forward) behavior. It is well known that  the exponent $\kappa$ in this property can be related to the regularity of certain invariant foliations associated to (the continuation) of the submanifold $\Lambda$ and which are important to  understand the homoclinic picture associated to  $\Lambda$ (see  \cite{MR501173} or Section \ref{sec:outline} of this manuscript). Given a direct product $F_0=F_M\times F_N$ as in \eqref{eq:directproductmap} we say that it satisfies property:
 \begin{enumerate}[label={[H\arabic*]},start=2]
    \item \label{it:Assumption2} 
    if the map $F_M:M\to M$ displays a hyperbolic fixed point $a\in M$ whose stable and unstable manifolds intersect transversally and, moreover, $\Lambda=\{a\}\times N$ is $\kappa$-normally hyperbolic for  $F_0$ with some $\kappa>0$. 
    \end{enumerate}

   \subsubsection*{Emergence of robustly transitive laminations} 
   Note that under assumption \ref{it:Assumption2}, the classical Smale horseshoe Theorem (see \cite{MR1326374}) shows that the map $F_M:M\to M$ also posseses a basic set $\Sigma$ and the direct product map $F_0$ in \eqref{eq:directproductmap} admits an invariant lamination\footnote{To be precise, the basic set $\Sigma$ is not invariant for $F_M$ but for some power of it or some induced return map.} 
\[
\mathcal X=\Sigma\times N\subset M\times N
\]
Moreover, thanks to the domination assumption in \ref{it:Assumption2}, is robust under small perturbations (see \cite{MR501173} or Section \ref{sec:outline} of this manuscript). Although for $F_0|_{\mathcal X}$ the fiber dynamics (along $N$) is decoupled from the chaotic base dynamics (along $\Sigma$) driven by the shift, one expects that after an arbitrarily small perturbation, the chaoticity of the base ``contaminates'' the fiber dynamics, generating rich dynamics in the continuation of $\mathcal X$.

This intuition has been confirmed in a number of works (see for instance \cite{Moeckeldrift,MR2104598,NassiriPujalsTransitivity,GelfreichTuraev,MR3917797} and the references therein). In particular, the aforementioned $C^\infty$-arcs of symplectomorphisms constructed by Nassiri and Pujals in \cite{NassiriPujalsTransitivity} correspond to $C^\infty$-unfoldings of direct products \eqref{eq:directproductmap} in which the second factor, i.e. the map $F_N$, is integrable (see \cite{MR1345386} for a definition of the concept of integrability  in Hamiltonian dynamics) and exhibits a weakly hyperbolic fixed point. The corresponding large robustly transitive sets are given by the continuation of $\mathcal X$ along these arcs.  
\medskip

In our main Theorem \ref{thm:mainham} below, under assumptions \ref{it:Assumption1}-\ref{it:Assumption2} we upgrade their construction to the real-analytic setting and remove the integrability assumption for the map $F_N$. To do so, we introduce a rather elementary geometric mechanism for the creation of symplectic blenders, which only requires assumption \ref{it:Assumption1} and a local version of \ref{it:Assumption2}. As we already discussed in the introduction, some of our tools can be seen as a higher dimensional generalization of our previous work with Guàrdia \cite{guardia2025partiallyhyperbolicdynamics3body}.

\subsection{Real-analytic Hamiltonian  unfoldings of direct products}
We now proceed to describe the set of unfoldings considered in Theorem \ref{thm:mainham}. These are given by Hamiltonian deformations. Given $r>1$  and a real-analytic   symplectic manifold $(X,\varpi)$  we we denote by 
\begin{equation}\label{eq:spaceofHams}
\mathcal H^\omega(X)=\{H:X\times [0,1]\to \mathbb R\colon H\text{ is real-analytic}\}
\end{equation}
the space of real-analytic  time-dependent Hamiltonians equipped with the real-analytic topology and define  the open subset 
\begin{equation}\label{eq:Ballperturbations}
\mathcal B^\omega_r(X)=\{H\in \mathcal H^\omega(X)\colon |H|_{C^r(X)}< 1\}.
\end{equation}
We write $H_t(\cdot)=H(\cdot,t)$ and recall that to any $H\in \mathcal H^\omega(X)$ we can associate a time-dependent Hamiltonian vector field $X_{H_t}: X\to TX$ defined (implicitly) by
\begin{equation}\label{eq:correspondenceformsvectors}
\varpi(\cdot,X_{H_t})=\mathrm dH_t(\cdot).
\end{equation}
We denote by 
\begin{align*}
\Phi_H:X&\to X
\end{align*}
the time-one map associated to the flow of this vector field and recall that it is a symplectic transformation (see \cite{MR3674984} for an introduction to Hamiltonian and symplectic dynamics).
\medskip

In Theorem \ref{thm:mainham} below, given a direct product Hamiltonian $F_0=F_M\times F_N$ as in \eqref{eq:directproductmap} we consider one-parametric unfoldings of the form
\begin{equation}\label{eq:deformationmap}
F_{\varepsilon,f}=F_0\circ\Phi_{\varepsilon f},\qquad\qquad f\in \mathcal B^\omega_r(M\times N).
\end{equation}
    Observe that for any $f\in \mathcal B^\omega_r(X)$ and any $0\leq |\varepsilon|\leq 1$, the  time-one map
    \begin{equation}\label{eq:unfolding}\Phi_{\varepsilon f}:M\times N\to M\times N
    \end{equation}
      is uniformly $C^{r-1}$-close to the identity. In particular, $F_{\varepsilon,f}$ is $C^{r-1}$-close to $F_0$.

\subsection{Main results}
The following is our first main result. We recall that a map $F:X\to X$ of a topological space $X$ is said to be \textit{topologically mixing} if for any two open sets $U,V\subset X$ there exists $N\in\mathbb N$ such that $F^n(U)\cap V\neq \emptyset$ for all $n\geq N$. Clearly, topological mixing implies transitivity.

\begin{thmx}\label{thm:mainham}
Let $(N,\varpi_N)$ and $(M,\varpi_M)$ be  symplectic manifolds and let $N$ be compact. Let $F_0:M\times N\to M\times N$ be a direct product symplectomorphism  which satisfies assumptions \ref{it:Assumption1}-\ref{it:Assumption2}. There exist
 \[
 \varepsilon_0(F_0)>0\qquad\qquad\text{and}\qquad\qquad  r_0(F_0),\kappa_0(F_0)<\infty,
 \]
 such that, if  the invariant submanifold $\Lambda_0=\{a\}\times N$ in \ref{it:Assumption2} is $\kappa_0$-normally hyperbolic for  $F_0$, the following holds. 
 
For any $f$ belonging to an open and dense subset of $\mathcal B^\omega_{r_0}(M\times N)$ and any $0<\varepsilon\leq \varepsilon_0$, the map $
   F_{\varepsilon,f}:M\times N\to M\times N$ in \eqref{eq:deformationmap} displays an open set $\mathcal Q\subset M\times N$ and a subset $\mathcal X_\varepsilon\subset \mathcal Q$  homeomorphic to $\mathbb N^\mathbb Z\times N$ such that: 
    \begin{enumerate}
        \item $\mathcal X_\varepsilon$ is locally maximal and normally hyperbolic for the return map $\mathcal R_\varepsilon$ induced on $\mathcal Q$,
        \item $\mathcal X_\varepsilon$ contains the continuation of $\Lambda_0$ in its closure,
        \item The restriction  $\mathcal R_\varepsilon|_{\mathcal X_\varepsilon}:\mathcal X_\varepsilon\to \mathcal X_\varepsilon$ is  topologically mixing.
    \end{enumerate}
\end{thmx}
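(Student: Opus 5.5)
The plan is to build a symplectic blender near the continuation of the elliptic point on $\Lambda_0$, show that it robustly captures strong stable and unstable leaves, and then use the full lamination to spread this transitivity over all of $N$.

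\textbf{Step 1: normally hyperbolic lamination and its holonomies.} Using \ref{it:Assumption2}, I first fix a transverse homoclinic point of $a$ and a neighbourhood $\mathcal Q$ of the homoclinic loop. The return map $\mathcal R_0$ induced on $\mathcal Q$ then has a horseshoe $\Sigma\cong\mathbb N^{\mathbb Z}$; I use infinitely many branches, indexed by the number of turns near $a$. Hence $\mathcal X_0=\Sigma\times N$ is invariant for $\mathcal R_0$. Taking $\kappa_0$ large, the normally hyperbolic lamination theory in \cite{MR501173} applies. For $0<\varepsilon\le\varepsilon_0$ and $r_0>\kappa_0+1$, it gives a locally maximal continuation $\mathcal X_\varepsilon$. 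This continuation comes with $C^{\kappa_0}$ strong stable and unstable foliations and holonomy maps between the center leaves. The first-order expansion of these holonomies along a branch returning after $n$ turns is
\[
N\ni x\mapsto F_N^{n}\circ\Phi^{N}_{\varepsilon \langle f\rangle_\gamma}(x)+O(\varepsilon^2),
\]
where $\langle f\rangle_\gamma$ is the Melnikov-type average of $f$ along the homoclinic orbit $\gamma$. Items (1) and (2) then follow directly: the branches accumulating on $a$ give $\Lambda_\varepsilon$ in the closure of $\mathcal X_\varepsilon$.

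\textbf{Step 2: blender at the elliptic point.} In the coordinates of \ref{it:Assumption1}, the center dynamics of different branches near $q$ are $C^{\kappa_0}$-close to
\[
z\mapsto z\,e^{i n\Omega(|z|^2)}+\varepsilon\,X_{\langle f\rangle}(z)+\dots
\]
The open and dense condition on $f$ is that the Melnikov potentials $\langle f\rangle_{\gamma_j}$ have gradients at $q$ that are nondegenerate in generic position. Open density follows from real-analyticity, since these are finitely many analytic nonvanishing conditions on an analytic functional of $f$. By choosing the numbers of turns $n_j$ and using the twist $\det D\Omega(0)\neq0$, I can tune the rotation parts so that finitely many iterated fiber maps act on a ball of radius $\sim\varepsilon$ around $q$ like translations by vectors whose positive cone is all of $\mathbb R^{2d}$. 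This gives the symplectic blender covering property of \cite{NassiriPujalsTransitivity}: every strong unstable disc crossing the region meets $W^s$ of the blender, and conversely, robustly.

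\textbf{Step 3: global mixing.} I would import the control-theoretic idea: the family of fiber maps $\{F_N^{n}\circ\Phi_{\varepsilon\langle f\rangle_j}\}$ generates a semigroup. For generic $f$ the vector fields $X_{\langle f\rangle_j}$ together with $F_N$ satisfy a Lie-bracket (Chow) condition, and because $N$ is compact and the maps are volume preserving, this semigroup acts transitively on $N$. Consequently every center leaf point can be steered, through finitely many homoclinic excursions, into the blender domain, and the blender domain can be steered to any point. Chaining these connections through stable and unstable leaves gives transitivity of $\mathcal R_\varepsilon|_{\mathcal X_\varepsilon}$. The fixed point inside the blender, together with the coexistence of branches of coprime return times, upgrades transitivity to topological mixing.

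The main obstacle I expect is Step 2. The perturbation is only of size $\varepsilon$ in the fiber, while the covering argument needs errors of order $O(\varepsilon^2)$ to be controlled by $C^1$ estimates with respect to a blender of size $\varepsilon$. This is where $\kappa_0$, $r_0$ and the choice of $n_j$ enter: my first approach is to rescale $z=\sqrt\varepsilon w$ so that the blender problem becomes an $\varepsilon$-independent affine iterated function system.
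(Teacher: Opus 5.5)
Your Step 2 has a genuine gap: it never produces hyperbolicity in the center direction, and without hyperbolicity there is no blender.

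\begin{itemize}
\item A symbolic blender, or a Nassiri--Pujals symplectic blender, needs a hyperbolic periodic point of the fiber IFS. It also needs common cone fields in which the maps contract a cs-direction and expand a cu-direction.
\item The covering property $D\subset\bigcup_n\mathcal F_n(D)$ is useful only because the inverse maps then enlarge cs-strips until they must cross $W^u$ of that periodic point.
\item In Nassiri--Pujals the hyperbolicity comes from a weakly hyperbolic periodic point of $F_N$, which you do not have.
\item On a ball of radius $\sim\varepsilon$ around the elliptic point $q$, the twist from $\det D\Omega(0)\neq0$ displaces points by only $O(n\varepsilon^3)$. So, as you say yourself, your fiber maps act as near-translations.
\item Translations are isometries. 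They have no hyperbolic fixed point and do not expand cs-strips. A family of them whose positive cone is $\mathbb R^{2d}$ gives at best local accessibility, not a robust intersection between the stable set of a blender and strong unstable discs.
\end{itemize}

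Accordingly, your genericity condition (nondegenerate gradients of the Melnikov potentials at $q$) is a first-order condition. It produces drift, not hyperbolicity. The difficulty you anticipate, controlling $O(\varepsilon^2)$ errors, is therefore not the real one. The rescaling $z=\sqrt\varepsilon w$ with $n\sim 1/\varepsilon$ does put you in a regime where the twist is of order one. But there the maps are no longer translations, and the real issue becomes showing that composing the twist with the homoclinic kick is hyperbolic.

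That is exactly the transversality--torsion mechanism of the paper, and it is the missing idea.
\begin{itemize}
\item One works near a KAM torus of the perturbed inner map $T_\varepsilon$ with constant-type frequency $\beta$, in Birkhoff normal form $\varphi\mapsto\varphi+\beta+AJ+\dots$. Here $A$ is invertible, which is where $\det D\Omega(0)\neq0$ enters.
\item One takes a critical point $\varphi_*$ of the Melnikov potential restricted to the torus, so the scattering map has no first-order drift in $J$ there. With $B$ the Hessian at $\varphi_*$, the generic condition is that $AB$ has simple, real, nonzero spectrum. This is a second-order condition.
\item For $n\varepsilon\approx N_0$ the linear part of $T^n_\varepsilon\circ S_\varepsilon$,
\[
\begin{pmatrix}\mathrm{id}+n\varepsilon AB & nA\\ \varepsilon B & \mathrm{id}\end{pmatrix},
\]
is hyperbolic with rates of order one.
\item The equidistribution of $[n\beta]$ over a window of $n$ whose length does not depend on $\varepsilon$ supplies the translations for the covering property. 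Over that window the linear parts stay almost simultaneously diagonalizable.
\end{itemize}

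Two further corrections follow.
\begin{itemize}
\item Because $n\sim 1/\varepsilon$, the fiber maps in your Step 1 must be written with $T_\varepsilon^n$, not $F_N^n$. Their difference is $O(n\varepsilon)$, not $O(\varepsilon^2)$.
\item In Step 3, a bracket-generating condition on the near-identity scattering maps only gives orbits that are $C\varepsilon$-dense, not a transitive semigroup. The inner map is not part of that condition. It is neutralized by Poincar\'e recurrence, which is also how the inverses $S_i^{-1}$ are realized using the forced words $T^n\circ S_i$ with $n\ge n_*$. The paper absorbs the $C\varepsilon$ error because its blender IFS is mixing on the whole annulus $\mathbb T^d\times[-\sqrt\varepsilon,\sqrt\varepsilon]^d$. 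A blender confined to an $\varepsilon$-ball around $q$ need not be reachable at all.
\end{itemize}
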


We present the proof of Theorem \ref{thm:mainham} in Section \ref{sec:outline} by reducing it to a number of more technical statements which will be proved in the remaining sections of the paper.  Before continuing, let us dicuss the assumptions and the main novelties of our result in comparison with those in the previous literature.

\subsubsection*{Domination   assumption:} In our current proof of Theorem \ref{thm:mainham}, the exponent $\kappa_0$ associated  to the domination assumption in \ref{it:Assumption2} depends on the dimension on $N$. To the best of our knowledge, a similar assumption is used in \cite{NassiriPujalsTransitivity}. It seems possible that under some technical extra work one can indeed drop this requirement and obtain the same result assuming only that $\kappa_0=2$.

On the other hand, it is worth remarking that in some cases of interest, the domination assumption in \ref{it:Assumption2} is automatically satisfied. This is the case if, for instance, the unperturbed dynamics on $N$ is given by a completely integrable map all whose singularities are elliptic. A natural example is to consider $N=\mathbb C\mathbb P^d$  (for any $d\in \mathbb N$) equipped with the Fubini-Study form $\varpi_{\mathrm FS}$ and let $F_N$ be the time-one map associated to a Hamiltonian given by a non-linear function of the moment map coordinates (see \cite{MR1853077}).
\medskip

\subsubsection*{Hamiltonian unfoldings} The fact that we only consider Hamiltonian unfoldings is crucial in our construction. In rough terms, for these unfoldings we can guarantee that, typically, a given partially hyperbolic KAM torus admits transverse homoclinics. This is a key ingredient in our mechanism for the creation of symplectic blenders.

\subsubsection*{Strong form of topological instability}
In the case where the unperturbed dynamics on $N$ is integrable, classical results show that, after arbitrarily small perturbations, there exist orbits which leak through the remnants of the invariant foliation (of the integrable system) and connect regions separated by a distance independent of the size of the perturbation. Some  recent contributions are \cite{MR3646879,GelfreichTuraev,MR4298716,MR4033892,MR4509324,MR4729212,MR4913967}. 

Under stronger assumptions (direct product structure of the unperturbed map), our result implies that a generic arbitrarily small real-analytic perturbation, gives rise to the existence of robustly transitive sets. Moreover, these sets are \textit{large} in the sense that  their projection to $N$ does not depend on the size of the perturbation (indeed it coincides with   $N$). In particular, we deduce the existence of orbits whose projection to $N$ is dense. It seems possible that, if in assumption \ref{it:Assumption2} we ask in addition for the existence of a homoclinic tangency, robustly transitive sets can be obtained with Hausdorff dimension arbitrarily close to maximal, i.e. $\mathrm{dim}(M)+\mathrm{dim}(N)$ (see also Section 1.5 in \cite{NassiriPujalsTransitivity}).

\medskip

\subsubsection*{Conceptual novelties}
We now explain briefly which are the main tools that we introduce and highlight the main differences between our result and that in \cite{NassiriPujalsTransitivity}.

First, the perturbation techniques to create blenders  in \cite{NassiriPujalsTransitivity} are inherently $C^\infty$, while our construction uses only real-analytic perturbations. Even if from their construction one can obtain real-analytic symplectic maps with robustly transitive sets (by approximation), a priori one cannot guarantee that these are close to the given  direct product in the real-analytic topology. 

Second, the arcs of symplectormorphisms with robustly transitive sets constructed in \cite{NassiriPujalsTransitivity} correspond to perturbations in somehow particular directions, while in Theorem \ref{thm:mainham} we show that robustly transitive sets are found for perturbations along an open and dense set of directions. The key idea leading to  this extension is that our construction of blenders is based on a rather elementary mechanism, the so-called transversality torsion mechanism, which appears typically in unfoldings of a direct product as that in Theorem \ref{thm:mainham}. Moreover, our techniques are well suited for applications to concrete parametric families. Indeed, in our previous work with  Guàrdia we have used a lower dimensional  version of our construction in this paper to build symplectic blenders in the 3-body problem of Celestial Mechanics \cite{guardia2025partiallyhyperbolicdynamics3body}.

Third, the authors in \cite{NassiriPujalsTransitivity}  assume that the unperturbed dynamics on $N$ is given by an integrable map\footnote{That is the case in Theorem B of \cite{NassiriPujalsTransitivity}. In  Theorem A in \cite{NassiriPujalsTransitivity} the authors only assume that the unperturbed dynamics on $N$ displays a weakly hyperbolic periodic orbit and deduce the existence of robustly transitive sets which, a priori, might be smaller than the lamination $\mathcal X_\varepsilon$.} while we only need assumption \ref{it:Assumption1}. To gain this generality  we will incorporate two new tools into their construction. One of them is the extensive use of the so-called \textit{scattering map} \cite{DelaLLavegaps} to describe the dynamics along homoclinic excursions, together with the perturbation theory for its asymptotic computation developed in \cite{DelaLLaveScattmap}. The second one is a classical idea in sub-Riemannian geometry (coming from the proof of the Ball-Box Theorem \cite{MR1867362})  which will be used to derive a property which we call $\varepsilon$-Reachability for the system of scattering maps. As we will see below this set of tools can be combined with the  blender dynamics - which one should think of as a semi-local source of robust transitivity - to generate robustly transitive dynamics in the whole lamination.
\medskip

We finally present the following corollary of Theorem \ref{thm:mainham} in which we construct  new open sets of  real-analytic topologically mixing symplectomorphisms which are not uniformly hyperbolic. The proof of Theorem \ref{thm:corollaryMain} is presented, after that of Theorem \ref{thm:mainham}, in Section \ref{sec:outline}.

\begin{thmx}\label{thm:corollaryMain}
    Let $(N,\varpi_N)$ and $(M,\varpi_M)$ be  symplectic compact  manifolds. Let $F_0:M\times N\to M\times N$ be a direct product Hamiltonian map as in \eqref{eq:directproductmap} which satisfies \ref{it:Assumption1}-\ref{it:Assumption2}, is partially hyperbolic with center bundle tangent to $N$ and   such that $F_M:M\to M$ is a transitive Anosov diffeomorphism. Then, there exist
 \[
 \varepsilon_0(F_0)>0\qquad\qquad\text{and}\qquad\qquad  r_0(F_0),\kappa_0(F_0)<\infty,
 \]
 such that the following holds. Suppose that for $F_0$ the invariant submanifold $\Lambda_0=\{a\}\times N$ in \ref{it:Assumption2} is $\kappa_0$-normally hyperbolic. Then for any $f$ belonging to an open and dense subset of $\mathcal B^\omega_{r_0}(M\times N)$ and any $0<\varepsilon\leq \varepsilon_0$, the  map $
   F_{\varepsilon,f}:M\times N\to M\times N$ in \eqref{eq:deformationmap} is topologically mixing and not uniformly hyperbolic.
\end{thmx}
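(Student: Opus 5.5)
We deduce Theorem~\ref{thm:corollaryMain} from Theorem~\ref{thm:mainham}, using the global partially hyperbolic structure of $F_0$ to spread the local transitivity to all of $M\times N$.

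\textbf{Step 1: the perturbed maps.} Since $F_0$ is partially hyperbolic with center bundle tangent to $N$, and partial hyperbolicity is $C^1$-open, $F_{\varepsilon,f}$ stays partially hyperbolic for $\varepsilon\leq\varepsilon_0$. By $\kappa_0$-normal hyperbolicity and Hirsch--Pugh--Shub theory \cite{MR501173}, the center foliation $\{\{x\}\times N\}$ persists as a plaque-expansive center foliation $\mathcal W^c_\varepsilon$. Its leaves are homeomorphic to $N$, and $F_{\varepsilon,f}$ is leaf conjugate to $F_0$. The quotient dynamics is therefore conjugate to the transitive Anosov map $F_M$.

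\textbf{Step 2: a mixing set meeting every unstable leaf.} Apply Theorem~\ref{thm:mainham}. This gives an open dense set of $f$ and the mixing lamination $\mathcal X_\varepsilon$, which contains the continuation $\Lambda_\varepsilon$ of $\{a\}\times N$ and is a union of center leaves over a horseshoe of the base. Now take open sets $U,V\subset M\times N$.

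\begin{enumerate}
\item Since the strong unstable leaves of $F_0$ project to unstable leaves of $F_M$, and those are dense because $F_M$ is transitive Anosov, $U$ contains a strong unstable disc $D^{uu}$. Its forward iterates grow and, via the leaf conjugacy, come close to $\mathcal X_\varepsilon$.
\item The local stable manifold $W^s(\Lambda_\varepsilon)$ is a union of strong stable leaves through all points of $\Lambda_\varepsilon$, so it has full center width. Hence $W^s(\Lambda_\varepsilon)$ crosses $F^n_{\varepsilon,f}(D^{uu})$ transversally, by dimension count $s+(u)+c$. Consequently some iterate of $U$ accumulates, via the $\lambda$-lemma, on an open subset of $W^u(\Lambda_\varepsilon)$, and in particular on points of $\mathcal X_\varepsilon$.
\item Symmetrically, backward iterates of $V$ meet $W^s$ of an open piece of $\mathcal X_\varepsilon$.
\item Mixing of $\mathcal R_\varepsilon|_{\mathcal X_\varepsilon}$ gives a connection within $\mathcal X_\varepsilon$ for all large return times.
\end{enumerate}

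\textbf{Step 3: from return times to all times (main obstacle).} Chaining the connections of Step 2 gives $F^n(U)\cap V\neq\emptyset$, but a priori only along return times of $\mathcal R_\varepsilon$. To get every large $n$, we use that the base horseshoe contains the fixed point $a$: $\Lambda_\varepsilon$ is invariant for $F_{\varepsilon,f}$ itself. We would first show that the mixing holds for $F_{\varepsilon,f}$ restricted to the homoclinic class of $\Lambda_\varepsilon$, since return times of coprime lengths can be produced by the horseshoe. Then we pad with iterates spent near $\Lambda_\varepsilon$, which yields every $n\geq N$. The delicate point is that the entrance into $\mathcal X_\varepsilon$ must be robust, that is, the intersections of Step 2 must hit the full center width. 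This is exactly the blender and $\varepsilon$-reachability property behind Theorem~\ref{thm:mainham}, which we expect to reuse.

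\textbf{Step 4: non-uniform hyperbolicity.} Theorem~\ref{thm:mainham} provides the invariant normally hyperbolic $\Lambda_\varepsilon\cong N$, with center bundle $T\Lambda_\varepsilon$. The restriction $F_{\varepsilon,f}|_{\Lambda_\varepsilon}$ is a symplectomorphism of the compact manifold $N$, so it preserves volume. A uniformly hyperbolic splitting of $T\Lambda_\varepsilon$ would force $\Lambda_\varepsilon$ to be a hyperbolic set. Alternatively, by KAM persistence of the non-degenerate elliptic point from \ref{it:Assumption1}, for small $\varepsilon$ the map $F_{\varepsilon,f}|_{\Lambda_\varepsilon}$ has an elliptic fixed point close to the original one. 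Its derivative then has eigenvalues of modulus one in the center direction, which contradicts uniform hyperbolicity.
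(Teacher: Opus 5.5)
Your Steps 1, 2 and 4 are sound in outline, and Step 2 is essentially the paper's argument. The paper reduces everything to showing that $W^u(\mathcal X_\varepsilon)$ and $W^s(\mathcal X_\varepsilon)$ meet every open set, as follows:
\begin{itemize}
\item $W^{u,s}(\mathcal X_0)$ is dense because $F_M$ is transitive Anosov.
\item Compact pieces of $W^{u,s}(\mathcal X_\varepsilon)$ are $O(\varepsilon)$-close to those of $W^{u,s}(\mathcal X_0)$. Hence $W^u(\mathcal X_\varepsilon)$ (resp.\ $W^s(\mathcal X_\varepsilon)$) cuts every disc tangent to $E^s$ (resp.\ $E^u$) of radius larger than $L\varepsilon$.
\item Such discs grow under backward (resp.\ forward) iteration, and $W^{u,s}(\mathcal X_\varepsilon)$ is invariant.
\end{itemize}
Mixing of $\mathcal R_\varepsilon|_{\mathcal X_\varepsilon}$ is then implicitly invoked.

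The leaf conjugacy of your Step 1 is not needed. Going through $W^s(\Lambda_\varepsilon)$ instead of $W^s(\mathcal X_\varepsilon)$ only adds a passage through the channel geometry: near $\Gamma^i_-$, pieces close to $W^u(\Lambda_\varepsilon)$ are horizontal in $\mathcal Q^i_\delta$, so they cross the vertical leaves $\mathcal W^s_{\bs\omega}$. Note that Theorem~\ref{thm:mainham} puts $\Lambda_\varepsilon$ only in the \emph{closure} of $\mathcal X_\varepsilon$. Points of $\mathcal X_\varepsilon$ return to $\mathcal Q_\delta$ in finite time, so $a$ is not in the base.

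Step 4 is correct and supplies the ``not uniformly hyperbolic'' part, which the paper's proof leaves implicit. However, persistence of the elliptic point is not a KAM statement. It follows from the implicit function theorem ($1$ is not an eigenvalue), together with simplicity of the spectrum and the symmetry $\lambda\mapsto\bar\lambda^{-1}$ of symplectic spectra, which keeps simple unimodular eigenvalues on the circle. Section~\ref{sec:genericity} records this fact. Drop the volume-preservation remark and the sentence about a hyperbolic splitting of $T\Lambda_\varepsilon$; they prove nothing, since volume-preserving maps can be Anosov.

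The genuine gap is Step 3, which you leave as an expectation and whose proposed mechanism does not work. Return times of every length $\omega+n_-+n_+$ with $\omega\ge n_*$ are trivially available in the symbolic base, so coprimality is not the issue. The issue is that a step with symbol $\omega$ acts on the fiber by $T_\varepsilon^{\omega}\circ(S_{\iota,\varepsilon}+R_{\bs\omega})$. Spending $j$ more iterates near $\Lambda_\varepsilon$ therefore composes the center coordinate with $T_\varepsilon^j$. This is a macroscopic change in $N$ (near the KAM tori or the elliptic point it is essentially a rotation), and it destroys the alignment with the exit set obtained in Step 2.

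Nor can you cure this with the full-center-width entrance you hope to extract from the blender. Since $F_{\varepsilon,f}$ is symplectic, the leafwise maps of $\mathcal X_\varepsilon$ preserve volume up to bounded distortion. So along a single itinerary, the center piece coming from a small open set stays small and never fills $N$. Full width is also unnecessary for the chaining itself: mixing of $\mathcal R_\varepsilon|_{\mathcal X_\varepsilon}$ concerns relatively open sets (cylinder times ball), and that is what the $\lambda$-lemma produces.

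What is missing is an argument controlling the total $F_{\varepsilon,f}$-time. One option is to pad only at a fixed point of $T_\varepsilon$ lying in both the entrance and exit sets. Another is a coprime-periods argument for homoclinically related hyperbolic periodic points in the transitive set. In fairness, the paper's proof does not address this passage from return times of $\mathcal R_\varepsilon$ to all times of $F_{\varepsilon,f}$ either. It stops at density of $W^{u,s}(\mathcal X_\varepsilon)$ combined with mixing of $\mathcal R_\varepsilon|_{\mathcal X_\varepsilon}$.
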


\begin{rem}
    In the setting of Theorem \ref{thm:corollaryMain} we say that $F_0=F_M\times F_N$ is  \textit{partially hyperbolic} with center bundle tangent to $N$ if there exists a continuous invariant  splitting of  $TM=E^s\oplus E^u$ and adapted metrics $\lVert\cdot\rVert_M$ on $M$ and $\lVert\cdot\rVert_N$ on  $N$ such that
       \[
         \lVert  DF_N^{-1}\rVert_N \lVert DF_M|_{E^s}\rVert_M < 1\qquad\qquad\text{and}\qquad\qquad \lVert DF_N\rVert\lVert DF_M^{-1}|_{E^u}\rVert_M< 1.
       \]
\end{rem}
\medskip

Under the additional assumption that the unperturbed  dynamics on $N$ is integrable, a version of Theorem \ref{thm:corollaryMain} in which the direct product $F_0$ is $C^\infty$ accumulated by  symplectomorphisms which are robustly transitive can be easily deduced from Theorem B in \cite{NassiriPujalsTransitivity}. In the  case where $\mathrm{dim} N=2$, a much stronger result was obtained by Horita and Sambarino \cite{MR3682778}. In the particular setting of Theorem \ref{thm:corollaryMain} (their result applies to a larger class of interesting scenarios), they show that if $\mathrm{dim}N=2$, the map $F_0$ can be $C^r$ approximated (for any $r\in\{2,\dots,\infty\}$) by a stably ergodic symplectomorphism. To the best of our knowledge, it is still not known whether a higher dimensional version of their result is true (although that it is expected to be the case) and whether their construction can be upgraded to obtain density in the real-analytic topology. A family of real-analytic nonuniformly hyperbolic symplectomorphisms which are stably ergodic are the so-called Berger-Carrasco maps  \cite{MR4069249}. In these examples, the center bundle is also two dimensional.
\medskip

\subsection{Some open problems}

We present here a small list of open problems which are connected in some way to our main result. A more extensive list can be found in \cite{NassiriPujalsTransitivity}.

\subsubsection*{Stable ergodicity around direct products} In the setting of Theorem \ref{thm:mainham} it is natural to ask if the dynamics in the lamination $\mathcal X_\varepsilon$ is ergodic with respect to the natural product measure. This question already appears in \cite{NassiriPujalsTransitivity} but, to the best of our knowledge, has not yet been answered even in less regular topologies. 

A possibly simpler question is to understand if the sets of symplectomorphisms in Corollary \ref{thm:corollaryMain} are  stably ergodic (see the discussion following Corollary \ref{thm:corollaryMain}).

\subsubsection*{Symplectic blenders beyond the direct product setting}

 It seems possible that  for some steps in the proof of Theorem \ref{thm:mainham}, in particular the creation of symplectic blenders,  the direct product assumption can be dropped (by exploiting that indeed, close to certain partially  hyperbolic objects, the dynamics can always locally decoupled by a smooth coordinate transformation).
 
 For instance, Li and Turaev show in \cite{LiTuraevPreprint} that given a exact symplectomoprhism with a partially hyperbolic KAM curve admitting a transverse homoclinic orbit, symplectic blenders emerge under an arbitrarily small real-analytic perturbation.  It would be interesting to generalize their result to the case of partially hyperbolic KAM tori of arbitrary dimension.

\subsubsection*{Symplectic blenders for a priori stable Hamiltonians}
A related problem which, we believe, could be addressed with the techniques developed in this paper, is to show that, for an open and dense set of directions, a real-analytic one-parametric unfolding of a  completely integrable Hamiltonian dynamical system exhibits symplectic blenders of any possible center dimension.

\subsubsection*{The influence of blenders in celestial mechanics}
In our previous work with Guàrdia \cite{guardia2025partiallyhyperbolicdynamics3body} we established the existence of symplectic blenders in the 3-body problem of celestial mechanics and used them to construct \textit{oscillatory orbits} with large closure. Along these orbits, although their distance is unbounded in time, the three bodies come together infinitely often. Moreover, the orbit of two of the bodies remains bounded, and visits a locally dense set of the space of oriented ellipses.

Another interesting scenario where symplectic blenders may emerge corresponds to the homoclinic picture around the Lagrange periodic orbit $L_1$ of the restricted 3-body problem (see \cite{LiTuraevPreprint}). Finally, it would be of great interest to understand  if symplectic blenders can be  effective tools to tackle Alekseev's conjecture on the existence of a locally dense set of initial conditions leading to collision \cite{MR629685,MR3951693}.

\subsection*{Acknowledgements}
The author would like to warmly thank  B. Fayad and M. Guàrdia for their interest in this project and valuable insights and comments on a first draft of this manuscript. Additionally, we want to thank M. Gidea, R. de la Llave and T.M. Seara for informing us about their ongoing work  \cite{HormanderGdlLS}, in which they also make use of ideas from control theory  to prove a result in the same spirit of Proposition \ref{prop:maintransportIFS} of the present manuscript, and establish the existence of a strong form of Arnold diffusion in nearly-integrable Hamiltonians. We have also been informed that they had announced such a result a number of years ago in \cite{MR4160091}.

\section{Definitions, strategy and  proof of Theorems \ref{thm:mainham} and \ref{thm:corollaryMain}}\label{sec:outline}

In this section we describe the strategy of the proof of Theorems \ref{thm:mainham}-\ref{thm:corollaryMain} and introduce the more relevant definitions.
\medskip
 
 First, in Section \ref{sec:nihimintro} we recall the definition of \textit{normally hyperbolic manifold} and prepare ourselves to study \textit{homoclinic phenomena} associated to normally hyperbolic manifolds such as the existence of \textit{normally hyperbolic laminations}.  To that end we review the construction of the so-called \textit{scattering map} introduced by Delshams, de la Llave and Seara \cite{DelaLLavegaps,DelaLLaveScattmap}. As we will see below, the scattering map, defined as a composition of stable and unstable holonomies, encodes the dynamics along the set of homoclinic orbits to a normally hyperbolic manifold. Having in mind the setting in Theorem \ref{thm:mainham}, in Proposition \ref{prop:mainnhim} we give a number of  sufficient conditions  for the existence of normally hyperbolic lamination and relate the dynamics on the lamination to the scattering map dynamics. The results in this section rely on ``soft techniques'' and require less stringent assumptions on the domination exponent $\kappa_0$ and the regularity exponent $r_0$ than the ones needed for Theorem \ref{thm:mainham}.

 \medskip

Section \ref{sec:IFSintro} is devoted to the study of certain \textit{iterated function systems} (IFSs) on compact manifolds. The reason for this (apparent) detour is that the dynamics in the lamination in Proposition \ref{prop:mainnhim} is given by a \textit{skew-product} over the shift. Hence one might expect to be able to translate some dynamical constructions from the IFS setting to the skew-product setting. 

In particular, having in mind the structure of the skew-product dynamics in Proposition \ref{prop:mainnhim}, our goal in Section \ref{sec:IFSintro} is to present sufficient conditions for a suitable class of IFS's to be \textit{topologically mixing}. This is done in two steps. The first step focuses on the creation of  a \textit{semi-local source} of robust transitivity: the so-called symbolic blenders (see \cite{NassiriPujalsTransitivity} and Section \ref{sec:IFSintro}). Sufficient conditions for their construction are given in  Theorem \ref{thm:mainIFS}. The second step involves \textit{global transport} in the IFS, namely, wether the IFS has the property of connecting any two points up to an small (but finite) error. Exploiting the fact  that the IFS's of interest for us include a couple of close to identity maps,  a simple argument borrowed from the \textit{control theory} literature shows that this is the case. A precise statement is given in Proposition \ref{prop:maintransportIFS}.

\medskip

In Section \ref{sec:skewprodintro},  building on the results   obtained in Section \ref{sec:IFSintro}, we give sufficient conditions which guarantee that the skew-product in  Proposition \ref{prop:mainnhim} is topologically mixing. This is the content of Proposition \ref{prop:skewprodmain}.
\medskip

Finally, in Section \ref{sec:genericityintro} we verify that for an open and dense set of perturbations in $\mathcal B^\omega_{r_0}(M\times N)$ (recall its definition in \eqref{eq:Ballperturbations}), the corresponding one-parametric family of Hamiltonian maps 
\[
F_{\varepsilon,f}:M\times N\to M\times N
\]
as in \eqref{eq:deformationmap}: 
\begin{enumerate}
    \item satisfies the asumptions in Proposition \ref{prop:mainnhim} and,
    \item  the corresponding skew-product dynamics  can be understood in terms of an IFS which verifies the assumptions introduced in Section \ref{sec:IFSintro} (namely the ones needed to apply Theorem \ref{thm:mainIFS} and Proposition \ref{prop:maintransportIFS}). In particular, Proposition \ref{prop:skewprodmain} applies.
\end{enumerate}

\begin{rem}\label{rem:remarksofttechniques}
    The presentation in  Sections \ref{sec:nihimintro}-\ref{sec:skewprodintro} does not rely on the particular scenario of Theorem \ref{thm:mainham} and is presented in a more abstract setting. In particular, throghout these sections we will consider $C^r$  diffeomorphisms  $F:X\to X$ of a compact Riemannian manifold with $r\geq 2$ and do not assume any product structure for $X$ nor that $F$ is a symplectic map.  Only in Section \ref{sec:genericityintro} we will come back to the setting of Theorem \ref{thm:mainham}.
\end{rem}

\subsection{Normally hyperbolic cylinders and homoclinic channels}\label{sec:nihimintro}
Consider a $C^r$ diffeomorphism $F:X\to X$ of a  Riemannain manifold $X$ with $r\geq2$.
\begin{defn}\label{defn:nhim}
    Let $\Lambda\subset X$ be an invariant submanifold for $F$, i.e. $F(\Lambda)=\Lambda$. We say that $\Lambda$ is \textit{normally hyperbolic} if there exists $C>0$ and constants
    \[
    0<\lambda<\alpha^{-1}<1
    \]
    such that for any $x\in \Lambda$ there exists a splitting $T_x X=E^s_x\oplus T_x\Lambda \oplus E^u_x$ verifying
    \begin{align*}
        v\in E^s_x &\Longleftrightarrow \lVert DF^n v\rVert\leq C\lambda^n \lVert v\rVert,\qquad n\geq 0\\
         v\in T_x\Lambda &\Longleftrightarrow \lVert DF^n v\rVert\leq C\alpha^{|n|} \lVert v\rVert,\qquad n\in\mathbb Z\\
          v\in E^u_x &\Longleftrightarrow \lVert DF^n v\rVert\leq C\lambda^n \lVert v\rVert,\qquad n\leq 0.
    \end{align*}
\end{defn}

\begin{rem}
    It is not difficult to check that the particular definition we gave for the direct product case (see Definition \ref{defn:bunchingpinching}) satisfies the requirements in Definition \ref{defn:nhim} (see for instance \cite{MR501173}).
\end{rem}

Given a normally hyperbolic manifold we define its \textit{stable and unstable sets}
\begin{equation}\label{eq:stablemanifoldsdefn}
\begin{aligned}
W^s(\Lambda)=&\{x_*\in X\colon \exists x\in \Lambda \text{ such that }\mathrm{dist}(F^n(x),F^n(x_*))\to 0\text{ as } n\to +\infty\}\\
W^u(\Lambda)=&\{x_*\in X\colon \exists x\in \Lambda \text{ such that }\mathrm{dist}(F^n(x),F^n(x_*))\to 0\text{ as } n\to -\infty\}
\end{aligned}
\end{equation}
and, given $x\in \Lambda$ introduce
\begin{equation}\label{eq:stablemanifoldsfibersdefn}
\begin{aligned}
W^s_x=&\{x_*\in X\colon \mathrm{dist}(F^n(x),F^n(x_*))\to 0\text{ as } n\to +\infty\}\\
W^s_x=&\{x_*\in X\colon \mathrm{dist}(F^n(x),F^n(x_*))\to 0\text{ as } n\to -\infty\}.
\end{aligned}
\end{equation}
The following facts are standard (see \cite{MR501173,DelaLLaveScattmap})
\begin{enumerate}
   
 \item The manifold $\Lambda$ is $C^l$ with 
    \begin{equation}\label{eq:spectralgap}
        1\leq l<\mathrm{min}\left(r,\frac{|\log\lambda|}{\log \alpha}\right),
    \end{equation}

    \item The maps  $x\mapsto W^{u,s}_x$ are $C^{l-1-j}$ when $W^{u,s}_x$ are given the $C^j$ topology and at any $x\in \Lambda$ we have $T_x W^{u,s}_x=E_x^{u,s}$,

    \item $W^{u,s}_x$ are $C^{r}$  manifolds for any $x\in \Lambda$.    
\end{enumerate}
In particular, it follows that $\{W^{u,s}_x\}_{x\in\Lambda}$ give a $C^{l-1}$ foliation of $W^{u,s}(\Lambda)$.
\medskip

\subsubsection*{Homoclinic channels and the scattering map}

We now introduce some tools to study homoclinic phenomena associated to normally hyperbolic manifolds. Our presentation follows closely that in \cite{DelaLLaveScattmap} and we refer the interested reader there for a much more complete exposition. 

We start by introducing the \textit{wave maps}
\begin{align*}
\Omega^{u,s}:W^{u,s}(\Lambda)&\mapsto \Lambda\\
x&\mapsto x^{u,s}
\end{align*}
where $x^{u,s}\in \Lambda$ is the unique point such that $x\in W^{u,s}_{x^{u,s}}$. Notice that these maps are $C^{l-1}$.
\begin{defn}[Homoclinic channel] Let $\Lambda$ be a normally hyperbolic manifold of a $C^r$ diffeomorphism $F:X\to X$ and let  $\Omega^{u,s}:x\in W^{u,s}(\Lambda)\to x^{u,s}\in\Lambda$ be the corresponding wave maps. We say that a submanifold $\Gamma\subset W^s(\Lambda)\cap W^u(\Lambda)$ is a \textit{homoclinic channel} to $\Lambda$ if at any $x\in \Gamma$
\[
T_x\Gamma=T_x W^s(\Lambda)\cap T_x W^u(\Lambda)\qquad\qquad T_x \Gamma \oplus T_x W^{u,s}_{x^{u,s}}=T_x W^{u,s}(\Lambda)
\]
\end{defn}

By analogy with their lower dimensional counterpart, i.e. hyperbolic periodic orbits displaying transverse homoclinic orbits to them, one usually expects that the existence of homoclinic channels to a normally hyperbolic submanifold leads to some sort of ``chaotic behavior''. 

One of the more relevant tools for the study of the dynamics around homoclinic channels is the so-called scattering map, introduced by Delshams, de la Llave and Seara in \cite{DelaLLavegaps,DelaLLaveScattmap}.

\begin{defn}[Scattering map]
    Let $\Lambda$ be a normally hyperbolic manifold of a $C^r$ diffeomorphism  $F:X\to X$, let $\Omega^{u,s}:x\in W^{u,s}(\Lambda)\to x^{u,s}\in \Lambda$ be the corresponding wave maps and let $\Gamma\in W^s(\Lambda)\cap W^u(\Lambda)$ be a homoclinic channel. Then, we define the \textit{scattering map}
    \begin{align*}
    S_\Gamma: D_\Gamma\subset \Lambda\to \Lambda\qquad\qquad \text{ where }\qquad D_\Gamma=\Omega^u|_\Gamma(\Gamma)
    \end{align*}
    by 
    \[
S_\Gamma=\Omega^s|_\Gamma\circ (\Omega^u|_\Gamma)^{-1}: x^u\mapsto x^s
    \]
\end{defn}
Observe that, by construction, the scattering map is a $C^{l-1}$ diffeomorphism onto their image.

\medskip

\subsubsection*{Existence of normally hyperbolic laminations}
 Throughout this section we will consider a one-parametric family of $C^r$ $(r\geq  2$) diffeomorphisms  $\{F_\varepsilon\}_{\varepsilon}$ of a Riemaniann manifold $X$ (see Remark \ref{rem:remarksofttechniques}). We shall suppose that there exists $\varepsilon_0(F_0)>0$ such that for any $0\leq \varepsilon\leq\varepsilon_0(F_0)$:
\begin{enumerate}[label={[A\arabic*]}]
    \item \label{it:H1} \textit{$\kappa$-normal hyperbolicity:} $F_\varepsilon:X\to X$ has a compact normally hyperbolic invariant manifold $\Lambda_\varepsilon$ which associated rates $0<\lambda_\varepsilon<\alpha_\varepsilon^{-1}<1$ satisfying
    \begin{equation}\label{eq:spectralgap}
        \alpha_\varepsilon^{\kappa+1}\lambda_\varepsilon<1.
    \end{equation}
    for some $\kappa\geq 2$. Moreover, we assume that  there exists a reference manifold $N$ and a parametrization $\psi_\varepsilon:N\to \Lambda_\varepsilon$. We denote by 
    \begin{equation}\label{eq:innerdynamicsdefn}
        T_\varepsilon=\psi_\varepsilon^{-1}\circ F_\varepsilon|_{\Lambda_\varepsilon}\circ\psi_\varepsilon:N\to N
    \end{equation}
the map describing the restriction of the dynamics to $\Lambda_\varepsilon$.
    
    \item \label{it:H2} \textit{Full homoclinic intersections:} There exist $m$ ($m\geq 1$)  homoclinic manifolds $\{\Gamma^i_\varepsilon\}_i\subset W^s(\Lambda_\varepsilon)\pitchfork W^u(\Lambda_\varepsilon)$ such that for every $i=1,\dots,m$ we have   $\Omega^u|_{\Gamma^i_\varepsilon}(\Gamma^i_\varepsilon)=\Lambda_\varepsilon$ and 
    \[
    \bigcap_{i=1}^m\bigg( \bigcup_{n\in\mathbb Z} F_\varepsilon^n(\Gamma^i_\varepsilon)\bigg)=\emptyset.
    \]
    We denote by $S_{\Gamma^i_\varepsilon}:\Lambda\to \Lambda$ the corresponding scattering maps and let 
    \begin{equation}\label{eq:scattmapdynamicsdefn}S_{i,\varepsilon}=\psi_\varepsilon^{-1}\circ S_{\Gamma^i_{\varepsilon}}\circ\psi_\varepsilon:N\to N
    \end{equation}
\end{enumerate}
\medskip

Before continuing, let us make some comments on the assumptions. First of all, one should notice that since both conditions are open, it follows from regular dependence on parameters (see \cite{MR501173,DelaLLaveScattmap}), that it is only necessary to verify Assumptions \ref{it:H1}-\ref{it:H2} for $F_0$ and then, they automatically hold for all $F_\varepsilon$ with $\varepsilon>0$ sufficiently small (see Section \ref{sec:genericity}). We prefer to state the assumptions already for the family to avoid innecessary repetitions. 

Concerning the nature of the assumptions,   \ref{it:H1} guarantees that the stable and unstable foliations associated to $\Lambda_0$ are at least $C^{\kappa}$ if $r\geq\kappa$ while \ref{it:H2} will be  of use to show that the homoclinic picture associated to $\Lambda_\varepsilon$ is sufficiently rich.   
\medskip

The following result describes the set in which, under suitable assumptions to be made precise later, the map $F_\varepsilon$ induces a  topologically mixing dynamical system.

\begin{prop}\label{prop:mainnhim}
    Let $F_\varepsilon:X\to X$ be a parametric family of $C^r$ maps ($r\geq 2$)  which satisfy assumptions \ref{it:H1}-\ref{it:H2}. Let $A=\{1,\dots,m\}$ and let $\Sigma=(\mathbb N\times A)^\mathbb Z$. Then, there exists $C(F_0)>0$,  $\bar\lambda(F_0)<1$ and  $\varepsilon_0(F_0)>0$ such that for any $0\leq \varepsilon\leq \varepsilon_0$ the map $F_\varepsilon$ displays a set $\mathcal X_\varepsilon\subset X$, an open neighbourhood $\mathcal Q_\varepsilon\supset \mathcal X_\varepsilon$   and a homeomorphism $\Psi:\Sigma\times N\to \mathcal X_\varepsilon$ such that the following holds:
    \begin{enumerate}
        \item \textit{(Normal hyperbolicity):} The set $\mathcal X_\varepsilon$ is normally hyperbolic and locally maximal for the return map $\mathcal R_\varepsilon$ induced on $\mathcal Q_\varepsilon$,
        \item \textit{(Skew-product dynamics):} The homeomorphism $\Psi$ conjugates the dynamics of $\mathcal R_\varepsilon$ to the skew-product
        \begin{equation}\label{eq:skewprodnhim}
        \begin{aligned}
        \mathcal F_\varepsilon=\Psi^{-1}\circ\mathcal R_\varepsilon\circ\Psi:\Sigma\times N&\to \Sigma\times N\\
        (\bs\omega,z)&\mapsto (\sigma(\bs\omega), F_{\bs\omega}(z))
        \end{aligned}
        \end{equation}
        where $\sigma$ is the full right shift and, for every  $\bs\omega=(\omega,\iota)\in\Sigma$ and $z\in N$
        \begin{equation}\label{eq:skewprodformula}
        F_{\bs\omega}=T^{\omega_0}_\varepsilon\circ (S_{\iota_0,\varepsilon}+R_{\bs\omega})
        \end{equation}
    with $T_\varepsilon$ as in \eqref{eq:innerdynamicsdefn} being of class $ C^{\min\{r,\kappa+1\}}$, $\{S_{i,\varepsilon}\}_i$ as in \eqref{eq:scattmapdynamicsdefn} being of class $ C^{\min\{r,\kappa\}}$ and $R_{\bs\omega}: N\to N$ satisfying $|R_{\bs \omega}|_{C^1}\leq  C \bar\lambda ^{\min\{\omega_0,\omega_{1}\}}$.
    \end{enumerate}
\end{prop}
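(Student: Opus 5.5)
We will build $\mathcal X_\varepsilon$ as a normally hyperbolic lamination obtained by a Shilnikov-type / Conley–Moser construction around $\Lambda_\varepsilon$ and its homoclinic channels. We take $\mathcal Q_\varepsilon$ to be the union of a thin tubular neighbourhood $V$ of $\Lambda_\varepsilon$ (in normally hyperbolic coordinates, where $W^{s,u}_{\mathrm{loc}}(\Lambda_\varepsilon)$ are straightened and the $C^{\kappa}$ stable/unstable foliations are coordinate foliations) together with small neighbourhoods $U_i$ of compact fundamental pieces of the channels $\Gamma^i_\varepsilon$. Each $U_i$ is chosen as a piece of $\Gamma^i_\varepsilon$ lying in $V$, transported by a fixed number $k_i$ of iterates from the unstable side to the stable side of $V$. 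The return map $\mathcal R_\varepsilon$ is the first return to a pair of cross-sections $\Sigma^{u}=\{|x_u|=\delta\}$ and $\Sigma^s=\{|x_s|=\delta\}$. A point in $\Sigma^s$ near $\Gamma^i$ spends $n$ iterates in $V$ before exiting through $\Sigma^u$, then follows channel $i$ back to $\Sigma^s$. The symbol $(\omega_0,\iota_0)\in \mathbb N\times A$ records $n$ and the channel $i$. The emptiness of $\bigcap_i\bigcup_n F^n_\varepsilon(\Gamma^i_\varepsilon)$ in \ref{it:H2} guarantees that the channels are distinct, so the pieces $U_i$ can be taken pairwise disjoint and the symbols are well defined. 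Since $\Omega^u(\Gamma^i_\varepsilon)=\Lambda_\varepsilon$, each piece projects onto all of $\Lambda_\varepsilon$, so no fiber restriction is needed and the fiber is all of $N$.

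First, the local passage near $\Lambda_\varepsilon$ will be analysed with a Shilnikov-type lemma / exchange lemma in Fenichel coordinates. Using the spectral gap $\alpha^{\kappa+1}\lambda<1$, we show that for $n$ iterates the map from an $\Sigma^s$-disk to $\Sigma^u$ is, in the center variable, $T_\varepsilon^n\circ\pi$ up to an error of size $C\bar\lambda^n$ in $C^1$, with $\bar\lambda=\lambda\alpha^{2}$ or similar. Here $\pi$ is the projection along the stable foliation, i.e. $\Omega^s$, and the $C^1$ control needs the foliation to be $C^1$ with room. Similarly, backward passage gives the $\Omega^u$ projection. The global passage along $\Gamma^i$ is a fixed diffeomorphism. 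Composing the two, the center component of the return equals $T^{\omega_0}\circ\Omega^s\circ(\Omega^u|_{\Gamma^i})^{-1}$ plus an error. After conjugating by $\psi_\varepsilon$ and absorbing the fixed iterates $k_i$ into $S_i$ (or into the definition of $T^{\omega_0}$), this is exactly the formula \eqref{eq:skewprodformula}. The error depends on both the time spent before the excursion ($\omega_1$, from the incoming side) and after it ($\omega_0$), hence the bound $C\bar\lambda^{\min\{\omega_0,\omega_1\}}$.

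Second, we construct the invariant set by a cone-field / graph-transform argument. The return map is hyperbolic in the $(x_s,x_u)$ directions, with contraction and expansion at least $\lambda^{\omega}$, dominating the center growth $\alpha^\omega$. We define strips $H_{\omega,\iota}\subset\Sigma^s$ indexed by the symbols and verify the Conley–Moser conditions uniformly in $\omega\geq \omega_*$, restricting to large $\omega$ by reindexing $\mathbb N$. This yields, for each bi-infinite sequence, a unique center leaf, namely a $C^1$ graph over $N$, giving the homeomorphism $\Psi$. Local maximality follows from uniqueness of the leaves, and normal hyperbolicity of $\mathcal X_\varepsilon$ follows from the invariant cones.

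The regularity claims come from Fenichel theory. $T_\varepsilon$ is $C^{\min(r,\kappa+1)}$ because $\Lambda_\varepsilon$ is $C^l$ with $l$ bounded by the gap. $S_i$ is $C^{\min(r,\kappa)}$ because the wave maps lose one derivative. Uniformity in $\varepsilon\in[0,\varepsilon_0]$ follows from persistence.

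I expect the main obstacle to be the uniform $C^1$ Shilnikov estimate for the passage near $\Lambda_\varepsilon$ with arbitrarily large passage time $n$. The center dynamics may expand like $\alpha^n$ and interacts with the hyperbolic normal directions, so showing that the deviation from $T^n\circ\Omega^s$ is still exponentially small requires the domination $\alpha^{2}\lambda<1$ together with $C^2$ straightening of the foliations. I would first try to obtain it by writing the local map in coordinates where $W^{s,u}_{\mathrm{loc}}$ and their foliations are flat, and then iterating a fixed-point (Lipschitz boundary-value) formulation in the spirit of Shilnikov's lemma.
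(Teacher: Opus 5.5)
Your plan follows the same route as the paper: a Shilnikov-type boundary-value lemma near $\Lambda_\varepsilon$ proved by a weighted-$C^1$ fixed point, a fixed outer map along the channels, a graph transform on horizontal/vertical blocks indexed by passage time and channel, and leaves $N_{\bs\omega}$ that are $C^1$ graphs over $N$. However, the way you define $\mathcal Q_\varepsilon$ and $\mathcal R_\varepsilon$ is wrong, and as written item (1) fails.

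\textbf{The return domain.} $F_\varepsilon$ is a diffeomorphism, not a flow. Its orbits do not land on codimension-one hypersurfaces such as $\{|x_u|=\delta\}$, so a ``first return to $\Sigma^s\cup\Sigma^u$'' is not defined. You also put a tubular neighbourhood $V$ of $\Lambda_\varepsilon$ inside $\mathcal Q_\varepsilon$. The return map induced on such a set coincides with $F_\varepsilon$ on $V\cap F_\varepsilon^{-1}(V)$. This has three consequences:
\begin{enumerate}
\item one step of $\mathcal R_\varepsilon$ no longer records a whole passage $(\omega_0,\iota_0)$;
\item the maximal $\mathcal R_\varepsilon$-invariant set in $\mathcal Q_\varepsilon$ contains $\Lambda_\varepsilon$ and every homoclinic orbit;
\item $\mathcal X_\varepsilon$ is neither locally maximal nor conjugate to a skew-product over $(\mathbb N\times A)^{\mathbb Z}$.
\end{enumerate}
The paper instead takes $\mathcal Q_\delta=\bigcup_i\mathcal Q^i_\delta$. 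These are full-dimensional boxes $\{0\le q\le\delta,\ 0\le p-p^i_s(q,z)\le\delta\}$ around a fundamental piece $\Gamma^i_-$ of each channel, at distance of order $\delta_0$ from $\Lambda_\varepsilon$; your $U_i$ alone would play this role. With this choice the return time is automatically $n_-+n_++n$. The return domains split into disjoint blocks $\mathcal V^{(n)}_{i;j}\subset\{p\lesssim\bar\lambda^n\}$ and $\mathcal H^{(n)}_{i;j}\subset\{q\lesssim\bar\lambda^n\}$ (Proposition \ref{prop:markovproperty}). Finally, $\mathcal X=\bigcap_{n}\mathcal R^n(\mathcal Q_\delta)$ is locally maximal by definition.

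\textbf{Where the center error sits.} You control the exit center coordinate as $T^n\circ\Omega^s+E$ with $|E|_{C^1}\lesssim\bar\lambda^n$. The statement, however, requires the error \emph{inside} the iterate, $T^{\omega_0}\circ(S_{\iota_0}+R_{\bs\omega})$. Rewriting $T^n(\cdot)+E$ as $T^n(\cdot+R)$ in $C^1$ costs powers of $\alpha^{n}$, including a $D^2T^{-n}$ term, and you would have to check that the gap absorbs them.

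The paper avoids this by working in coordinates where the normal fibration is straightened. There the local center map is exactly $z\mapsto T(z)$, and the boundary-value solution has $z_0=T^{-n}(\bar z)$. All the error then lies in the outer remainder $\widetilde S_i(q,p,z)$, which vanishes at $q=p=0$. Setting $R_{\bs\omega}=\widetilde S_{\iota_0}|_{N_{\bs\omega}}$ and using the leaf graph bounds $|q_{\bs\omega}|_{C^1}\lesssim\bar\lambda^{\omega_0}$ and $|p_{\bs\omega}|_{C^1}\lesssim\bar\lambda^{\omega_1}$ gives $|R_{\bs\omega}|_{C^1}\lesssim\bar\lambda^{\min\{\omega_0,\omega_1\}}$ immediately. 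This is also the clean explanation of the two-sided dependence you describe only heuristically.
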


The proof of Proposition \ref{prop:mainnhim} is presented in Section \ref{sec:nhimproof}. Indeed, in that section we prove a slightly expanded version of this result, Proposition \ref{prop:nhimexpanded}, where we include some additional information on the dependence of $R_{\bs\omega}$ on $\bs\omega$ (which will prove useful for the proof of Theorem \ref{thm:mainham}).

Before proceeding, a couple of remarks are in order. First, to obtain a lamination in which the base is homeomorphic to $\Sigma$, we study the return map to an open set which contains the homoclinic channels $\{\Gamma_i\}$ in its closure. The point of doing that instead of considering a lamination close but uniformly away from the channels (and hence with base defined over a sequence space on a finite alphabet), is the following. By allowing the symbols of $\omega$ to be arbitrarily large, the corresponding orbit comes arbitrarily close to the normally hyperbolic manifold $\Lambda_\varepsilon$. As we will see below, this is crucial for shadowing certain pseudo-orbits of the iterated function system generated by the map $T_\varepsilon$ and the maps $\{S_{i,\varepsilon}\}_{i=1}^m$.
\medskip

\subsection{Iterated function systems}\label{sec:IFSintro}
In order to prove that the skew-product dynamics \eqref{eq:skewprodnhim} is robustly topologically mixing, the first natural step is to study the IFS generated by the symplectic diffeomorphisms $T_\varepsilon$ in \eqref{eq:innerdynamicsdefn} and $\{S_{i,\varepsilon}\}_{i=1}^m$ in \eqref{eq:scattmapdynamicsdefn} of the compact manifold $N$.

\subsubsection*{Symbolic blenders for IFS}
 In this section we show that for $m\geq 2$,  under suitable assumptions which we will later show are satisfied for a generic family $F_\varepsilon$ as in the setting of Theorem \ref{thm:mainham}, the IFS generated by $\{T_\varepsilon,\{S_{i,\varepsilon}\}_{i=1}^m\}$  displays symbolic blenders, a semi-local source of robust transitivity.  Since this result might be of independent interest, instead of focusing on the maps $\{T_\varepsilon,\{S_{i,\varepsilon}\}_{i=1}^m\}$ derived from the study of the diffeomorphism $F_\varepsilon$ above, we state the main result of this section in a more general setting.
 \medskip
 
 We start by properly defining the concept of symbolic blender.
 Consider an IFS $\{T_i\}_{i=1,\dots,m}$ of smooth diffeomorphisms of a smooth manifold $N$. 
Let  $Q\subset N$ be an open ball and denote by $\{\mathtt T_i\}_{i=1,\dots,m}$ the corresponding induced return maps on $Q$. We now suppose that:
\begin{enumerate}
    \item for at least one $j\in\{1,\dots,m\}$, the map $\mathtt T_j$ has a hyperbolic fixed point $P_j\in Q$. We  denote by $W^u(P_j; \mathtt T_j)$ and $W^s(P_j; \mathtt T_j)$ the connected component in $Q$ of the local  unstable and stable manifolds (for the map $\mathtt T_j$),
    \item there exist families of cone fields $\mathcal C^u,\mathcal C^s$ which are common for all the maps $\mathtt T_i$, $i=1,\dots, m$.
\end{enumerate}
 In this setting we say that a $C^1$ submanifold $\Delta\subset Q$ is a cs-strip (resp. cu-strip) if its tangent bundle is contained in the cone $\mathcal C^s$ (resp. $\mathcal C^u$) 
\begin{defn}[Symbolic $cs$-blender]\label{defn:symboliccsblender}
    We say that the pair $(P_j,Q)$ is a \textit{symbolic $cs$-blender} for the IFS $\{T_i\}_{i=1,\dots,k}$ if for any cs-strip $\Delta\subset Q$ there exists $L\in\mathbb N$ and $\iota\in\{1,\dots,m\}^L$ such that 
    \[
    T_{\iota_{L-1}}^{-1}\circ\cdots\circ  T_{\iota_{0}}^{-1}(\Delta)\pitchfork W^u(P_j;\mathtt T_j)\neq \emptyset.
    \]
\end{defn}

Analogously, the pair $(P_j,Q)$ is a \textit{symbolic $cu$-blender} if for any cu-strip $\Delta\subset Q$ there exist $L$ and $\iota$ as above such that $T_{\iota_{L-1}}\circ\cdots\circ T_{\iota_0}(\Delta)\pitchfork W^s(P_j;\mathtt T_j)\neq \emptyset$.

\begin{defn}[Symbolic double blender]
    Let $(P_j,Q)$ be a symbolic $cu$-blender and $(P_i,Q')$ be a symbolic $cs$-blender for the IFS $\{T_i\}_{i=1,\dots,m}$. Together they form a \textit{symbolic double blender}  if 
    \[
    W^s(P_j;\mathtt T_j)\pitchfork W^u(P_i;\mathtt T_i)\neq\emptyset.
    \]
\end{defn}

Observe that symbolic double blenders act as some sort of ``mixer''. Indeed, using these objects it is possible to show that, given \textit{any} pair $\Delta\in Q$, $\Delta'\in Q'$ formed by a cs-strip in $Q$  and a cu-strip in $Q'$, the orbit of $\Delta$ under the IFS generated by $\{T_i\}_{i=1,\dots,m}$ intersects $\Delta'$. In particular, given any two open balls $B\in Q$ and $B'\in Q'$   we have that the orbit of $B$ intersects $B'$ (since we can foliate $B$ by cs-strips and $B'$  by cu-strips).

\medskip
We now introduce a suitable class of parametric families of diffeomorphisms of a $2d$-dimensional manifold (for any $d\in \mathbb N$) and give sufficient conditions which guarantee that the IFS that they generate displays symbolic double blenders. The conditions are local (i.e. only require control of the dynamics on a small region of the phase space) and, as we will see below, are general enough so that, in the context of Theorem \ref{thm:mainham}, we can check that the corresponding inner map \eqref{eq:innerdynamicsdefn} and system of scattering maps \eqref{eq:scattmapdynamicsdefn} verify these hypothesis.

To be precise, we consider a  pair of (parametric families of) maps $T_\varepsilon,S_\varepsilon$ of the form
\begin{equation}\label{eq:innermapsymbolic}
T_\varepsilon:\binom{\varphi}{J}\mapsto \binom{\varphi+\beta+AJ+R_{\varepsilon,\varphi}(\varphi,J)}{J+R_{\varepsilon,J}(\varphi,J)}
\end{equation}
and
\begin{equation}\label{eq:scattmapsymbolic}
S_\varepsilon:\binom{\varphi}{J}\mapsto \binom{\varphi+\varepsilon P_{\varepsilon,\varphi}(\varphi,J)}{J+\varepsilon P_{\varepsilon,J}(\varphi,J)}.
\end{equation}
We will suppose that $T_\varepsilon,S_\varepsilon$ are $C^2$ and satisfy the following:
\begin{enumerate}[label={[B\arabic*]}]
\item \label{it:itemB1} \textit{Rotation vector of the map $T_\varepsilon$:} The vector $\beta\in\mathbb R^d$ is of constant type, i.e. there exists $\gamma>0$ such that 
\begin{equation}\label{eq:constanttype}
\beta\in\mathcal B_\gamma=\{v\in\mathbb R^d\colon |v\cdot k-l|\geq \gamma |k|^{-d}\ \text{ for all }k\in\mathbb Z^d\setminus\{0\}\text{ and all }l\in\mathbb Z\}.
\end{equation}
   \item \label{it:itemB2} \textit{Small nonlinearity of the map $T_\varepsilon$}: The functions $R_{\varepsilon,\varphi},R_{\varepsilon,J}$ satisfy $ \partial_{J}^n R_{\varepsilon,\star}(\varphi,0)=0$ for $n=1$ if $\star=\varphi$ and $n=2$ if $\star=J$.
   \item \label{it:itemB3} \textit{Transversality-torsion}: Let 
   \[
   B=D_\varphi P_{0,J}(0,0).
   \]
   Then, the spectrum of the matrix $AB$ is simple, real, and does not contain zero. Moreover, $P_{0,J}(0,0)=0$.
\item \textit{Regularity with respect parameters} \label{it:itemB5} $R_{\varphi,\star}$ and $P_{\varphi,\star}$  ($\star=\varphi,J$) depend $C^2$ on $\varepsilon$ on a neighbourhood of $\varepsilon=0$.
\end{enumerate}
We denote by 
\begin{equation}\label{eq:C^2estimatesmapsIFSassumptions}
K=\max_{|\varepsilon|\leq 1}\{|R_{\varepsilon,\varphi}|_{C^0},|R_{\varepsilon,J}|_{C^0}, |P_{\varepsilon,\varphi}|_{C^2},|P_{\varepsilon,J}|_{C^2}\}.
\end{equation}

\begin{thm}[Symbolic blender]\label{thm:mainIFS}
Let $T_\varepsilon,S_\varepsilon$ be $C^2$ and satisfy assumptions \ref{it:itemB1}-\ref{it:itemB5},  and let $K$ be the constant in \eqref{eq:C^2estimatesmapsIFSassumptions}. Then, there exists $\varepsilon_0(\gamma,K)$ such that for any $0<\varepsilon\leq \varepsilon_0$ the IFS generated by $\{T_\varepsilon,S_\varepsilon\}$ displays a symbolic double blender. Moreover,   under the same conditions, this IFS is topologically mixing on the annulus 
\begin{equation}\label{eq:smallannulus}
    \mathbb A^d_{\sqrt\varepsilon}=\mathbb T^d\times [-\sqrt\varepsilon,\sqrt\varepsilon]^d.
\end{equation}
That is, for any open sets $B_0,B_1\in \mathbb A_{\sqrt\varepsilon}^d$ there exist $\widetilde L\in\mathbb N$ such that for any $L\geq \widetilde L$ there exists  $\omega\in\{1,2\}^{L}$ such that 
\[
G_{\omega_{L-1}}\circ\cdots \circ G_{\omega_0} (B_0)\cap B_1\neq \emptyset\qquad\qquad \text{ where }\quad G_{1}=T_{\varepsilon}\quad\text{and}\quad G_2=S_\varepsilon.
\]
\end{thm}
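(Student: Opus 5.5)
The idea is to find hyperbolic fixed points of a suitable composition of $T_\varepsilon$ and $S_\varepsilon$ near the annulus $\mathbb A^d_{\sqrt\varepsilon}$, and to use the transversality--torsion condition \ref{it:itemB3} to produce the center expansion/contraction. Near $J=0$ the map $T_\varepsilon$ is, by \ref{it:itemB2}, a shear $(\varphi,J)\mapsto(\varphi+\beta+AJ,J)$ up to errors $O(|J|^2)$ in $\varphi$ and $O(|J|^3)$ in $J$. I rescale $J=\sqrt\varepsilon\, I$ and work on $\mathbb T^d\times[-1,1]^d$. The key object is the composition $G=S_\varepsilon\circ T_\varepsilon^{n}$ (or more generally $T^{n}S$) with $n\sim\varepsilon^{-1/2}$ chosen so that $n\beta$ lies close to $0$ modulo $\mathbb Z^d$. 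This is possible by Dirichlet-type approximation. The constant type condition \ref{it:itemB1} controls the recurrence and keeps the rotation from resonating over the relevant window.

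In the rescaled variables, $T^n$ acts as $\varphi\mapsto\varphi+n\beta+n\sqrt\varepsilon AI$, which has size $O(1)$ in the $I$-direction, while $S_\varepsilon$ acts as $I\mapsto I+\sqrt\varepsilon P_{0,J}(\varphi,0)+\dots$. Linearizing at $(0,0)$, where $P_{0,J}(0,0)=0$, gives a $2d\times 2d$ matrix close to
\[
\begin{pmatrix}\mathrm{Id} & \tau A\\ \delta B & \mathrm{Id}+\tau\delta BA\end{pmatrix},
\]
with $\tau=n\sqrt\varepsilon$ and $\delta=\sqrt\varepsilon$. Its eigenvalues are governed by the spectrum of $AB$, which is simple, real and nonzero. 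Positive eigenvalues of $AB$ give pairs $\mu,\mu^{-1}$ real with $\mu>1$. For negative eigenvalues I would instead use a suitable composition, for example $S^{-1}$ or several iterates, so as to obtain saddle-type behavior in every center direction.

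Choosing several $n$ near different recurrence times, possibly composed with powers of $S$, produces finitely many affine-like maps on a box $Q$ of size $O(\sqrt\varepsilon)$. These maps share common cone fields $\mathcal C^u,\mathcal C^s$ aligned with the eigendirections, and the errors are controlled by $K$ from \eqref{eq:C^2estimatesmapsIFSassumptions} and by \ref{it:itemB5}. I would then verify a covering property in the spirit of Bonatti--Díaz and Nassiri--Pujals: the images of $Q$ under the chosen compositions overlap, each contracting slightly in the center with distinct translations. This makes every cs-strip meet $W^u(P)$ after finitely many steps, and symmetrically for cu-strips. The translations come from varying the residual rotation $n\beta \bmod 1$ and from the $O(\varepsilon)$ drift of $S$. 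The intersection $W^s(P_j)\pitchfork W^u(P_i)$ needed for the double blender follows from the near-linear geometry, since the eigenspaces are transverse.

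To get topological mixing on all of $\mathbb A^d_{\sqrt\varepsilon}$, I would transport open sets into the blender box and back out. $T_\varepsilon$ restricted to the annulus is a twist with Diophantine base rotation, so its iterates move any open set into the $\varphi$-neighborhood of $0$. Then $S$ and $T$ together adjust $I$, and at this step the twist--transversality gives controllability. Mixing for all large $L$, rather than only some $L$, I would obtain by inserting trivial loops of lengths with coprime periods, for instance using the two fixed points of different return times. I expect the main obstacle to be the covering property with uniform cones in dimension $d$: handling eigenvalues of $AB$ of both signs, and making sure the translations fill a full $d$-dimensional neighborhood. I would try first to use $d+1$ compositions with affinely independent translation vectors coming from distinct recurrence times.
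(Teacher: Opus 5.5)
Your construction works in the wrong time regime, and this is a genuine gap. Hyperbolicity of the linearization of $T^n\circ S$ (or $S\circ T^n$) at $(0,0)$ is governed by the product $\tau\delta=n\varepsilon$ of accumulated torsion and transversality, not by the shear $\tau=n\sqrt\varepsilon$. Your rescaling $J=\sqrt\varepsilon I$ leaves $n\varepsilon$ unchanged. In an eigenbasis of $AB$ the linear part splits into $2\times 2$ blocks of determinant $1$ and trace $2+n\varepsilon\alpha_i$.

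With $n\sim\varepsilon^{-1/2}$ you have $n\varepsilon\sim\sqrt\varepsilon$, and this fails in both sign cases:
\begin{itemize}
\item For $\alpha_i>0$ the eigenvalues are only $1\pm O(\varepsilon^{1/4})$, and the stable and unstable directions make an angle $O(\varepsilon^{1/4})$. So there are no uniform cones, and the covering estimates degenerate.
\item For $\alpha_i<0$ the trace lies in $(-2,2)$, so the block is elliptic.
\end{itemize}

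Your proposed repairs do not fix this:
\begin{itemize}
\item Using $S^{-1}$ replaces $B$ by $-B$ and flips the sign of every $\alpha_i$. If $AB$ has eigenvalues of both signs, some block stays elliptic. This is allowed by \ref{it:itemB3}, and it is the situation in the application whenever $A$ is indefinite, since there $B$ is positive definite.
\item Powers of an elliptic block remain elliptic.
\item Mixed words such as $T^nS\,T^{n'}S^{-1}$ have block traces $2-nn'\varepsilon^2\alpha_i^2\in(-2,2)$ when $n,n'\sim\varepsilon^{-1/2}$.
\item Powers $S^k$ help only once $nk\varepsilon|\alpha_i|>4$, and that is exactly the requirement you are missing.
\end{itemize}

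The idea you need is Lemma \ref{lem:hyperbolicityiteratesIFS}: take $n\approx N_0/\varepsilon$ with $N_0$ so large that $n\varepsilon|\alpha_i|>4$ for all $i$. Then every block is hyperbolic with rates independent of $\varepsilon$; the blocks with $\alpha_i<0$ become flip saddles with trace $<-2$.

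This choice forces the blender box to have $J$-size $O(\varepsilon)$, not $O(\sqrt\varepsilon)$, so that $n|A||J|=O(1)$ and the twist nonlinearity stays small.

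The covering then comes from a short window of times. Take $n\in\{N_0/\varepsilon,\dots,N_0/\varepsilon+N_*\}$ with $N_*$ independent of $\varepsilon$. On this window the matrices $A^{(n)}$ are simultaneously diagonalizable up to $O(\varepsilon N_*)$ (Lemma \ref{lem:simultaneouslinearization}). At the same time the translations $[n\beta]$ are $\gamma^{-1}N_*^{-1/d}$-dense on all of $\mathbb T^d$ (Theorem \ref{thm:Diophantineapprox}). This is how the constant type condition enters: as fast equidistribution, not as a way to make $n\beta$ close to $0$. Keep every $n$ in the window whose translation lands in the box. Use a box much longer in the expanding directions, so the translations point essentially along the contracting ones. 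This gives the covering property with $\varepsilon$-independent constants. A handful of maps at scattered recurrence times would have unrelated linear parts and would not do this.

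Two further points would need separate arguments:
\begin{itemize}
\item The cu-blender does not come automatically from the same maps. The paper uses $S\circ T^n$ together with the involution $(\varphi,J)\mapsto(-\varphi,J)$, and this also yields the homoclinic relation between the two blenders.
\item Because the box is only $O(\varepsilon)$-thick, mixing on $\mathbb A^d_{\sqrt\varepsilon}$ needs an explicit drift step. One applies $S\circ T^n$ with $n$ chosen so that $\varphi$ sits at a prescribed offset from the zero $\varphi_*(J)$ of $P_{\varepsilon,J}(\cdot,J)$. Each step lowers $|J|$ by roughly $\varepsilon/|\log\varepsilon|$, and one repeats until $|J|=O(\varepsilon)$.
\end{itemize}
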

The proof of this result is presented in Section \ref{sec:blenderIFS}. It can be seen as a higher dimensional analogue of Theorem A in \cite{guardia2025partiallyhyperbolicdynamics3body} (see Remark \ref{rem:comparisonIFSresult} below and also Remark \ref{rem:lituraev} concerning the related work \cite{LiTuraevPreprint}). As in that paper, the basic idea is to implement a variant of the transversality torsion mechanism \cite{MR1949441} in which the transversality is arbitrarily small with respect to the arithmetic properties of the rotation vector. Namely,  locally around $\varphi=0=J$,  the family of maps $\{T^n_\varepsilon\circ S_\varepsilon\}_n$ can be described in terms of their linear approximation, given by the composition of the affine (positive) shear map
    \[
    \binom{\varphi}{J}\mapsto \bs b_n+\begin{pmatrix}
        \mathrm{id} &nA\\
        0&\mathrm{id}
    \end{pmatrix}\binom{\varphi}{J}
    \]
    with $\bs b_n=([n\beta],0)$ and the (negative) weak-shear map:
    \[
    \binom{\varphi}{J}\mapsto \begin{pmatrix}
        \mathrm{id}&0\\
        \varepsilon B&\mathrm{id}
    \end{pmatrix}\binom{\varphi}{J}
    \]
For $n\in\mathbb N$ large enough, the composition of these maps is hyperbolic under the assumption \ref{it:itemB3} (see Proposition \ref{lem:hyperbolicityiteratesIFS}). The key idea now is that, if we restrict our attention to the range of iterates $n\varepsilon\sim 1$:
\begin{enumerate}
    \item the hyperbolicity is of order one, so images of a small (suitably oriented) rectangle $Q^{cs}$ around $\varphi=0=J$ do not get very sequeezed and,
    \item thanks to the arithmetic assumption \ref{it:itemB1} on the rotation vector $\beta\in\mathbb R^d$, there exists a large subset of iterates within this range for which the image of $Q^{cs}$ comes very close and equidistribute fast around $\varphi=0=J$. In particular, the union of these images covers $Q^{cs}$ (see Figure \ref{fig:fig2}).
\end{enumerate}

\begin{figure}
    \centering
    \includegraphics[scale=0.55]{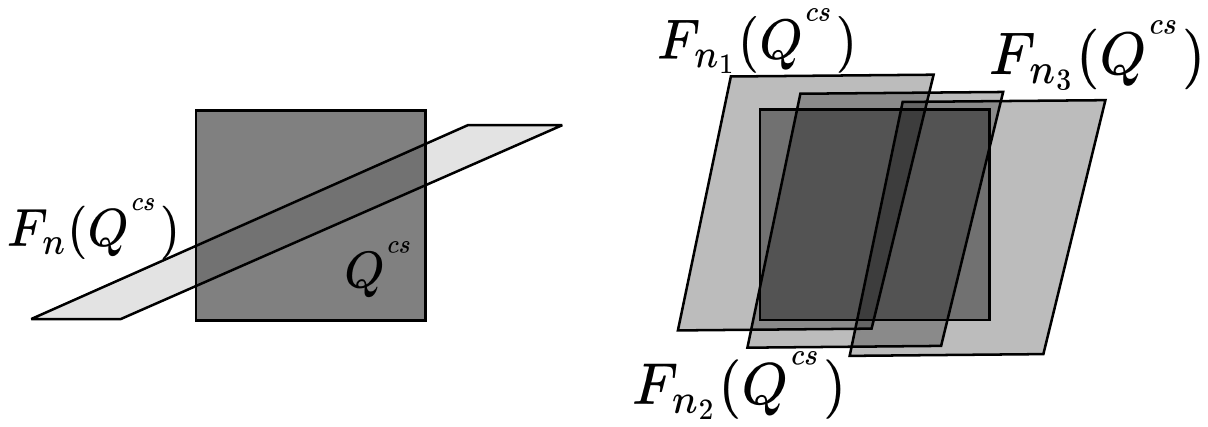}
    \caption{Let $F_n=T_0^n\circ T_1$. In the left we show the image of a small rectangle $Q^{cs}$ under the map $F_n$ for $n\gg 1/\varepsilon$. In the right we show the image of $D$ under $F_{n_i}$, $i=1,2,3$ for  $n_i\sim 1/\varepsilon$. In this regime the expansion/contraction is  close to one. Moreover, if $\beta$ is sufficiently irrational it is possible to chose $n_i$ such that the union  $\bigcup_{i}F_{n_i}(Q^{cs})$ contains $Q^{cs}$.}
    \label{fig:fig2}
\end{figure}

We then verify that in $Q^{cs}$ there exists a hyperbolic fixed point $P^{cs}$ (of one of the maps $T_\varepsilon^n\circ S_\varepsilon$) such that the pair $(Q^{cs},P^{cs})$ is a symbolic cs-blender for the IFS generated by $\{T_\varepsilon,S_\varepsilon\}$. A symmetry argument shows the existence of a symbolic cu-blender. From the construction both objects are homoclinically related and hence, they constitute a symbolic double blender.

\begin{rem}\label{rem:comparisonIFSresult}
    The main difference between Theorem \ref{thm:mainIFS} and Theorem A in \cite{guardia2025partiallyhyperbolicdynamics3body} is that here we allow any $d\in\mathbb N$. There are though a few technical differences between both results. First we note that  in Theorem A in \cite{guardia2025partiallyhyperbolicdynamics3body} the smallness quantity $\varepsilon_0$ is made explicit in terms of the torsion and the Diophantine constant, what allows for applications to degenerate situations in which these are also asymptotically small. Although we have not made it explicit to do not overcomplicate the statement, a quantitative smallness condition for the dependence of $\varepsilon_0$ on $\gamma$ and the matrix $B$ can be deduced from the proof of Theorem \ref{thm:mainIFS}.
    
    Second, in Theorem A in \cite{guardia2025partiallyhyperbolicdynamics3body}, transitivity is obtained on a larger domain (depending logarithmically on $\varepsilon$) at the expense of demanding more regularity. For our purposes, proving Theorem \ref{thm:mainham} (of this paper), the conclusion in Theorem \ref{thm:mainIFS} will be more than enough.
\end{rem}

\begin{rem}\label{rem:lituraev}
    The idea of working with small transversality first appeared in the work of Li and Turaev \cite{LiTuraevPreprint}, where, for $d=1$ the authors consider the case of a cubic tangency between the curve $\{J=0\}$ and its image under the map $S$. Even if \cite{LiTuraevPreprint} became available only very recently, the result and main ideas behind its proof were discussed by its authors some years ago at several conferences.
\end{rem}

\begin{rem}
    It has been shown by Koropecki and Nassiri \cite{MR2719428} that the IFS generated by a  $C^\infty$ generic pair of area-preserving maps of a compact surface is transitive. To the best of our knowledge it is still not known if the same conclusion holds with transitive replaced by robustly transitive. Also, the higher dimensional analogue of their result seems to be open (some progress has been made in \cite{MR3893271}).
\end{rem}
\medskip

\subsubsection*{Transport in IFS}

The symbolic double blenders in Theorem \ref{thm:mainIFS} produce a semi-local source of transitivity. In particular, they reduce the problem of global transitivity of the IFS generated by $\{T_\varepsilon,\{S_{i,\varepsilon}\}_{i=1}^m\}$ (recall that these are the maps in \eqref{eq:innerdynamicsdefn}-\eqref{eq:scattmapdynamicsdefn}) to the question of whether the blender is ``reachable'' from anywhere in the manifold $N$. Our next result shows that this is the case under a mild condition on the system of scattering maps. 

As we did above, we state this condition in abstract terms and then relate its assumptions to the system of scattering maps \eqref{eq:scattmapdynamicsdefn} which arise in the setting of Proposition \ref{prop:mainnhim}. More precisely, given $m\geq 2$, and a collection of (parametric families of) maps  $\{S_{i,\varepsilon}\}_{i=1}^m:N\to N$ satisfies:
\begin{enumerate}[label={[B\arabic*]},start=5]
    \item \label{it:B6} \textit{(Hörmander's  condition):} There exists $C>0$, $\tilde r>0$ $\varepsilon_0>0$ and a covering $\{U_j\}_j$ of $N$ by charts $\psi_j:U_j\subset \mathbb R^{2d} \to N$ such that, for any $0\leq \varepsilon\leq \varepsilon_0$, on each $z\in U_j$
    \[
    \psi_j^{-1}\circ S_{i,\varepsilon}\circ\psi_j(z)=z+\varepsilon X_i^{(j)}(z)+\varepsilon^2 R_i^{(j)}(z;\varepsilon)\qquad\qquad i=1,\dots,m
    \]
    for some $X_i^{(j)},R_i^{(j)}:U_j\to \mathbb R^{2d}$ such that $|X_i^{(j)}|_{C^{\tilde r}},|R_i^{(j)}|_{C^{\tilde r}}\leq C$ and at any $z\in U_j$
    \begin{equation}\label{eq:assumptionLiedistribution}
        \mathrm{Lie}^{(\tilde r)}_z\{X_1^{(j)},\dots, X_m^{(j)}\}=\mathbb R^{2d}
    \end{equation}
    where, for each $1\leq s\leq \tilde r$, given $m$ vector fields $\{Y_i\}_{i=1}^m:U_j\to \mathbb R^{2d}$ 
    \begin{equation}\label{eq:Liedistributions}
    \mathrm{Lie}^{(s)}_z\{Y_1,\dots, Y_m\}=\mathrm{Lie}_z^{(r-1)}\{Y_1,Y_2\}+[\mathrm{Lie}^0\{Y_1,\dots,Y_m\},\mathrm{Lie}^{(s-1)}\{Y_1,\dots,Y_m\}]_z
    \end{equation}
    and $\mathrm{Lie}_z^0\{Y_1,\dots,Y_m\}=\mathrm{span}\{Y_1,\dots,Y_m\}$.
\end{enumerate}

Hypothesis \ref{it:B6} is nothing but a suitably adapted reformulation of those in the classical Ball-Box Theorem in sub-Riemannian geometry (see \cite{MR1867362}).  From an abstract viewpoint it  can be understood as a sufficient condition on a collection of close to identity diffeomorphisms of a compact manifold $N$ which guarantees $\varepsilon$-\textit{Reachability} of the corresponding IFS (see Proposition \ref{prop:maintransportIFS} for the precise formulation). For our purposes (giving a proof of Theorem \ref{thm:mainham}), assumption \ref{it:B6} will be understood jointly with assumptions \ref{it:H1} and \ref{it:H2}. Hence, the regularity class $\tilde r$ in assumption \ref{it:B6} will be determined by that in the strong spectral gap condition in assumption \ref{it:H1}.

\begin{rem}
    As far as we know, a version of hypothesis \ref{it:B6} first appeared in the work of Chow \cite{MR1880} in sub-Riemannian geometry. Later, this condition gained popularity in the work of Hörmander \cite{MR222474}, where a version of hypothesis \ref{it:B6}  is shown to be a sufficient condition for the  smoothness of transition probabilities associated to a class of   diffusion processes.
\end{rem}

\begin{prop}[$\varepsilon$-Reachability] \label{prop:maintransportIFS} Suppose that, a collection of diffeomorphisms $\{S_{i,\varepsilon}\}_i:N\to N$  satisfies assumption \ref{it:B6}. Then, there exists $C>0$ and $\varepsilon_0>0$ such that for any $0<\varepsilon\leq \varepsilon_0$ the following holds. For any $z,z_*\in N$ there exists $L\in\mathbb N$ and $\iota\in \{1,2\}^L$ such that 
\[
\mathrm{dist}(S_{\varepsilon,\iota_L}\circ\cdots\circ S_{\varepsilon,\iota_0}(z),\ z_*)\leq C\varepsilon.
\]
\end{prop}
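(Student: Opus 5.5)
The plan is to follow the classical proof of the Chow--Rashevskii / Ball-Box theorem, with flows of vector fields replaced by the near-identity maps $S_{i,\varepsilon}$. In a chart $U_j$, a word of length $n$ in the $S_{i,\varepsilon}$ displaces a point by at most $O(n\varepsilon)$. To first order it behaves like the time-$n\varepsilon$ flow of the corresponding combination of the $X_i^{(j)}$.

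One obstruction must be recorded before starting. The proposition only allows forward compositions and no inverses. Moreover, commutator words such as $S_1^{k}S_2^{k}S_1^{-k}S_2^{-k}$, which produce motion along $[X_1,X_2]$ at scale $(k\varepsilon)^2$, need the inverses. I would therefore first use compactness of $N$ together with recurrence to replace $S_{i,\varepsilon}^{-k}$ by a forward word. Since each $S_{i,\varepsilon}$ is symplectic, hence volume preserving, on compact $N$, Poincaré recurrence gives this up to small error. A cleaner option is to use the near-identity structure: $S_{i,\varepsilon}^{k}$ for $k\sim T/\varepsilon$ approximates the flow $\phi^{T}_{X_i}$ with error $O(\varepsilon)$ uniformly for $T$ bounded. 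This reduces the problem to reachability for the control system $\dot z=\sum u_iX_i$ with $u_i\geq 0$.

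The key steps, in order, are as follows.

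First, prove a shadowing lemma. For bounded total time $T$, any concatenation $\phi^{t_L}_{X_{\iota_L}}\circ\cdots\circ\phi^{t_0}_{X_{\iota_0}}$ is approximated within $C\varepsilon$ by the composition $S_{\iota_L,\varepsilon}^{k_L}\circ\cdots\circ S_{\iota_0,\varepsilon}^{k_0}$ with $k_l=\lfloor t_l/\varepsilon\rfloor$. This follows from Gronwall and the $\varepsilon^2R$ remainder, since one step is an Euler step of the flow with local error $O(\varepsilon^2)$. Summed over $O(1/\varepsilon)$ steps, this gives $O(\varepsilon)$.

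Second, establish exact controllability of the positive-time system. This is where the Hörmander condition \eqref{eq:assumptionLiedistribution} enters. The bracket-generating hypothesis gives, by Krener's theorem, that the forward-reachable set from any $z$ has nonempty interior. To get the whole of $N$, one needs the drifts to be recurrent. Here I would use that the $X_i^{(j)}$ are the generators of symplectic maps, so they are divergence free. By the Lobry/Bonnard theorem, for divergence-free (Poisson stable) vector fields on a compact manifold, the positive-time reachable set coincides with the full orbit of the Lie algebra. By Chow--Rashevskii this orbit is all of the connected manifold $N$.

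Third, make the control time uniform. Compactness of $N$ and openness of the interior of reachable sets give a uniform bound $T_*$ on the time needed to steer any $z$ to any $z_*$. A finite subcover argument handles this, using the uniform $C^{\tilde r}$ bounds on the $X_i^{(j)}$ to control how the chart-dependent pieces glue together.

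Finally, combining this uniform bound with the first step yields the $C\varepsilon$ estimate. The constant is uniform because $T_*$ is bounded and the Gronwall constant depends only on $C$ and $T_*$.

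I expect the main obstacle to be the second step. The difficulty is the positivity of the controls, which rules out using inverses, combined with checking that the divergence-free/recurrence property genuinely holds for the leading-order vector fields in the setting at hand. If this fails, the fallback is to work with scattering maps of the form $S_{i,\varepsilon}^{-1}$, obtained from the reversed homoclinic channels. Including these gives a symmetric system, to which the Chow--Rashevskii theorem applies directly.
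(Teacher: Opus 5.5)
Your proposal is correct, given the invariant-measure hypothesis you flag, and it takes a genuinely different route from the paper.

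The paper never passes to flows. It shows by induction that iterated group commutators $[\phi,\psi]=\phi^{-1}\circ\psi^{-1}\circ\phi\circ\psi$ of the $S_{i,\varepsilon}$ satisfy $\mathcal S_\iota=\mathrm{id}+\varepsilon^{n}Y_\iota+O(\varepsilon^{n+1})$ (see \eqref{eq:inductionconmutators}). It then uses these words to hop between adjacent boxes of a grid of mesh of order $K\varepsilon$. This yields the discrete Ball--Box statement of Proposition \ref{prop:Chow}, with words in $S_{i,\varepsilon}^{\pm1}$. The inverses are removed afterwards by Poincar\'e recurrence of the discrete maps on the full-measure invariant set $\mathcal R$ of Lemma \ref{lem:recurrentset} (Proposition \ref{prop:epsaccesibility}). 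The same device inserts the long blocks $T^{\omega_k}$ with prescribed growing gaps that Section \ref{sec:skewprod} actually needs.

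You instead take the limit $\varepsilon\to0$ once and for all: Euler/Gronwall shadowing on bounded time, exact positive-time controllability from Krener, Bonnard--Lobry and Chow--Rashevskii, and compactness for uniformity. Your route gives the bound for every $z$, whereas the recurrence argument is carried out only for $z\in\mathcal R$. It produces words of length $O(1/\varepsilon)$ and confines recurrence to the limiting flows. The price is importing nontrivial control theorems. It also does not by itself give the $T^{\omega_k}\circ S_{\iota_k}$ structure used later, for which recurrence of $T$ would still be needed.

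Two remarks.

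First, the extra hypothesis is genuinely necessary. Under \ref{it:B6} alone the forward-only statement can fail, since bracket-generating fields may all point into a common trapping region. The paper uses the same hypothesis through $\mathcal R$. In the paper's setting it holds because $X_i=X_{\mathcal L_i(f)}$ is $\varpi_N$-Hamiltonian (Proposition \ref{lem:Melnikovformula}). So your fallback to inverse scattering maps is unnecessary, and it is not available in general anyway, since $S_\Gamma^{-1}$ is a scattering map of $F^{-1}$, not of $F$.

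Second, your third step needs a uniform bound on the number of switching blocks, not only on the total time. Each block costs an $O(\varepsilon)$ rounding error in your shadowing lemma, so an unbounded number of blocks would spoil the $C\varepsilon$ estimate. The compactness argument gives both bounds if you combine three ingredients:
\begin{enumerate}
\item normal accessibility: a fixed word whose time-map has full rank sends a whole neighbourhood of $z$ onto a common open set;
\item the same property for the time-reversed family near $z_*$;
\item a finite subcover of $N\times N$.
\end{enumerate}
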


\begin{rem}
    Another popular name in the control literature for the property that we call $\varepsilon$-Reachability is ``$\varepsilon$-Accesibility''. However, in the context of partially hyperbolic dynamics, the latter is usually reserved for a different property involving the distributions generated (under brackets) by  the strong stable and unstable distributions (see \cite{MR2039999}).
\end{rem}

The proof of Proposition \ref{prop:maintransportIFS} is given in Section \ref{sec:transporIFS}. The idea, similar to that in the proof of the Ball-Box theorem, boils down to the observation that suitable commutators of the pair of diffeomorphisms $\{S_{i,\varepsilon}\}_{i}$ can be used to approximate the set displacements generated by the time-$\varepsilon$ flow of vector fields forming a basis of $\mathrm{Lie}^{(\tilde r)}\{X_1,\dots,X_m\}$.

\begin{rem}
While discussing this work with  M. Gidea, R. de la Llave and T.M. Seara they  have brought to our attention that in their project in preparation \cite{HormanderGdlLS}  they have successfully implemented a similar idea to that in Proposition \ref{prop:maintransportIFS} in the context of Arnold diffusion and that they had already announced such a result a number of years ago in \cite{MR4160091}. 
\end{rem}

\medskip

\subsection{Robustly transitive skew-product dynamics}\label{sec:skewprodintro}

In Section \ref{sec:skewprod} we will  deduce Theorem \ref{thm:mainham} as a corollary of Proposition \ref{prop:mainnhim}, Theorem \ref{thm:mainIFS}, Proposition \ref{prop:maintransportIFS} and a shadowing argument. The main idea behind this shadowing argument boils down to the use of Poincar\'e's recurrence theorem to insert long strings of iterations of the local map $T_\varepsilon$ in the codes constructed in Theorem \ref{thm:mainIFS} and Proposition \ref{prop:maintransportIFS}. This idea is certainly not new and has already  been  exploited in a number of related works (see \cite{NassiriPujalsTransitivity}, \cite{GelfreichTuraev} and \cite{MR4033892}). 

However, we must face a number of (technical) difficulties when compared to the applications of these arguments in these previous works. For instance by making use of inherently $C^\infty$ perturbation techniques, the authors in \cite{NassiriPujalsTransitivity} can assume that the skew-product dynamics is locally constant, so the results for IFSs can be readily applied. On the other hand,  the shadowing arguments in \cite{GelfreichTuraev,MR4033892} are essentially one-sided while ours is (necessarily) two-sided. We will prove the following in Section \ref{sec:skewprod}.

\begin{prop}\label{prop:skewprodmain}
Let $F_\varepsilon:X\to X$ be a parametric family of $C^r$ maps $(r\geq 2)$ which satisfy \ref{it:H1}-\ref{it:H2} and let  $T_\varepsilon:N\to N$ and  $\{S_i\}_{i=1}^m:N\to N$ be given by \eqref{eq:innerdynamicsdefn} and \eqref{eq:scattmapdynamicsdefn}. Suppose that there exists $\varepsilon>0$ such that for all $0<\varepsilon\leq \varepsilon_0$:
\begin{enumerate}
\item The maps $T_\varepsilon$ and $\{S_i\}^m_{i=1}$ preserve a common\footnote{Of course, this measure might depend on $\varepsilon$.} smooth measure on $N$,
    \item There exists a subset $\mathcal A\subset N$ diffeomorphic to the $2d$-dimensional annulus $\mathbb A^d=\mathbb T^d\times[-1,1]^d$ (here $2d=\mathrm{dim}(N)$) on which the maps $\{T_\varepsilon, S_{1,\varepsilon}\}$ are of the form \eqref{eq:innermapsymbolic} and \eqref{eq:scattmapsymbolic} and verify hypothesis \ref{it:itemB1}-\ref{it:itemB5} with $\gamma=\gamma(F_0)$.   \item The scattering maps $\{S_{i,\varepsilon}\}_{i=1}^m$ satisfy assumption \ref{it:B6}. 
    
\end{enumerate}
Then, the corresponding skew-product $\mathcal F_\varepsilon:\Sigma\times N\to \Sigma\times N$ in \eqref{eq:skewprodnhim} is topologically mixing.
\end{prop}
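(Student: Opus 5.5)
To prove that $\mathcal F_\varepsilon$ is topologically mixing, we reduce to cylinder-type open sets. It suffices to take open sets of the form $U=[\bs\omega_{-k}\dots\bs\omega_k]\times B_0$ and $V=[\bs\omega'_{-k}\dots\bs\omega'_k]\times B_1$, with $B_0,B_1\subset N$ small balls. We must show that for every $n$ large there is a point of $U$ whose $\mathcal F_\varepsilon^n$-image lies in $V$. Since $\mathcal F_\varepsilon$ is a skew-product over the full shift, such a point is a pair $(\bs\omega,z)$. Here $\bs\omega$ extends the prescribed central block of $U$ and, after a word of length $n-2k-1$, contains the central block of $V$. The fibre coordinate must satisfy $F_{\sigma^{n-1}\bs\omega}\circ\dots\circ F_{\bs\omega}(z)\in B_1$.

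The word is assembled in the following order.
\begin{enumerate}
\item Push $B_0$ forward through the prescribed block of $U$. This gives an open set $B_0'$, since the fibre maps are diffeomorphisms.
\item Use Proposition \ref{prop:maintransportIFS} ($\varepsilon$-Reachability, from \ref{it:B6}) to join $B_0'$ to within $C\varepsilon$ of the cs-blender region $Q^{cs}$ of Theorem \ref{thm:mainIFS}, located in the annulus $\mathcal A$.
\item Apply the same transport backwards, using the inverse scattering maps and the symmetric hypothesis, to bring $B_1$ back to within $C\varepsilon$ of $Q^{cu}$.
\item Inside $\mathbb A^d_{\sqrt\varepsilon}$, use the mixing of the IFS $\{T_\varepsilon,S_{1,\varepsilon}\}$ from Theorem \ref{thm:mainIFS} to connect the two sets for all word lengths $L\ge\widetilde L$. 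This flexibility in $L$ is what gives mixing rather than mere transitivity.
\end{enumerate}

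The main obstacle is the $C\varepsilon$ error in Proposition \ref{prop:maintransportIFS}, which is not small compared to the scale $\sqrt\varepsilon$ of the blender. To absorb it, we intend to use hypothesis (1), the common invariant smooth measure, via Poincaré recurrence. Inserting long strings of iterates of $T_\varepsilon$ (large symbols $\omega_i$) between scattering maps allows small open sets to return near themselves. The IFS pseudo-orbit can then be realized with open sets rather than single points: we transport an open neighbourhood and only need it to intersect a slightly enlarged target, and recurrence supplies the required returns.

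A second point is that the actual fibre maps are $F_{\bs\omega}=T_\varepsilon^{\omega_0}\circ(S_{\iota_0,\varepsilon}+R_{\bs\omega})$ rather than the exact IFS maps. The IFS words are encoded in $\Sigma$ by taking the symbol $\omega_0=$ (number of $T_\varepsilon$ iterates) and $\iota_0=$ (scattering index). Consecutive $T$-strings must be long, which is why we require $\min\{\omega_0,\omega_1\}$ large; then $|R_{\bs\omega}|_{C^1}\le C\bar\lambda^{\min\{\omega_0,\omega_1\}}$ is arbitrarily small. To arrange this, each single $T_\varepsilon$ step in the blender words is replaced by $T_\varepsilon^{1+q}$, where $q$ is a recurrence time, so that $T_\varepsilon^q$ is $C^1$-close to the identity on the relevant set. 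Such times exist because, by \ref{it:itemB1}–\ref{it:itemB2}, $T_\varepsilon$ is close to a Diophantine rotation on $\mathcal A$.

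The blender property, transversality of cs/cu-strips to $W^{u,s}(P)$, is $C^1$-robust. It therefore survives both the perturbations $R_{\bs\omega}$ and the two-sided dependence of $F_{\bs\omega}$ on future symbols, which enters only through $R_{\bs\omega}$. Concretely, we foliate the transported open sets by cs-strips and cu-strips and apply the double-blender intersection argument to the genuine non-autonomous compositions. Concatenating the pieces gives an admissible $\bs\omega$, and varying the length of the central blender word yields every sufficiently large $n$.
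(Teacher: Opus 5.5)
Your outline has the same architecture as the paper's proof. You transport forward and backward into the annulus using $\varepsilon$-reachability with recurrence-inserted $T_\varepsilon$-strings, as in Lemma \ref{lem:transportskewprod}. You then connect inside $\mathcal A_{\sqrt\varepsilon}$ using the blender mixing of Theorem \ref{thm:mainIFS} and Proposition \ref{prop:blenderskewprod}, with flexible word length. However, both places where you explain how the difficulties are overcome fail as written.

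\textbf{The ``main obstacle''.} It is misdiagnosed, and the cure you propose could not work. Since $C\varepsilon/\sqrt\varepsilon=C\sqrt\varepsilon\to0$, it suffices to take the reachability target to be a point of the KAM torus $\{J=0\}$. The transported open set then contains a ball inside $\mathcal A_{\sqrt\varepsilon}$, and Theorem \ref{thm:mainIFS} applies to arbitrary open sets there. This is exactly why that theorem is proved on $\mathbb A^d_{\sqrt\varepsilon}$: its Steps 2--3 use the torsion drift of $S_\varepsilon\circ T_\varepsilon^{n}$ to descend from scale $\sqrt\varepsilon$ to $\mathbb A^d_{c\varepsilon}$, and then $T^L_\varepsilon$ to enter $Q^{cu}$.

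What does fail is your step 2 as literally stated, aiming at the blender box itself. $Q^{cs}$ has $J$-extent $O(\varepsilon\kappa/\chi)$, which is comparable to the reachability error. The blender property applies only to strips contained in the box, so there is no ``slightly enlarged target''. Poincar\'e recurrence only returns a point near itself; it gives no control below the $\varepsilon$-step size of the scattering maps. (Also, forward images should be brought into $Q^{cu}$ and preimages of $B_1$ into $Q^{cs}$, not the reverse.)

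\textbf{The claim that $T_\varepsilon^q$ is $C^1$-close to the identity.} This is false. On $\mathcal A$, $T_\varepsilon$ is a twist map with $A$ invertible, which \ref{it:itemB3} forces. Hence $DT^q_\varepsilon$ contains a shear block $\approx qA$, and points off $\{J=0\}$ drift by $\approx qAJ$. The replacement $T_\varepsilon\to T_\varepsilon^{1+q}$ is also unnecessary for the blender. Its words are $T^n_\varepsilon\circ S_\varepsilon$ with $n\ge N_0/\varepsilon$, so $|R_{\bs\omega}|_{C^1}\lesssim\bar\lambda^{N_0/\varepsilon}$ is already negligible. Where the proof of Theorem \ref{thm:mainIFS} uses shorter strings (Steps 2--3), one may take the $n$'s from a later window of the same length, since the density of $\{[n\beta]\}$ does not change when the window is shifted.

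\textbf{How the paper actually uses the invariant measure.} It uses only pointwise $C^0$ recurrence of a reference point in the full-measure set $\mathcal R$ (Proposition \ref{prop:epsaccesibility}, Lemma \ref{lem:transportskewprod}). This serves two purposes:
\begin{itemize}
\item to neutralize the $T^{\omega}_\varepsilon$-strings that the skew-product forces after each scattering map;
\item to emulate the inverses $S_i^{-1}$ required by the commutator construction, via $T^{n_*}(S_iT^{n_*})^k$. Your proposal never addresses this point.
\end{itemize}
Error propagation is controlled by the mean value theorem. Return errors are chosen $\le\varepsilon/(K_r2^r)$ and later symbols satisfy $\bar\lambda^{\hat\omega_r}\le\varepsilon/(LK_r)$, where $K_r$ is the $C^1$ norm of the remaining composition. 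Uniformity over the future symbols comes from \eqref{eq:comparisonskewprod} and the compactified right-finite sequences. With these corrections your outline becomes the paper's argument.
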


\subsection{Typicality of the assumptions}\label{sec:genericityintro}

Finally, we observe that hypothesis \ref{it:H1}-\ref{it:H2} and \ref{it:itemB1}-\ref{it:B6} are typically satisfied for  one-parametric families $\{F_{\varepsilon,f}\}_\varepsilon:M\times N\to M\times N$ of real-analytic symplectic maps unfolding from a direct product $F_0$ as in Theorem \ref{thm:mainham}.

\begin{prop}\label{prop:genericityofassumptions}
  Let $F_0:M\times N\to M\times N$ be a direct product symplectic map satisfying \ref{it:Assumption1}-\ref{it:Assumption2} and which, in addition, displays a collection of full disjoint homoclinic channels $\{\Gamma_0^i\}_{i=1}^m\subset W^s(\Lambda_0)\pitchfork W^u(\Lambda_0)$ for some $m$ large enough (depending on the dimension of $N$).  
  
  Then, there exists $\gamma(H_0)>0$ and   $\varepsilon_0(H_0)>0$  such that for any  $f$ belonging to an open and dense subset of $\mathcal B^\omega_{r_0}(M\times N)$, the corresponding family $F_{\varepsilon,f}:M\times N\to M\times N$ of real-analytic symplectic maps as in \eqref{eq:deformationmap} satisfies the following for any $0<\varepsilon\leq \varepsilon_0$:
    \begin{enumerate}
        \item \label{it:Item1genericity} $F_{\varepsilon,f}$ displays a normally hyperbolic invariant manifold $\Lambda_\varepsilon$ for which assumptions \ref{it:H1}-\ref{it:H2} are verified. Denote by $T_\varepsilon:N\to N$ and $\{S_{i,\varepsilon}\}_{i=1}^m:N\to N$ the corresponding inner map \eqref{eq:innerdynamicsdefn} and scattering maps \eqref{eq:scattmapdynamicsdefn} (associated to the continuation of $\{\Gamma_0^i\}_{i=1}^m$),
        \item \label{it:Item2genericity} There exists a subset $\mathcal A\subset N$ diffeomorphic to the $d$-dimensional annulus $\mathbb A^d=\mathbb T^d\times[-1,1]^d$ (here $2d=\mathrm{dim}(N)$) on which the maps $\{T_\varepsilon, S_{1,\varepsilon}\}$ are of the form \eqref{eq:innermapsymbolic} and \eqref{eq:scattmapsymbolic} and verify hypothesis \ref{it:itemB1}-\ref{it:itemB5} with $\gamma=\gamma(F_0)$.
        \item \label{it:Item3genericity} The scattering maps $\{S_{i,\varepsilon}\}_{i=1}^m$ satisfy assumption \ref{it:B6}. 
    \end{enumerate}
\end{prop}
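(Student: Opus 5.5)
We will verify the three items separately. Each will be reduced to an open condition plus a non-degeneracy condition on the first-order (Melnikov) part of the perturbation. The latter is a nontrivial real-analytic condition on $f$, so it holds on an open and dense set. The tool throughout is the perturbation theory for scattering maps of \cite{DelaLLaveScattmap}. For the Hamiltonian deformation $F_{\varepsilon,f}=F_0\circ\Phi_{\varepsilon f}$, the scattering maps are exact symplectic and satisfy
\[
S_{i,\varepsilon}=\mathrm{Id}+\varepsilon\, \mathcal J\nabla \mathcal L_i+O(\varepsilon^2),
\]
where $\mathcal L_i:N\to\mathbb R$ is the Melnikov potential. It is given by the sum over $n\in\mathbb Z$ of $f$ evaluated along the homoclinic orbits of $\Gamma^i_0$, minus $f$ evaluated along their asymptotic orbits in $\Lambda_0$. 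The inner map is $T_\varepsilon=F_N+O(\varepsilon)$. The key point is that $f\mapsto \mathcal L_i$ is linear and continuous from $\mathcal B^\omega_{r_0}$ to $C^{\tilde r}(N)$. Moreover, because the $\Gamma^i_0$ are disjoint and their orbits accumulate only on $\Lambda_0$, the map $f\mapsto(\mathcal L_1,\dots,\mathcal L_m)$ has dense range modulo constants in $C^{\tilde r}(N)^m$. To see this, localize $f$ near a fundamental domain of $\Gamma^i_0$, away from $\Lambda_0$, and approximate smooth bumps by real-analytic functions (Whitney/Grauert approximation).

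Item \eqref{it:Item1genericity} needs no genericity. $\kappa_0$-normal hyperbolicity of $\Lambda_0$ and the transversality of $W^s(\Lambda_0)\pitchfork W^u(\Lambda_0)$ along the full channels $\Gamma^i_0$ persist under $C^1$-small perturbations by Fenichel theory \cite{MR501173}. The condition $\Omega^u(\Gamma^i_\varepsilon)=\Lambda_\varepsilon$ and disjointness of orbits persist because $N$ is compact. The regularity $\tilde r$ of $T_\varepsilon,S_{i,\varepsilon}$ is $\min\{r_0,\kappa_0\}$, and this is what fixes $r_0,\kappa_0$ in terms of $\dim N$.

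Item \eqref{it:Item2genericity} uses \ref{it:Assumption1}. In the Birkhoff coordinates $z$ near $q$, pass to action–angle variables $(\varphi,I)$ on an annulus $\{|z_k|^2\in[I^*_k-\delta,I^*_k+\delta]\}$. Here $F_N$ is a twist map with frequency $\Omega(I)$ plus $O(|z|^3)$ errors, and $\det D\Omega(0)\neq0$ gives the nondegenerate torsion $A=D\Omega(I^*)$. Choose $I^*$ with $\beta=\Omega(I^*)$ of constant type; this set is nonempty, and its elements are Diophantine. By KAM (in the real-analytic category, with the perturbation $O(\varepsilon)$), $T_\varepsilon$ has an invariant torus with rotation $\beta$. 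Straightening it and normalizing to order two in $J$ gives the form \eqref{eq:innermapsymbolic} with \ref{it:itemB2}. We may need to adjust $\beta$ slightly within $\mathcal B_\gamma$, using a parameter shift of $I^*$, to absorb the $\varepsilon$-dependence. Writing $S_{1,\varepsilon}$ in these coordinates gives \eqref{eq:scattmapsymbolic}, with $P_{0,J}=-\partial_\varphi \mathcal L_1$. Condition \ref{it:itemB3} then asks for a critical point $\varphi=0$ of $\mathcal L_1(\cdot,0)$ on the torus (which always exists, and which we translate to $0$) such that $A\,\mathrm{Hess}_\varphi\mathcal L_1$ has simple real nonzero spectrum. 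This holds if the critical point is chosen to be a minimum (Morse after a generic perturbation) and $A$ is symmetric definite. In general, simplicity and reality of the spectrum of $AB$ is an open condition which is achievable by the density statement above, hence it is open and dense. Condition \ref{it:itemB5} follows from smooth dependence on $\varepsilon$.

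Item \eqref{it:Item3genericity} is the main obstacle. In a chart, $X_i=\mathcal J\nabla\mathcal L_i$. We want a bracket-generating condition at every point of the compact $N$ with a uniform order $\tilde r$. First attempt: since $m$ is large compared with $2d$, density allows us to choose $\mathcal L_1,\dots,\mathcal L_m$ with $\{\mathcal J\nabla\mathcal L_i(z)\}$ already spanning $\mathbb R^{2d}$ at every $z$. This is a jet transversality argument: the set of $m$-tuples of gradients failing to span has codimension $m-2d+1$, which exceeds $\dim N$ once $m\geq 4d$. Then $\tilde r=0$ suffices. This is an open condition in $C^1$ (by compactness), hence open in $f$, and dense by the Thom transversality theorem transported through the dense-range map. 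Openness of all three conditions is in the $C^{r_0}$ topology, which is coarser than the real-analytic one, so the intersection is open and dense. The delicate point I expect is justifying the dense-range property of the Melnikov map in the real-analytic topology. Here we must ensure that the infinite sums defining $\mathcal L_i$ converge in $C^{\tilde r}$ (this uses $\kappa_0$-domination) and that analytic approximations of localized bumps do not spoil the contributions along the other channels.
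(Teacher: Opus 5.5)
Your decomposition matches the paper's: persistence for item (1), a constant-type KAM torus plus one Birkhoff normal form step for the inner map, the Delshams--de la Llave--Seara expansion $S_{i,\varepsilon}=\mathrm{id}+\varepsilon X_{\mathcal L_i(f)}+O(\varepsilon^2)$ with $\mathcal L_i$ linear in $f$ and controlled by perturbations localized near one channel, a minimum of $\widetilde{\mathcal L}_1(\cdot,0)$ for \ref{it:itemB3}, and the count $m-2d+1>2d$ giving \ref{it:B6} with $\tilde r=0$. However, one step fails as written: KAM ``in the real-analytic category'' does not apply to $T_\varepsilon$. Although $F_{\varepsilon,f}$ is analytic, the continuation $\Lambda_\varepsilon$ is in general only $C^l$, with $l$ limited by the spectral gap. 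So $T_\varepsilon$ is only finitely differentiable, roughly $C^{\min\{r_0,\kappa_0+1\}}$. The paper instead uses a finite-regularity KAM theorem for symplectic maps (Shang's, in Proposition \ref{prop:innerdynamicsassumption}). That theorem, together with the derivatives lost in the cohomological equation that makes $A$ constant, is the real reason $\kappa_0,r_0$ must be large in terms of $d$. It is not \ref{it:B6}, which with $\tilde r=0$ needs little regularity. Also, nondegenerate torsion lets the torus be continued with exactly the same $\beta$, which keeps $\beta$ fixed in \eqref{eq:innermapsymbolic}; readjusting $I^*$ with $\varepsilon$ is unnecessary.

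Two further justifications need repair. In \ref{it:itemB3}, $A$ is symmetric by symplecticity, but it need not be definite: \ref{it:Assumption1} only gives $\det D\Omega(0)\neq0$. What yields a real spectrum is definiteness of $B$. At a minimum made nondegenerate by a small perturbation, $B$ is definite, so $AB$ is similar to the symmetric matrix $\pm|B|^{1/2}A|B|^{1/2}$ and has real nonzero eigenvalues. Simplicity then follows from a further perturbation of the $2$-jet at $\varphi_*$ that keeps $\varphi_*$ critical. Your fallback (``open and achievable, hence open and dense'') needs exactly this argument, because real spectrum is open but not dense among matrices.

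For density in \ref{it:B6}, dense range of $f\mapsto(\mathcal L_i(f))_i$ does not by itself pull the Thom-generic set of potentials back to a dense set of $f$. A continuous linear map with dense range need not have this property. What your localization actually provides is pointwise surjectivity of $f\mapsto(\mathrm{Jet}^k\mathcal L_i(f)(z))_i$, which is Lemma \ref{lem:surjectivity}. Hence $(z,f)\mapsto(X_{\mathcal L_i(f)}(z))_i$ is a submersion. Parametric transversality over finite-dimensional analytic families $f+\sum_k s_k g_k$, combined with your codimension count, then gives density. Compactness of $N$ gives openness, as in Proposition \ref{prop:assumptionB6}.
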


We present the proof of Proposition \ref{prop:genericityofassumptions} in Section \ref{sec:genericity}. Here we just give a heuristic idea of why these assumptions are abundant around maps $F_0$ as that in Theorem \ref{thm:mainham}.
\medskip

First, Item \ref{it:Item1genericity} is straightforward since, by assumption, for $F_0$ there exists a normally hyperbolic cylinder $\Lambda_0=\{\gamma_0\}\times N$ which is already $\kappa$-normally hyperbolic (with $\kappa\geq \kappa_0\gg 2$) and  exhibits one homoclinic channel. It is straightforward to observe that, in this setting this implies that there actually exists a countable collection of disjoint homoclinic channels. In particular, for any $m\in\mathbb N$ we can select a family of full and disjoint homoclinic channels $\{\Gamma_0^i\}_{i}\subset M\times N$. The corresponding results for $\varepsilon>0$ follow by regular perturbation theory (see \cite{MR501173}, \cite{DelaLLaveScattmap} or  Section \ref{sec:genericity}).

Concerning Item \ref{it:Item2genericity}, in view of assumption \ref{it:Assumption1}, a standard argument shows that close to the non-degenerate elliptic equilibrium,  there exists a $d$-dimensional torus which, for $\varepsilon>0$ small enough, admits a continuation as a KAM torus $\mathcal T_\varepsilon\subset\Lambda_\varepsilon$ (see \cite{PoschelKAM} or Section  \ref{sec:genericity}). Around this torus the inner dynamics  can be put in normal form \eqref{eq:innermapsymbolic}. This gives us the assumptions \ref{it:itemB1}-\ref{it:itemB2} and \ref{it:itemB5}. On the other hand, for the assumptions involving the scattering map associated to the homoclinic channel $\Gamma_\varepsilon^1\subset X$, we will make use of the perturbative tools developed in \cite{DelaLLaveScattmap}. Indeed, the results obtained  in \cite{DelaLLaveScattmap}, expressed in terms of deformation theory, guarantee that, in our setting, the scattering maps associated to the homoclinic channels $\{\Gamma_\varepsilon^i\}_i$ (given by the continuation of $\{\Gamma_0^i\}_i$) are close to identity symplectic diffeomorphisms. Moreover, using the asymptotic expansions in \cite{DelaLLaveScattmap}, it is not difficult to check that the transversality-torsion assumption in \ref{it:itemB3} holds for perturbations $f$ from an open and dense subset of $\mathcal B^\omega_{r_0}(M\times N)$.

Finally, Item \ref{it:Item3genericity}, i.e. genericity of assumption \ref{it:B6}, can be easily deduced by means of the asymptotic expansions in \cite{DelaLLaveScattmap} and a suitable parametric transversality argument.

\subsubsection*{Towards applications to given parametric families}

In Proposition \ref{prop:genericityofassumptions} we fix $m\in \mathbb N$  and look at the scattering maps associated to the continuation of $m$ different homoclinic channels $\{\Gamma^i_0\}_{i=1}^m\subset X$. Provided $m\gg \mathrm{dim}(N)$ it is easy to check (see Section \ref{sec:genericity}) that for an open and dense set of deformations, the corresponding scattering maps $\{S_{i,\varepsilon}\}_{i=1}^m:N\to N$ satisfy \ref{it:B6} with $\tilde r=0$, that is, their corresponding vector fields already span $\mathbb R^{2d}$ (so it is not necessary to look at higher order distributions as in \eqref{eq:Liedistributions}).
\medskip

However, in applications to concrete parametric families it is typically difficult to have a precise description of a large family of homoclinic channels and one often only looks at primary homoclinic channels (corresponding to first enocounter intersections of the stable and unstable manifolds). In those situations, it seems more appropriate to fix a small number of homoclinic channels $m\geq 2$ and look at the corresponding higher order distributions \eqref{eq:Liedistributions}. In that scenario, the verification of \ref{it:B6} seems feasible by computer assisted technology, or, in some cases, by asymptotic perturbation techniques. To support this belief we also prove the following result (see Appendix \ref{sec:hormander}).

\begin{prop}\label{prop:genericityhormanderwithtwomaps}
The same conclusion in  Proposition \ref{prop:genericityofassumptions} holds for any $m\geq 2$.
\end{prop}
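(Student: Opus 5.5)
Since items \ref{it:Item1genericity} and \ref{it:Item2genericity} of Proposition \ref{prop:genericityofassumptions} do not involve the number of channels beyond $m\geq 1$ (only the channel $\Gamma^1_0$ enters \ref{it:itemB1}-\ref{it:itemB5}), I would reuse their proofs verbatim. The whole task is then to show that, for $m=2$, assumption \ref{it:B6} holds for $f$ in an open and dense subset of $\mathcal B^\omega_{r_0}(M\times N)$, with some finite $\tilde r$.

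The first step is the first-order expansion of the scattering maps, following \cite{DelaLLaveScattmap}. In symplectic coordinates on $\Lambda_\varepsilon\simeq N$, each $S_{i,\varepsilon}$ is, up to $O(\varepsilon^2)$ in $C^{\tilde r}$, the time-$\varepsilon$ map of the Hamiltonian vector field of a reduced Poincaré--Melnikov potential
\[
\mathcal L_i(z)=\sum_{n\in\mathbb Z}\Big(f\big(F_0^n(\gamma^i(z))\big)-f\big(F_0^n(a,z)\big)\Big),
\]
where $\gamma^i$ parametrizes the channel $\Gamma_0^i$ over $\Lambda_0$ through the wave maps. Hence $X_i=J\nabla\mathcal L_i$, and iterated Lie brackets of the $X_i$ are the Hamiltonian vector fields of iterated Poisson brackets of $\mathcal L_1,\mathcal L_2$. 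Condition \eqref{eq:assumptionLiedistribution} therefore becomes a bracket-generating condition for the pair of functions $(\mathcal L_1,\mathcal L_2)$. The needed regularity $C^{\tilde r}$ is available if $\kappa_0,r_0$ are taken large relative to $\tilde r$.

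The second step is a parametric transversality argument. The map $f\mapsto(\mathcal L_1,\mathcal L_2)$ is linear and continuous into $C^{\tilde r+1}(N)^2$. The channels are disjoint and their orbits are disjoint from each other and from $\Lambda_0$, so one can localize $f$ near a single homoclinic point of $\Gamma^1_0$ or $\Gamma^2_0$ and prescribe $\mathcal L_1$ and $\mathcal L_2$ independently. Exact localization is impossible in the real-analytic class, so instead I would use the fact that real-analytic functions are dense in the $C^{r_0}$ topology, and that the conditions involved are open. Thus $(\mathcal L_1,\mathcal L_2)$ ranges over a set whose image is dense in $C^{\tilde r+1}(N)^2$ (at least locally, which suffices).

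On the jet side, I would consider the subset $\mathcal Z\subset J^{\tilde r+1}(N,\mathbb R^2)$ of jets at which the vector fields generated by brackets of length $\leq\tilde r$ fail to span $\mathbb R^{2d}$. This set is semialgebraic in the jet coordinates. The plan is to prove that its codimension exceeds $2d$ once $\tilde r$ is large. I would do this by exhibiting, at a single point, explicit polynomial pairs (for example $\mathcal L_1=q_1$ and $\mathcal L_2=\sum_k p_k^2+$ a generic cubic) whose brackets span. I would then show that the degeneracy conditions for successive bracket levels impose independent polynomial equations, whose number grows with $\tilde r$. Thom's jet transversality theorem, applied to the dense family, then shows that the jet extension avoids $\mathcal Z$ for an open dense set of $f$. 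By compactness of $N$, this gives finitely many charts $U_j$ and uniform constants. Openness is immediate because spanning is an open condition on the $C^{\tilde r}$ data.

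The main obstacle is this codimension count. The growth of the degeneracy codimension with the bracket length is plausible, because free Lie algebras on two generators have rapidly growing dimension, but making it rigorous requires an explicit algebraic lemma. I would first try the standard result that generic pairs of vector fields are bracket generating (Lobry's theorem), adapted to the Hamiltonian subclass. The adaptation works because Hamiltonian vector fields still span all directions pointwise and the Poisson algebra of polynomials is rich enough.
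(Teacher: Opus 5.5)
Your framework matches the paper's: Melnikov potentials $\mathcal L_i(f)$ depending linearly on $f$, free prescription of their jets at a point by localizing $f$ near one channel and approximating by analytic functions (Lemma \ref{lem:surjectivity}), then parametric transversality against a degeneracy set of codimension larger than $2d=\dim N$. The gap is exactly the step you flag as the main obstacle, and the route you suggest does not close it.

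Exhibiting one polynomial pair whose brackets span only shows that $\mathcal Z$ is a proper algebraic subset of the jet space, cut out by the maximal minors of the bracket matrix. That gives codimension $\geq 1$. You need codimension $\geq 2d+1$. Nothing in your plan explains why the failure conditions at successive bracket lengths should be independent. An appeal to Lobry's theorem ``adapted to the Hamiltonian subclass'' is not an argument. Without a quantitative bound, the jet extension of a generic pair may well meet $\mathcal Z$, and transversality gives nothing.

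The idea that makes the count trivial is to drop the full Lie algebra and use a single chain $Y_k=\mathrm{ad}_{X_2}^{k-1}X_1$, $k=1,\dots,\tilde m$ (the brackets indexed by $\mathcal I_{\tilde m}$). Every term of $Y_k$ is a contraction of $k$ factors whose total derivative order is $k-1$. Hence at $p$,
\[
Y_k(p)=\pm D^{k-1}X_1(p)[X_2(p),\dots,X_2(p)]+\ell_k\big(D^{k-1}X_2(p)\big)+R_k(p),
\]
where $\ell_k$ is linear and $R_k(p)$ involves only derivatives of order $\leq k-2$. So the top derivative of $X_1$ enters $Y_k(p)$ only through its contraction against $X_2(p)$, and it does not enter $Y_j(p)$ for $j<k$. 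Since $X_1=J\nabla\mathcal L_1$, Lemma \ref{lem:surjectivity} lets you change $D^k\mathcal L_1(p)$ alone by an arbitrary symmetric form. Lemma \ref{lem:technicalmultilinear} shows that contracting such forms against a fixed nonzero vector produces every covector. The map from jets to $(Y_1(p),\dots,Y_{\tilde m}(p))\in(\mathbb R^{2d})^{\tilde m}$ is therefore block triangular with surjective diagonal blocks, hence a submersion (Lemma \ref{lem:surjectivitymaptobracket}).

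The bad set is then the pull-back of the rank-deficient locus, whose strata have codimension $(2d-k)(\tilde m-k)\geq \tilde m-2d+1$. This exceeds $2d$ once $\tilde m\geq 4d$, and the proof of Proposition \ref{prop:assumptionB6} goes through verbatim with $\tilde r=\tilde m$. This also bounds the codimension of your $\mathcal Z$, which is contained in that bad set.

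One point you would still have to handle: Lemma \ref{lem:technicalmultilinear} needs $X_2(p)\neq 0$, and $X_2$ must vanish at critical points of $\mathcal L_2$ on the compact $N$. There you can use the symmetric chain $\mathrm{ad}^{k-1}_{X_1}X_2$. This is legitimate because a common zero of $X_1$ and $X_2$ is a codimension-$4d$ condition, hence generically absent.
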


\subsection{Proof of Theorem \ref{thm:mainham}}

Proposition \ref{prop:genericityofassumptions} shows that for $F_{0}$ as in Theorem \ref{thm:mainham}, assumptions \ref{it:H1}-\ref{it:H2} are verified and the corresponding inner map  $T_\varepsilon:N\to N$ and scattering maps $\{S_i\}_{i=1}^m:N\to N$ satisfy \ref{it:itemB1}-\ref{it:B6}. The fact that these maps preserve a common smooth  measure is standard (see \cite{DelaLLaveScattmap} or Section \ref{sec:genericity}). In this setting, Proposition  \ref{prop:mainnhim} and Proposition \ref{prop:skewprodmain} imply that there exists a set $\mathcal X_\varepsilon\subset M\times N$, an open neighbourhood $\mathcal Q_\varepsilon\supset \mathcal X_\varepsilon$   and a homeomorphism $\Psi:\Sigma\times N\to \mathcal X_\varepsilon$ such that:
 \begin{enumerate}
        \item $\mathcal X_\varepsilon$ is locally maximal and normally hyperbolic for the return map $\mathcal R_\varepsilon$ induced on $\mathcal Q$,
        \item $\mathcal X_\varepsilon$ contains the continuation of $\Lambda_0$ in its closure,
        \item The restriction  $\mathcal R_\varepsilon|_{\mathcal X_\varepsilon}:\mathcal X_\varepsilon\to \mathcal X_\varepsilon$ is robustly topologically mixing.
    \end{enumerate}
The proof of Theorem \ref{thm:mainham} is complete.
\medskip

\subsection{Proof of Theorem \ref{thm:corollaryMain}}
 The proof boils down to establishing that for any pair of open sets $U,V\in M\times N$ we have $W^{u}(\mathcal X_\varepsilon)\cap U\neq \emptyset$ and $W^{s}(\mathcal X_\varepsilon)\cap V\neq \emptyset$. However, the manifold $W^{u,s}(\mathcal X_0)$ is dense in $M\times N$ (since in the unperturbed case $\varepsilon=0$, the dynamics is a direct product in which the first factor is, by assumption, a transitive Anosov map). By the classical perturbation theory for normally hyperbolic submanifolds (see \cite{MR501173}), for any $f\in \mathcal B_{r_0}^\omega(M\times N)$  and $0\leq \varepsilon\leq \varepsilon_0$, compact pieces of $W^{u,s}(\mathcal X_\varepsilon)$ are $O(\varepsilon)$-close to $W^{u,s}(\mathcal X_0)$. In particular, there exists $L>0$ independent of $\varepsilon$ and $f$ such that $W^u(\mathcal X_\varepsilon)$ (resp.  $W^s(\mathcal X_\varepsilon)$) intersects any disk tangent to $E^s$ (resp. $E^u$) and which has radius larger than $L\varepsilon$. Hence, our claim follows from the fact that any disk tangent to $E^s$ (resp. $E^u$) grows under backwards (forward) iteration and that $W^u(\mathcal X_\varepsilon)$ (resp.  $W^s(\mathcal X_\varepsilon)$) is invariant.
\medskip

\section{Existence of normally hyperbolic laminations}\label{sec:nhimproof}

In this section we consider a $C^r$ one-parametric family of  diffeomorphisms $\{F_\varepsilon\}_\varepsilon$ of a smooth Riemannian manifold $X$ (with $r\geq 2$)  which satisfies assumptions \ref{it:H1}-\ref{it:H2} and construct a normally hyperbolic lamination accumulating on the corresponding homoclinic channels $\{\Gamma_\varepsilon^i\}_i\subset W^s(\Lambda_\varepsilon)\pitchfork W^u(\Lambda_\varepsilon)$. Namely, we give a proof of Theorem \ref{prop:mainnhim}. 

\begin{rem}
 As already remarked in Section \ref{sec:nihimintro} (see the discussion after \ref{it:H1}-\ref{it:H2} are introduced), assumptions \ref{it:H1}-\ref{it:H2} are open and \textit{uniform in $\varepsilon$} (provided $\varepsilon>0$ is sufficiently small depending only on $F_0$). Hence, all of our constructions (and quantitative estimates) in this section are also uniform in $\varepsilon$. For that reason, we \textit{fix any value} $0\leq \varepsilon\leq \varepsilon_0(F_0)$ for which \ref{it:H1}-\ref{it:H2} hold and drop the dependence on $\varepsilon$ from the notation.
\end{rem}
\medskip

We will construct a return map to a neighbourhood of the channels $\{\Gamma^i\}_i\subset X$ by gluing the local dynamics around $\Lambda$ together with the outer dynamics along the orbits of the homoclinic channels. As the qualitative description of both regimes is rather different, we split this section in several subsections. First, in Section \ref{sec:straighteningnormalfibration} we introduce local coordinates in a neighbourhood of $\Lambda$ in which the  normal fibration is \textit{straightened}. This will simplify considerably the coordinate description of the maps below.
Second, in Section \ref{sec:localmap} we study the \textit{local} dynamics in a neighbourhood of $\Lambda$. In Section \ref{sec:outermap} we describe the \textit{outer} dynamics, i.e. of orbits following closely the orbit of one of the homoclinic channels $\Gamma^i$. Then, in Section \ref{sec:globalmaps} we \textit{patch together} the local and outer maps to build a collection of maps $\{\Phi_{i\to j}^{(n)}\}_n$ defined on a suitable neighbourhood of $\bigcup_i\Gamma^i$ and  \textit{indexed by their return time} to this neighbourhood (these will be later used to piecewise define the return map to a neighbourhood of $\bigcup_i\Gamma^i$). In Section \ref{sec:partiallygraphtransform} we study the action of each $\Phi_{i\to j}^{(n)}$ on a suitable class of codimension one submanifolds, which we name partially horizontal and partially vertical. From this analysis we will deduce that the \textit{return map} that $F$ induces on an open set  $\mathcal Q\subset X$ which contains $\bigcup_i\Gamma^i$ on its closure, admits a \textit{Markov partition} with infinitely many symbols and, in Section \ref{sec:Returnmap} conclude the proof of Proposition \ref{prop:mainnhim}.

 \medskip

\subsubsection*{Remarks on the quantitative relations between the parameters}
Throughout this section we will introduce two auxiliary parameters $0<\delta\ll \delta_0\ll 1$ specifying the shape of the domain in which we will define the aforementioned return map ($\delta_0$ locates the center while $\delta$ determines the size). We will find $n_*\in \mathbb N$ (depending only on $\delta_0$) such that this return map is defined piecewise in terms of $n$-th iterates of the map $F_\varepsilon$ with $n\geq n_*$.

Throughout this section, given two functions $f(n,z),h(n,z)$ we write $f\lesssim h$ if there exists $C>0$ such that for all $n\in\mathbb N$ sufficiently large  and all $z\in N$ we have $f(n,z)\leq C h(n,z)$. 
\medskip

\subsubsection*{Remark on the dimension of $M$} With the intention of avoiding unnecessary technicalities, we will suppose that $\mathrm{dim} M=2$. The reader can easily adapt the proof to the general case.

\medskip

\subsection{Straightening the normal fibration}\label{sec:straighteningnormalfibration}
By \ref{it:H1},  the $C^2$ diffeomorphism $F:X\to X$ possesses a compact normally hyperbolic invariant manifold $\Lambda$ on which the strong spectral gap condition \ref{eq:spectralgap} is satisfied for some $\kappa\geq 2$. Recall that this implies that $\Lambda$ is a $C^2$ submanifold and that $W^{u,s}(\Lambda)$ are $C^2$ (see Section \ref{sec:outline}).
Proceeding as in \cite{MR283825} (see the proof of Theorem 1, case 3) or as in \cite{MR2163534} (see Section 5.1), for any $\delta_0>0$ small enough there exists a neighbourhood $U_{\delta_0}$ of $\Lambda$ and a  $C^{2}$ local coordinate system
\begin{equation}\label{eq:localcoordinatesnhim}
\psi_{\mathrm{loc}}:(q,p,z)\in [-2\delta_0,2\delta_0]^2\times N\mapsto x\in U_{\delta_0}\subset X
\end{equation}
in which
\[
\Lambda=\psi_{\mathrm{loc}}(\{q=p=0\})
\]
and  the coordinate expression for $F$ is given by 
\begin{equation}\label{eq:mapF}
\Phi_{\mathrm{loc}}=\psi_{\mathrm{loc}}^{-1}\circ F\circ \psi_{\mathrm{loc}}:\begin{pmatrix}
    q\\p\\z
\end{pmatrix}\mapsto \begin{pmatrix}
    \lambda(z) q +Q(q,p,z)\\
    \mu (z) p+P(q,p,z)\\
    T(z)
\end{pmatrix}\qquad\qquad 
\end{equation}
where $T:N\to N$ is the map in \eqref{eq:innerdynamicsdefn} and:
\begin{enumerate}
    \item \textit{Domination for the linear dynamics:}  The constants
\begin{equation}\label{eq:maxminlambda}
\bar \lambda=\max_{z\in N} \{\lambda(z),1/\mu(z)\},\qquad 
   \qquad \underline \lambda=\min_{z\in N} \{\lambda(z),1/\mu(z)\}\end{equation}
   and
   \begin{equation}\label{eq:maxminalpha}\bar\alpha=\bar\lambda /\underline\lambda,\qquad\qquad \alpha=\max_{z\in N}\{\lVert DT\rVert,\lVert DT^{-1} \rVert\}
\end{equation}
satisfy
\[
\text{(Bunching)}\quad\alpha^{\kappa+1}\bar\lambda<1,\qquad\qquad\text{and}\qquad\qquad \text{(Pinching)}\quad\bar\alpha^{\kappa+1}\bar\lambda<1.
\]
 \item \textit{Small nonlinear remainder:} The functions $Q,P$ are $C^2$ and at any $(q,p,z)\in [-2\delta_0,2\delta_0]^2\times N$ we have 
 \[
 Q(0,p,z)=0=\partial_qQ(0,0,z)\qquad\qquad  P(q,0,z)=0=\partial_pP(0,0,z).
\]

\end{enumerate}
\subsection{The local map}\label{sec:localmap}

In the following proposition we give a precise description of the dynamics of $F:X\to X$ around $\Lambda$. A statement with a similar flavor can be found in Lemma 1 of \cite{GelfreichTuraev} (see also Lemma 3.1 in \cite{LiTuraevPreprint}).
\begin{rem}
    In Lemma 1 of \cite{GelfreichTuraev} the authors only assume that the spectral gap condition \eqref{eq:spectralgap} holds with $\kappa\geq 1$ (while we assume $\kappa\geq 2$) but obtain less detailed information about the orbit segment $\{q_i,p_i,z_i\}_i$. In particular, in that setting it does not seem to be possible to obtain quantitative $C^1$ estimates (compare \eqref{eq:quantitativedecayorbitnearnhim}-\eqref{eq:quantitativedecayorbitnearnhim2} to estimates (19) in Lemma 1 of \cite{GelfreichTuraev}). These quantitative estimates will be important in Section \ref{sec:skewprod}.
\end{rem}

\begin{prop}\label{prop:localdynamics}
    Fix any $w=( q ,  \bar p ,\bar z)\in [-\delta_0,\delta_0]^2\times N$. Then, provided $\delta_0>0$ is small enough,  for any $n\in\mathbb N$ large enough  there exists a unique orbit segment $\{q_i,p_i,z_i\}_{i=0}^n$ of the map $\Phi_{\mathrm{loc}}$ in \eqref{eq:mapF} satisfying $q_0= q $, $p_n=  \bar p $ and $z_n=\bar z$. Moreover, if we define 
    \[
    \lambda^{(n)}(\bar z)=\prod_{i=1}^{n} \lambda (T^{-i}(\bar z))\qquad\qquad \mu^{(n)}(\bar z)=\prod_{i=1}^{n} \mu (T^{-i}(\bar z))
    \]
    the asymptotic formulas
    \begin{equation}\label{eq:asymptoticslocal}
    q_n( q ,  \bar p ,\bar z)=\lambda^{(n)}(\bar z)  q + h( q ,  \bar p ,\bar z)\quad\quad y_0( q ,  \bar p ,\bar z)=\frac{1}{\mu^{(n)}(\bar z)}  \bar p +g( q ,  \bar p ,\bar z)\quad\quad z_0(q ,  \bar p ,\bar z)=T^{-n}(\bar z)
    \end{equation}
    hold with $h,g$ satisfying the pointwise estimates 
    \begin{equation}\label{eq:quantitativedecayorbitnearnhim}
    \frac{1}{\delta_0}|h|,|\partial_ q  h|,|\partial_  {\bar p}  h|\lesssim \delta_0 \lambda^{(n)}\qquad\qquad  \frac{1}{\delta_0}|g|,|\partial_ q  g|,|\partial_  {\bar p}  g|\lesssim \delta_0/ \mu^{(n)}
    \end{equation}
    and 
    \begin{equation}\label{eq:quantitativedecayorbitnearnhim2}
    |\partial_{\bar z} h|\lesssim \delta^2_0 \alpha^n\lambda^{(n)} \qquad\qquad |\partial_{\bar z} g|\lesssim \delta^2_0  \alpha^n/\mu^{(n)}.
    \end{equation}
\end{prop}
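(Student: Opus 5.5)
This is a Shilnikov boundary value problem near a normally hyperbolic manifold, and I will solve it by a contraction mapping argument on the space of orbit segments. The $z$-component decouples: since the third component of $\Phi_{\mathrm{loc}}$ is $T(z)$, independent of $(q,p)$, every orbit segment with $z_n=\bar z$ satisfies $z_i=T^{i-n}(\bar z)$. This gives the formula $z_0=T^{-n}(\bar z)$ at once and reduces the problem to a nonautonomous two-dimensional saddle along the fixed base orbit $z_i$.

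I then rewrite the saddle equations in variation-of-constants form. The unknowns are the sequences $(q_i)_{i=0}^n$ and $(p_i)_{i=0}^n$. The contracting direction is summed forward:
\[
q_i=\prod_{k=0}^{i-1}\lambda(z_k)\,q+\sum_{j=0}^{i-1}\prod_{k=j+1}^{i-1}\lambda(z_k)\,Q(q_j,p_j,z_j).
\]
The expanding direction is summed backward:
\[
p_i=\prod_{k=i}^{n-1}\mu(z_k)^{-1}\,\bar p-\sum_{j=i}^{n-1}\prod_{k=i}^{j}\mu(z_k)^{-1}\,P(q_j,p_j,z_j).
\]
These define an operator $\mathcal T$ on the ball $\{|q_i|,|p_i|\le 2\delta_0\}$, which I equip with the weighted norm
\[
\sup_i\bigl(|q_i|/\lambda^{(i)}+|p_i|\,\mu^{(n-i)}\bigr).
\]

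The structure of the nonlinearities makes them small. From $Q(0,p,z)=0$ and $\partial_qQ(0,0,z)=0$ we get $Q=q\cdot O(|q|+|p|)$, and symmetrically $P=p\cdot O(|q|+|p|)$. So the nonlinear terms are $O(\delta_0)$ relative to the linear ones. Together with the domination $\bar\lambda<1$, this shows $\mathcal T$ is a contraction with constant $O(\delta_0)$ in the weighted norm, which gives existence and uniqueness. Reading off $i=n$ for $q$ and $i=0$ for $p$, the remainders satisfy $h=O(\delta_0^2\lambda^{(n)})$ and $g=O(\delta_0^2/\mu^{(n)})$. This is \eqref{eq:quantitativedecayorbitnearnhim} for $h$ and $g$ themselves.

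For the derivatives in $q$ and $\bar p$, I differentiate the fixed-point equation. By the implicit function theorem for contractions, which requires $C^2$ regularity of $Q$ and $P$, the derivatives of the orbit solve the linearized fixed-point problem. The same weighted estimate applies to that problem because $\partial Q=O(|q|+|p|)$, which gives the bounds $\delta_0\lambda^{(n)}$ and $\delta_0/\mu^{(n)}$.

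I expect the $\bar z$-derivative \eqref{eq:quantitativedecayorbitnearnhim2} to be the main obstacle. Differentiating with respect to $\bar z$ hits the base orbit $z_j=T^{j-n}(\bar z)$, with $|\partial_{\bar z}z_j|\le\alpha^{n-j}$. It also hits the products $\lambda^{(i)}$ and $\mu^{(n-i)}$: using $\log$-derivatives, $\partial_{\bar z}\lambda^{(n)}/\lambda^{(n)}=\sum_j O(\alpha^{n-j})=O(\alpha^n)$. The source terms in the linearized equation for $\partial_{\bar z}(q_i,p_i)$ are therefore of the form $\partial_zQ\cdot\alpha^{n-j}$, where $\partial_zQ=O(|q|(|q|+|p|))$ since $Q$ vanishes to second order in this sense. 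I will control these by the bunching condition $\alpha\bar\lambda^{1/(\kappa+1)}<1$ (using $\kappa\ge2$, so the extra factor $\alpha^{n}$ is dominated after summing against the geometric weights) and by the pinching condition. Both are needed to keep the weighted contraction intact when one extra factor $\alpha^{n}$ is allowed in the norm. If the naive sum loses a factor, I would first try including $\alpha^{n-i}$ in the weight of the derivative norm. Carrying that through the sums gives $|\partial_{\bar z}h|\lesssim\delta_0^2\alpha^n\lambda^{(n)}$, and the same for $g$.
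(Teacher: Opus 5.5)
Your plan is essentially the paper's proof in Appendix B: the same variation-of-constants reformulation of the boundary value problem, solved by a Banach contraction in a norm weighted by $\lambda^{(0,i-1)}$, $\mu^{(i,n-1)}$ and an extra factor $\alpha^{n}$ on the $\bar z$-derivatives. The only structural difference is that the paper puts the $C^0$, $(q,\bar p)$- and $\bar z$-parts into a single weighted $C^1$ norm, instead of differentiating the fixed-point equation afterwards. Two corrections are needed. First, the contraction must be run on a ball of radius $C\delta_0$ in the weighted norm, not on $\{|q_i|,|p_i|\le 2\delta_0\}$: on the latter, e.g.\ for $Q=qp$ with $p_j\equiv\delta_0$, the weighted Lipschitz constant is of order $n\delta_0$, and uniformity in $n$ comes only from the summability of $|q_j|+|p_j|\lesssim\delta_0(\bar\lambda^{j}+\bar\lambda^{n-j})$. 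Second, with the global weight $\alpha^{n}$ neither bunching nor pinching is needed, whereas your fallback weight $\alpha^{n-i}$ would fail for the $q$-components, since $\partial_{\bar z}\lambda^{(0,i-1)}$ contains $\partial_{\bar z}z_0=DT^{-n}(\bar z)$ and can be of size $\alpha^{n}\lambda^{(0,i-1)}$ for every $i\geq 1$.
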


The proof of Proposition \ref{prop:localdynamics} follows from a standard fixed point argument and is deferred to Appendix \ref{sec:prooflocaldynamics}.

\begin{figure}
    \centering
    \includegraphics[scale=0.45]{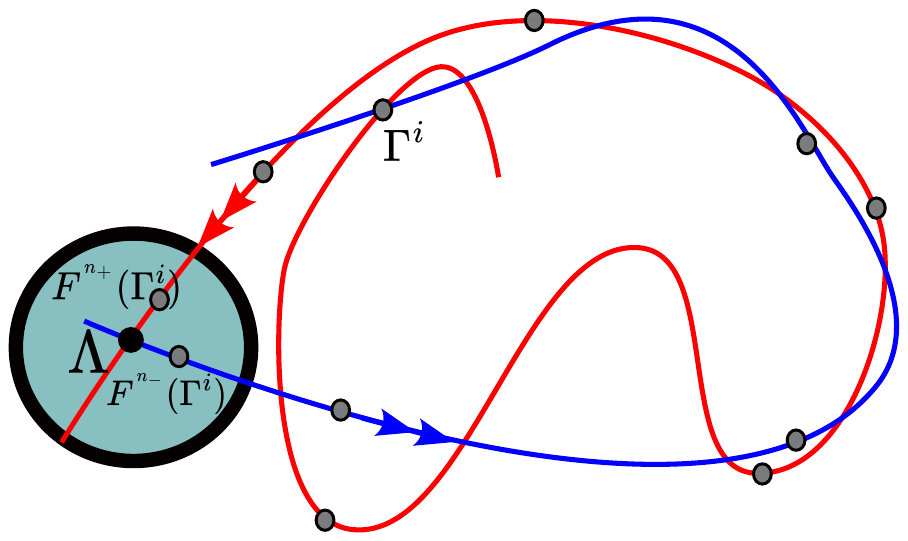}
    \caption{A homoclinic channel $\Gamma^i\subset W^s(\Lambda)\pitchfork W^u(\Lambda)$ and its orbit.}
    \label{fig:fig3}
\end{figure}

\subsection{The outer map}\label{sec:outermap}

We fix any $\delta_0>0$ small enough such that Proposition \ref{prop:localdynamics} holds. Let $m\geq 1$ be as in Assumption \ref{it:H2}. Then, let $n_\pm\in\mathbb N$ be any natural  numbers for which
\[
\{F^{n_+}(\Gamma^i)\}_{i=1}^m\subset U_{\delta_0/2}\qquad\qquad\text{and}\qquad\qquad \{F^{-n_-}(\Gamma^i)\}_{i=1}^m\subset  U_{\delta_0/2}
\]
and define the submanifolds (see Figure \ref{fig:fig3})
\[
\Gamma^{i}_{\pm}:=F^{\pm n_\pm}(\Gamma^i)\subset W^s(\Lambda)\pitchfork W^u(\Lambda).
\]
The purpose of this section is to study the map $ F^{(n_-+n_+)}:X\to X$
    when restricted to small neighbourhoods around $\{\Gamma^{i}_-\}_i$ with  $i=1,\dots,m$. We start by chosing suitable neighbourhoods of $\{\Gamma^{i}_{\pm}\}_i\subset X$. To do so, observe that from their very definition,  we can deduce the existence of  $C^{2}$ functions ($i=1,2$)
\begin{align*}
q^{i}_{+}:N &\to [0,\delta_0/2] &p^{i}_{-}: N &\to [0,\delta_0/2]\\
z&\mapsto q^{i}_{+}(z) &z&\mapsto p^{i}_{-}(z)
\end{align*}
such that 
\[
\Gamma^{i}_{+}=\{(q^i_+(z),0,z)\colon z\in N\}\qquad\qquad \Gamma^i_-=\{(0,p^i_-(z),z)\colon z\in N\}.
\]
Moreover, for $0<\delta\ll \delta_0\ll 1$ small enough, we can find functions 
\begin{align*}
q^i_u:[-\delta,\delta]\times N&\to [0,\delta_0]& p^i_s:[-\delta,\delta]\times N&\to [0,\delta_0]\\
(p,z)&\mapsto q^{i}_{u}(p,z) &(q,z)&\mapsto p^{i}_{s}(q,z)
\end{align*}
such that for $i=1,2$:
\begin{enumerate}
    \item There exists a compact piece $\widetilde W^{u}_i(\Lambda)\subset W^u(\Lambda)$ (resp. $\widetilde W^{s}_i(\Lambda)\subset W^s(\Lambda)$) which can be parametrized as 
    \begin{align*}
      \widetilde W^u_i(\Lambda)=&\{(q^i_u(p,z),p,z)\colon (p,z)\in [-\delta,\delta]\times N\}\\
      (\text{resp. } \widetilde W^s_i(\Lambda)=&\{(q,p^i_s(q,z),z)\colon (q,z)\in [-\delta,\delta]\times N\}),
    \end{align*}
   
    \item $\widetilde W^u_i(\Lambda)\pitchfork W^s_{\mathrm{loc}}(\Lambda)=\Gamma^i_-$ (resp. $\widetilde W^s_i(\Lambda)\pitchfork W^u_{\mathrm{loc}}(\Lambda)=\Gamma^i_+$) so, in particular, 
    \[
    q^i_u(0,z)=q^i_+(z)\qquad\qquad (\text{resp. } p^i_s(0,z)=p^i_-(z))
    \]
    and 
    \[
    \partial_p q ^{i}_{u}(0,z)\neq 0\qquad\qquad (\text{resp. } \partial_q p^{i}_{s}(0,z)\neq 0)
    \]
\end{enumerate}
Then, for $0<\delta\ll \delta_0\ll 1$  as above,  we define the small regions  (see Figure \ref{fig:fig4})
\begin{align*}
\mathcal Q^i_{\delta}=&\psi_{\mathrm{loc}}\left(\{(q,p,z)\in [-2\delta_0,2\delta_0]^2\times N\colon 0\leq p-p_{s}^{i}(q,z)\leq \delta,\  0\leq q\leq \delta\}\right)\subset X
\end{align*}
around the submanifolds $\{\Gamma_-^i\}_i$ (i.e. the $n_-$-backward iterates of the homoclinic channels $\{\Gamma^i\}_i$).

\medskip

We now study the map $ F^{(n_-+n_+)}:X\to X$ restricted to the sets  $\mathcal Q^i_{\delta}$. The first observation is that, by construction, 
\[
\Gamma^i_+\subset F^{(n_-+n_+)}(\mathcal Q_{\delta}^{i})
\]
To obtain more quantitative information it will prove convenient to introduce the local coordinate patches
\begin{equation}\label{eq:localcoordinateshomchannel}
\begin{split}
\psi_i:[0,\delta]^2\times N&\to \mathcal Q^i_\delta\subset X\\
(q,p,z)&\mapsto \psi_{\mathrm{loc}}(q,p^i_s(q,z)+p,z).
\end{split}
\end{equation}
which cover an open set containing of the submanifolds $\{\Gamma^i_-\}_i\subset X$ on their closure.

\begin{figure}
    \centering
    \includegraphics[scale=0.55]{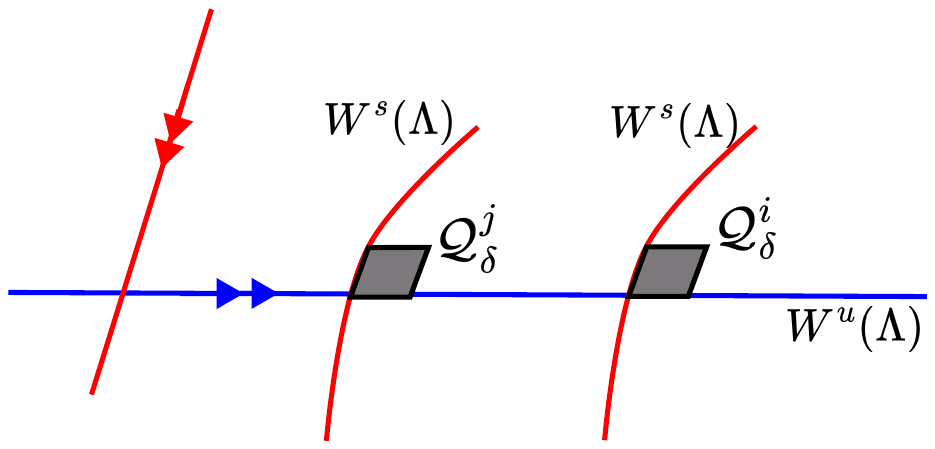}
    \caption{The sets $\mathcal Q_\delta^i$ for $i=1,\dots,m$.}
    \label{fig:fig4}
\end{figure}

\begin{lem}\label{lem:outerdynamics}
Let $i=1,\dots,m$. In the local coordinate systems given by $\eqref{eq:localcoordinatesnhim}$ on $U_{\delta_0}$ and   \eqref{eq:localcoordinateshomchannel} on $\mathcal Q^i_\delta$ the map 
\[
F^{(n_-+n_+)}:\mathcal Q^i_\delta\to U_{\delta_0} 
\]
is of the form 
    \[
    \Phi_{\mathrm{out},i}=\psi_{\mathrm{loc}}^{-1}\circ F^{(n_-+n_+)}\circ\psi_i:\begin{pmatrix}
        q\\p\\z
    \end{pmatrix}\mapsto \begin{pmatrix}
       \begin{pmatrix}
        \tilde q_{i}^+(p,z)\\0
    \end{pmatrix}+ A_i(z)\begin{pmatrix}
            q\\ p
        \end{pmatrix}+\begin{pmatrix}
            \widetilde Q_i(q,p,z)\\ p\widetilde P_i(q,p,z)
        \end{pmatrix}\\ S_{i}(z)+ \widetilde S_i(q,p,z)
    \end{pmatrix}
    \]
    with $A_i(z)$ being a two dimensional matrix which for all $z\in N$ is of the form 
    \[
    A_i(z)=\begin{pmatrix}
        a_i(z)&b_i(z)\\
        0&c_i(z)
    \end{pmatrix}\qquad\qquad\qquad c_i(z)\neq 0,\qquad \mathrm {det} A_i(z)\neq 0,
    \]
     $S_i:N\to N$ being the coordinate expressions of the scattering maps in \eqref{eq:scattmapdynamicsdefn}, the function $\tilde q^+_i(q,z)=q^+_i(z)+b_\varepsilon(z)p_i^s(q,z)$, and with $\widetilde Q_i,\widetilde S_i$ being $C^2$, $\widetilde P_i$ being $C^1$ and satisfying 
    \begin{equation}\label{eq:propertiesfunctionsoutermap}
  \widetilde Q_i(0,p,z)=0=\widetilde Q_i(q,0,z),\qquad\qquad \widetilde P_i(0,0,z)=0,\qquad\qquad \widetilde S_i(0,0,z)=0.
    \end{equation}
\end{lem}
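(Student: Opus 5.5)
The plan is to Taylor expand the finite composition $G:=\psi_{\mathrm{loc}}^{-1}\circ F^{(n_-+n_+)}\circ\psi_{\mathrm{loc}}$ around $\Gamma^i_-$, and then read off each claimed structural property from invariance of $\Lambda$, $W^{u,s}(\Lambda)$ and the unstable/stable fibrations. Since $n_-+n_+$ is fixed, $G$ is a $C^2$ diffeomorphism from a neighbourhood of $\Gamma^i_-$ in $U_{\delta_0}$ to a neighbourhood of $\Gamma^i_+$; no uniform-in-$n$ analysis is needed. We then precompose with $\psi_i$, i.e. substitute $p\mapsto p^i_s(q,z)+p$. In these coordinates $\{p=0\}$ is exactly $\widetilde W^s_i(\Lambda)$, and $\{q=0\}$ lies in $W^u_{\mathrm{loc}}(\Lambda)$, because in the straightened coordinates \eqref{eq:localcoordinatesnhim} the local unstable manifold is $\{q=0\}$ and the local stable manifold is $\{p=0\}$. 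The proof has three steps.

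\textbf{Step 1: the $p$-component.} The set $\{p=0\}$ in $\psi_i$-coordinates is a piece of $W^s(\Lambda)$, and its image under $F^{(n_-+n_+)}$ is a piece of $W^s(\Lambda)$ near $\Gamma^i_+$. Near $\Gamma^i_+$, inside $U_{\delta_0}$, this piece coincides with $W^s_{\mathrm{loc}}(\Lambda)=\{p=0\}$. So the $p$-component of $\Phi_{\mathrm{out},i}$ vanishes at $p=0$, and Hadamard's lemma writes it as $p\cdot c(q,p,z)$ with $c\in C^1$. Setting $c_i(z)=c(0,0,z)$ and $\widetilde P_i=c-c_i(z)$ gives $\widetilde P_i(0,0,z)=0$. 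This also shows that the lower-left entry of $A_i$ is $0$.

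\textbf{Step 2: the base points $q=p=0$ and the $z$-component.} A point with $q=p=0$ lies on $\Gamma^i_-\cap\{q=0\}$, i.e. on $\Gamma^i_-$ itself, and is mapped to $\Gamma^i_+=\{(q^i_+(z),0,z)\}$. For the $z$-component we use the fibrations. The point $(0,p^i_-(z),z)\in\Gamma^i_-$ lies in the unstable fibre of $(0,0,z)$, since the unstable fibres are $\{q=0,z=\text{const}\}$ in the straightened chart. Its image lies in the stable fibre of its $z$-coordinate. Because the scattering map is defined up to the fixed iterate shift $F^{n_+}$ or $F^{-n_-}$, the image has $z$-coordinate equal to $S_i(z)$ in the coordinates of \eqref{eq:scattmapdynamicsdefn}. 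One should renormalise $S_i$ by composing with $T^{n_++n_-}$ as needed; I would absorb this into the definition of $S_i$.

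Set $\widetilde S_i(q,p,z)=(z\text{-component})-S_i(z)$; then $\widetilde S_i(0,0,z)=0$ and $\widetilde S_i\in C^2$. Similarly, $\tilde q^+_i(p,z)$ is defined as the $q$-component of the image of $(0,p,z)$ to first order. Because of the $\psi_i$-shift this picks up the term $b(z)p^i_s$ appearing in the statement, so I would take $\tilde q^+_i$ to be the exact $q$-component restricted to $q=0$ and absorb the difference into the $b_ip$ term.

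\textbf{Step 3: the $q$-component.} Write it as $q^+_i(z)+a_i(z)q+b_i(z)p+\widetilde Q_i$, where $a_i,b_i$ are the first derivatives at $q=p=0$. Defining $\widetilde Q_i$ as the remainder, the normalisations $\widetilde Q_i(0,p,z)=0$ and $\widetilde Q_i(q,0,z)=0$ follow by putting the whole $q=0$ restriction into $\tilde q^+_i(p,z)$ and the $p=0$ restriction into $a_iq$. The latter step requires the image of $\{p=0\}$ to be linear in $q$ to leading order. This can be arranged by choosing the normal-form coordinates so that the restriction to $W^s$ is handled, or else by letting $\widetilde Q_i$ carry mixed terms $qp\,(\cdot)$. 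I expect this normalisation bookkeeping to be the main obstacle; that is, matching exactly the form stated, particularly the $C^2$ claims given only $C^2$ charts, where Hadamard's lemma costs a derivative for $\widetilde P_i$, consistent with the stated $C^1$.

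Finally, $\det A_i(z)=a_ic_i\neq0$ follows because $G$ is a diffeomorphism and the $z$-block $DS_i$ is invertible. The triangular structure then gives $\det DG=\det A_i\cdot\det DS_i$ along $q=p=0$, so both $a_i$ and $c_i$ are nonzero.
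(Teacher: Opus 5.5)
Your Steps 1 and 2 are correct, and they carry essentially all the content behind the paper's remark that the lemma is ``straightforward from the definition'' of $\psi_i$. Your remark that $S_i$ must be renormalised by iterates of $T$ is a genuine correction to the statement: the $z$-component on $\{q=p=0\}$ is $T^{n_+}\circ S_i\circ T^{n_-}$.

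The gap is in your last paragraph, because the Jacobian at $(0,0,z)$ is not block triangular. Writing $(\bar q,\bar p,\bar z)=\Phi_{\mathrm{out},i}(q,p,z)$, it is
\[
D\Phi_{\mathrm{out},i}(0,0,z)=\begin{pmatrix} a_i & b_i & \partial_z\bar q\\ 0 & c_i & 0\\ \partial_q\bar z & \partial_p\bar z & \partial_z\bar z\end{pmatrix}.
\]
The only forced zeros are in the $\bar p$-row, and they come from $\bar p(q,0,z)\equiv 0$. The other two off-diagonal entries need not vanish:
\begin{itemize}
\item Since $\Gamma^i_-$ is mapped onto the graph $\Gamma^i_+$, you get $\partial_z\bar q(0,0,z)=Dq^+_i(\bar z)\,\partial_z\bar z$.
\item $\partial_q\bar z(0,0,z)$ measures how the stable fibres inside $\widetilde W^s_i(\Lambda)$ tilt relative to the slices $\{z=\mathrm{const}\}$ of $\psi_{\mathrm{loc}}$, which straighten the fibration only on $W^{u,s}_{\mathrm{loc}}(\Lambda)$.
\end{itemize}
Expanding along the $\bar p$-row, invertibility gives $c_i\neq0$ directly, so that part of your conclusion stands. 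For the $(q,z)$-block, however, it only gives $a_i\neq\partial_z\bar q\,(\partial_z\bar z)^{-1}\partial_q\bar z$, not $a_i\neq0$.

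In fact $a_i$ can vanish. Take tilted stable fibres $\{z-kq=\mathrm{const}\}$ in $\widetilde W^s_i$, and set $\bar z=S(z-kq)$, $\bar q=q^+_i(\bar z)+\alpha q$, $\bar p=cp$. This is compatible with $\{p=0\}\mapsto\{\bar p=0\}$ and $\Gamma^i_-\mapsto\Gamma^i_+$, and has Jacobian determinant $c\,\alpha\det DS\neq0$. Yet $a_i=\alpha-Dq^+_i\,DS\,k=0$ for suitable $k$. So $\det A_i\neq0$ does not follow from the invariance structure alone. It needs extra input, for example proximity to the direct product of Section~\ref{sec:genericity}. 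There $Dq^+_i$ and the tilt are $O(\varepsilon)$, so $a_i$ stays close to its nonzero unperturbed value.

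Step 3 has the same kind of problem, and neither remedy you propose works. If $\tilde q^+_i$ depends only on $(p,z)$, then $\widetilde Q_i(q,0,z)=0$ forces $q\mapsto\bar q(q,0,z)$ to be exactly affine, and nothing guarantees that. You cannot re-choose the charts, since $\psi_{\mathrm{loc}}$ and $\psi_i$ are fixed by the statement. Terms $qp\,(\cdot)$ vanish on $\{p=0\}$, so they cannot absorb the pure-$q$ nonlinearity of $\bar q(q,0,z)$. What your argument actually yields is the two-sided Hadamard decomposition
\[
\bar q(q,p,z)=\bar q(q,0,z)+\bar q(0,p,z)-\bar q(0,0,z)+qp\int_0^1\!\!\int_0^1\partial_q\partial_p\bar q(sq,tp,z)\,\mathrm ds\,\mathrm dt.
\]
So the lemma holds once the base term may be a sum of a function of $(q,z)$ and a function of $(p,z)$. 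This is what the statement's own, internally inconsistent, formula $\tilde q^+_i(q,z)=q^+_i(z)+b(z)p^s_i(q,z)$ is gesturing at.

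This weaker form is enough for the later estimates in the proof of Proposition~\ref{prop:globalmap}. Those estimates, together with the use of $\widetilde S_i(0,0,z)=0$ in Proposition~\ref{prop:nhimexpanded}, need only $\bar p=p\,(c_i+\widetilde P_i)$, $c_i\neq0$, $\widetilde S_i(0,0,z)=0$ and bounded derivatives. You should state and prove that version rather than try to force the literal normalisation and $\det A_i\neq0$.
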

The proof of Lemma \ref{lem:outerdynamics} is straightforward from the definition of the local coordinate system \eqref{eq:localcoordinateshomchannel} and it is left to the reader. 
\medskip

\subsection{The global maps}\label{sec:globalmaps}
Now for $n\in\mathbb N$ large enough (depending only on $\delta_0$) we consider the collection of maps 
\begin{equation}\label{eq:globalmap}
    \Phi_{i\to j}^{(n)}=:\psi_{j}^{-1}\circ F^n\circ F^{(n_-+n_+)}\circ \psi_i
    \end{equation}
  defined between (a suitable subset of)  $\mathcal Q^{i}_{\delta}\subset X$ and a small neighbourhood of $\Gamma_-^j\subset \mathcal Q^j_\delta$ for $i,j=1,\dots,m$. It is not difficult to see (use Proposition \ref{prop:localdynamics}) that there exists $n_*\in\mathbb N$ such that for any $n\geq n_*$ and any pair $i,j=1,\dots,m$ 
  \[
  \mathrm{Dom}(\Phi_{i\to j}^{(n)})\neq \emptyset.
  \]
In the next proposition we gather some quantitative estimates which we will use in Sections \ref{sec:partiallygraphtransform}-\ref{sec:Returnmap} to fully characterize these domains and their images.
\begin{prop}\label{prop:globalmap}
   Fix any $K>0$, let $0<\delta\ll \delta_0 \ll 1$. Let $\bar\lambda,\underline\lambda,\bar\alpha$ and $\alpha$ be the positive constants defined in \eqref{eq:maxminlambda} and \eqref{eq:maxminalpha}. There exists $n_*\in\mathbb N$ (depending only on $\delta_0$) such that, 
 for any $i,j=1,\dots,m$:
\begin{enumerate}
\item \textit{Estimates for the map $\Phi^{(n)}_{i\to j}$:} at any $(q,p,z)\in[0,\delta]\times [0, \bar\lambda^nK]\times N$ at which $\Phi_{i\to j}^{(n)}$ is well defined, the functions $
  (\bar q,\bar p,\bar z)=\Phi_{i\to j}^{(n)}(q,p,z)$ satisfy the estimates
  \begin{equation}\label{eq:estimatesforwardmap}
  \begin{aligned}
  |\bar q|\lesssim& \bar \lambda^n &\qquad |\partial_q \bar q|\lesssim & \left(\bar \lambda \alpha \bar \alpha\right)^n & |\partial_p \bar q|\lesssim & \bar\alpha^n & |\partial_z \bar q|\lesssim& (\bar\lambda \alpha)^n\\
  &  &\qquad |\partial_q \bar p|\lesssim & \left(\alpha \bar \alpha\right)^n & |\partial_p \bar p|\gtrsim & 1/\bar\lambda^n & |\partial_z \bar p|\lesssim& (\alpha \bar \alpha)^n\\
  \end{aligned}
  \end{equation}
    and 
    \begin{equation}\label{eq:globalmapzcomponent}
    \bar z(q,p,z)= T^n(S_{i}(z)+\widetilde S_i(q,p,z))
    \end{equation}
    with $\widetilde S_i$ as in Lemma \ref{lem:outerdynamics}.
\item \textit{Estimates for  the inverse map $(\Phi^{(n)}_{i\to j})^{-1}$:} at any $(q,p,z)\in[0, \bar\lambda^n K]\times[0,\delta]\times N$ at which $(\Phi_{i\to j}^{(n)})^{-1}$ is well defined, the functions $(\bar q,\bar p,\bar z)=(\Phi^{(n)}_{i\to j})^{-1}(q,p,z)$ satisfy the estimates 
  \begin{equation}
  \begin{aligned}
  & &\qquad |\partial_q \bar q|\gtrsim & 1/\bar\lambda ^n & |\partial_p \bar q|\lesssim & (\alpha\bar\alpha)^n & |\partial_z \bar q|\lesssim& (\alpha\bar\alpha)^n\\
  |\bar p|\lesssim& \bar\lambda^n &\qquad |\partial_q \bar p|\lesssim &  \bar\alpha^n & |\partial_p \bar p|\gtrsim & \left(\bar\lambda\alpha \bar \alpha\right)^n & |\partial_z \bar p|\lesssim& (\bar\lambda \alpha)^n\\
  \end{aligned}
  \end{equation}
    and 
    \begin{equation}\label{eq:globalmapzcomponentinverse}
    \bar z(x,y,z)= (S_{i}+\widetilde S_i)^{-1} (q,p,T^{-n}(z))
    \end{equation}
\end{enumerate}
\end{prop}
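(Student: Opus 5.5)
The plan is to write $\Phi^{(n)}_{i\to j}$ as the composition of the outer map $\Phi_{\mathrm{out},i}$ from Lemma \ref{lem:outerdynamics}, the local map $\Phi_{\mathrm{loc}}^n$ described by Proposition \ref{prop:localdynamics}, and the chart change $\psi_j^{-1}\circ\psi_{\mathrm{loc}}$. The estimates then come from the chain rule, applied to the mixed (Shilnikov-type) boundary value representation of the local passage. The reason for using this representation is that the forward map $\Phi_{\mathrm{loc}}^n$ itself has bad derivatives of size $1/\underline\lambda^n$ in $p$. Proposition \ref{prop:localdynamics} instead gives the smooth correspondence $(q_0,\bar p_n,\bar z_n)\mapsto (q_n,p_0,z_0)$, whose derivatives are controlled by $\lambda^{(n)}$, $1/\mu^{(n)}$ and $\alpha^n$.

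First, write $(q',p',z')=\Phi_{\mathrm{out},i}(q,p,z)$. By Lemma \ref{lem:outerdynamics}, $p'=c_i(z)p+p\widetilde P_i$, so $p'\sim p$ with $\partial_p p'$ bounded away from zero. The quantities $q'$ and $z'=S_i(z)+\widetilde S_i$ are $C^2$-bounded. Next, impose $q_0=q'$, $p_0=p'$, $z_0=z'$ on a local orbit of length $n$. By \eqref{eq:asymptoticslocal} this amounts to solving $p'=\bar p/\mu^{(n)}(\bar z)+g(q',\bar p,\bar z)$ with $z'=T^{-n}(\bar z)$. Hence $\bar z=T^n(z')$, which is \eqref{eq:globalmapzcomponent}, and $\bar p=\mu^{(n)}(\bar z)\,(p'-g)$.

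The restriction $p\le \bar\lambda^nK$ is exactly what keeps $\bar p\lesssim 1$ and the orbit inside $U_{\delta_0}$. The implicit function theorem applies because $|\partial_{\bar p}g|\lesssim\delta_0/\mu^{(n)}$ is small compared with $1/\mu^{(n)}$. We then differentiate $\bar p$:
\begin{itemize}
\item In $p$, we get $\partial_p\bar p\approx\mu^{(n)}c_i\gtrsim\bar\lambda^{-n}$.
\item In $q$ and $z$, the factor $\mu^{(n)}(T^n z')$ is differentiated along $z$, and the chain rule through $T^n$ produces $\alpha^n$. The ratio $\mu^{(n)}\cdot(\text{bounded}/\mu^{(n)})$ is bounded, while sums of $\log$-derivatives over $n$ terms, combined with the pinching ratio $\bar\lambda/\underline\lambda=\bar\alpha$, yield bounds of the form $(\alpha\bar\alpha)^n$.
\end{itemize}

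Then $\bar q=q_n=\lambda^{(n)}q'+h$ gives $|\bar q|\lesssim\bar\lambda^n$. Its derivatives follow by substituting $\bar p(q,p,z)$, $\bar z(q,p,z)$ into $h$ and $\lambda^{(n)}$ and using \eqref{eq:quantitativedecayorbitnearnhim}--\eqref{eq:quantitativedecayorbitnearnhim2}. For example, $\partial_p\bar q=\partial_{\bar p}h\,\partial_p\bar p\lesssim\lambda^{(n)}\mu^{(n)}\lesssim\bar\alpha^n$. Finally, the chart $\psi_j^{-1}\circ\psi_{\mathrm{loc}}$ subtracts $p^j_s(\bar q,\bar z)$, which is $C^2$-bounded with bounded derivatives. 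The resulting cross terms, such as $\partial_{\bar z}p^j_s\,\partial_z\bar z\lesssim\alpha^n$, are absorbed into the stated bounds.

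The inverse map is handled symmetrically: use $(\Phi_{\mathrm{out},i})^{-1}$ composed with the same boundary-value solution, now parametrized by the final $q$ with $q\le\bar\lambda^nK$. This symmetric pass gives \eqref{eq:globalmapzcomponentinverse} and the mirrored inequalities.

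The main obstacle is bookkeeping the $z$-derivatives. The $n$-fold products $\lambda^{(n)}(\bar z)$ and $\mu^{(n)}(\bar z)$ have derivatives of order $n\alpha^n$ times the product. Getting exactly the exponents claimed requires invoking the bunching condition $\alpha^{\kappa+1}\bar\lambda<1$ to absorb polynomial factors in $n$. It also requires using $\lesssim$ loosely, possibly by slightly enlarging $\alpha$ or $\bar\alpha$.
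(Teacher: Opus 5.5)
Your proposal follows essentially the paper's argument. Appendix~\ref{sec:appendixreturnmap} also factors $\Phi^{(n)}_{i\to j}=(\psi_j^{-1}\circ\psi_{\mathrm{loc}})\circ\Phi^u_n\circ(\Phi^s_n)^{-1}\circ\Phi_{\mathrm{out},i}$, with $\Phi^s_n\colon(q_0,p_n,z_n)\mapsto(q_0,p_0,z_0)$ and $\Phi^u_n\colon(q_0,p_n,z_n)\mapsto(q_n,p_n,z_n)$ taken from Proposition~\ref{prop:localdynamics} (precisely your ``solve $p_0=p'$, $z_0=z'$ for $(\bar p,\bar z)$, then read off $\bar q=q_n$''), and obtains the bounds by the chain rule.

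When you do the bookkeeping, three points need care. First, differentiate $g,h,\lambda^{(n)},\mu^{(n)}$ with respect to $z_0=z'$, rerunning the fixed-point estimate with $z_k=T^k(z_0)$; composing \eqref{eq:quantitativedecayorbitnearnhim2} with $|\partial_z\bar z|\lesssim\alpha^n$ instead costs an extra factor $\alpha^n$. Second, take $|\bar p|\le\delta_0$ from the hypotheses of Proposition~\ref{prop:localdynamics}, not from $p\le\bar\lambda^nK$, which only gives $\bar p\lesssim\bar\alpha^n$. Third, bunching and pinching ($\alpha\bar\lambda<1$, $\bar\alpha\bar\lambda<1$) are really needed to make the $O(\alpha^n+\bar\alpha^n)$ cross terms negligible against $\bar\lambda^{-n}$ in the lower bound for $\partial_p\bar p$.
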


The proof of this proposition consists of a number of straightforward (though somewhat tedious) estimates, and is deferred to Appendix \ref{sec:appendixreturnmap}. The important observation for what follows is that the $q$-direction gets contracted while the $p$-direction gets expanded under $\Phi^{(n)}_{i\to j}$ and viceversa under the inverse map.
\medskip

\subsection{Graph transform for  horizontal/vertical submanifolds}\label{sec:partiallygraphtransform} With a view towards the construction of the first return map induced by $F$ in the union 
\begin{equation}\label{eq:finalneighbourhood}
\mathcal Q_\delta=\bigcup_{i=1}^m\mathcal Q^i_{\delta}\subset X
\end{equation}
we now study the action of the collection of maps $\{\Phi^{(n)}_{i\to j}\}_n$ on a suitable class of  submanifolds.

\begin{defn}\label{defn:verticalsubmanifold}
    A  \textit{horizontal} (resp. \textit{vertical}) submanifold is an inmersed, codimension-one $C^1$ submanifold $\Delta\subset \mathcal Q^i_{\delta}$ for some $i=1,\dots,m$ which, in local coordinates $(q,p,z)$ as in  \eqref{eq:localcoordinateshomchannel} admits a parametrization of the form
     \begin{align*}
     \Delta=&\{(h(p,z),p, z)\colon z\in N, p\in[0,\delta] \}\\
     \big(\text{resp.}\quad \Delta=&\{(q,v(q,z),  z)\colon z\in  N,\ q\in [0,\delta]\} \big)
     \end{align*}
     where: the function $h$ (resp. $v$) satisfies $0<h<\delta$ (resp. $0<v<\delta$) and
         \[
        |\partial_{(p,z)} h|\leq 1\qquad\qquad (\text{resp.}  |\partial_{(q,z)} v|\leq 1).
         \]
\end{defn}

The definition of  horizontal and vertical submanifolds is justified as follows. The main idea is that, in view of item $(1)$ in Proposition \ref{prop:globalmap},  horizontal submanifolds (resp.  vertical) are well aligned with the most expanding (resp. contracting) directions of the maps $\Phi_{i\to j}^{(n)}$. In particular, the horizontal (resp. vertical) boundaries of $\mathcal Q^i_{\delta}$, $i=1,\dots,m$, are  horizontal (resp. vertical) submanifolds. Studying the action of $\Phi^{(n)}_{i\to j}$ on these classes of manifolds will allow us to show the existence of a Markov partition for the induced return map in the set  $\mathcal Q_\delta\subset X$ defined in \eqref{eq:finalneighbourhood}.

\begin{prop}\label{prop:graphtransform}
   Let  $0<\delta\ll \delta_0 \ll 1$. There exists $n_*\in\mathbb N$ (depending only on $\delta_0$) such that the following holds. Let $i=1,\dots,m$,  and let $\Delta\subset \mathcal Q^i_\delta$ be  a horizontal (resp. vertical) submanifold. Then, for any $n\geq n_*$ and any $j=1,\dots,m$ there exists a  horizontal submanifold $\Delta^{(n)}\subset \Phi^{(n)}_{i\to j}(\Delta)\subset \mathcal Q^j_\delta$ (resp. vertical submanifold $\Delta^{(n)}\subset (\Psi^{(n)}_{j\to i})^{-1}(\Delta)\subset \mathcal Q^j_\delta$). 
\end{prop}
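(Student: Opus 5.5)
The plan is a graph transform argument driven by the estimates in Proposition \ref{prop:globalmap}. I treat the horizontal case; the vertical case follows by the same argument applied to the inverse maps $(\Phi^{(n)}_{i\to j})^{-1}$, using item (2) of that proposition, which has the symmetric structure ($q$ expanded, $p$ contracted).

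\textbf{Step 1: locate the relevant piece of $\Delta$.} Let $\Delta=\{(h(p,z),p,z)\}$ be horizontal in $\mathcal Q^i_\delta$. By the estimates \eqref{eq:estimatesforwardmap}, the map $\Phi^{(n)}_{i\to j}$ is only defined on a thin strip $p\in[0,\bar\lambda^nK]$, so I consider the restriction of $\Delta$ to this strip. On it, $\Delta$ is parametrized by $(p,z)$. Composing with $\Phi^{(n)}_{i\to j}$ gives
\[
(p,z)\mapsto \big(\bar q(h(p,z),p,z),\ \bar p(h(p,z),p,z),\ \bar z(h(p,z),p,z)\big).
\]

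\textbf{Step 2: invert the $(\bar p,\bar z)$-components.} I will show that $G:(p,z)\mapsto(\bar p,\bar z)$ is a diffeomorphism from a subdomain of the strip onto $[0,\delta]\times N$.
\begin{itemize}
\item In the $p$-direction, $\partial_p\bar p+\partial_q\bar p\,\partial_ph\gtrsim \bar\lambda^{-n}-(\alpha\bar\alpha)^n$. This is huge, since the bunching and pinching conditions give $\alpha\bar\alpha\bar\lambda<1$.
\item In the $z$-direction, \eqref{eq:globalmapzcomponent} shows that $\bar z=T^n\circ(S_i+\widetilde S_i)$. This is a diffeomorphism of $N$ up to the correction $\widetilde S_i$, which is small because $\widetilde S_i(0,0,z)=0$ and $q,p\lesssim\delta$.
\end{itemize}
So $DG$ is block-triangular-dominant with invertible diagonal blocks, and the off-diagonal terms $\partial_z\bar p\lesssim(\alpha\bar\alpha)^n$ are negligible after rescaling by $\bar\lambda^n$. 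The $p$-range: at $p=0$ the image lies on $\bar p\approx 0$, and at $p=\bar\lambda^nK$ it exceeds $\delta$ once $K\gtrsim\delta$. By a continuity/degree argument on each $z$-fiber, combined with the fact that $T^n\circ S_i$ is a diffeomorphism, $G$ covers $[0,\delta]\times N$.

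\textbf{Step 3: write the image as a graph and check the conditions.} Set $\bar h=\bar q\circ G^{-1}$, so the image contains the graph $\Delta^{(n)}=\{(\bar h(\bar p,\bar z),\bar p,\bar z)\}$.
\begin{itemize}
\item Since $|\bar q|\lesssim\bar\lambda^n$, we get $0<\bar h<\delta$ for $n\ge n_*$. Positivity comes from $\bar q\approx \tilde q_i^+ + a_i q$ with $q^+_i>0$, after the coordinate change \eqref{eq:localcoordinateshomchannel}.
\item The slope is estimated by the chain rule, $D\bar h=D_{(p,z)}\bar q\cdot DG^{-1}$. The numerator satisfies $|D\bar q|\lesssim \bar\alpha^n+(\bar\lambda\alpha)^n+(\bar\lambda\alpha\bar\alpha)^n$. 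Meanwhile $DG^{-1}$ has norm $\lesssim \bar\lambda^n$ in the $\bar p$-direction and $\lesssim\alpha^n$ in the $\bar z$-direction, since $\bar z$ involves $T^n$. Hence $|\partial_{\bar p}\bar h|\lesssim(\bar\alpha\bar\lambda)^n$ and $|\partial_{\bar z}\bar h|\lesssim(\bar\lambda\alpha^2\bar\alpha)^n$.
\item These bounds are small precisely by the bunching and pinching conditions with $\kappa\ge2$. That yields $|\partial_{(\bar p,\bar z)}\bar h|\le1$ for $n\ge n_*$.
\end{itemize}

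\textbf{Main obstacle.} The delicate step is the $z$-part of $DG^{-1}$ in Step 3. One must control $(D\bar z)^{-1}\sim (DT^{-n})$ together with the cross term $\partial_z\bar p$, and verify that the exponent combination is dominated by $\bar\lambda^n$. This is where the strong gap $\kappa\ge2$ is used. I would first try a Schur-complement bound for $DG^{-1}$, treating $\partial_z\bar p/\partial_p\bar p\lesssim(\bar\lambda\alpha\bar\alpha)^n$ as a small perturbation.
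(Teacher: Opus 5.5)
Your proposal is correct and is essentially the paper's own argument: the paper also solves $P=\bar p(h(p,z),p,z)$, $Z=\bar z(h(p,z),p,z)$ for $(p,z)=(\tilde p,\tilde z)(P,Z)$ using the estimates of Proposition \ref{prop:globalmap}. It then bounds $|\partial_P\tilde p|\lesssim\bar\lambda^n$ and $|\partial_Z\tilde p|\lesssim(\bar\lambda\alpha\bar\alpha)^n$, concludes that $h^{(n)}=\bar q(h(\tilde p,\tilde z),\tilde p,\tilde z)$ satisfies $|\partial_{(P,Z)}h^{(n)}|\le\bar\lambda^{n/2}$, and treats the vertical case symmetrically.

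One small slip in your Step 2: points with $p=0$ lie on $W^s(\Lambda)$, so their images lie in $W^s_{\mathrm{loc}}(\Lambda)=\{p=0\}$ of the $\psi_{\mathrm{loc}}$-chart, that is, at $\bar p\approx -p^j_-(\bar z)<0$ in the $\psi_j$-chart rather than at $\bar p\approx 0$. Consequently your intermediate-value argument covers $[0,\delta]$ only once $K\gtrsim\delta_0$ (not merely $K\gtrsim\delta$), which changes nothing else.
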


\begin{proof}
 The proof boils down to a graph transform argument which makes use of the estimates in Proposition \ref{prop:globalmap}. For any $n\geq n_*$ as in Proposition \ref{prop:globalmap} and for a choice of $i,j=1,\dots,m$, let 
 \[
 (\bar q,\bar p,\bar z)=\Phi^{(n)}_{i\to j}(q,p,z)
 \]
 Now observe that, in view of the estimates \eqref{eq:estimatesforwardmap} and the definition of a horizontal manifold, for any  $(P,Z)\in [0,\delta]\times N$, the system of implicit equations 
 \[
 P=\bar p(h(p,z),p,z)\qquad\qquad Z=\bar z(h(p,z),p,z)
 \]
admits a unique solution $(p,z)=(\tilde p(P,Z),\tilde z(P,Z))$ defined for $P\in[0,\delta]$ and $Z\in N$. Moreover, making use of the estimates in Proposition \ref{prop:globalmap}, it is not difficult to see that
 \begin{equation}\label{eq:graphtransformproof}
 |\partial_P  \tilde p|\lesssim \bar\lambda ^n,\qquad\qquad |\partial_Z \tilde p|\lesssim (\bar\lambda\alpha\bar\alpha)^n.
 \end{equation}
 To show that 
 \[
 \Delta^{(n)}=\{(h^{(n)}(P,Z),P,Z)\colon  (P,Z)\in[0,\delta]\times N\}
 \]
 with
 \[
 h^{(n)}(P,Z)=\bar q(h(\tilde p(P,Z),\tilde z(P,Z)),\tilde p(P,Z), \tilde z(P,Z))
 \]
 is a horizontal submanifold it is enough to combine the estimates in Proposition \ref{prop:globalmap} and \eqref{eq:graphtransformproof} to deduce  (possibly after enlarging $n_*$) that
 \[
 |\partial_{(P,Z)} h^{(n)}|\leq \bar\lambda^{n/2}.
 \]
 The claim for vertical submanifolds is obtained by a similar argument.
\end{proof}

\medskip

\subsection{The induced  return map}\label{sec:Returnmap}

Finally, in this section, we use the results obtained above to describe in detail the first return map (wherever it is defined) 
\begin{equation}\label{eq:returnmap}
\mathcal R:\mathcal Q_\delta\to \mathcal Q_\delta
\end{equation}
that $F:X\to X$ induces in the set $\mathcal Q_\delta\subset X$ defined in \eqref{eq:finalneighbourhood}. We start by observing that $\mathcal R$ admits a Markov partition. To do so it is convenient for first introduce the following concept.

\begin{defn}
 Let $i=1,\dots,m$. We say that a set $\mathcal V\subset \mathcal Q_\delta^i$ (resp. $\mathcal H\subset \mathcal Q_\delta^i$)  is a vertical block (resp. horizontal block) if it can be foliated by vertical (resp. horizontal) submanifolds.
\end{defn}

We now describe the Markov partition for the return map $\mathcal R$. The fact that $\mathcal Q_\delta$ is not connected introduces some small, but rather irrelevant, technicalities. We have included a sketch of the partition in Figure \ref{fig:fig5} for the convenience of the reader.

\begin{figure}
    \centering
    \includegraphics[scale=0.65]{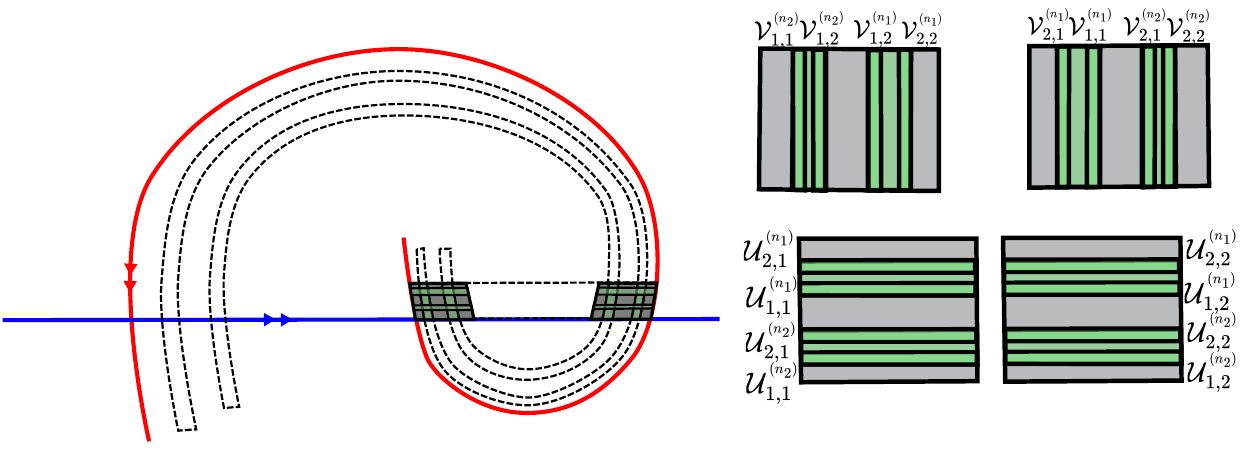}
    \caption{Sketch of the Markov partition elements for the case where $m=2$, i.e. we only involve two homoclinic channels. To build the partition it is helpful to analyze the preimages (and images) of the connected dashed rectangle and then look at where the left and right shaded subrectangles land.}
    \label{fig:fig5}
\end{figure}

\begin{prop}\label{prop:markovproperty}
Let $A=\{1,\dots,m\}$ and let $0<\delta\ll \delta_0 \ll 1$. There exists $n_*\in\mathbb N$ and $C>0$ (depending only on $\delta_0$) and collections $\{\mathcal H^{(n)}_{i;j}\},\{\mathcal V^{(n)}_{i;j}\}\subset \mathcal Q_\delta$ of horizontal and vertical blocks such that
\[
\bigcup_{n,i,j}\mathcal V^{(n)}_{i;j}\subset \mathrm{Dom} (\mathcal R)\qquad\qquad \bigcup_{n,i,j}\mathcal H^{(n)}_{i;j}\subset \mathrm{Dom}(\mathcal R^{-1}),
\]
for each $(n,i,j)\in \mathbb N\times A^2$ with $n\geq n_*$
\[
\mathcal V_{i;j}^{(n)}\in \mathcal Q_\delta^{j}\qquad\qquad \mathcal H_{i;j}^{(n)}\in \mathcal Q_\delta^{i}
\]
and for any $(n,i,j),(n',i',j')\in\mathbb N\times A^2$ with $n,n'\geq n_*$:
\begin{enumerate}
\item \textit{(Domain Partition):}if $(n,i,j)\neq (n',i',j')$ 
\[
\mathcal V^{(n)}_{i;j}\cap\mathcal V^{(n')}_{i';j'}=\emptyset\qquad\text{and}\qquad\mathcal H^{(n)}_{i;j}\cap \mathcal H^{(n')}_{i';j'}=\emptyset,
\]
    \item \textit{(Markov property):} We have 
    \[
    \mathcal R(\mathcal V^{(n)}_{i;j })=\Phi_{j\to i}^{(n)}(\mathcal V^{(n)}_{i;j})=\mathcal H^{(n)}_{i;j}.
    \]
    Moreover, 
    \[
    \mathcal R(\mathcal H^{(n)}_{i;j}\cap \mathcal V^{(n)}_{j';i})=\Phi^{(n)}_{i\to j'}(\mathcal H^{(n)}_{i;j}\cap \mathcal V^{(n)}_{j';i})\qquad\qquad (\text{resp. } \mathcal R^{-1}(\mathcal H^{(n)}_{i;j}\cap \mathcal V^{(n)}_{j';i})=(\Phi^{(n)}_{j\to i})^{-1}(\mathcal H^{(n)}_{i;j}\cap \mathcal V^{(n)}_{j';i}))
    \]
    is a horizontal (resp. vertical) block. 
     \item \textit{(Localization):} In local coordinates $(q,p,z)$ as in \eqref{eq:localcoordinateshomchannel} 
     \begin{equation}\label{eq:localizationblocks}\mathcal V_{i;j}^{(n)}\subset \{0<p\leq C\bar\lambda^n\}\qquad\qquad \mathcal H_{i;j}^{(n)}\subset\{0<q<C\bar\lambda^n\}
     \end{equation}
    \item \textit{(Center dynamics):} On $\mathcal H^{(n)}_{i;j}\cap \mathcal V^{(n)}_{j';i}$ 
    \[
    \pi_z \mathcal R(q,p,z)=T^n\circ (S_{i}+\widetilde S_i)(q,p,z)\qquad\qquad \pi_z\mathcal R^{-1}(q,p,z)=(S_{j}+\widetilde S_j)^{-1}(q,p,T^{-n}(z)).
    \]
\end{enumerate}
with $T:N\to N$ as in \eqref{eq:innerdynamicsdefn}, $\{S_i\}_i:N\to N$ as in \eqref{eq:scattmapdynamicsdefn} and $\{\widetilde S_i\}_i:N\to N$ as in Lemma \ref{lem:outerdynamics}.
\end{prop}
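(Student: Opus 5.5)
The plan is to define the blocks directly as domains and images of the global maps, and then check the four properties using Propositions \ref{prop:globalmap} and \ref{prop:graphtransform}. For $n\geq n_*$ and $i,j\in A$ I would set
\[
\mathcal V^{(n)}_{i;j}=\{x\in\mathcal Q^j_\delta\colon \Phi^{(n)}_{j\to i}(x)\in \mathcal Q^i_\delta\},\qquad \mathcal H^{(n)}_{i;j}=\Phi^{(n)}_{j\to i}(\mathcal V^{(n)}_{i;j}).
\]
In words, $\mathcal V^{(n)}_{i;j}$ is the set of points of $\mathcal Q^j_\delta$ that follow the channel $\Gamma^j$, spend $n$ iterates near $\Lambda$, and land in $\mathcal Q^i_\delta$. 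The composite map $F^{n}\circ F^{n_-+n_+}$ has total time $n+n_-+n_+$. The first task is to show it is the \emph{first} return to $\mathcal Q_\delta$ on this set, so that $\mathcal R=\Phi^{(n)}_{j\to i}$ there. This follows from the structure of the orbit. During the outer excursion the orbit stays $O(\delta)$-close to the orbit of $\Gamma^j$, and that orbit meets $\mathcal Q_\delta$ only at time $0$, because the channels' orbits are disjoint by \ref{it:H2} and $\delta\ll\delta_0$. During the local passage Proposition \ref{prop:localdynamics} gives $|q_k|\lesssim\delta\bar\lambda^k$ and $|p_k|\lesssim \bar\lambda^{n-k}\delta_0$. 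The sets $\mathcal Q^i_\delta$ lie near $\Gamma^i_-$, where $p\approx p^i_-(z)$, which is bounded away from $0$. For $\delta$ small this rules out intermediate visits, except possibly at the last $O(1)$ steps, which fixes the convention for $n$.

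For the geometry of the blocks, the vertical boundaries $\{q=0\},\{q=\delta\}$ of $\mathcal Q^j_\delta$ are vertical submanifolds, and the horizontal boundaries $\{p=0\},\{p=\delta\}$ are horizontal. I would apply Proposition \ref{prop:graphtransform} to the horizontal boundaries of $\mathcal Q^i_\delta$ pulled back by $(\Phi^{(n)}_{j\to i})^{-1}$. Using $|\partial_p\bar p|\gtrsim\bar\lambda^{-n}$ together with the smallness of the other entries in \eqref{eq:estimatesforwardmap}, the set $\{\bar p\in[0,\delta]\}$ is a strip bounded by two vertical graphs $p=v_\pm(q,z)$ with $0<v_-<v_+\lesssim\bar\lambda^n$. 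This strip is foliated by the vertical graphs $\{\bar p=c\}$, so it is a vertical block. The same argument gives the localization $\mathcal V^{(n)}_{i;j}\subset\{0<p\leq C\bar\lambda^n\}$. The symmetric argument with the inverse estimates in Proposition \ref{prop:globalmap} shows that $\mathcal H^{(n)}_{i;j}$ is a horizontal block contained in $\{0<q<C\bar\lambda^n\}$. This yields item (3).

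Disjointness (item (1)) follows because different $(n,i)$ with the same $j$ correspond to different values of $\bar p$ or to different targets. The targets are disjoint since the $\mathcal Q^i_\delta$ are disjoint. For fixed target, the intervals $\{v^{(n)}_-\le p\le v^{(n)}_+\}$ for consecutive $n$ are separated, by the relation $p_0\approx \bar p/\mu^{(n)}$ from \eqref{eq:asymptoticslocal}: these intervals scale like $\bar\lambda^{n}[c_1,c_2]$ with nonoverlapping ranges, after possibly shrinking $\delta$ relative to the ratio $\underline\lambda/\bar\lambda$. The main obstacle is that $\lambda,\mu$ depend on $z$, so the strips are only approximately nested geometric intervals. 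I would first try to handle this using the bunching and pinching inequalities, which control $\bar\alpha^n$. If consecutive strips overlap, I would instead take first-return disjointness as the definition and simply intersect away overlaps; first return still makes them disjoint automatically.

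For item (2) and the intersection claims, $\mathcal R(\mathcal V)=\mathcal H$ holds by definition. For $\mathcal H^{(n)}_{i;j}\cap\mathcal V^{(n')}_{j';i}$, the intersection of a horizontal block with a vertical block is a product-like region in which each horizontal leaf crosses the vertical strip. I would apply Proposition \ref{prop:graphtransform} leafwise, which gives that the image under $\Phi_{i\to j'}$ is foliated by horizontal submanifolds, and the same holds for the preimage. Finally, item (4) is exactly \eqref{eq:globalmapzcomponent} and \eqref{eq:globalmapzcomponentinverse}, restricted to the blocks.
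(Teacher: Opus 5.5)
Your proposal is correct and is essentially the paper's argument: the blocks are $(\Phi^{(n)}_{j\to i})^{-1}(\mathcal Q^i_\delta)$ and $\Phi^{(n)}_{j\to i}(\mathcal Q^j_\delta)$, their block structure and localization come from Propositions \ref{prop:globalmap} and \ref{prop:graphtransform}, disjointness comes from identifying them with level sets of the first return time (so your interval-separation and bunching/pinching detour is unnecessary), and item (4) is read off from \eqref{eq:globalmapzcomponent} and \eqref{eq:globalmapzcomponentinverse}. There are three minor fixes:
\begin{itemize}
\item In the paper's convention $\{q=\mathrm{const}\}$ is horizontal and $\{p=\mathrm{const}\}$ is vertical, so what you pull back are the vertical boundaries, which is exactly the case covered by Proposition \ref{prop:graphtransform}.
\item The local passage starts near $\Gamma^j_+$, so $|q_k|\lesssim\delta_0\bar\lambda^k$ rather than $\delta\bar\lambda^k$.
\item A visit to some other $\mathcal Q^{i'}_\delta$ during the last $O(1)$ local steps is not handled by a convention on $n$. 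It is ruled out for small $\delta$ by the positive distance between $\Gamma^i_-$ and the finitely many compact sets $F^l(\Gamma^{i'}_-)$, with $l$ bounded independently of $\delta$; this distance is positive because the channel orbits are disjoint.
\end{itemize}
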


\begin{proof}
    Proposition \ref{prop:graphtransform} implies that or each $n\geq n_*$ the set  
\[
\mathcal H^{(n)}_{i;j}=\Phi^{(n)}_{j\to i}(\mathcal Q_\delta^{j}) \qquad\qquad (\text{resp. } \mathcal V^{(n)}_{i;j}=(\Phi^{(n)}_{j\to i})^{-1} (\mathcal Q_\delta^{i})
\]
is a horizontal (resp. vertical) block. Moreover, by choosing $\delta>0$ small enough it is not difficult to see that 
\[
\mathcal H^{(n)}_{i;j}=\{w\in\mathcal Q_\delta^{i}\colon \mathrm{ret}_-^j(w)=n\} \qquad\qquad \text{and}\qquad\qquad \mathcal V^{(n)}_{i;j} =\{w\in\mathcal Q_\delta^{j}\colon \mathrm{ret}_+^i(w)=n\}
\]
where $\mathrm{ret}_+^i:\mathcal Q_\delta\to \mathbb N$ is the first forward return time to $\mathcal Q^i_\delta$ and $\mathrm{ret}_-^i:\mathcal Q_\delta\to \mathbb N$ is the first backward return time to $\mathcal Q^i_\delta$. In particular, this implies item $(1)$. Items $(2), (3)$ and $(4)$ follow from the very definition of the sets $\{\mathcal H_{i;j}^{(n)},\mathcal V_{i;j}^{(n)}\}$ and the estimates in Proposition \ref{prop:globalmap}.
\end{proof}
\medskip

\subsection{The normally hyperbolic lamination}

Finally, it follows from the estimates in Propositions \ref{prop:globalmap} and Proposition \ref{prop:markovproperty} that the set 
\begin{equation}\label{eq:lamination}
\mathcal X=\bigcap_{n\in\mathbb Z} \mathcal R^n(\mathcal Q_\delta)
\end{equation}
is a  normally hyperbolic  lamination (see \cite{MR501173}). Moreover, from the third item in Proposition \ref{prop:markovproperty} we can connect the scattering map dynamics along the homoclinic channels $\{\Gamma^i\}_i$ to the dynamics on the lamination $\mathcal X$. More precisely, we have the following result, which is an expanded version of Proposition \ref{prop:mainnhim}.

\begin{prop}\label{prop:nhimexpanded}
   Let $A=\{1,\dots,m\}$ and let $0<\delta\ll 1$. The set $\mathcal X$ in \eqref{eq:lamination} is a  normally hyperbolic  lamination. Moreover, there exist $0<\bar\lambda<1$ and a constant $C>0$  and a homemorphism $\Psi:(\mathbb N\times A)^\mathbb Z\times N\to \mathcal X$ which conjugates $\mathcal R|_{\mathcal X}$ to a skew-product of the form
    \begin{equation}\label{eq:skewprodpropnhimproof}
    \begin{aligned}
    \mathcal F: (\mathbb N\times A)^\mathbb Z\times N&\to (\mathbb N\times A)^\mathbb Z\times N\\
    (\bs \omega,z)&\mapsto (\sigma(\bs\omega), F_{\bs\omega}(z))
    \end{aligned}
    \end{equation}
    where $\sigma$ is the right shift and, at each $\bs\omega=(\omega,\iota)\in (\mathbb N\times A)^\mathbb Z$ and $z\in \mathbb N$
    \begin{equation}\label{eq:expressioncenterdynskewprod}
   F_{\bs\omega}(z)=T^{\omega_0}\circ(S_{\iota_0}+R_{\bs\omega})(z),\qquad\qquad |R_{\bs\omega}|_{C^1}\leq C \bar\lambda^{\min\{\omega_0,\omega_{1}\}}.
    \end{equation}
    Moreover, for any $\bs\omega,\bs\omega'\in(\mathbb N\times A)^\mathbb Z$ and $n\geq 0$ with: 
    \begin{itemize}
        \item $\bs\omega_k=\bs\omega_k'$ for all $k\leq n$  we  have 
\begin{equation}\label{eq:comparisonskewprod}
    |\pi_z \mathcal F^{n+1}(\bs\omega,z)-\pi_z\mathcal F^{n+1}(\bs\omega',z)|\leq C \bar\lambda^{\min\{\omega_{n+1},\omega_{n+1}'\}}.
    \end{equation}
    \item $\bs\omega_k=\bs\omega_k'$ for all $k\geq n$ we have
\begin{equation}\label{eq:comparisonskewprod2}
    |\pi_z \mathcal F^{-n-1}(\bs\omega,z)-\pi_z\mathcal F^{-n-1}(\bs\omega',z)|\leq C \bar\lambda^{\min\{\omega_{-n-1},\omega_{-n-1}'\}}.
    \end{equation}
\end{itemize}
\end{prop}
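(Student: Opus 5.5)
The plan is to build $\Psi$ from the Markov structure of Proposition \ref{prop:markovproperty}, using the standard intersection of nested horizontal and vertical blocks.

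\textbf{Step 1: strips attached to a symbolic sequence.} Fix $\bs\omega=(\omega,\iota)\in(\mathbb N\times A)^\mathbb Z$ with all $\omega_k\geq n_*$; the alphabet $\mathbb N$ is relabelled as $\{n_*,n_*+1,\dots\}$. For the forward half of the sequence, define the set of points whose successive returns have times $\omega_0,\omega_1,\dots$ and visit the channels $\iota_0,\iota_1,\dots$:
\[
\mathcal V_{\bs\omega}^{+}=\bigcap_{k\geq0}\mathcal R^{-k}\big(\mathcal V^{(\omega_k)}_{\iota_{k+1};\iota_k}\big).
\]
Using the Markov property and iterating the vertical-graph part of Proposition \ref{prop:graphtransform}, each finite intersection is a vertical block. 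Its width in $p$ shrinks at rate $\bar\lambda^{\sum\omega_k}$ by \eqref{eq:localizationblocks} and the inverse-map estimates of Proposition \ref{prop:globalmap}. The infinite intersection is therefore a single vertical submanifold $\{p=v_{\bs\omega}(q,z)\}$. The symmetric construction applied to the backward half gives a horizontal submanifold $\{q=h_{\bs\omega}(p,z)\}$.

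\textbf{Step 2: the conjugacy.} For each $z$, the intersection of the two graphs is a single point, because both graphs have slopes at most $1$ (indeed at most $\bar\lambda^{n/2}$ after the graph transform) in the $(q,p)$ variables. This defines $\Psi(\bs\omega,z)$ as that intersection point, reparametrized by its $z$-coordinate. Continuity of $\Psi$ follows from the exponential shrinking of the blocks. Injectivity follows from the disjointness in item (1) of Proposition \ref{prop:markovproperty}. Surjectivity onto $\mathcal X$ holds because every point of $\mathcal X$ has a well-defined itinerary. Normal hyperbolicity of $\mathcal X$ comes from the cone estimates in Proposition \ref{prop:globalmap}: contraction in $q$ and expansion in $p$ dominate the $z$-rates, provided $n_*$ is large, since $\bar\lambda\alpha\bar\alpha<1$ by the bunching and pinching conditions.

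\textbf{Step 3: the fiber maps.} Item (4) of Proposition \ref{prop:markovproperty} gives
\[
F_{\bs\omega}(z)=T^{\omega_0}\big(S_{\iota_0}(z)+\widetilde S_{\iota_0}(q,p,z)\big)
\]
evaluated at $(q,p)=(h_{\bs\omega},v_{\bs\omega})(z)$. Set
\[
R_{\bs\omega}(z)=\widetilde S_{\iota_0}\big(h_{\bs\omega}(z),v_{\bs\omega}(z),z\big).
\]
By Lemma \ref{lem:outerdynamics}, $\widetilde S$ is $C^2$ and vanishes at $(0,0,z)$, and by \eqref{eq:localizationblocks}, $|q|\lesssim\bar\lambda^{\omega_{-1}}$ and $|p|\lesssim\bar\lambda^{\omega_0}$. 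One must check that the indexing convention makes these the symbols $\omega_0,\omega_1$ after the shift. The $C^0$ bound on $R_{\bs\omega}$ follows directly. The $C^1$ bound additionally needs $|\partial_z h_{\bs\omega}|$ and $|\partial_z v_{\bs\omega}|$ to be small.

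\textbf{Step 4: the comparison estimates.} For \eqref{eq:comparisonskewprod}, note that if $\bs\omega$ and $\bs\omega'$ agree up to index $n$, their horizontal graphs coincide. Their vertical graphs differ by at most $\bar\lambda^{\min\{\omega_{n+1},\omega'_{n+1}\}}$ after being pushed forward $n$ times, because the first return where they can diverge sits in blocks localized in $\{p\lesssim\bar\lambda^{\omega_{n+1}}\}$. The difference in the $z$-projection is then controlled through \eqref{eq:globalmapzcomponent}.

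\textbf{Main obstacle.} The difficulty is that pushing forward through $T^{\omega_k}$ can amplify errors by $\alpha^{\omega_k}$. The plan is to exploit that the $q$-direction decays like $\bar\lambda^{\omega}\alpha^{\omega}$, and to absorb the factor $\alpha^n$ into a slightly weaker rate $\bar\lambda$ using $\alpha^{\kappa+1}\bar\lambda<1$. I expect this, together with the related need for uniform $C^1$ control of $\partial_z h_{\bs\omega}$ and $\partial_z v_{\bs\omega}$, to be the delicate step. I would handle it with an invariant-cone argument on the $z$-slopes, using \eqref{eq:quantitativedecayorbitnearnhim2}.
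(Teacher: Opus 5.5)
Your proposal is correct and follows essentially the same route as the paper. Nested horizontal and vertical blocks from Proposition \ref{prop:markovproperty} give the graphs $\mathcal W^u_{\bs\omega}$ and $\mathcal W^s_{\bs\omega}$, their intersection gives the leaves $N_{\bs\omega}$ and the conjugacy, normal hyperbolicity comes from the cones $\mathcal C^{uu},\mathcal C^{ss}$, $R_{\bs\omega}$ is $\widetilde S_{\iota_0}$ restricted to the leaf, and the comparison estimates come from the localization \eqref{eq:localizationblocks} of leaves sharing a past (or future). You are more careful than the paper on two points it only asserts: the $C^1$ smallness in $z$ of the graph functions, which your plan of absorbing powers of $\alpha$ into a weaker rate via bunching handles, and the index bookkeeping, where the paper is itself off by one --- it places $N_{\bs\omega}$ in $\mathcal H^{(\omega_0)}_{\iota_0}\cap\mathcal V^{(\omega_1)}_{\iota_1}$, whose return time is $\omega_1$, yet writes $T^{\omega_0}$, so your deferred index check will only succeed up to such a relabeling (e.g.\ with the bound $\bar\lambda^{\min\{\omega_{-1},\omega_0\}}$ in your convention).
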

\begin{rem}
    Inequalities \eqref{eq:comparisonskewprod}-\eqref{eq:comparisonskewprod2} will be important in Section \ref{sec:skewprod}. They allow us to compare the dynamics of points in the same local unstable/stable manifold.
\end{rem}

\begin{proof}
 Throughout the proof, give two functions $f(\bs\omega,z)$ and $h(\bs\omega,z)$ we write $f\lesssim h$ if there exists $C>0$ such that uniformly on $(\bs\omega,z)$ we have $f(\bs\omega,z)\leq Ch(\bs\omega,z)$. 
 \medskip
 
 For each $n\in \mathbb N$ and $i\in A$ define 
  \[
 \mathcal H_i^{(n)}=\bigcup_{j\in A}\mathcal H^{(n)}_{i;j}\qquad\qquad  \mathcal V^{(n)}_i=\bigcup_{j\in A}\mathcal V^{(n)}_{i;j}.
  \]
 In this way, if $w\in \mathcal V^{(n)}_i$ then $\mathcal R(w)\in \mathcal H^{(n)}_i$ and if $w\in \mathcal H_i^{(n)}$ we have $\mathcal R^{-1}(w)\in \mathcal V_i^{(n)}$.
Now for any  $\bs\omega=(\omega,\iota)\in (\mathbb N\times A)^\mathbb Z$ introduce the sets
  \[
  \mathcal W_{\bs\omega}^u=\bigcap_{k\geq 0} \mathcal R^k(\mathcal H_{\iota_{-k}}^{(\omega_{-k})}),\qquad\qquad (\text{resp. }\mathcal W_{\bs\omega}^s=\bigcap_{k\geq 0}\mathcal R^{-k}(\mathcal V^{(\omega_{k+1})}_{\iota_{k+1}}))
  \]
  By construction, the sets $\mathcal W^u_{\bs\omega}$ (resp. $,\mathcal W^s_{\bs\omega}$) are horizontal (resp. vertical) submanifolds as in Definition \ref{defn:verticalsubmanifold} and, in local coordinates $(q,p,z)$ as in \eqref{eq:localcoordinateshomchannel},  can be parametrized as 
  \[
  \mathcal W^u_{\bs\omega}=\{h_{\bs\omega}(p,z),p,z)\colon z\in N,p\in[0,\delta]\}\qquad\qquad (\text{resp. }\mathcal W^s_{\bs\omega}=\{(q,v_{\bs\omega}(q,z),z)\colon z\in N,q\in[0,\delta]\})
  \]
  in terms of $C^1$ functions $h_{\bs\omega}$ (resp. $v_{\bs\omega}$) satisfying
  \[
  |h_{\bs\omega}|_{C^1}\lesssim \bar\lambda^{\omega_0},\qquad\qquad
  (\text{resp. } |v_{\bs\omega}|_{C^1}\lesssim \bar\lambda^{\omega_1}).
  \]
 Hence,
   \[
   N_{\bs \omega}= \mathcal W^u_{\bs\omega}\cap \mathcal W^s_{\bs\omega}
   \]
 is a $C^1$ submanifold diffeomorphic to $N$ and, in local coordinates $(q,p,z)$ as in \eqref{eq:localcoordinateshomchannel}, can be parametrized as 
      \begin{equation}\label{eq:parametrizationNomega}
      N_{\bs\omega}=\{(q_{\bs\omega}(z),p_{\bs\omega}(z),z)\colon z\in N\}
      \end{equation}
      for some $C^1$ functions $q_{\bs\omega},p_{\bs\omega}$ which solve the system of equations
      \[
      q_{\bs \omega}(z)=h_{\bs\omega}(p_{\bs\omega}(z),z)\quad\qquad p_{\bs \omega}(z)=v_{\bs\omega}(q_{\bs\omega}(z),z)
      \]
      and satisfy
      \begin{equation}\label{eq:localizationNomega}
      |q_{\bs\omega}|_{C^1}\lesssim \bar\lambda^{\omega_0}\qquad\qquad |p_{\bs\omega}|_{C^1}\lesssim \bar\lambda^{\omega_1}.
      \end{equation}
      Moreover, 
      \[
      N_{\bs\omega}\subset \mathcal H^{(\omega_0)}_{\iota_0}\cap \mathcal V_{\iota_1}^{(\omega_1)}
      \]
      and for $w\in N_{\bs\omega}$ we have
      \[
      \mathcal R^{k+1}(w) \in \mathcal H^{(\omega_k)}_{\iota_k}\quad \text{if }\kappa\geq 0\qquad\qquad \text{and}\qquad\qquad \mathcal R^k(w)\in \mathcal V_{\iota_{k}}^{(\omega_k)}\qquad\text{if }\kappa\leq 0.
      \]      
We deduce that 
\[
\mathcal X=\bigcup_{\bs\omega\in \Sigma} N_{\bs\omega}
\]
and the dynamics on $\mathcal X$ is driven by the skew-product map \eqref{eq:skewprodpropnhimproof}. To show that $\mathcal X$ is normally hyperbolic it is enough to observe that at any $w\in \mathcal Q_\delta$, 
\[
\mathcal C^{uu}_w=\{v\in T_w \mathcal Q_\delta\colon |v_p|\geq \max\{|v_q|,|v_z|\}\}\qquad  (\text{resp. } \mathcal C^{ss}_w=\{v\in T_w \mathcal Q_\delta\colon |v_q|\geq \max\{|v_p|,|v_z|\}\}) 
\]
constitute a family of invariant expanding (resp. contracting) cones and that, for $w\in N_{\bs\omega}$ and $v\in \mathcal C^{uu}_w$ (resp. $v\in \mathcal C^{ss}_w$)
\[
\lVert D\mathcal R(w) v\rVert\gtrsim \bar\lambda^{-\omega_0}\qquad\qquad(\text{resp.} \lVert D\mathcal R^{-1}(w) v\rVert\gtrsim \bar\lambda^{-\omega_1}).
\]
It now remains to establish the estimates for 
\[
R_{\bs\omega}=\widetilde S_{\omega_0}|_{N_{\bs\omega}}
\]
given in \eqref{eq:expressioncenterdynskewprod} and \eqref{eq:comparisonskewprod}-\eqref{eq:comparisonskewprod2}. Inequality \eqref{eq:expressioncenterdynskewprod} follows from \eqref{eq:propertiesfunctionsoutermap} and  \eqref{eq:localizationNomega}. To deduce inequality \eqref{eq:comparisonskewprod} we observe that for such $\bs\omega,\bs\omega'$ we have (recall the expression of the parametrization for $N_{\bs\omega}$ in \eqref{eq:parametrizationNomega})
\[
|q_{\bs\omega}-q_{\bs\omega'}|\lesssim \bar\lambda^{\omega_0}\  \bar\lambda^{\mathrm{min}\{\omega_{n+1},\omega'_{n+1}\}}\prod_{k=0}^n\bar\lambda^{\omega_k}\qquad\qquad |p_{\bs\omega}-p_{\bs\omega'}|\lesssim \bar\lambda^{\mathrm{min}\{\omega_{n+1},\omega'_{n+1}\}}\prod_{k=0}^n\bar\lambda^{\omega_k}.
\]
Inequality \eqref{eq:comparisonskewprod2} is obtained by a similar reasoning.
\end{proof}

\section{Symbolic blenders for IFS on high dimensional annuli}\label{sec:blenderIFS}
In this section we give a proof of Theorem \ref{thm:mainIFS}. The proof is divided in a number of steps:
\begin{enumerate}
    \item First, in Section \ref{sec:linearapproxIFS} we replace the maps $\{T^n_\varepsilon\circ S_\varepsilon\}_n$ by their local affine approximation and show that the corresponding matrix is hyperbolic.

    \item In Section \ref{sec:uniformIFS} we show that it is possible to construct a range of $\mathcal N\subset\mathbb N$ for which, at the same time, the sequence $\{[n\beta]\}_{n\in\mathcal N}$ is sufficiently well-distributed, while, the linearization of corresponding maps $\{T^n_\varepsilon\circ S_\varepsilon\}_{n\in\mathcal N}$, can be  simultaneously diagonalized (up to sufficiently small errors).

    \item In Section \ref{sec:coveringandequidistributionIFs} we verify  that the set of maps $\{T^n_\varepsilon\circ S_\varepsilon\}_{n\in\mathcal N}$ satisfy the so-called covering property and has a set of well distributed hyperbolic fixed points (see \cite{NassiriPujalsTransitivity} or \cite{BeyondUH}).
    \item In Section \ref{sec:csblenderIFS} we verify the existence of a symbolic cs-blender by exploiting the covering and equidistribution properties established in Section \ref{sec:coveringandequidistributionIFs}.
    \item In Section \ref{sec:doubleblenderIFS}, a symbolic double blender is constructed making use of symmetry considerations.
    \item Finally, in Section \ref{sec:completionproofmainifs}  we conclude the proof of Theorem \ref{thm:mainIFS}.
\end{enumerate}
\medskip

With the intention of alleviating the notation as much as possible, throughout this section we will simply write $T,S$ instead of $T_\varepsilon,S_\varepsilon$. We will also assume without mentioning that the maps $T,S$ satisfy assumptions \ref{it:itemB1}-\ref{it:itemB5}.

\subsection{Linear approximation}\label{sec:linearapproxIFS}
\begin{lem}\label{lem:C1control}
    Fix any $C>0$. Then, there exists $\ell>0$ independent of $\varepsilon$ such that for any $n\leq C/\varepsilon$ the map 
    \[
    F_n:=T^n\circ S: \mathbb A^d\to \mathbb A^d
    \]
    can be expressed as
    \begin{equation}\label{eq:localaffineapprox}
    F_n:\binom{\varphi}{J}=\underbrace{\binom{[n\beta]}{0}}_{\bs b_n}+\underbrace{\begin{pmatrix}
    \mathrm{id}+n\varepsilon AB&nA\\
    \varepsilon B&\mathrm{id}
\end{pmatrix}}_{A^{(n)}}\binom{\varphi}{J}+\mathcal E(\varphi,J)
    \end{equation}
   with $\mathcal E=(\mathcal E_\varphi,\mathcal E_J)$ such that at any $(\varphi,J)\in \{|\varphi|\leq 1/2,\ |J|\leq \ell \varepsilon\}$
    \begin{equation}\label{eq:errorestimatesIFSapprox}
    \mathcal E_\varphi=O(n\varepsilon |\varphi|^2,\varepsilon,\varepsilon|\varphi|)\qquad\qquad \mathcal E_J=O(\varepsilon|\varphi|^2,\varepsilon^2).
    \end{equation}
    Moreover, for these values of $(\varphi,J)$
    \begin{equation}\label{eq:errorestimatesIFSapproxC1}
    D\mathcal E(\varphi,J)=\begin{pmatrix}O(n\varepsilon|\varphi|,\varepsilon)&O(1)\\ O(\varepsilon|\varphi|,\varepsilon^2)&O(\varepsilon) \end{pmatrix}.
    \end{equation}
\end{lem}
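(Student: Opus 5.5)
The plan is to compose explicit Taylor expansions. First expand $S$, then iterate $T$ exactly $n$ times on the image, tracking the $J$-component to second order and the $\varphi$-component to first order.

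\textbf{Step 1: expansion of $S$.} By \ref{it:itemB3} we have $P_{0,J}(0,0)=0$. By \ref{it:itemB5}, $P_{\varepsilon,J}(0,0)=O(\varepsilon)$. Taylor expanding with the $C^2$ bound $K$ from \eqref{eq:C^2estimatesmapsIFSassumptions} gives
\[
S(\varphi,J)=\Big(\varphi+O(\varepsilon),\; J+\varepsilon B\varphi+O(\varepsilon|\varphi|^2,\varepsilon^2,\varepsilon|J|)\Big).
\]
On $|J|\le \ell\varepsilon$ the term $\varepsilon|J|$ is $O(\varepsilon^2)$. The first-order terms in $\varphi$ of the $\varphi$-component of $S$ contribute $O(\varepsilon|\varphi|)$, and derivatives obey the analogous bounds. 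Call the output $(\varphi',J')$. Then $|J'|\lesssim \varepsilon$ and $|\varphi'-\varphi|\lesssim\varepsilon$.

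\textbf{Step 2: $n$ iterates of $T$ near $J=0$.} By \ref{it:itemB2}, on a region $|J|\lesssim\varepsilon$ the remainders satisfy $R_{\varphi}(\varphi,J)=R_\varphi(\varphi,0)+O(|J|^2)$ and $R_J(\varphi,J)=R_J(\varphi,0)+O(|J|^3)$, with the values at $J=0$ also vanishing. This holds because $\{J=0\}$ is the invariant torus rotated by $\beta$, as in the KAM normal form; I read \ref{it:itemB2} as including this. So one step of $T$ is
\[
(\varphi,J)\mapsto(\varphi+\beta+AJ+O(|J|^2),\; J+O(|J|^3)).
\]
I then prove by induction that after $k\le n\le C/\varepsilon$ steps, starting from $|J'|\lesssim\varepsilon$:
\begin{itemize}
\item the $J$-drift is at most $k\varepsilon^3\lesssim \varepsilon^2$, so $J_k=J'+O(\varepsilon^2)$;
\item the angle satisfies $\varphi_k=\varphi'+k\beta+kAJ'+O(k\varepsilon^2)=\varphi'+k\beta+kAJ'+O(\varepsilon)$.
\end{itemize}
For derivatives, the variational equation has matrix $\begin{pmatrix}\mathrm{id}&A+O(|J|)\\ 0&\mathrm{id}+O(|J|^2)\end{pmatrix}$. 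The product of $n$ such matrices is $\begin{pmatrix}\mathrm{id}&nA+O(1)\\0&\mathrm{id}+O(\varepsilon)\end{pmatrix}$ by the same counting, since $n\cdot\varepsilon=O(1)$ and $n\varepsilon^2=O(\varepsilon)$.

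\textbf{Step 3: composition.} Substitute $J'=J+\varepsilon B\varphi+O(\varepsilon|\varphi|^2,\varepsilon^2)$ into the iteration:
\[
\varphi_n=[n\beta]+\varphi+nAJ+n\varepsilon AB\varphi+O(n\varepsilon|\varphi|^2,\varepsilon,\varepsilon|\varphi|),
\qquad
J_n=J+\varepsilon B\varphi+O(\varepsilon|\varphi|^2,\varepsilon^2).
\]
The angle error arises as follows. The quadratic part of $\varepsilon P_J$ is multiplied by $nA$, giving $n\varepsilon|\varphi|^2$. The term $nA\cdot\varepsilon^2$ is $O(\varepsilon)$. The other $O(\varepsilon)$ and $O(\varepsilon|\varphi|)$ terms come from $\varepsilon P_\varphi$. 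This is exactly \eqref{eq:localaffineapprox}–\eqref{eq:errorestimatesIFSapprox}. Differentiating the same composition with the chain rule, using the derivative product from Step 2 and $DS=\mathrm{id}+\varepsilon DP$, yields \eqref{eq:errorestimatesIFSapproxC1}. The $O(1)$ entry in the $(\varphi,J)$ block absorbs the $O(1)$ correction to $nA$.

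\textbf{Main obstacle.} The delicate point is Step 2: controlling accumulation over $n\sim1/\varepsilon$ iterates. This requires the $J$-coordinate to stay in $|J|\lesssim\ell\varepsilon$ throughout, which I establish by a bootstrap on $k$, choosing $\ell$ so that the $O(\varepsilon^2)$ drift cannot leave the region. It also requires using the vanishing orders in \ref{it:itemB2} precisely, so that per-step errors are $O(\varepsilon^2)$ in angle and $O(\varepsilon^3)$ in action; merely $O(\varepsilon)$ per-step errors would sum to $O(1)$ and break the estimate. The choice of $\ell$ is then fixed in terms of $C$, $K$ and $|A|$, independently of $\varepsilon$.
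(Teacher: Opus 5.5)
Your argument is correct and is essentially the direct computation the paper has in mind; the paper does not reproduce the proof and defers to Lemma 5.2 of the companion 3-body paper, while you expand $S$ at $(0,0)$, iterate the normal form of $T$ for $n\le C/\varepsilon$ steps in the strip $|J|\lesssim\varepsilon$, and compose. One small repair is needed in the $C^1$ part: the one-step variational matrix also contains $D_\varphi R_{\varepsilon,\varphi}=O(|J|^2)$ and $D_\varphi R_{\varepsilon,J}=O(|J|^3)$, so in fact $DT^n=\begin{pmatrix}\mathrm{id}+O(\varepsilon)&nA+O(1)\\ O(\varepsilon^2)&\mathrm{id}+O(\varepsilon)\end{pmatrix}$, which still yields \eqref{eq:errorestimatesIFSapproxC1}, but this $O(\varepsilon^2)$ lower-left entry is precisely where the cubic vanishing of $R_{\varepsilon,J}$ (and $C^3$--$C^4$ control of the remainders, beyond the stated $C^2$) is used, since an $O(\varepsilon)$ entry there would spoil the $(J,\varphi)$ block and, through the $nA$ entry, the $(\varphi,\varphi)$ block.
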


The proof of this result is obtained in the very same way as Lemma 5.2. in \cite{guardia2025partiallyhyperbolicdynamics3body} and we do not reproduce it here. We now study the dynamical features of the local affine approximation \eqref{eq:localaffineapprox} for the family of maps $\{F_n\}_n$ for a suitable range of $n$. To that end we recall that, in view of assumption \ref{it:itemB3}, the matrix $AB$ has $d$ different eigenvalues $\{\alpha_i\}_{i=1}^d\subset\mathbb R\setminus \{0\}$.
We now exploit this fact to diagonalize the matrix $A^{(n)}$ in \eqref{eq:localaffineapprox}. To do so we let $Q\in GL(\mathbb R,d)$ be such that 
\begin{equation}\label{eq:diagonalizationAB}
Q^{-1}AB  Q=\mathrm{diag}(\alpha_1,\dots,\alpha_d).
\end{equation}

\begin{lem}\label{lem:hyperbolicityiteratesIFS}
    There exists $N_0\gg1$ (depending only on $AB$) such that, for any $n\in\mathbb N$ satisfying 
    \[
    n\varepsilon \geq N_0
    \]
    the symplectic matrix $A^{(n)}$ is hyperbolic with simple spectrum.   Its eigenvalues $\{(\lambda_i^{(n)},1/\lambda_i^{(n)})\}_{i=1}^d\subset\mathbb R\setminus\{-1,1\}$ are given by 
    \[
\lambda_i^{(n)}=\frac{\alpha_i^{(n)}+2-\sqrt{(\alpha_i^{(n)})^2+4\alpha_i^{(n)}} }{2}\qquad\qquad \alpha_i^{(n)}=n\varepsilon\alpha_i 
    \]
    and 
    \begin{equation}\label{eq:defndiagonalmatrix}
   Q_n^{-1}A^{(n)}Q_n :=\mathcal D_n=\begin{pmatrix}
        D_n&0\\
        0&D_n^{-1}
    \end{pmatrix}=\begin{pmatrix}
        \mathrm{diag}(\lambda_1^{(n)},\dots,\lambda_d^{(n)})&0\\
        0&\ \mathrm{diag}(1/\lambda_1^{(n)} ,\dots, 1/\lambda_d^{(n)} )
    \end{pmatrix}
    \end{equation}
    with 
    \[
    Q_n=\begin{pmatrix}
        Q&Q\\
      \varepsilon BQ(D_n-\mathrm{id})^{-1}&  \varepsilon BQ(D^{-1}_n-\mathrm{id})^{-1}
        
    \end{pmatrix}.
    \]
\end{lem}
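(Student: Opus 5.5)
The plan is to compute the eigenvectors of $A^{(n)}$ directly with a block ansatz adapted to the eigenbasis of $AB$. First I would record the factorization
\[
A^{(n)}=\begin{pmatrix}\mathrm{id}&nA\\0&\mathrm{id}\end{pmatrix}\begin{pmatrix}\mathrm{id}&0\\ \varepsilon B&\mathrm{id}\end{pmatrix},
\]
which is the linear part of the shear decomposition already described after Theorem \ref{thm:mainIFS}. Each factor is symplectic when $A$ and $B$ are symmetric. I expect this symmetry to follow from $T,S$ being symplectic twist-type maps in the normal form \eqref{eq:innermapsymbolic}--\eqref{eq:scattmapsymbolic}: $A$ is the torsion Hessian, and $B=D_\varphi P_{0,J}$ is the Hessian of a generating function. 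Consequently $A^{(n)}$ is symplectic and its spectrum is closed under $\lambda\mapsto 1/\lambda$.

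Next I would look for eigenvectors of the form $(x,y)$ with $A^{(n)}(x,y)=\lambda(x,y)$. The second block row gives $\varepsilon Bx+y=\lambda y$, so $y=\varepsilon B x(\lambda-1)^{-1}$ provided $\lambda\neq 1$. Substituting into the first row gives $n\varepsilon AB x\,\big(1+(\lambda-1)^{-1}\big)=(\lambda-1)x$, that is,
\[
n\varepsilon\lambda\, ABx=(\lambda-1)^2x .
\]
Choosing $x=Qe_i$, an eigenvector of $AB$ with eigenvalue $\alpha_i$ as in \eqref{eq:diagonalizationAB}, reduces this to the scalar quadratic $\lambda^2-(2+\alpha_i^{(n)})\lambda+1=0$ with $\alpha_i^{(n)}=n\varepsilon\alpha_i$. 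Its roots are $\lambda_i^{(n)}$ as stated and $1/\lambda_i^{(n)}$, since the product of the roots is $1$. Collecting the $2d$ resulting eigenvectors column by column produces exactly the matrix $Q_n$ in the statement, with the eigenvalues arranged as in \eqref{eq:defndiagonalmatrix}.

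It then remains to check the spectral conditions, and I would do this using only assumption \ref{it:itemB3}. The roots are real, distinct and different from $\pm1$ if and only if the discriminant satisfies $(\alpha_i^{(n)})^2+4\alpha_i^{(n)}>0$, i.e.\ $\alpha_i^{(n)}>0$ or $\alpha_i^{(n)}<-4$. Since every $\alpha_i$ is real and nonzero, taking $N_0>4/\min_i|\alpha_i|$ guarantees this whenever $n\varepsilon\geq N_0$; in particular $\lambda\neq 1$, which justifies the division above. For simplicity of the whole spectrum, note that $\lambda+1/\lambda=2+n\varepsilon\alpha_i$, and the $\alpha_i$ are pairwise distinct. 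Hence the pairs $\{\lambda_i^{(n)},1/\lambda_i^{(n)}\}$ are pairwise disjoint, and within each pair the two values differ. So $A^{(n)}$ has $2d$ distinct real eigenvalues, none on the unit circle, and is therefore hyperbolic. Eigenvectors associated with distinct eigenvalues are linearly independent, so $Q_n$ is invertible and $Q_n^{-1}A^{(n)}Q_n=\mathcal D_n$.

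The only point I expect to need care is the symplecticity claim, namely that $A$ and $B$ are symmetric. The diagonalization itself does not use it, so if the normal form does not make symmetry evident I would argue it from the exactness of $T$ and $S$. Everything else is the elementary computation above.
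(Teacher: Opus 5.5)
Your proof is correct and is essentially the paper's argument. The paper first conjugates $A^{(n)}$ by $\mathrm{diag}(\mathrm{id},\varepsilon B)$ and then uses the ansatz $(v,v/(\lambda-1))$ with $v$ an eigenvector of $AB$. After undoing the conjugation this is exactly your eigenvector $(Qe_i,\varepsilon BQe_i/(\lambda-1))$, and it yields the same quadratic $\lambda^2-(2+\alpha_i^{(n)})\lambda+1=0$ under the same condition $\alpha_i^{(n)}\notin[-4,0]$.

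Your explicit checks of the disjointness of the pairs, the exclusion of $\pm1$, and the invertibility of $Q_n$ fill in what the paper calls a straightforward manipulation. As you note, symplecticity of $A^{(n)}$ plays no role in the conclusions, and in any case it is not part of \ref{it:itemB1}--\ref{it:itemB5} in the abstract setting.
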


\begin{rem}
We label the eigenvalues of $A^{(n)}$ in such a way that $|\lambda_i^{(n)}|<1$ for all $1\leq i\leq d$.
\end{rem}

\begin{proof}
Define the matrix 
\[
\bar A^{(n)}= \underbrace{\begin{pmatrix}
    \mathrm{id}&0\\
    0 & \varepsilon^{-1} B^{-1}
\end{pmatrix}}_{C^{-1}} A^{(n)} \underbrace{\begin{pmatrix}
    \mathrm{id}&0\\
    0 & \varepsilon B
\end{pmatrix}}_C=\begin{pmatrix}
    \mathrm{id}+n\varepsilon AB &n\varepsilon AB\\\mathrm{id}&\mathrm{id}
\end{pmatrix}
\]
By assumption, the matrix $M_n=n\varepsilon AB$ has $d$ different eigenvalues $\{\alpha_i^{(n)}\}_{i=1}^d\subset\mathbb R\setminus\{0\}$ where $\alpha_i^{(n)}=n\varepsilon\alpha_i$. We now relate the eigenvalues of $\bar A^{(n)}$ to those of $M_n$. Let $\{v_i\}_{i=1}^d\in\mathbb R^d$ be  the eigenvectors of $M_n$. Then, it is a simple exercise to check that the pair $(\lambda_i^{(n)},v_i^{(n)})\in \mathbb R\times\mathbb R^{2d}$ with 
\begin{equation}\label{eq:eigenvaluesequation}
(\lambda_i^{(n)})^2-(2+\alpha_i^{(n)})\lambda_i^{(n)}+1=0
\end{equation}
and
\[
v_i^{(n)}=(v_1,\dots,v_d, v_1/(\lambda_i^{(n)}-1),\dots, v_d/(\lambda_i^{(n)}-1))
\]
constitutes and eigenpair for the matrix $M_n$. Provided 
\begin{equation}\label{eq:suffcondhyperbolicity}
\alpha_i^{(n)}\notin[-4,0]
\end{equation}
the equation \eqref{eq:eigenvaluesequation} posseses two real solutions (one being the multiplicative inverse of the other one). Clearly, since $\alpha_i^{(n)}=n\varepsilon \alpha_i$ and $\alpha_i\neq 0$, provided $n\varepsilon$ is large enough we can guarantee that \eqref{eq:suffcondhyperbolicity} holds for all $i=1,\dots,d$. The conclusion follows after a straightforward algebraic manipulation.
\end{proof}

\subsection{Diophantine approximation and simultaneous diagonalization}\label{sec:uniformIFS}

Our next result, key for the construction of a symbolic cs-blender, shows that for short enough ranges $n\in \{N_0/\varepsilon,\dots,N_0/\varepsilon+ N_*\}$ with $N_0$ as in Lemma \ref{lem:hyperbolicityiteratesIFS}  the matrices $A^{(n)}$ can be simultaneously diagonalized up to a small error. 

\begin{lem}\label{lem:simultaneouslinearization}
   Let $N_0$ be as in Lemma \ref{lem:hyperbolicityiteratesIFS}, let $N_*\in\mathbb N$ and define 
   \begin{equation}\label{eq:delta}
   N_\varepsilon=N_0/\varepsilon\qquad \qquad \delta(\varepsilon,N_*)=N_*/N_\varepsilon= \varepsilon N_*/N_0
   \end{equation}
   Then, for any $n\in \{N_\varepsilon,\dots, N_*\}$ we have 
    \[
   Q_{N_\varepsilon}^{-1} A^{(n)} Q_{N_\varepsilon}=\mathcal D_n+O(\delta)
    \]
    with $\mathcal D_n$ being the diagonal matrix \eqref{eq:defndiagonalmatrix}.
\end{lem}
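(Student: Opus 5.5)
The range is evidently meant to be $n\in\{N_\varepsilon,\dots,N_\varepsilon+N_*\}$, so I write $n=N_\varepsilon+k$ with $0\le k\le N_*$. The plan is to compare $A^{(n)}$ with $A^{(N_\varepsilon)}$, conjugate by $Q_{N_\varepsilon}$, and show that the result differs from $\mathcal D_n$ only by $O(\delta)$.

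Since $A^{(n)}$ is affine in $n$,
\[
A^{(n)}=A^{(N_\varepsilon)}+k\begin{pmatrix}\varepsilon AB& A\\ 0&0\end{pmatrix}=:A^{(N_\varepsilon)}+k\,E.
\]
Lemma \ref{lem:hyperbolicityiteratesIFS} gives $Q_{N_\varepsilon}^{-1}A^{(N_\varepsilon)}Q_{N_\varepsilon}=\mathcal D_{N_\varepsilon}$. It therefore suffices to prove two estimates:
\[
\text{(i)}\quad k\,Q_{N_\varepsilon}^{-1}EQ_{N_\varepsilon}=O(\delta),\qquad\qquad \text{(ii)}\quad \mathcal D_n-\mathcal D_{N_\varepsilon}=O(\delta).
\]

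For (ii), the eigenvalues $\lambda_i^{(n)}$ are smooth functions of $\alpha_i^{(n)}=n\varepsilon\alpha_i$. This holds on the range $n\varepsilon\in[N_0,N_0+\varepsilon N_*]$, which stays a bounded distance away from the degenerate set $[-4,0]$ by the choice of $N_0$. Since $|\alpha_i^{(n)}-\alpha_i^{(N_\varepsilon)}|=k\varepsilon|\alpha_i|\le \varepsilon N_*|\alpha_i|\lesssim\delta$, the mean value theorem gives (ii).

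For (i), it is convenient to conjugate by the rescaling $C=\mathrm{diag}(\mathrm{id},\varepsilon B)$ from the proof of Lemma \ref{lem:hyperbolicityiteratesIFS}. A direct computation gives
\[
C^{-1}EC=\begin{pmatrix}\varepsilon AB&\varepsilon AB\\0&0\end{pmatrix}.
\]
We also have $Q_{N_\varepsilon}=C\,\bar Q$, where
\[
\bar Q=\begin{pmatrix}Q&Q\\ Q(D_{N_\varepsilon}-\mathrm{id})^{-1}&Q(D_{N_\varepsilon}^{-1}-\mathrm{id})^{-1}\end{pmatrix}.
\]
The entries of $\bar Q$ depend only on $AB$ and $N_0$, not on $\varepsilon$. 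In particular $\bar Q$ and $\bar Q^{-1}$ are bounded uniformly in $\varepsilon$: the $\lambda_i^{(N_\varepsilon)}$ are fixed numbers bounded away from $1$, and the block structure built on the eigenbasis $Q$ gives an invertible matrix because $\lambda_i\neq1/\lambda_i$.

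It follows that
\[
kQ_{N_\varepsilon}^{-1}EQ_{N_\varepsilon}=k\varepsilon\,\bar Q^{-1}\begin{pmatrix}AB&AB\\0&0\end{pmatrix}\bar Q=O(k\varepsilon)=O(\varepsilon N_*)=O(\delta),
\]
because $\delta=\varepsilon N_*/N_0$ and $N_0$ is fixed.

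The main point to check is this $\varepsilon$-uniformity of $\bar Q^{\pm1}$. The conjugating matrix $Q_{N_\varepsilon}$ itself has a degenerate block of size $\varepsilon$ and an inverse of size $\varepsilon^{-1}$. A naive bound would therefore lose a factor $\varepsilon^{-1}$ and turn $k$ into $k\varepsilon\cdot\varepsilon^{-1}$. The factorization through $C$ is exactly what cancels this loss: it produces the factor $\varepsilon$ in $C^{-1}EC$, which compensates the factor $k\le N_*$ and yields an error of size $\varepsilon N_*\sim\delta$.

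Once this is in place, (i) and (ii) together give
\[
Q_{N_\varepsilon}^{-1}A^{(n)}Q_{N_\varepsilon}=\mathcal D_{N_\varepsilon}+O(\delta)=\mathcal D_n+O(\delta),
\]
which is the claim.
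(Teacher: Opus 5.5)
Your proof is correct and follows essentially the paper's route. Both arguments write $A^{(n)}=A^{(N_\varepsilon)}+(n-N_\varepsilon)E$ and show that the conjugated correction is $(n-N_\varepsilon)\,O(\varepsilon)=O(\delta)$; the paper does this bookkeeping through the explicit block inverse of $Q_{N_\varepsilon}$, using $V_{N_\varepsilon},W_{N_\varepsilon}=O(\varepsilon)$ and $(W_{N_\varepsilon}-V_{N_\varepsilon})^{-1}=O(\varepsilon^{-1})$, while your factorization $Q_{N_\varepsilon}=C\bar Q$ with $\bar Q$ independent of $\varepsilon$ captures the same cancellation more cleanly. Your step (ii) (passing from $\mathcal D_{N_\varepsilon}$ to $\mathcal D_n$) and your reading of the range as $\{N_\varepsilon,\dots,N_\varepsilon+N_*\}$ make explicit two points the paper's proof leaves implicit.
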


\begin{proof}
Let 
 \[
    V_N=\varepsilon BQ(D_N-\mathrm{id})^{-1}\qquad W_N=\varepsilon BQ(D^{-1}_N-\mathrm{id})^{-1}.
    \]
Then,
\[
Q_n=\begin{pmatrix}
    Q&Q\\
    V_n&W_n
\end{pmatrix}
\qquad\qquad Q_N^{-1}=\begin{pmatrix}
    Q^{-1}+(W_N-V_N)^{-1}V_NQ^{-1}& -(W_N-V_N)^{-1}\\
     -(W_N-V_N)^{-1}V_NQ^{-1}& (W_N-V_N)^{-1}
\end{pmatrix}
\]
Hence, 
\[
A^{(n)}Q_N= \begin{pmatrix}
    (\mathrm{id}+n\varepsilon AB)Q+nA V_N& (\mathrm{id}+n\varepsilon AB)Q+nA W_N\\
    \varepsilon BQ+V_N& \varepsilon BQ+W_N
\end{pmatrix}
\]
and (here the term $(W_N-V_N)^{-1}$ is understood to multiply each block of the matrix)
\[
\mathcal A^{(n)}=D_n+(n-N)(W_N-V_N)^{-1}\begin{pmatrix}
         W_N (\varepsilon D_n+Q^{-1}AV_N)& V_N(\varepsilon D_n+Q^{-1} A W_N)\\
        -V_N(\varepsilon D_n+Q^{-1}AV_N)& -W_n(\varepsilon D_n+Q^{-1}AW_N)
    \end{pmatrix}
\]
Finally, notice that
\[
(V_N)_{i,j}, (W_N)_{i,j}=O(\varepsilon)
\]
and
\[
\left((W_N-V_N)^{-1}\right)_{i,j}= \varepsilon^{-1}  \left(\tilde D_n(BQ)^{-1}\right)_{i,j}=O(\varepsilon^{-1})
\]
with $\tilde D_n=\mathrm{diag}(\dots,(\lambda_i^{(n)}-1)(1/\lambda_i^{(n)}-1)/(\lambda_i^{(n)}-1/\lambda_i^{(n)}),\dots)$ so the conclusion follows.
\end{proof}

The key idea now is that, provided the Diophantine constant $\gamma$ of the rotation number $\beta\in\mathbb R^d$ is large enough compared to $\varepsilon$ it is possible to choose $N_*\in\mathbb N$ such that, at the same time:
\begin{enumerate}
    \item the orbit segment $\{[n\beta]\}_{n=N_\varepsilon}^{N_\varepsilon+N_*}$ is well distributed over $\mathbb T^d$,
    \item the quantifier $\delta$ in \eqref{eq:delta} is small enough, so all the maps $F_n$ with $n\in \{N_\varepsilon,\dots,N_*\}$ can be simultaneously diagonalized up to a small error.
\end{enumerate}
Indeed, we have the following result. 
\begin{thm}[Theorem VI in Chapter 5 of \cite{Cassels72}]\label{thm:Diophantineapprox}
  Let $\gamma>0$ be the Diophantine constant of the rotation vector $\beta\in\mathbb R^d$. Then, for any $N\in\mathbb N$ large enough
  \[
  \max_{x\in\mathbb T^d}\mathrm{dist}([n\beta]_{n\leq N}, x)\leq \frac{1}{\gamma N^{1/d}}.
  \]
\end{thm}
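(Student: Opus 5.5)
The statement is a quantitative dispersion bound for the Kronecker orbit $\{[n\beta]\}_{n\le N}$ when $\beta\in\mathcal B_\gamma$ has constant type. I would prove it by transference: the constant-type (badly approximable) condition on the linear form $k\mapsto k\cdot\beta$ is converted into an inhomogeneous approximation statement for the orbit. Concretely, I would argue through the geometry of numbers. Consider the lattice $\Lambda_N\subset\mathbb R^{d+1}$ generated by
\[
(n,\ n\beta-l),\qquad n\in\mathbb Z,\ l\in\mathbb Z^d,
\]
suitably rescaled so that the first coordinate is divided by $N$ and the last $d$ coordinates are multiplied by $N^{1/d}$. The covolume of $\Lambda_N$ is then $1$. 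The target is the following: for every $x\in\mathbb T^d$ there is a lattice point of $\Lambda_N$ in a fixed box (first coordinate in $[0,1]$, torus coordinates within $c/\gamma$) around $(\tfrac12, N^{1/d}x)$. This is exactly the claim $\mathrm{dist}([n\beta],x)\lesssim 1/(\gamma N^{1/d})$ for some $n\le N$.

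The key step is the \emph{homogeneous-to-inhomogeneous} transfer. By Minkowski's second theorem, the successive minima $\mu_1\le\dots\le\mu_{d+1}$ of $\Lambda_N$ with respect to the box norm satisfy $\mu_1\cdots\mu_{d+1}\asymp 1$, and the covering radius of $\Lambda_N$ is at most $\asymp\mu_{d+1}$. So it suffices to show that $\mu_1\gtrsim\gamma^{c}$, since this forces $\mu_{d+1}\lesssim\gamma^{-c'}$.

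The lower bound on $\mu_1$ is where the hypothesis enters. A short vector in $\Lambda_N$ is an integer $n$ with $|n|\le \mu N$ and $\|n\beta\|\le\mu N^{-1/d}$. Constant type in the dual form, $|k\cdot\beta-l|\ge\gamma|k|^{-d}$, is equivalent, by Khintchine's transference principle applied to the dual lattice $\Lambda_N^*$ (whose short vectors are exactly small values of $k\cdot\beta-l$), to the simultaneous badly-approximable property: $\|n\beta\|\ge c\gamma^{c}n^{-1/d}$. Combining this with the bound on $|n|$ gives $\mu\cdot\mu^{1/d}\gtrsim\gamma^{c}$, hence $\mu_1\gtrsim\gamma^{c}$. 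Tracking constants should yield a bound of the form $C_d\gamma^{-c}N^{-1/d}$.

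The main obstacle is matching the exact exponent $1/\gamma$ claimed in the statement rather than a power $\gamma^{-c}$. I would first try to do the transference directly on $\Lambda_N^*$ via Mahler's relation $\mu_i(\Lambda)\mu_{d+2-i}(\Lambda^*)\asymp1$. The hypothesis gives a lower bound on $\mu_1(\Lambda_N^*)$ that is linear in $\gamma$. That lower bound yields $\mu_{d+1}(\Lambda_N)\lesssim\gamma^{-1}$, and hence, through the covering radius, the stated bound with $1/\gamma$, up to a dimensional constant absorbed for $N$ large (or into the normalization of $\gamma$).

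Finally, restricting to $n\ge 0$ rather than $|n|\le N$ only costs a translate, which is harmless. One replaces the center $\tfrac12$ appropriately, or applies the result with $2N$ and shifts by $N$.
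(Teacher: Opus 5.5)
The paper gives no proof of this statement; it quotes Cassels' inhomogeneous transference theorem. The standard proof of that theorem is exactly the geometry-of-numbers scheme you outline: Minkowski's second theorem, Mahler's duality between the minima of $\Lambda$ and $\Lambda^*$, and the covering-radius bound $\rho\le\frac{d+1}{2}\mu_{d+1}$. So the framework is right, but the step that is supposed to produce $1/(\gamma N^{1/d})$ fails as written.

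\textbf{Where it breaks.} Your first route bounds $\mu_1(\Lambda_N)$ from below via Khintchine transference and then uses $\mu_{d+1}\lesssim\mu_1^{-d}$. It loses powers of $\gamma$ at both steps; with the natural exponents it ends at about $\gamma^{-d}N^{-1/d}$, so it cannot give the statement for $d\ge2$. In the fallback, the exponent is wrong. The dual vectors are $(N(m+k\cdot\beta),\,-N^{-1/d}k)$, and the hypothesis gives $\gamma\nu^{-d}N^{-1}\le\nu N^{-1}$. Hence $\mu_1(\Lambda_N^*)\gtrsim\gamma^{1/(d+1)}$, not a bound linear in $\gamma$.

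More seriously, you never convert a covering-radius bound into $0\le n\le N$. A covering radius $\rho$ for the symmetric box only places the first coordinate within $\rho$ of $\tfrac12$, i.e.\ $|n-N/2|\le\rho N$. Moreover $\rho\ge\tfrac12$ for every unimodular lattice, and your bound on $\rho$ is large. Getting back to $0\le n\le N$ forces you to run the argument at scale $N/\rho$ (and shift), which multiplies the angular error by $\rho^{1/d}$. With your claimed $\rho\lesssim\gamma^{-1}$ this gives $\gamma^{-1-1/d}N^{-1/d}$, which is worse than the statement. With the correct $\rho\lesssim\gamma^{-1/(d+1)}$ it gives $C_d(\gamma N)^{-1/d}$.

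\textbf{The fix.} The clean way, which is Cassels' argument, is to scale the two factors independently. Take $\Lambda=\{(\gamma X^{-d}n,\,X(n\beta-l))\colon n\in\mathbb Z,\ l\in\mathbb Z^d\}$, whose dual is $\{(\gamma^{-1}X^{d}(m+k\cdot\beta),\,-k/X)\}$. The hypothesis says exactly that no nonzero dual vector has $|k|<X$ and $|m+k\cdot\beta|<\gamma X^{-d}$. That is, $\mu_1(\Lambda^*)\ge1$, with the minima of $\Lambda^*$ taken for the polar cross-polytope. Mahler then gives $\mu_{d+1}(\Lambda)\le(d+1)!$. Hence, for every $x$, there is some $n$ with $|n|\le c_dX^d/\gamma$ and $\|n\beta-x\|\le c_d/X$. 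Set $N=c_dX^d/\gamma$; requiring $N$ large is what makes $X\ge1$. Shifting $n\mapsto n+N$ then gives $\max_x\mathrm{dist}\le C_d(\gamma N)^{-1/d}$ over $0\le n\le 2N$, with no rescaling loss.

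\textbf{The constant.} A dimensional constant cannot be ``absorbed for $N$ large'', since both sides scale like $N^{-1/d}$. The inequality $C_d(\gamma N)^{-1/d}\le\gamma^{-1}N^{-1/d}$ holds only when $\gamma^{1-1/d}\le C_d^{-1}$. This is harmless for the paper, which only uses the density bound up to a constant independent of $\varepsilon,\kappa,\chi$. Still, $C_d(\gamma N)^{-1/d}$ is what the argument actually proves.
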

\medskip

\subsubsection*{Blender geometry and quantifiers}\label{sec:geometryandquantifiersIFS}

Based on the results  in Lemma \ref{lem:simultaneouslinearization} and Theorem \ref{thm:Diophantineapprox}, we now describe the region $Q^{cs}\subset\mathbb A^d$ in which we will establish the existence of a symbolic cs-blender. To do so, it will be necessary to introduce auxilary constants $\kappa,\chi>0$ which, as will become clear soon, determine the geometry of the set $Q^{cs}$. We then define the linear maps
\[
(\text{Diagonalization})\qquad\qquad \psi_\varepsilon:\binom{\tilde \xi}{\tilde \eta}\mapsto \binom{\varphi}{J}=Q_{N_\varepsilon} \binom{\tilde\xi}{\tilde\eta}
\]
and
\begin{equation}\label{eq:scalingIFS}
(\text{Scaling+Anisotropy})\qquad\qquad \psi_{\kappa,\chi}:\binom{\xi}{ \eta}\mapsto \binom{\tilde\xi}{\tilde\eta}=\underbrace{\begin{pmatrix}
    \kappa \ \mathrm{id}&0\\
    0& (\kappa/\chi)\ \mathrm{id}
\end{pmatrix}}_{S_{\kappa,\chi}}.
\end{equation}
Denote their composition by \begin{equation}\label{eq:coordinatesforIFS}
    \Psi_{\varepsilon,\kappa,\chi}=\psi_\varepsilon\circ\psi_{\kappa,\chi}:[-1,1]^{2d}\to \mathbb A^d
\end{equation}
and define the rectangle 
\begin{equation}\label{eq:csblenderdomain}
    Q^{cs}=\Psi_{\varepsilon,\kappa,\chi}([-1,1]^{2d}).
\end{equation}
Before continuing, let us briefly comment on the nature of this transformation. On one hand, the introduction of a scaling (measured by $\kappa$) will allow us to guarantee that $Q^{cs}$ is localized close to $\{J=0=\varphi\}$ so we can make use of the linear approximation in Lemma \ref{lem:C1control}. On the other hand, the transformation $\psi_\varepsilon$ diagonalizes the linear dynamics. Finally, the role of the quantifier $\chi$ (which will also be taken small) is to introduce a strong anisotropy between the sizes of the different faces of $Q^{cs}$. By doing so we can guarantee that, in the new coordinate system, the vector of affine translations, given by $\Psi_{\varepsilon,\kappa,\chi}^{-1}(\bs b_n)$, is well aligned with the contracting direction (see Figure \ref{fig:fig6}). This last property will be of importance to show that the the maps $\{F_n\}_n$ satisfy the covering property with respect to $Q^{cs}$.

\begin{figure}
    \centering
    \includegraphics[scale=0.65]{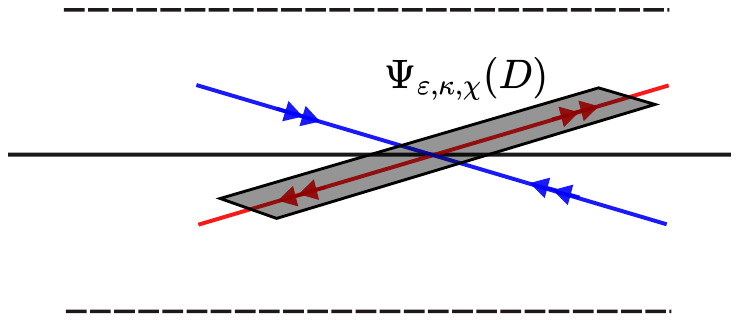}
    \caption{Image of $D=[-1,1]^{2d}$ under the coordinate change $\Psi_{\varepsilon,\kappa,\chi}$. In red and blue we also indicate the expanding and contracting directions.}
    \label{fig:fig6}
\end{figure}

\begin{prop}\label{prop:uniformcoordinatesIFS}
    Fix any $0<\chi\ll 1$ sufficiently small (depending only on the matrices $A,B$). Then, there exist $\kappa_0(\chi)>0$ and  $\varepsilon_0(\gamma,\kappa,\chi)>0$ such that for any $0<\kappa\leq \kappa_0(\chi)$ and any $0<\varepsilon\leq \varepsilon_0(\gamma,\kappa,\chi)$ there exist a constant $C>0$ (independent of $\chi,\kappa$ and $\varepsilon$) and a subset $\mathcal N\subset\mathbb N$ such that, for any $n\in\mathcal N$ the map
    \[
    \mathcal F_n:=\Psi_{\varepsilon,\kappa,\chi}^{-1}\circ F_n\circ\Psi_{\varepsilon,\kappa,\chi}:[-1,1]^{2d}\to \mathbb R^{2d}
    \]
    is of the form
    \begin{equation}\label{eq:approxdynamicsIFS}
    \mathcal F_n:\binom{\xi}{\eta}\mapsto \tilde{\bs b}_n+\mathcal D_n\binom{\xi}{\eta}+\widetilde{\mathcal{E}}(\xi,\eta),
    \end{equation}
    with $\tilde{\bs b}_n=(\tilde{\bs b}_{n,\xi},\tilde{\bs b}_{n,\xi})$ such that
    \[
   \max_{\xi\in [-1,1]^{d}} \min_{n\in\mathcal N}\ \mathrm{dist}(\tilde {\bs b}_{n,\xi},\xi)\leq C\chi\qquad\qquad 
        |\tilde{\bs b}_{n,\eta}|\leq C\chi |\tilde{\bs b}_{n,\xi}|
        \]
    and $\widetilde{\mathcal E}$ verifying 
    \[
   \widetilde {\mathcal E}=\binom{O(\kappa^2)}{O(\varepsilon \kappa^2)} \qquad\qquad D\mathcal{\widetilde E}=\begin{pmatrix}
    O(\kappa)&O(\kappa/\chi)\\
    O(\kappa\chi)&O(\kappa)
\end{pmatrix}.
    \]
\end{prop}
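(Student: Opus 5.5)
We will obtain the normal form \eqref{eq:approxdynamicsIFS} by conjugating the affine approximation of Lemma \ref{lem:C1control} by the linear map $\Psi_{\varepsilon,\kappa,\chi}=Q_{N_\varepsilon}\circ S_{\kappa,\chi}$. We then choose $\mathcal N$ as a window of iterates $n\in\{N_\varepsilon,\dots,N_\varepsilon+N_*\}$ on which Lemma \ref{lem:simultaneouslinearization} and Theorem \ref{thm:Diophantineapprox} apply simultaneously. Concretely,
\[
\mathcal F_n(\xi,\eta)=S_{\kappa,\chi}^{-1}Q_{N_\varepsilon}^{-1}\bs b_n+S_{\kappa,\chi}^{-1}\big(Q_{N_\varepsilon}^{-1}A^{(n)}Q_{N_\varepsilon}\big)S_{\kappa,\chi}\binom{\xi}{\eta}+S_{\kappa,\chi}^{-1}Q_{N_\varepsilon}^{-1}\mathcal E\big(\Psi_{\varepsilon,\kappa,\chi}(\xi,\eta)\big).
\]
The three terms are treated as follows.

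\emph{Linear part.} By Lemma \ref{lem:simultaneouslinearization}, $Q_{N_\varepsilon}^{-1}A^{(n)}Q_{N_\varepsilon}=\mathcal D_n+O(\delta)$ with $\delta=\varepsilon N_*/N_0$. Since $\mathcal D_n$ is block diagonal, it commutes with $S_{\kappa,\chi}$. The error becomes $O(\delta/\chi)$ after conjugation, and we absorb it into $\widetilde{\mathcal E}$ by requiring $\delta\ll\kappa\chi$, that is $N_*\ll \kappa\chi/\varepsilon$.

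\emph{Error term.} We first check that $\Psi_{\varepsilon,\kappa,\chi}([-1,1]^{2d})$ lies in the region $\{|\varphi|\le 1/2,\ |J|\le\ell\varepsilon\}$ where Lemma \ref{lem:C1control} holds. The columns of $Q_{N_\varepsilon}$ have $\varphi$-part $Q$, which is $O(1)$, and $J$-part $V_N,W_N$, which is $O(\varepsilon)$. Hence $|\varphi|\lesssim\kappa/\chi$ and $|J|\lesssim\varepsilon\kappa/\chi$, which is admissible for $\kappa\le\kappa_0(\chi)$. Next we use \eqref{eq:errorestimatesIFSapprox}, with $n\varepsilon\sim N_0$. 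The dangerous contribution to $\mathcal E_\varphi$ is $O(\varepsilon)$, which is not small in $\kappa$; we would take $\varepsilon\ll\kappa^2$ to make it $O(\kappa^2)$, as allowed by the ordering $\varepsilon_0(\gamma,\kappa,\chi)$. For the inverse, $Q_{N_\varepsilon}^{-1}$ has blocks $O(1)$, $O(\varepsilon^{-1})$, and $(W_N-V_N)^{-1}=O(\varepsilon^{-1})$ acting on the $J$-component. Multiplying out the blocks, and applying the same bookkeeping to \eqref{eq:errorestimatesIFSapproxC1}, should give the stated sizes $O(\kappa^2)$, $O(\varepsilon\kappa^2)$ and the matrix of $D\widetilde{\mathcal E}$. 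This needs care, since the anisotropy $\chi$ enters through the $O(\kappa/\chi)$ entry. \textbf{This block bookkeeping, in particular checking that the $\varepsilon^{-1}$ factors of $Q_{N_\varepsilon}^{-1}$ are always compensated by the $\varepsilon$ in $\mathcal E_J$ and in the $J$-columns, is the step I expect to be the main obstacle.} I would first handle the quadratic terms, by factoring out one power of $\kappa$, and then tune the order of the smallness choices $\chi$, then $\kappa$, then $\varepsilon$.

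\emph{Translations.} We have $\bs b_n=([n\beta],0)$, where $[n\beta]$ is taken in the lift closest to $0$. Then $Q_{N_\varepsilon}^{-1}\bs b_n$ has $\xi$-part $(Q^{-1}+O(1))[n\beta]$ and $\eta$-part $-(W_N-V_N)^{-1}V_NQ^{-1}[n\beta]$, both of size $O(|[n\beta]|)$. After $S_{\kappa,\chi}^{-1}$ the $\eta$-component picks up the factor $\chi/\kappa$ while the $\xi$-component picks up $1/\kappa$. This gives $|\tilde{\bs b}_{n,\eta}|\le C\chi|\tilde{\bs b}_{n,\xi}|$, and this is exactly the purpose of the anisotropy.

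\emph{Density of translations.} We need $\{\tilde{\bs b}_{n,\xi}\}_{n\in\mathcal N}$ to be $C\chi$-dense in $[-1,1]^d$. This amounts to $\{[n\beta]\}$ being $\kappa\chi$-dense in a $\kappa$-ball of $\mathbb T^d$. The orbit segment $\{[n\beta]\}_{n=N_\varepsilon}^{N_\varepsilon+N_*}$ is a rotation of $\{[n\beta]\}_{n\le N_*}$, so Theorem \ref{thm:Diophantineapprox} gives density $1/(\gamma N_*^{1/d})$. We therefore choose $N_*\sim(\gamma\kappa\chi)^{-d}$, and let $\mathcal N$ be the set of $n$ in the window with $[n\beta]$ in a slightly enlarged ball. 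This is compatible with the linear-part requirement $N_*\ll\kappa\chi/\varepsilon$ once $\varepsilon\le\varepsilon_0(\gamma,\kappa,\chi)$. Every $n\in\mathcal N$ satisfies $n\varepsilon\ge N_0$, so by Lemma \ref{lem:hyperbolicityiteratesIFS} the matrix $\mathcal D_n$ is hyperbolic, and the constant $C$ depends only on $A,B,Q$.
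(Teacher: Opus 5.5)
Your plan is the paper's proof. It uses the same splitting of $\mathcal F_n$ into translation, linear and nonlinear parts, the same window $n\in\{N_\varepsilon,\dots,N_\varepsilon+N_*\}$ with $N_*=(\gamma\kappa\chi)^{-d}$, Lemma~\ref{lem:simultaneouslinearization} for the linear part, and Theorem~\ref{thm:Diophantineapprox} for the density of $\tilde{\bs b}_{n,\xi}$. In places you are more careful than the paper: you track the $1/\chi$ lost when the $O(\delta)$ error is conjugated by $S_{\kappa,\chi}$, you note that on $Q^{cs}$ one only has $|\varphi|\lesssim\kappa/\chi$, and you restrict $\mathcal N$ so that $\tilde{\bs b}_{n,\xi}$ stays bounded.

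The step that will not come out as you announce is the $C^0$ size of $\widetilde{\mathcal E}$. The bounds $O(\kappa^2)$, $O(\varepsilon\kappa^2)$ in the statement are the sizes of $\mathcal E\circ\Psi_{\varepsilon,\kappa,\chi}$, not of $\widetilde{\mathcal E}=S_{\kappa,\chi}^{-1}Q_{N_\varepsilon}^{-1}\,\mathcal E\circ\Psi_{\varepsilon,\kappa,\chi}$, and no smallness of $\varepsilon$ repairs this. The term $n\varepsilon|\varphi|^2\sim N_0|\varphi|^2$ in $\mathcal E_\varphi$ comes from $nA$ acting on the quadratic part of $\varepsilon P_{\varepsilon,J}$. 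For generic $P_{\varepsilon,J}$ it is of order at least $\kappa^2$ on $Q^{cs}$, however small $\varepsilon$ is. The factors $\kappa^{-1}$ and $\chi/\kappa$ of $S_{\kappa,\chi}^{-1}$ then turn it into contributions of order $\kappa$ to $\widetilde{\mathcal E}_\xi$ and of order $\chi\kappa$ to $\widetilde{\mathcal E}_\eta$. This holds up to $\chi$-dependent constants, which are harmless since $\chi$ is fixed first.

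The compensation you worried about does close. Indeed $(W_N-V_N)^{-1}\mathcal E_J=O(|\varphi|^2+\varepsilon)$, and the $O(1)$ entry $\partial_J\mathcal E_\varphi$ only meets the $J$-rows of $Q_{N_\varepsilon}S_{\kappa,\chi}$, which are $O(\varepsilon\kappa/\chi)$. But what you obtain is $\widetilde{\mathcal E}=(O(\kappa),O(\chi\kappa))$. That is exactly what the paper's own proof derives, and it is what the covering and graph-transform arguments (Proposition~\ref{prop:covering}, Lemma~\ref{lem:graphtransofrmIFS}) actually use. Aim for that bound; with it, your conditions $\varepsilon\ll\kappa^2$ and $\delta\ll\kappa\chi$ are the right ones.

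A smaller point concerns the translations. Knowing that both components of $Q_{N_\varepsilon}^{-1}\bs b_n$ are $O(|[n\beta]|)$ does not by itself give $|\tilde{\bs b}_{n,\eta}|\le C\chi|\tilde{\bs b}_{n,\xi}|$, because that inequality needs a lower bound on the $\xi$-component. It holds because $(W_N-V_N)^{-1}W_N$ and $(W_N-V_N)^{-1}V_N$ are both diagonal. Hence $\tilde{\bs b}_{n,\eta}=\chi\,\mathrm{diag}(1/\lambda_i^{(N_\varepsilon)})\,\tilde{\bs b}_{n,\xi}$ exactly, with $\lambda_i^{(N_\varepsilon)}$ depending only on $N_0$ and $AB$; this is how the paper argues.
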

\begin{proof}
    From its definition 
\begin{align*}
\mathcal F_n(\xi,\eta)=&\Psi_{\varepsilon,\kappa,\chi}^{-1}\circ F_n\circ\Psi_{\varepsilon,\kappa,\chi}(\xi,\eta)\\
=&S_{\kappa,\chi}^{-1}Q_{N_\varepsilon}^{-1} \bs b_n+ S_{\kappa,\chi}^{-1}Q_{N_\varepsilon}^{-1} A^{(n)}Q_{N_\varepsilon}S_{\kappa,\chi} \binom{\xi}{\eta}+\Psi^{-1}_{\varepsilon,\kappa,\chi} (\mathcal E\circ \Psi_{\varepsilon,\kappa,\chi}(\xi,\eta))
\end{align*}
where $Q_{N_\varepsilon}$ is the matrix in Lemma \ref{lem:hyperbolicityiteratesIFS} (particularized for $n=N_\varepsilon=N_0/\varepsilon$) and $S_{\kappa,\chi}$ is the scaling matrix in \eqref{eq:scalingIFS}.  We study the three terms separately for $n\in\{N_\varepsilon,\dots,N_\varepsilon+ N_*\}$ where 
\[
N_*=\left(\frac{1}{\gamma\chi\kappa}\right)^d.
\]
Recall that we first fix $0<\chi\ll 1$, then we take $0<\kappa\ll 1$ as small as needed (depending on $\chi$) and finally, let $0<\varepsilon\ll 1$ be as small as desired (depending on $\chi,\kappa$).
\medskip

\noindent\textit{Translation vector}: For the first term, it follows from the definition of $Q_{N_\varepsilon}$ that 
    \[
    \tilde {\bs b}_n=S_{\kappa,\chi}^{-1}Q_{N_\varepsilon}^{-1}\bs b_n=\begin{pmatrix}
        \kappa^{-1}\mathrm{id}&0\\
        0&\kappa^{-1}\chi\ \mathrm{id} 
    \end{pmatrix}\binom{(W_N-V_N)^{-1}W_NQ^{-1}[n\beta]}{-(W_N-V_N)^{-1}V_NQ^{-1}[n\beta]}
    \]
    where
 \[
    V_N=\varepsilon BQ(D_N-\mathrm{id})^{-1}\qquad W_N=\varepsilon BQ(D^{-1}_N-\mathrm{id})^{-1}.
    \]
    An easy computation shows that 
    \begin{equation}\label{eq:techestimateschangevariables}
    \begin{aligned}
   (W_N-V_N)^{-1}W_N=&\mathrm{diag}(\dots,(\lambda_i^{(N)}-1)/(\lambda_i^{(N)}-1/\lambda_i^{(N)}),\dots)\\
    (W_N-V_N)^{-1}V_N=&\mathrm{diag}(\dots,(1/\lambda_i^{(N)}-1)/(\lambda_i^{(N)}-1/\lambda_i^{(N)}),\dots)\\
    =&\underbrace{\mathrm{diag}(\dots,(1/\lambda_i^{(N)}-1)/(\lambda_i^{(N)}-1),\dots)}_{\tilde D_N}\  (W_N-V_N)^{-1}W_N
    \end{aligned}
    \end{equation}
    Hence,
    \[
    \tilde{\bs b}_{n,\xi}=\kappa^{-1}\mathrm{diag}(\dots,(\lambda_i^{(N)}-1)/(\lambda_i^{(N)}-1/\lambda_i^{(N)}),\dots) Q^{-1}[n\beta]\qquad\qquad \tilde{\bs b}_{n,\eta}=\chi \tilde D_N \ \tilde {\bs b}_{n,\chi}.
    \]
    so:
    \begin{itemize}
        \item In view of Theorem \ref{thm:Diophantineapprox} there exists $C>0$ (independent of $\chi,\kappa$ and $\varepsilon$) such that the sequence $\{\tilde {\bs b}_{n,\xi}\}_{N_\varepsilon\leq n\leq N_\varepsilon+N_*}$ is $C\chi$-dense on $\mathbb T^d$. In particular, there exists $\mathcal N\subset\{N_\varepsilon,\dots,N_\varepsilon+ N_*\}$ such that 
        \[
        \max_{\xi\in [-1,1]^{d}} \min_{n\in\mathcal N}\ \mathrm{dist}(\tilde {\bs b}_{n,\xi},\xi)\leq C\chi.
        \]
        \item There exists $C>0$ such that, for all $n\in \mathcal N$ we have 
        \[
        |\tilde{\bs b}_{n,\eta}|\leq C\chi |\tilde{\bs b}_{n,\xi}|.
        \]
    \end{itemize}
    \medskip

    \noindent\textit{Linear term:} For the second term we have already seen in Lemma \ref{lem:hyperbolicityiteratesIFS} that (recall our choice of $N_*$ above)
    \[
    S_{\kappa,\chi}^{-1}Q_{N_\varepsilon}^{-1} A^{(n)}Q_{N_\varepsilon}S_{\kappa,\chi} =\mathcal D_n+O(\varepsilon/(\gamma\chi\kappa)^d).
    \]
\medskip

\noindent\textit{Nonlinear term:} Finally for the nonlinear term we start by observing that, from the estimate \eqref{eq:errorestimatesIFSapprox} we obtain $
\mathcal E\circ\Psi_{\varepsilon,\kappa,\chi}=(O(\kappa^2),\ O(\varepsilon \kappa^2))$. 
Then, 
\[
\widetilde{\mathcal E}=\Psi^{-1}_{\varepsilon,\kappa,\chi} (\mathcal E\circ \Psi_{\varepsilon,\kappa,\chi}(\xi,\eta))=\begin{pmatrix}
 \kappa^{-1}\mathrm{id}&0\\
        0&\kappa^{-1}\chi  \mathrm{id}
    \end{pmatrix}\begin{pmatrix}(W_N-V_N)^{-1}W_NQ^{-1} &- (W_N-V_N)^{-1}\\
    -(W_N-V_N)^{-1}V_NQ^{-1}& (W_N-V_N)^{-1} \end{pmatrix} \binom{O(\kappa^2)}{O(\varepsilon\kappa^2)}
\]
so, making use of the equalities \eqref{eq:techestimateschangevariables} we obtain
\[
\widetilde {\mathcal E}=(O(\kappa),O(\chi \kappa)).
\]
On the other hand, it is easy to check from \eqref{eq:errorestimatesIFSapproxC1} that  
\[
D\mathcal{\widetilde E}=\begin{pmatrix}
    O(\kappa)&O(\kappa/\chi)\\
    O(\kappa\chi)&O(\kappa)
\end{pmatrix}.\qedhere
\]
\end{proof}
For the rest of this section we will assume without mentioning that the quantifiers $\chi,\kappa$ and $\varepsilon$ are taken sufficienly small while respecting the hierarchy introduced in Proposition \ref{prop:uniformcoordinatesIFS}.

\medskip

\subsection{Covering and well distributed periodic orbits}\label{sec:coveringandequidistributionIFs}

We now study the distribution of the images of $D=[-1,1]^{2d}$ under the family of maps $\{\mathcal F_n\}_{n\in\mathcal N}$.

\begin{prop}[Covering property]\label{prop:covering}
Consider the setting of Proposition \ref{prop:uniformcoordinatesIFS} and let $\mathcal N\subset \mathbb N$ the subset constructed therein. Then, if we denote by $D=[-1,1]^{2d}$ we have that 
\[
D\subset \bigcup_{n\in\mathcal N} \mathcal F_n(D).
\]
    Moreover, given $z_*=(\xi_*,\eta_*)\in D$ denote by 
    \[
    B_{r,\xi}(z_*)=\{(\xi,\eta)\in \mathbb R^{2d}\colon -r<\xi_i-\xi_{i,*}<r,\ \eta=\eta_*,\ i=1,\dots,d\}
    \]
    the horizontal rectangle centered at $z_*\in D$ of radius $r$. Then, there exists a number $\underline a>0$ independent of $\chi,\kappa$ and $\eta$ such that 
\begin{equation}\label{eq:definitionkappa}
a:=\min\{r\in\mathbb R_+\colon   \text{ there exists } z\in D\text{ such that } B_{r,\xi}(z)\subset D\text{ and } B_{r,\xi}(z)\not\subset \mathcal F_n(D) \text{ for any } n\in\mathcal N\}\geq \underline a.
\end{equation}

\end{prop}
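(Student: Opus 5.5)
The plan is to work entirely in the normalized coordinates of Proposition \ref{prop:uniformcoordinatesIFS}, where each $\mathcal F_n$, $n\in\mathcal N$, is an affine hyperbolic map plus a small error:
\[
\mathcal F_n(\xi,\eta)=\tilde{\bs b}_n+\mathcal D_n(\xi,\eta)+\widetilde{\mathcal E}(\xi,\eta).
\]
Since $N_\varepsilon\le n\le N_\varepsilon+N_*$ with $\delta=O(\varepsilon/(\gamma\chi\kappa)^d)$, all eigenvalues of $D_n$ are $\lambda_i^{(N_\varepsilon)}+O(\delta)$. These depend only on $N_0\alpha_i$, so they are of order one and uniformly bounded away from $0$ and $\pm1$. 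In particular, $D_n^{-1}$ expands the $\eta$-directions by a factor $\ge \underline\mu>1$ independent of $\chi,\kappa,\varepsilon$, and $D_n$ contracts the $\xi$-directions by a factor in $[c,\bar c]\subset(0,1)$.

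First, I would describe the image $\mathcal F_n(D)$ as a ``horizontal slab''.
\begin{itemize}
\item For the affine part, $\tilde{\bs b}_n+\mathcal D_n(D)$ is the box $\tilde{\bs b}_n+D_n[-1,1]^d\times D_n^{-1}[-1,1]^d$. Its $\xi$-section has half-widths $|\lambda_i^{(N)}|\ge c$ around $\tilde{\bs b}_{n,\xi}$. Its $\eta$-extent is $[-\underline\mu,\underline\mu]^d$ shifted by $\tilde{\bs b}_{n,\eta}$, which satisfies $|\tilde{\bs b}_{n,\eta}|\le C\chi|\tilde{\bs b}_{n,\xi}|=O(\chi)$ on the relevant range.
\item Taking $\chi$ small, the $\eta$-extent therefore strictly contains $[-1,1]^d$, with margin $\underline\mu-1-O(\chi)$.
\end{itemize}

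Next I would treat the error $\widetilde{\mathcal E}$ via a graph/degree argument rather than pointwise. For fixed $\eta_0\in[-1,1]^d$, I would consider the map
\[
(\xi,\eta)\mapsto \pi_\eta\mathcal F_n(\xi,\eta)=D_n^{-1}\eta+\tilde{\bs b}_{n,\eta}+O(\kappa\chi)\text{-small function}.
\]
Its $\eta$-derivative is $D_n^{-1}+O(\kappa)$, invertible, and its $\xi$-derivative is $O(\kappa\chi)$. By the implicit function theorem, the set $\{\pi_\eta\mathcal F_n=\eta_0\}\cap D$ is a graph $\eta=g_{n,\eta_0}(\xi)$ over all of $[-1,1]^d$ with $|Dg|=O(\kappa\chi)$. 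Along this graph, $\pi_\xi\mathcal F_n(\xi,g(\xi))=\tilde{\bs b}_{n,\xi}+D_n\xi+O(\kappa)+O(\kappa/\chi)\cdot O(\kappa\chi)$. This is a $C^1$-perturbation of size $O(\kappa)$ of an affine contraction. A degree (or Brouwer) argument then shows that $\mathcal F_n(D)\cap\{\eta=\eta_0\}$ contains the rectangle of half-widths $c-O(\kappa)$ centered at $\tilde{\bs b}_{n,\xi}$, for every $\eta_0\in[-1,1]^d$. I expect this to be the main technical obstacle. The $D\widetilde{\mathcal E}$ entry of size $O(\kappa/\chi)$ is large relative to $\chi$, so it must always be paired with the $O(\kappa\chi)$ slope of the graph. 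This is why the hierarchy requires $\kappa\ll\chi$.

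Finally, I would combine this with density of the translations. By Proposition \ref{prop:uniformcoordinatesIFS}, every $\xi\in[-1,1]^d$ lies within $C\chi$ of some $\tilde{\bs b}_{n,\xi}$, $n\in\mathcal N$. Hence, for any $z_*=(\xi_*,\eta_*)\in D$, the rectangle $B_{r,\xi}(z_*)$ is contained in the horizontal section of $\mathcal F_n(D)$ at $\eta_*$ as soon as $r+C\chi\le c-O(\kappa)$. This yields both $D\subset\bigcup_{n\in\mathcal N}\mathcal F_n(D)$ and $a\ge\underline a:=c/2$, once $\chi$ and then $\kappa$ are small. The constant $c$ depends only on $N_0$ and the spectrum of $AB$, so $\underline a$ is independent of $\chi,\kappa,\varepsilon$. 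If the centers $\tilde{\bs b}_{n,\xi}$ only cover a torus fundamental domain rather than $[-1,1]^d$, I would first adjust $\kappa$ (the scaling) so that the $\kappa^{-1}$-dilated image of $\{[n\beta]\}$ covers $[-2,2]^d$.
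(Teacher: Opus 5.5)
Your proposal is correct and follows essentially the same route as the paper. The affine image $\tilde{\bs b}_n+\mathcal D_n(D)$ contains a box whose $\xi$-half-width is uniformly bounded below (the paper's $\underline\lambda$) and whose $\eta$-extent is full (since $\bar\lambda^{-1}>1$), and this is combined with the $C\chi$-density of $\{\tilde{\bs b}_{n,\xi}\}_{n\in\mathcal N}$ and the bound $|\tilde{\bs b}_{n,\eta}|\le C\chi$. The one difference is that you control the remainder $\widetilde{\mathcal E}$ explicitly, via the graph $\eta=g_{n,\eta_0}(\xi)$ and a degree argument, whereas the paper dismisses this step by saying it suffices to cover a $K\chi$-neighbourhood of $D$ by the affine images. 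Incidentally, the pairing of the $O(\kappa/\chi)$ and $O(\kappa\chi)$ entries gives $O(\kappa^2)$ irrespective of the hierarchy, so $\kappa\ll\chi$ is not what makes that step work.
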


\begin{proof}
   In view of the approximation \eqref{eq:approxdynamicsIFS} it is enough to show that for $K>0$ large enough (independent of $\chi,\kappa$ and $\varepsilon$) \begin{equation}\label{eq:approxcovering}
\{z\in\mathbb R^{2d}\colon \mathrm{dist}(z,D)\leq K\chi\}\subset   \bigcup_{n\in \mathcal N} \mathtt F_n(D)
\end{equation}
where $\mathtt F_n$ stands for the affine approximation
\[
\mathtt F_n:(\xi,\eta)\mapsto \tilde{\bs b}_n+ \mathcal D_n\binom{\xi}{\eta}.
\]
However, it is easy to observe that \eqref{eq:approxcovering} holds.  Indeed, from their definition in Lemma \ref{lem:hyperbolicityiteratesIFS} we notice that there exist $0<\underline \lambda<\bar\lambda< 1$ independent of $\chi,\kappa$ and $\varepsilon$ such that 
\[
\underline\lambda<\min\{|\lambda_i^{(n)}|\colon i=1,\dots,d,\ n\in\mathcal N\}\qquad\qquad \bar\lambda>\max\{|\lambda_i^{(n)}|\colon i=1,\dots,d,\ n\in\mathcal N\}.
\]
Then 
\[
\tilde{\bs b}_n+[-\underline\lambda,\underline\lambda]^d\times[-\bar\lambda^{-1},\bar\lambda^{-1}]^d\subset \mathtt F_n(D),
\]
and the desired  conclusion follows since $\{\tilde{\bs b}_{n,\xi}\}_{n\in\mathcal N}$ is (for some $C>0$ independent of $\chi,\kappa$ and $\varepsilon)$ $C\chi$-dense in $[-1,1]^d$ and $|\tilde{\bs b}_{n,\eta}|\leq C\chi|\tilde{\bs b}_{n,\xi}|\leq C\chi$ for $n\in\mathcal N$.
\medskip

The second part of the statement follows from a rather similar argument. Indeed, for any $z_*=(\xi_*,\eta_*)\in D$ there exists $n_*\in\mathcal N$ such that $\mathrm{dist}(\tilde{\bs b}_{n_*,\xi},\xi_*)\leq C\chi$. Let now $r<\underline\lambda/2$. Then, 
\[
B_{r,\xi}(z_*)\times[-1,1]^d\subset \tilde{\bs b}_n+[-\underline\lambda,\underline\lambda]^d\times[-\bar\lambda^{-1},\bar\lambda^{-1}]^d\subset \mathtt F_n(D)
\]
and the claim follows after shrinking, if necessary, the value of $\chi$.
\end{proof}

We now introduce a suitable class of submanifolds which are well aligned with the contracting direction of the maps $\{\mathcal F_n\}_{n\in\mathcal N}$.
\begin{defn}\label{defn:cstrip}
    We say that $\Delta\subset Q^{cs}$ is a \textit{cs-strip} if, in the coordinate system $(\xi,\eta)\in[-1,1]^{2d}$ introduced in \eqref{eq:coordinatesforIFS} admits a parametrization of the form 
    \[
    \Delta=\{(\xi,h(\xi))\in[-1,1]^{2d}\colon \xi\in U \}
    \]
    where $U\subset [-1,1]^d$ is an open subset and $h:U\to [-1,1]^d$ is a $C^1$ function satisfying $|\partial_\xi h|\leq 1$. We define the \textit{width} of $\Delta$ as 
    \[
    \mathrm{width}(\Delta)=\sup\{r\in \mathbb R\colon \xi+[-r,r]^d\subset U, \ \xi\in U\}.
    \]
\end{defn}

Since cs-strips are well aligned with the contracting direction they are expanded under the application of the inverse map.

\begin{lem}\label{lem:graphtransofrmIFS}
There exists $0<\bar\lambda<1$ (independent of $\chi,\kappa$ and $\varepsilon$) such that the following holds.  Let $\Delta\subset D$ be a cs-strip and suppose that there exists $n\in\mathcal N$ such that $\Delta\subset \mathcal F_n (D)$. Then $\Delta_n=\mathcal F_n^{-1}(\Delta)$ is a cs-strip and 
\begin{equation}\label{eq:widthincrease} 
\mathrm{width}(\Delta_n)\geq (1/\bar\lambda) \  \mathrm{width}(\Delta).
\end{equation}
\end{lem}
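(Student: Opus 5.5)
The plan is a standard graph-transform argument for the inverse maps $\mathcal F_n^{-1}$, using the normal form \eqref{eq:approxdynamicsIFS} from Proposition \ref{prop:uniformcoordinatesIFS}. I would write $\mathcal F_n=\mathtt F_n+\widetilde{\mathcal E}$ with $\mathtt F_n(\xi,\eta)=\tilde{\bs b}_n+(D_n\xi,D_n^{-1}\eta)$. Since $\mathcal D_n$ is block diagonal with $|\lambda_i^{(n)}|\in(\underline\lambda,\bar\lambda)$ for $n\in\mathcal N$, and $D\widetilde{\mathcal E}$ is small (the entries $O(\kappa)$, $O(\kappa\chi)$, $O(\kappa/\chi)$ can all be made small once $\kappa\ll\chi$), $\mathcal F_n$ is a diffeomorphism of $D$ onto its image. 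Its inverse has derivative
\[
D\mathcal F_n^{-1}=\mathcal D_n^{-1}+\begin{pmatrix}O(\kappa)&O(\kappa/\chi)\\ O(\kappa\chi)&O(\kappa)\end{pmatrix}.
\]
In particular, the $\xi$-direction is expanded by at least roughly $1/\bar\lambda$ and the $\eta$-direction is contracted by at least roughly $\bar\lambda$.

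The first step is to show that $\Delta_n$ is a graph over $\xi$. I would parametrize $\Delta=\{(\xi,h(\xi))\colon\xi\in U\}$ and look for $\Delta_n$ as $\{(\zeta,g(\zeta))\}$ by solving $\mathcal F_n(\zeta,g(\zeta))\in\Delta$. Equivalently, I would define the map $\zeta\mapsto \pi_\xi\mathcal F_n^{-1}(\xi,h(\xi))$ and show it is injective with invertible derivative. Its derivative is $D_n^{-1}+O(\kappa/\chi)\,\partial_\xi h+O(\kappa)$, and since $|\partial_\xi h|\le1$ this is $D_n^{-1}$ plus a small error. So it is a local diffeomorphism that expands lengths by a factor $\ge(1-c\kappa/\chi)/\bar\lambda$, and injectivity follows by integrating along segments. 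The new slope is then
\[
\partial_\zeta g=\bigl(D_n\,\partial_\xi h+O(\kappa\chi)+O(\kappa)\partial_\xi h\bigr)\bigl(D_n^{-1}+O(\kappa/\chi)\partial_\xi h+O(\kappa)\bigr)^{-1},
\]
which is bounded by roughly $\bar\lambda^2+O(\kappa/\chi)<1$. This gives the cone condition $|\partial g|\le1$. The image lies in $[-1,1]^{2d}$ because $\Delta\subset\mathcal F_n(D)$ means $\mathcal F_n^{-1}(\Delta)\subset D$, so $|g|\le1$ holds automatically.

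For the width bound, I would take a cube $\xi+[-r,r]^d\subset U$ with $r$ close to $\mathrm{width}(\Delta)$. Its preimage under the graph projection contains a cube of side $r(1-c\kappa/\chi)/\bar\lambda'$ around the corresponding point, where $\bar\lambda'$ is the largest of the $|\lambda_i^{(n)}|$. Since the projection is close to the diagonal map $D_n^{-1}$, which expands each coordinate by at least $1/\bar\lambda'$, the image of an axis-parallel cube contains an axis-parallel cube scaled by that factor, up to the small perturbation, via a degree or inverse function argument. I would then pick $\bar\lambda\in(\bar\lambda',1)$ independent of $\chi,\kappa,\varepsilon$ so that \eqref{eq:widthincrease} holds once $\kappa\ll\chi$.

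I expect the main obstacle to be the anisotropic $O(\kappa/\chi)$ entry of $D\widetilde{\mathcal E}$. The whole argument requires the hierarchy $\kappa\ll\chi$ from Proposition \ref{prop:uniformcoordinatesIFS} so that this entry stays small. A further concern is that the diagonal entries $\lambda_i^{(n)}$ may be negative, so $D_n$ can include reflections; this is harmless because I only use absolute values and the cube statement is symmetric.
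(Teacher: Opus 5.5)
Your proposal is correct and follows essentially the same route as the paper. The paper also runs a graph transform for $\mathcal F_n^{-1}$ in the normal-form coordinates of Proposition \ref{prop:uniformcoordinatesIFS}: it solves for the base point through the $\xi$-component (which $D_n^{-1}$ expands), bounds the new slope by a quantity less than $1$, and reads off the width gain from that expansion.

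Your version is slightly more careful than the paper's on two points: you track the anisotropic $O(\kappa/\chi)$ entry, and you absorb the perturbation into a slightly larger $\bar\lambda$ rather than asserting that the new domain is exactly $(1/\bar\lambda)\bar U$. One point is implicit in both arguments: injectivity of the projected map, which you get by integrating along segments, needs the domain $U$ to be convex. This is harmless, because you can first restrict $\Delta$ to the sub-strip over an inscribed cube.
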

\begin{proof}
 It is trivial to see that the inverse map $\mathcal F_n^{-1}$ is given by 
 \[
 \mathcal F_n^{-1}:\binom{\bar \xi}{\bar\eta}\mapsto \hat{\bs b}_n+\mathcal D_n^{-1}\binom{\bar \xi}{\bar\eta}+\widehat{\mathcal E}(\bar\xi,\bar \eta)
 \]
 for some $\hat {\bs b}_n$ and $\widehat{\mathcal E}$ satisfying similar properties to $\tilde{\bs b}_n$ and $\widetilde{\mathcal E}$ in Proposition \ref{prop:uniformcoordinatesIFS}. Let $\Delta$ be a cs-strip and let $\bar U\subset[-1,1]^d$ and $\bar h:\bar U\to [-1,1]^d$ be as in Definition \ref{defn:cstrip}. Then, the equation
 \[
 \xi=\hat{\bs b}_{n,\xi}+ D_n^{-1} \ \bar \xi+\widehat{\mathcal E}_{\xi}(\bar\xi,\bar h(\bar\xi))
 \]
 admits a unique $C^1$ solution $\bar\xi=\bar\xi(\xi)$ defined for all $\xi\in  U\subset[-1,1]^d$ where $U=(1/\bar\lambda) \bar U$ and $\bar\lambda=\max\{|\lambda_i^{(n)}|\colon i=1,\dots,d,\ n\in\mathcal N\}$. Now we define $h:U\to [-1,1]^d$ by the implicit equation
 \[
 h(\xi)=\hat {\bs b}_{n,\eta}+D_n\  \bar h(\bar\xi(\xi))+\widehat{\mathcal E}_\eta(\bar\xi(\xi),\bar h(\bar\xi(\xi))).
 \]
Clearly, the solution $h$ to this equation satisfies that 
\[
|\partial_\xi h|\leq \overline \lambda |\partial_\xi \bar h|+O(\chi)<1.
\]
The proof is then completed after noticing that \eqref{eq:widthincrease} follows from the definition of $U$.
\end{proof}

Together with the Covering Property in Proposition \ref{prop:covering}  (which will allow us to make use of Lemma \ref{lem:graphtransofrmIFS} to expand cs-strips iteratively), the following is the second main ingredient needed to construct a symbolic cs-blender.

\begin{prop}[Well distributed fixed points]\label{prop:welldistributed}
    There exists a subset $\mathcal N_{\mathrm{per}}\subset\mathcal N$ such that, for each $n\in \mathcal N_{\mathrm{per}}$ the map $\mathcal F_n$ has a hyperbolic fixed point $z_n=(\xi_n,\eta_n)\in D$ and such that:
    \begin{enumerate}
        \item There exists a constant $C>0$ (independent of $\chi,\kappa$ and $\varepsilon$) such that 
        \[
        \max_{\xi\in [-1,1]^d}\min_{n\in\mathcal N_\mathrm{per}}\mathrm{dist}(\xi_n,\xi)\leq C\chi.
        \]
        \item Its unstable manifold is a fully crossing vertical submanifold that is, a $d$-dimensional submanifold which admits  a graph parametrization of the form 
\begin{equation}\label{eq:parametrizationunstableIFS}
W^u(z_n;\mathcal F_n)=\{(f_n(\eta),\eta)\colon \eta\in[-1,1]^d\}
\end{equation}
for some differentiable function $f_n:[-1,1]^d\to [-1,1]^d$. Moreover, the function $f_n$ satisfies $|\partial_\eta f_n|\lesssim \chi$.
\item Its stable manifold is a fully crossing horizontal submanifold, that is a $d$-dimensional submanifold admitting a parametrization of the form 
\[
W^s(z_n;\mathcal F_n)=\{(\xi,\tilde f_n(\xi))\colon \xi\in[-1,1]^d\}\]
for some differentiable function $\tilde f_n:[-1,1]^d\to [-1,1]^d$. Moreover, the function $\tilde f_n$ satisfies $|\partial_\xi \tilde f_n|\lesssim \chi$.
    \end{enumerate}
\end{prop}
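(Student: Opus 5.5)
We locate a fixed point for each $\mathcal F_n$ by working with the affine approximation $\mathtt F_n(\xi,\eta)=\tilde{\bs b}_n+\mathcal D_n(\xi,\eta)$ from Proposition \ref{prop:uniformcoordinatesIFS} and then absorbing the error $\widetilde{\mathcal E}$ by a contraction-mapping argument. The fixed point of $\mathtt F_n$ solves $(\mathrm{id}-D_n)\xi=\tilde{\bs b}_{n,\xi}$ and $(\mathrm{id}-D_n^{-1})\eta=\tilde{\bs b}_{n,\eta}$. Lemma \ref{lem:hyperbolicityiteratesIFS} gives that, for $n\in\mathcal N$, the eigenvalues satisfy $\underline\lambda\le|\lambda_i^{(n)}|\le\bar\lambda<1$ uniformly in $\chi,\kappa,\varepsilon$, since $n\varepsilon\in[N_0,N_0+\delta]$. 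Hence $\mathrm{id}-D_n$ and $\mathrm{id}-D_n^{-1}$ are uniformly invertible.

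We define $\mathcal N_{\mathrm{per}}$ as the set of $n\in\mathcal N$ for which $\xi_n^{\mathrm{aff}}:=(\mathrm{id}-D_n)^{-1}\tilde{\bs b}_{n,\xi}$ lies in a slightly shrunk cube, say $[-1+C\chi,1-C\chi]^d$. This set is $C\chi$-dense for the following reason. The map $\xi\mapsto(\mathrm{id}-D_n)\xi$ is, up to $O(\delta)$, the same diagonal bi-Lipschitz map $\mathrm{id}-D_{N_\varepsilon}$ for every $n$ in the range. So for any target $\xi\in[-1,1]^d$ we can apply the density of $\{\tilde{\bs b}_{n,\xi}\}$ (proved in Proposition \ref{prop:uniformcoordinatesIFS}, with $\kappa^{-1}$ scaling making it dense on a much larger cube) at the point $(\mathrm{id}-D_{N_\varepsilon})\xi$. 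This yields item (1). At this step one should double check that $\tilde{\bs b}_{n,\xi}$ is dense on a cube of size larger than $\max|1-\lambda_i|$. This holds because $\tilde{\bs b}_{n,\xi}$ ranges over a $\kappa^{-1}$-scaled copy of the torus. The $\eta$-component satisfies $|\eta_n^{\mathrm{aff}}|\lesssim|\tilde{\bs b}_{n,\eta}|\lesssim\chi$, so it sits well inside $D$.

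Next we perturb. We write the fixed point equation for $\mathcal F_n$ as $z=\mathtt F_n^{\mathrm{hyp}}(z)$, solving the $\xi$-equation forward and the $\eta$-equation backward:
\[
\xi=D_n\xi+\tilde{\bs b}_{n,\xi}+\widetilde{\mathcal E}_\xi,\qquad \eta=D_n(\eta-\tilde{\bs b}_{n,\eta}-\widetilde{\mathcal E}_\eta).
\]
Viewed as a map on $D$, this is a contraction once $\kappa$ is small compared to $\chi$, because $D\widetilde{\mathcal E}=O(\kappa/\chi)$ in its worst block. The hierarchy $\kappa\le\kappa_0(\chi)$ lets us make $\kappa/\chi$ as small as we like. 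We therefore obtain a unique fixed point $z_n$ within $O(\kappa)$ of the affine one. Hyperbolicity follows because $D\mathcal F_n=\mathcal D_n+O(\kappa/\chi)$.

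For items (2)--(3) we run the standard cone/graph-transform argument, in the same spirit as Lemma \ref{lem:graphtransofrmIFS}. For a graph $\{(f(\eta),\eta)\}$ with $|\partial_\eta f|\le K\chi$, the image under $\mathcal F_n$ is again such a graph over all of $[-1,1]^d$: the $\eta$-direction expands by at least $\bar\lambda^{-1}$, and the fixed point lies near the center in $\eta$. The slope becomes at most $\bar\lambda^2K\chi+O(\kappa)+O(\kappa/\chi)\cdot O(\kappa\chi)$, using the off-diagonal bounds $O(\kappa\chi)$ and $O(\kappa/\chi)$ of $D\widetilde{\mathcal E}$. Taking $\kappa\ll\chi^2$ keeps the slope below $K\chi$. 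The iterates contract in $C^0$ (and, via fiber contraction, in $C^1$) to $W^u(z_n;\mathcal F_n)$, which is therefore a fully crossing vertical graph. The stable manifold is handled symmetrically with $\mathcal F_n^{-1}$ and horizontal graphs.

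The main obstacle is the anisotropic off-diagonal term $O(\kappa/\chi)$ in $D\widetilde{\mathcal E}$. It threatens both the contraction and the slope bound $\lesssim\chi$, so the bookkeeping in the hierarchy $\chi\gg\kappa\gg\varepsilon$ must be carried out carefully. A secondary obstacle is ensuring that fixed points lie inside $D$ uniformly enough to give a constant $C$ independent of all the parameters.
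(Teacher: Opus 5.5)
Your proposal is correct and follows the route the paper indicates. The paper's proof is a single remark that the result follows from the stable manifold theorem together with the graph-transform estimates of Lemma \ref{lem:graphtransofrmIFS}. Your contraction argument for the fixed point near $((\mathrm{id}-D_n)^{-1}\tilde{\bs b}_{n,\xi},\,(\mathrm{id}-D_n^{-1})^{-1}\tilde{\bs b}_{n,\eta})$, and the density argument for item (1) built on it, supply details the paper leaves implicit. This includes your remark that negative $\lambda_i^{(n)}$ force density of $\tilde{\bs b}_{n,\xi}$ on a cube somewhat larger than $[-1,1]^d$.

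One bookkeeping correction concerns the unstable graph transform. There the block $\partial_\eta\widetilde{\mathcal E}_\xi=O(\kappa/\chi)$ enters the new slope linearly, giving $\bar\lambda^2K\chi+O(\kappa/\chi)+O(\kappa\chi)$ rather than the product you wrote. This is exactly why the condition $\kappa\ll\chi^2$ you impose is the right one.
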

The proof of this result follows plainly from the stable manifold theorem. Moreover, notice that part of the argument in the proof of that theorem, boils down to the study of the graph transform which we already analyzed in the proof of Lemma \ref{lem:graphtransofrmIFS}. We refer the interested reader to \cite{PalisMelo,MR1326374}. 

\medskip

\subsection{Existence of a symbolic cs-blender}\label{sec:csblenderIFS}
Building now on the results in Propositions \ref{prop:covering} and \ref{prop:welldistributed} we construct a symbolic cs-blender for the family of maps $\{\mathcal F_n\}_n$ (what implies the existence of a symbolic cs-blender for the IFS generated by $\{T,S\}$). To do so we  study the action of the family of maps $\{\mathcal F_n\}_{n\in\mathcal N}$ on cs-strips. 

\begin{prop}\label{prop:csblender1}
  Let $\mathcal N_{\mathrm{per}}\subset\mathcal N\subset \mathbb N$ be the sets constructed in Propositions \ref{prop:uniformcoordinatesIFS} and \ref{prop:welldistributed} and, for $n\in\mathcal N_{\mathrm{per}}$, let $W^u(z_n;\mathcal F_n)\subset Q^{cs}$ be the unstable manifolds of the hyperbolic fixed points in Proposition \ref{prop:welldistributed}. Let $\Delta\subset Q^{cs}$ be a cs-strip. Then, there exists $n_*\in \mathcal N_{\mathrm{per}}$, $L\in\mathbb N$ and $\omega\in\mathcal N^L$ such that 
  \[
  \mathcal F_{\omega_{L-1}}^{-1}\circ\cdots \circ \mathcal F_{\omega_0}^{-1}(\Delta) \pitchfork W^u(z_{n_*};\mathcal F_{n_*})\neq\emptyset.
  \]
  
\end{prop}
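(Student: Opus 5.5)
The plan is the usual blender argument: iterate backwards using the covering property, let Lemma \ref{lem:graphtransofrmIFS} grow the width of the cs-strip geometrically, and stop once the strip is wide enough to cross one of the well distributed unstable manifolds of Proposition \ref{prop:welldistributed}. We work throughout in the coordinates $(\xi,\eta)\in D=[-1,1]^{2d}$ given by $\Psi_{\varepsilon,\kappa,\chi}$, where $\Delta$ is the graph of $h:U\to[-1,1]^d$ with $|\partial_\xi h|\leq 1$.

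\textbf{Step 1 (one backward step).} Suppose $\mathrm{width}(\Delta)<\underline a$, with $\underline a$ as in \eqref{eq:definitionkappa}. Choose $\xi_0\in U$ with $\xi_0+[-r,r]^d\subset U$, where $r=\mathrm{width}(\Delta)$ (or slightly less).

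\begin{enumerate}
\item The piece $\Delta'=\{(\xi,h(\xi))\colon \xi\in\xi_0+(-r,r)^d\}$ is a cs-strip of essentially the same width.
\item Because $|\partial_\xi h|\leq 1$ and the images $\mathcal F_n(D)$ contain the product sets $\tilde{\bs b}_n+[-\underline\lambda,\underline\lambda]^d\times[-\bar\lambda^{-1},\bar\lambda^{-1}]^d$ (up to $O(\kappa)$ errors), the second part of Proposition \ref{prop:covering} applies to graphs as well as to flat horizontal rectangles. This gives some $n\in\mathcal N$ with $\Delta'\subset\mathcal F_n(D)$.
\item Lemma \ref{lem:graphtransofrmIFS} then says that $\mathcal F_n^{-1}(\Delta')$ is a cs-strip of width at least $\bar\lambda^{-1}r$.
\end{enumerate}

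The main technical point is checking that the covering in item 2 still holds for tilted strips. The cleanest route is to shrink $\chi$ and $\kappa$ so that the $\eta$-extent $\bar\lambda^{-1}>1$ of these product sets absorbs the variation of $h$ over a ball of radius $r<\underline a$.

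\textbf{Step 2 (iteration).} Repeat Step 1. The width grows by a factor of at least $\bar\lambda^{-1}$ per step, so after finitely many steps $L$ we obtain a cs-strip $\widetilde\Delta=\mathcal F_{\omega_{L-1}}^{-1}\circ\cdots\circ\mathcal F_{\omega_0}^{-1}(\Delta)$ with width at least $\underline a$. This means that $\widetilde\Delta$ contains a graph over some cube $\xi_1+[-\underline a,\underline a]^d$.

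\textbf{Step 3 (transverse intersection).} By Proposition \ref{prop:welldistributed}(1), there is $n_*\in\mathcal N_{\mathrm{per}}$ with $\mathrm{dist}(\xi_{n_*},\xi_1)\leq C\chi$. Its unstable manifold is the graph $\xi=f_{n_*}(\eta)$ over the whole $\eta$-cube, and $|\partial_\eta f_{n_*}|\lesssim\chi$, so it stays within $O(\chi)$ of $\xi=\xi_{n_*}$.

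Now look for a point on both graphs, that is, solve $\xi=f_{n_*}(h(\xi))$ for $\xi$ in the cube around $\xi_1$. The map $\xi\mapsto f_{n_*}(h(\xi))$ is an $O(\chi)$-contraction into an $O(\chi)$-neighbourhood of $\xi_{n_*}$. Once $C\chi\ll\underline a$, this neighbourhood lies inside $\xi_1+[-\underline a,\underline a]^d$, and the contraction mapping theorem gives a unique intersection point.

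At that point the two tangent spaces are the graphs of $\partial_\xi h$ (slope at most $1$) and of $\partial_\eta f_{n_*}$ (slope $O(\chi)$), and together they span $\mathbb R^{2d}$. The intersection is therefore transverse, which proves the claim.
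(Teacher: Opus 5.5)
Your proof is correct and follows the paper's argument: you grow the width of the cs-strip by pulling back through maps $\mathcal F_n$ supplied by the covering property (Proposition \ref{prop:covering} together with Lemma \ref{lem:graphtransofrmIFS}) until the width exceeds a threshold comparable to $\underline a\gg C\chi$, and then cross one of the well distributed unstable manifolds of Proposition \ref{prop:welldistributed}. Your version is more explicit than the paper's in two places. You check that tilted strips, not just flat rectangles, lie in some $\mathcal F_n(D)$; this holds because these images contain the full $\eta$-range $[-1,1]^d$. You also produce the transverse intersection by a contraction argument. The paper's closing appeal to the $\lambda$-lemma, which makes $n_*$ independent of $\Delta$ for the blender, is not needed for the statement as written.
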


\begin{proof}
    Let $a$ be the number introduced in \eqref{eq:definitionkappa} and recall that there exists $\underline a>0$ independent of $\chi,\kappa$ and $\varepsilon$ such that $a\geq \underline a$. Introduce 
    \[
    b=\max_{(\xi,\eta)\in D} \min_{n\in\mathcal N_{\mathrm{per}}} |\xi-f_n(\eta)|
    \]
    where $f_n:[-1,1]^d\to [-1,1]^d$ is the function in the parametrization \eqref{eq:parametrizationunstableIFS}. In view of Proposition \ref{prop:welldistributed} there exists $C>0$ independent of $\chi,\kappa$ and $\varepsilon$ such that
    \[
   b<C\chi<\underline a/2<\underline a\leq a
    \]
    Now, given $\Delta\subset Q^{cs}$ we distinguish two scenarios:
    \begin{enumerate}
        \item If $\mathrm{width}(\Delta)>\underline a/2$ then, since $\underline a/2>b$, there exists $n_*\in\mathcal N_{\mathrm{per}}$ such that $\Delta \pitchfork W^u(z_{n_*};\mathcal F_{n_*})\neq\emptyset$ so we are done.
        \item If $\mathrm{width}(\Delta)<\underline a/2$, since $\underline a<a$, there exists $n\in\mathcal N$ such that $\Delta\subset \mathcal F_n(D)$. Hence, thanks to Lemma \ref{lem:graphtransofrmIFS} the submanifold $\Delta_1=\mathcal F_n^{-1}(\Delta)$ is a cs-strip and satisfies $\mathrm{width}(\Delta_1)\geq \underline \chi \mathrm{width}(\Delta)$. If $\mathrm{width}(\Delta_1)>\underline a/2$ then we arrive to the first scenario. Otherwise we repeat this argument. Since $\underline \chi>1$ is uniform, after a finite number of steps we must arrive to the first scenario.
    \end{enumerate}
    Finally, the proof is concluded making use of the lambda-lemma \cite{PalisMelo} after noticing that for any two  $z_n,z_{n_*}$ with $n,n_*\in\mathcal N_{\mathrm{per}}$ we have 
    \[
    W^u(z_{n};\mathcal F_{n_*})\pitchfork W^s(z_{n_*};\mathcal F_{n_*})\neq\emptyset,\qquad\qquad W^s(z_{n};\mathcal F_{n_*})\pitchfork W^u(z_{n_*};\mathcal F_{n_*})\neq\emptyset.\qedhere
    \]
\end{proof}

The following is a reformulation of Proposition \ref{prop:csblender1}.

\begin{prop}\label{prop:csblender2}
    Let $Q^{cs}$ as in \eqref{eq:csblenderdomain} and let $P^{cs}\in Q^{cs}$ be given by the image under \eqref{eq:coordinatesforIFS} of the point $z_{n_*}\in[-1,1]^{2d}$ in Proposition \ref{prop:csblender1}. Then, the pair $(P^{cs},Q^{cs})$ forms a symbolic cs-blender for the IFS generated by $\{T,S\}$.
\end{prop}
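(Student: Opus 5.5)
The plan is to translate Proposition \ref{prop:csblender1} back from the normalized coordinates $(\xi,\eta)\in[-1,1]^{2d}$ to the original coordinates on $\mathbb A^d$ through the linear chart $\Psi_{\varepsilon,\kappa,\chi}$ of \eqref{eq:coordinatesforIFS}, and then to check each clause of Definition \ref{defn:symboliccsblender} for the IFS $\{T,S\}$. The first step is to pin down the data the definition requires. The ball is $Q=Q^{cs}=\Psi_{\varepsilon,\kappa,\chi}([-1,1]^{2d})$. The distinguished map $\mathtt T_j$ is taken to be $F_{n_*}=T^{n_*}\circ S$, an element of the semigroup generated by $T,S$, which should be viewed as a return map of the IFS to $Q^{cs}$. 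The fixed point is $P^{cs}=\Psi_{\varepsilon,\kappa,\chi}(z_{n_*})$, which is hyperbolic for $F_{n_*}$ because $\mathcal F_{n_*}=\Psi^{-1}\circ F_{n_*}\circ\Psi$ is conjugate to $F_{n_*}$ and $z_{n_*}$ is a hyperbolic fixed point of $\mathcal F_{n_*}$ by Proposition \ref{prop:welldistributed}. Its local unstable manifold in $Q^{cs}$ is the image under $\Psi$ of the fully crossing graph $W^u(z_{n_*};\mathcal F_{n_*})$ from \eqref{eq:parametrizationunstableIFS}.

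Next I define the common cone fields. I take $\mathcal C^s$ to be the pushforward under $D\Psi_{\varepsilon,\kappa,\chi}$ (a constant matrix) of the cone $\{|v_\eta|\le|v_\xi|\}$, and $\mathcal C^u$ to be the pushforward of $\{|v_\xi|\le|v_\eta|\}$. With these choices, a cs-strip in the sense of Definition \ref{defn:symboliccsblender} is exactly $\Psi$ applied to a $C^1$ graph $\eta=h(\xi)$ with $|\partial_\xi h|\le 1$. That is the notion of Definition \ref{defn:cstrip}, provided we restrict to strips over an open set of $\xi$; any $C^1$ submanifold tangent to $\mathcal C^s$ contains such a piece. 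The invariance of these cones under each $\mathcal F_n$ with $n\in\mathcal N$ (forward invariance for $\mathcal C^u$, backward invariance for $\mathcal C^s$) follows from the form $\mathcal D_n+D\widetilde{\mathcal E}$ given in Proposition \ref{prop:uniformcoordinatesIFS}. Here $D\widetilde{\mathcal E}$ is small once $\kappa\ll\chi$, which is also the computation behind the graph transform in Lemma \ref{lem:graphtransofrmIFS}.

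Given a cs-strip $\Delta\subset Q^{cs}$, Proposition \ref{prop:csblender1} applied to $\Psi^{-1}(\Delta)$ produces a word $\omega\in\mathcal N^L$ with $\mathcal F_{\omega_{L-1}}^{-1}\circ\cdots\circ\mathcal F_{\omega_0}^{-1}(\Psi^{-1}\Delta)\pitchfork W^u(z_{n_*};\mathcal F_{n_*})\neq\emptyset$. The point $n_*$ there may depend on $\Delta$; to obtain a single $P^{cs}$, I fix one $n_*$ and invoke the heteroclinic connections and the $\lambda$-lemma used at the end of that proof. Conjugating by $\Psi$ gives $F_{\omega_{L-1}}^{-1}\circ\cdots\circ F_{\omega_0}^{-1}(\Delta)\pitchfork W^u(P^{cs};F_{n_*})\neq\emptyset$. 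Each factor is $F_n^{-1}=S^{-1}\circ T^{-n}$, so expanding every factor turns $\omega$ into a word in $\{T,S\}$ of length $\sum_k(\omega_k+1)$, which is exactly the form required in Definition \ref{defn:symboliccsblender}.

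The main obstacle is the bookkeeping in the definition itself, which speaks of return maps to $Q$ and of cone fields common to all the $\mathtt T_i$. I would resolve it by declaring the relevant family of maps to be $\{F_n\}_{n\in\mathcal N}$, rather than the raw generators $T$ and $S$, which do not return to $Q^{cs}$. I would then note that every intermediate image $\mathcal F_{\omega_k}^{-1}\circ\cdots(\Delta)$ stays inside $[-1,1]^{2d}$, as guaranteed in the proof of Proposition \ref{prop:csblender1} through the covering property of Proposition \ref{prop:covering}. This ensures the composition in the original coordinates is well defined inside $Q^{cs}$, where the estimates of Lemma \ref{lem:C1control} hold.
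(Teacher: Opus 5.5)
Your proposal is correct and follows the paper. The paper gives no separate argument: it simply presents this statement as a reformulation of Proposition \ref{prop:csblender1}, transported to $\mathbb A^d$ through the linear chart $\Psi_{\varepsilon,\kappa,\chi}$. Your additional bookkeeping (the explicit common cone fields, fixing a single $n_*$ via the heteroclinic connections and the $\lambda$-lemma, and unpacking $F_n^{-1}=S^{-1}\circ T^{-n}$ into inverses of the generators) makes explicit what the paper leaves implicit.
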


\medskip

\subsection{Existence of a symbolic double blender}\label{sec:doubleblenderIFS}
We now make use of a symmetry consideration to deduce the existence of a symbolic double blender, hence completing the first part of the proof of Theorem \ref{thm:mainIFS}.
To do so, for $n\in\mathbb N$ we define the map
\begin{equation}\label{eq:mapsforcublender}
\widetilde F_n:=S\circ T^n:\mathbb A^d\to \mathbb A^d.
\end{equation}
Then, after some algebraic manipulations, it is not difficult to observe that (compare \eqref{eq:localaffineapprox})
\begin{equation}\label{eq:expressionreversibility}
\widetilde F_n^{-1}:\binom{\varphi}{J}\mapsto {\bs b}_n+\begin{pmatrix}
    \mathrm{id}+n\varepsilon AB &-nA\\
    -\varepsilon B & \mathrm{id}
\end{pmatrix}\binom{\varphi}{J}+\widetilde{\mathcal E} (\varphi,J)
\end{equation}
with $\bs b_n$  as in \eqref{eq:localaffineapprox} and $\widetilde {\mathcal E}(\varphi,J)$ satisfying the very same estimates as $\mathcal E$ in Lemma \ref{lem:C1control}. For the involution
\begin{equation}\label{def:involution}
\psi_R:\begin{pmatrix}\varphi\\J\end{pmatrix}\mapsto \begin{pmatrix}-\varphi\\J\end{pmatrix}
\end{equation}
one can check that 
\[
\widetilde F_n^{-1}=\psi_R\circ (T^n\circ S)\circ \psi_R+ \overline{\mathcal E}
\]
with $\overline{\mathcal E}$ satisfying the same estimates as $\mathcal E$ in Lemma \ref{lem:C1control}.
In other words, $\widetilde F_n^{-1}=(S\circ T^n)^{-1}$ is conjugate (up to small errors) to $F_n=T^n\circ S$.
Therefore, verbatim repetition of the discussion in Sections \ref{sec:linearapproxIFS}-\ref{sec:csblenderIFS} shows that there exists a point 
\[
    P^{cu}\sim \psi_R (P^{cs})
    \]
which is a hyperbolic fixed point for the map $\widetilde F_{N_l}$ and such that the pair $(P^{cu},Q^{cu})$ where $Q^{cu}=\psi_R(Q^{cs})$
is a $cu$-blender for the IFS generated by $\{T,S\}$ (see Figure \ref{fig:fig8}). We now show that $P^{cu}$ and $P^{cs}$ are homoclinically related to conclude the existence of a  symbolic double blender. To alleviate the notation we denote by $W^{u}(P^{cs})=W^{u}(P^{cs};F_{N_l})$ and by $W^{s}(P^{cu})=W^{s}(P^{cu};\widetilde F_{N_l})$.

\begin{figure}
    \centering
    \includegraphics[scale=0.75]{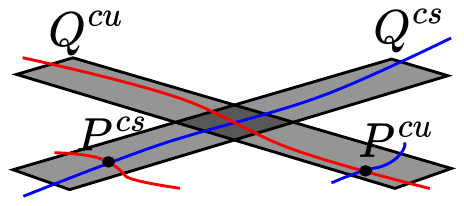}
    \caption{A sketch of the relative position between the sets $Q^{cs}$ and $Q^{cu}$.}
    \label{fig:fig8}
\end{figure}
\begin{lem}\label{lem:homrelatedblenders}
    Let  and $P^{cs}\in Q^{cs}$ be the point in Proposition \ref{prop:csblender2} and let $P^{cu}\in Q^{cu}$ be as above. Then, we have that  $W^{u}(P^{cs})\pitchfork W^s(P^{cu})\neq 0$.
\end{lem}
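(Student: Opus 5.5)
I will work in the original $(\varphi,J)$ coordinates near the origin and compare both invariant manifolds with the linear models. By Proposition \ref{prop:welldistributed}, in the $(\xi,\eta)$ coordinates of \eqref{eq:coordinatesforIFS}, $W^u(P^{cs})$ is a fully crossing vertical graph $\{(f_{N_l}(\eta),\eta)\}$ with $|\partial_\eta f_{N_l}|\lesssim\chi$. Pushed forward by $\Psi_{\varepsilon,\kappa,\chi}$, it is therefore a $d$-dimensional disk through $P^{cs}$. This disk is $O(\chi)$-$C^1$-close to the affine plane $P^{cs}+\Psi_{\varepsilon,\kappa,\chi}(\{0\}\times\mathbb R^d)$, which is spanned by the unstable eigenvectors of $A^{(N_\varepsilon)}$: the columns of $Q_{N_\varepsilon}$ corresponding to $D_N^{-1}$. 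In the $(\varphi,J)$ directions these have the form $(Qv,\ \varepsilon BQ(D_N^{-1}-\mathrm{id})^{-1}v)$. Its extent is of order $\kappa/\chi$ in the rescaled $\eta$-variables.

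Next I will use the symmetry. By \eqref{eq:expressionreversibility}, $W^s(P^{cu};\widetilde F_{N_l})=W^u(P^{cu};\widetilde F^{-1}_{N_l})$. The same argument therefore shows that it is $C^1$-close to $\psi_R$ applied to the unstable plane of $A^{(N_\varepsilon)}$, translated to $P^{cu}\approx\psi_R(P^{cs})$. The reflected plane is spanned by the vectors $(-Qv,\ \varepsilon BQ(D_N^{-1}-\mathrm{id})^{-1}v)$. The key linear-algebra step is to check that the two planes $E=\{(Qv,\varepsilon BQ(D_N^{-1}-\mathrm{id})^{-1}v)\}$ and $\psi_R E$ are transverse in $\mathbb R^{2d}$. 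A vector lies in both exactly when $Qv=-Qw$ and $\varepsilon BQ(D_N^{-1}-\mathrm{id})^{-1}v=\varepsilon BQ(D_N^{-1}-\mathrm{id})^{-1}w$. Since $B$, $Q$ and $D_N^{-1}-\mathrm{id}$ are invertible (the eigenvalues $1/\lambda_i^{(N)}\neq1$ by Lemma \ref{lem:hyperbolicityiteratesIFS}), this forces $v=w=-v$, so $E\cap\psi_RE=\{0\}$. To make the angle uniform, I will rescale $J$ by $\varepsilon^{-1}$, as in the matrix $C$ of the proof of Lemma \ref{lem:hyperbolicityiteratesIFS}. The angle is then bounded below independently of $\varepsilon,\kappa,\chi$, since $D_N$ depends only on $N_0$ and $AB$.

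Then I will produce the intersection point by degree and implicit-function arguments. Both disks have radius of order $\kappa$ in $\varphi$ and order $\varepsilon\kappa/\chi$ in $J$, which is comparable after the $\varepsilon^{-1}$ rescaling. The base points satisfy $|P^{cs}-\psi_R(P^{cs})|=2|\varphi(P^{cs})|$. The main obstacle is to ensure that this displacement is small compared with the radii, so that the two transverse, almost-affine disks actually meet inside $Q^{cs}\cup Q^{cu}$ rather than only their linear extensions. I would handle this by choosing $P^{cs}$, using the freedom in Proposition \ref{prop:welldistributed}(1), with $|\xi(P^{cs})|\le C\chi$. This gives $|\varphi(P^{cs})|\lesssim\kappa\chi$ and $|J|\lesssim\varepsilon\kappa$. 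The displacement is then $O(\chi)$ relative to the disk size, while the disks extend a relative distance $\gtrsim1$ (in fact $1/\chi$ in the unstable direction). A transverse intersection of two graphs over complementary directions with uniformly bounded angle then follows from the contraction mapping principle. The nonlinear corrections are $O(\kappa)$ and $O(\chi)$ in $C^1$ by Proposition \ref{prop:uniformcoordinatesIFS}, and the $C^1$-closeness of $\widetilde{\mathcal E}$ and $\overline{\mathcal E}$ to $\mathcal E$ keeps the error between $P^{cu}$ and $\psi_R(P^{cs})$ at the same order. This yields $W^u(P^{cs})\pitchfork W^s(P^{cu})\neq\emptyset$.
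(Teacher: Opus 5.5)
Your proposal is correct and uses the same mechanism as the paper. The paper's terse claim that the chart transition $\Psi_{\varepsilon,\kappa,\chi}^{-1}\circ\psi_R\circ\Psi_{\varepsilon,\kappa,\chi}$ is a ``rotation by $90$ degrees'' amounts to saying that it turns the almost-vertical, fully crossing $W^u(P^{cs})$ into an almost-horizontal, fully crossing disk in the $Q^{cu}$-chart. That is exactly the uniform transversality $E\cap\psi_R E=\{0\}$ you verify, and there the anisotropy $\chi$ absorbs the base-point displacement that you instead control by a contraction argument.

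One minor point: along $W^u(P^{cs})$ one has $\varphi=Q(\kappa\xi+(\kappa/\chi)\eta)$, so these disks have $\varphi$-extent of order $\kappa/\chi$ rather than $\kappa$. Hence the displacement $2|\varphi(P^{cs})|=O(\kappa)$ is already $O(\chi)$ relative to their size, so your special choice $|\xi(P^{cs})|\le C\chi$ is unnecessary. Also, the intersection you produce lies in $Q^{cs}\cap Q^{cu}$, not the union.
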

\begin{proof}
On the coordinate system $(\xi,\eta)\in [-1,1]^{2d}$ on $Q^{cs}$ given by the linear map  in \eqref{eq:coordinatesforIFS}, we have shown on Proposition \ref{prop:welldistributed} that the local unstable manifold $W^u(P^{cs})$ is given by a $C^1$ submanifold which is almost vertical and fully crosses the rectangle $[-1,1]^{2d}$. On the other hand, on the coordinate system $(\tilde \xi,\tilde \eta)\in[-1,1]^{2d}$ on $Q^{cs}$ given by the linear map $\phi\circ \psi_R$, it follows by construction that $W^s(P^{cu})$ is given by a $C^1$ submanifold which is almost vertical and fully crosses the rectangle $[-1,1]^{2d}$. A straightforward computation shows that on $Q^{cu}\cap Q^{cs}$ the transition map between the two coordinate charts is given by a rotation by $90$ degrees so the proof follows (see  Figure \ref{fig:fig8}).
\end{proof}

Combining Proposition \ref{prop:csblender2} and Lemma \ref{lem:homrelatedblenders} we have proven the existence of a symbolic double blender.

\begin{prop}\label{prop:symbolicdoubleblender}
    The pairs $(P^{cu},Q^{cu})$ and $(P^{cs},Q^{cs})$ constructed above form a symbolic double blender for the IFS generated by the maps $\{T,S\}$.
\end{prop}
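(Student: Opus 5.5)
The plan is to check the two ingredients that enter the definition of a symbolic double blender: that $(P^{cs},Q^{cs})$ is a symbolic $cs$-blender, that $(P^{cu},Q^{cu})$ is a symbolic $cu$-blender, and that $W^s(P^{cu})\pitchfork W^u(P^{cs})\neq\emptyset$. Since Proposition \ref{prop:csblender2} and Lemma \ref{lem:homrelatedblenders} already give the first and last items, the remaining work is to justify the $cu$-blender property through the symmetry $\psi_R$, and then assemble the three pieces.

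For the $cu$-part, I would use the reversibility relation $\widetilde F_n^{-1}=\psi_R\circ F_n\circ\psi_R+\overline{\mathcal E}$, where $\overline{\mathcal E}$ obeys the estimates of Lemma \ref{lem:C1control}. This relation says that the family $\{\widetilde F_n^{-1}\}_n$, written in the chart $\Psi_{\varepsilon,\kappa,\chi}\circ\psi_R$, has exactly the normal form of Proposition \ref{prop:uniformcoordinatesIFS}. The translation vectors are $\psi_R(\tilde{\bs b}_n)$, up to an error whose size is controlled by the same hierarchy $\chi\gg\kappa\gg\varepsilon$. The estimates in Propositions \ref{prop:covering} and \ref{prop:welldistributed} and in Lemma \ref{lem:graphtransofrmIFS} use only these properties: $C\chi$-density of the translations, $|\tilde{\bs b}_{n,\eta}|\le C\chi|\tilde{\bs b}_{n,\xi}|$, the diagonal hyperbolic part $\mathcal D_n$, and the small $C^1$ remainder. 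Their proofs therefore carry over verbatim to $\widetilde F_n^{-1}$.

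Running the argument of Proposition \ref{prop:csblender1} for the maps $\widetilde F_n^{-1}$ then works as follows. A cs-strip for $\widetilde F_n^{-1}$ is the same as a cu-strip in $Q^{cu}=\psi_R(Q^{cs})$ for $\widetilde F_n$. Inverse iterates of $\widetilde F_n^{-1}$ are forward iterates $\widetilde F_n=S\circ T^n$, and the unstable manifold of $P^{cu}$ for $\widetilde F_n^{-1}$ is $W^s(P^{cu};\widetilde F_{N_l})$. The conclusion is that every cu-strip $\Delta\subset Q^{cu}$ has some forward composition of words in $\{T,S\}$ whose image meets $W^s(P^{cu})$ transversally, which is the $cu$-blender property. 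The hyperbolic fixed point of $\widetilde F_{N_l}$ sits $O(\kappa)$-close to $\psi_R(P^{cs})$ in the chart, because the affine parts are conjugate and the errors are small.

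Finally, I would combine this with Proposition \ref{prop:csblender2} and Lemma \ref{lem:homrelatedblenders}. Lemma \ref{lem:homrelatedblenders} supplies the heteroclinic condition $W^s(P^{cu})\pitchfork W^u(P^{cs})\neq\emptyset$ that the definition of a symbolic double blender requires. The step I expect to be the real obstacle is the transversality in that lemma: one has to check that in $Q^{cs}\cap Q^{cu}$ the almost-vertical graph $W^u(P^{cs})$ (slope $\lesssim\chi$) and the graph $W^s(P^{cu})$, which is almost vertical in the reflected chart, really do cross. I would verify this by computing the transition matrix $Q_{N_\varepsilon}^{-1}\psi_R Q_{N_\varepsilon}$. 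Its block structure shows that it exchanges the stable and unstable eigen-directions up to $O(\chi)$, so the two graphs are uniformly transverse and, both being fully crossing, must intersect.
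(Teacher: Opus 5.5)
Your decomposition matches the paper's. Proposition \ref{prop:csblender2} gives the $cs$-part. The reversibility relation $\widetilde F_n^{-1}=\psi_R\circ F_n\circ\psi_R+\overline{\mathcal E}$, with verbatim repetition of Sections \ref{sec:linearapproxIFS}--\ref{sec:csblenderIFS}, gives the $cu$-part; your dictionary (cs-strips, inverse words and $W^u$ for $\widetilde F_n^{-1}$ correspond to cu-strips, forward words and $W^s(P^{cu})$ for $\widetilde F_n$) is right. Lemma \ref{lem:homrelatedblenders} gives the heteroclinic condition. One minor correction: the reflected chart must be $\psi_R\circ\Psi_{\varepsilon,\kappa,\chi}$, i.e.\ reflect in $(\varphi,J)$ after applying $\Psi_{\varepsilon,\kappa,\chi}$. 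In that chart $\widetilde F_n^{-1}$ is conjugate to $\mathcal F_n$ up to small errors with the same translations $\tilde{\bs b}_n$, not $\psi_R(\tilde{\bs b}_n)$. Also, $\psi_R$ is \emph{not} a coordinate reflection in $(\xi,\eta)$.

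That last fact is where your plan for the step you flag goes wrong. The computation you propose does not give what you predict, and it does not give the paper's ``rotation by $90$ degrees'' either. Write $q_i=Qe_i$ and $\lambda_i=\lambda_i^{(N_\varepsilon)}$. The map $\psi_R$ preserves each plane spanned by $v_i^s=(q_i,\varepsilon Bq_i/(\lambda_i-1))$ and $v_i^u=(q_i,\varepsilon Bq_i/(\lambda_i^{-1}-1))$, and on each block $(\xi_i,\eta_i)$ one finds
\[
S_{\kappa,\chi}^{-1}Q_{N_\varepsilon}^{-1}\,\psi_R\,Q_{N_\varepsilon}S_{\kappa,\chi}=\begin{pmatrix}\frac{1-\lambda_i}{1+\lambda_i}&-\frac{2\lambda_i}{(1+\lambda_i)\chi}\\ -\frac{2\chi}{1+\lambda_i}&\frac{\lambda_i-1}{1+\lambda_i}\end{pmatrix}.
\]
The diagonal entries are of order one, so this involution does not exchange $E^s$ and $E^u$ up to $O(\chi)$. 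It sends the horizontal direction to a nearly horizontal one, and before rescaling it maps $v_i^u$ to $-v_i^u+O(\lambda_i)$.

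What rescues the lemma is the entry of size $|\lambda_i|/\chi$.
\begin{itemize}
\item \emph{Transversality.} $W^s(P^{cu})$ is a graph along the vertical direction of the reflected chart. That direction is sent to one of slope $(1-\lambda_i)\chi/(2|\lambda_i|)$, which is nearly horizontal once $\chi\ll\min_i|\lambda_i^{(N_\varepsilon)}|$. This choice is admissible, since $\lambda_i^{(N_\varepsilon)}$ depends only on $N_0\alpha_i$, hence only on $AB$, and $\chi$ is fixed afterwards.
\item \emph{Full crossing.} For the same reason, the piece of $W^s(P^{cu})$ whose reflected-chart vertical coordinate is $\lesssim\chi/\min_i|\lambda_i|$ is already a graph $\eta=h(\xi)$ over all of $[-1,1]^d$, with $|h|,|\partial_\xi h|\lesssim\chi/\min_i|\lambda_i|$.
\item \emph{Intersection.} Intersecting this graph with the graph $\xi=f_{n_*}(\eta)$ of $W^u(P^{cs})$ from Proposition \ref{prop:welldistributed} is a contraction argument. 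It yields a transverse intersection point in $Q^{cs}\cap Q^{cu}$.
\end{itemize}
Geometrically, in the unscaled eigen-coordinates $Q^{cs}$ and $Q^{cu}$ are thin strips of aspect ratio $1/\chi$ whose long sides meet at an angle of order $|\lambda_i|$, not $\pi/2$. So your ``both fully crossing'' must be established in one common chart using this large entry. It cannot be deduced from an exchange of directions, because there is none.
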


\subsection{Proof of Theorem \ref{thm:mainIFS}}\label{sec:completionproofmainifs} We now make use of the symbolic double blender in Proposition \ref{prop:symbolicdoubleblender} to conclude the proof of Theorem \ref{thm:mainIFS}. The idea is exactly the same as in Section 3.6 of \cite{guardia2025partiallyhyperbolicdynamics3body} but adapted to the higher dimensional context. We reproduce it here for the sake of self-completeness.
\medskip

Given $c>0$ (independent of $\varepsilon,\gamma$) we denote by 
\begin{equation}\label{eq:stripsaroundKAM}
\mathbb A_{c\varepsilon}^d=\mathbb T^d\times [-c\varepsilon,c\varepsilon]^d\qquad\qquad {\mathbb A}^d_{\sqrt{\varepsilon}}=\mathbb T^d\times[-\sqrt\varepsilon, \sqrt\varepsilon]^d.
\end{equation}
We divide the proof in three steps:
\begin{itemize}
    \item We first notice that if $B\subset Q^{cu}$ and $B'\subset Q^{cs}$ then, the conclusion follows from the symbolic double blender dynamics.
    \item Second, we show that if $B,B'\in\mathbb A_{c\varepsilon}^d$ with $c>0$ sufficiently small (independent of $\varepsilon,\gamma$), there exists $n_f,n_b\in\mathbb N$ such that $T^{n_f}(B)\cap Q^{cu}\neq\emptyset$ and $T^{-n_b}(B')\cap Q^{cs}\neq\emptyset$.
    \item Finally, we complete the proof by showing that for any $B,B'\in { \mathbb A}_{\sqrt\varepsilon}^d $ there exist $n_f',n_b'\in\mathbb N$ such that $T^{n_f'}(B)\cap \mathbb A^d_{c\varepsilon} \neq\emptyset $ and $T^{-n_b'}(B')\cap \mathbb A^d_{c\varepsilon} \neq\emptyset$.
\end{itemize}

\noindent\textbf{Step 1:}
$B$ is open so it contains a $cu$-strip $\Delta_B$. Hence, since $(P^{cu},Q^{cu})$ is a symbolic $cu$-blender, the unstable manifold $W^s(P^{cu})$ intersects $\Delta_B$ transversally.  On the other hand, $B'$ is open so it contains a  $cs$-strip $\Delta_{B'}$ and the fact that $(P^{cs},Q^{cs})$ is a symbolic $cs$-blender implies that $W^u(P^{cs})$ intersects $\Delta_{B'}$ transversally. The conclusion now follows from a direct application of the lambda-lemma (see \cite{PalisMelo}) and the fact that $P^{cs}$ and $P^{cu}$ are homoclinically related.

\noindent \textbf{Step 2:} We only deal with the existence of $n_f$, the existence of $n_b$ being deduced from the same argument. Let $(\varphi,J)\in \mathbb A_{c\varepsilon}^d$ and recall that 
\[
Q^{cu}=\psi_R\circ \phi([-1,1]^{2d}),
\]
with $\Psi:[-1,1]^{2d}\to\mathbb A^d$ as in \eqref{eq:coordinatesforIFS} and $\psi_R:(\varphi,J)\mapsto (-\varphi,J)$. We show that, provided $c>0$ is small enough,  there exists $L\in\mathbb N$ such that $T^{L}(\varphi,J)\in  Q^{cu}$.
This is done in two steps:
\begin{itemize}
    \item First, we notice that there exists $\ell>0$ independent of $\varepsilon$ such that, for any $c>0$ small and for any $J_*\in[-c\varepsilon,c\varepsilon]^d$ there exists  intervals $\{I_{J,i}\}_{i=1}^d\subset\mathbb T$ of length $|I_{J,i}|\geq \ell$ such that 
    \[
   Q_{J}:= \bigg(\prod_{i=1}^d I_{J_*,i}\times [J_{i}-\varepsilon^2,J_{i}-\varepsilon^2]\bigg)\subset \psi_R\circ\phi([-1/2,1/2]^2)\subset Q^{cu}.
    \]
    That is, at any height $J\in[-c\varepsilon,c\varepsilon]^d$ we can embed a flat  (the vertical size is $O(\varepsilon^2)$) cube $Q_{J}$ inside $Q^{cu}$ with horizontal sides of length independent of $\varepsilon$.
\item Second, for any $C>0$,  any $n\leq C/\gamma^d$, and any
$(\varphi,J)\in \mathbb A_{c\varepsilon}^d$, we have
\begin{equation}\label{eq:dense}
T^n(\varphi,J)=\begin{pmatrix}\varphi+[n\beta]+O(C\gamma^{-d}c\varepsilon)\\ J+O(C\gamma^{-d}c^3\varepsilon^3)\end{pmatrix}.
\end{equation}
By Theorem \ref{thm:Diophantineapprox} the sequence $\{[n\beta]\}_{n=1,\dots,[C/\gamma^d]}$ is $\frac{1}{C}$-dense in $\mathbb T^d$ so, for $C\geq  10\ell^{-1}$,  in view of \eqref{eq:dense} and the fact that $|I_{J,i}|\geq \ell$ (for all $i=1,\dots,d$) we deduce that that, at any
$(\varphi,J)\in \mathbb A_{c\varepsilon}^d$,
\[
\{T_0^n(\varphi,J)\}_{n=1,\dots,[C/\gamma^d]}\cap Q_J\neq \emptyset.
\]
\end{itemize}

\noindent \textbf{Step 3:} We only deal with the existence of $n_f'$, the existence of $n_b'$ being deduced from the same argument. Recall that the map $S$ is of the form
\[
S:\binom{\varphi}{J}\mapsto \binom{\varphi+\varepsilon P_{\varepsilon,\varphi}(\varphi,J)}{J+\varepsilon P_{\varepsilon,J}(\varphi,J)}
\]
with $P_{0,J}(0,0)=0$ and $
B=D_{\varphi}P_{0,J}(0,0)$ 
being a nondegenerate matrix. In particular, for any $|J|\leq \sqrt\varepsilon$ there exists a smooth curve
$\varphi_*=\varphi_*(J)$ such that $
P_{\varepsilon,J}(\varphi_*(J),J)=0$ and $\widetilde B_{\varepsilon}(J):=D_\varphi P_{\varepsilon,J}(\varphi_*(J),J)$ is non-degenerate.

Let now $C>0$ be any large constant independent of $\varepsilon$.  In view of Theorem \ref{thm:Diophantineapprox},   if we let $
    \widetilde C(\gamma,\varepsilon)=C\left(\frac{\log\varepsilon}{\gamma}\right)^d$, the set $\{[n\beta]\}_{n\leq \widetilde C(\gamma,\varepsilon)}$ is $1/C|\log\varepsilon|$-dense in $\mathbb T^d$. Moverover, $(\varphi,J)\in {\mathbb A}_{\sqrt\varepsilon}^d$ and $n\leq \widetilde C(\gamma,\varepsilon)$
   \begin{equation}\label{eq:adjustingangle}
T^n:\binom{\varphi}{J}\mapsto \begin{pmatrix}\varphi+[n\beta]+O(n \sqrt{\varepsilon})\\ J+O(n \varepsilon^{3/2}) \end{pmatrix}=\begin{pmatrix}\varphi+[n\beta]+O(\gamma^{-d}|\log\varepsilon|^d\sqrt\varepsilon)\\ J+O(\gamma^{-d}|\log\varepsilon|^d\varepsilon^{3/2}) \end{pmatrix},
   \end{equation}
Given any choice of $\iota\in\{+,-\}^d$, we choose $n_\iota(\varphi,J)\in\mathbb N$ such that (this is possible because $\widetilde B_\varepsilon$ is invertible and the sequence $\{[n\beta]\}_{n\leq \widetilde C(\gamma,\varepsilon)}$ is $1/C|\log\varepsilon|$-dense in $\mathbb T^d$)
\[
\widetilde B_\varepsilon(J) (\varphi+[n_+\beta]-\varphi_*(J))\in \prod_{i=1}^d (\min\{\iota_i/\log\varepsilon,4\iota_i/\log\varepsilon\},\max\{\iota_i/\log\varepsilon,4\iota_i/\log\varepsilon\})
\]
Then, after writing 
\[
\pi_J S(\varphi,J)=\varepsilon \widetilde B_\varepsilon(J)(\varphi-\varphi_*(J))+O(\varepsilon|\varphi-\varphi_*(J)|^2),
\]
it follows from \eqref{eq:adjustingangle} that 
\begin{align*}
\Delta_\iota J:=\pi_J (S\circ T^{n_\iota})(\varphi,J)-J&\in \prod_{i=1}^d (\min\{2\iota_i \varepsilon/\log\varepsilon,3\iota_i\varepsilon/\log\varepsilon\},\max\{2\iota_i\varepsilon/\log\varepsilon,3\iota_i\varepsilon/\log\varepsilon\})
\end{align*}
If $J+\Delta_\iota J\in[-c\varepsilon,c\varepsilon]^d$ (for $c$,  as in Step 2) we are done. If not, we repeat the argument a finite number of times. The proof of Theorem \ref{thm:mainIFS} is completed.
\medskip

\section{Transport in   IFS generated by close-to-identity maps}\label{sec:transporIFS}

The goal of this section is to give a proof of Proposition \ref{prop:maintransportIFS}. Namely, to show that under assumption \ref{it:B6}, the system of maps $\{S_{i,\varepsilon}\}_{i=1}^m:N\to N$ has the $\varepsilon$-reachability property.  However, this result is not  strong enough to later be of use for the proof of Proposition \ref{prop:skewprodmain}. Namely, in the skew-product setting constructed in Proposition \ref{prop:mainnhim}, we are forced to consider a rather particular subclass of orbits of the  iterated function system composed by the inner map $T_\varepsilon:N\to N$  and the system of maps $\{\{S_{i,\varepsilon}\}_{i=1}^m\}:N\to N$. In particular, we are only allowed to choose orbits composed by segments of the form $T^n\circ S_i$ (instead of arbitrary choices between the maps $\{T_\varepsilon,\{S_{i,\varepsilon}\}_{i=1}^m\}\}$). Hence, instead of Proposition \ref{prop:maintransportIFS} we prove Proposition \ref{prop:epsaccesibility}, which is tailored for this subclass of orbits.
\medskip

\begin{rem}\label{rem:lessymsymbol}
    Given two functions $f(\varepsilon,z)$ and $h(\varepsilon,z)$, we write $f\lesssim h$ if there exists $C>0$ such that uniformly on $(\varepsilon,z)$ we have $f(\varepsilon,z)\leq C h(\varepsilon,z)$. The same notation will be used in Section \ref{sec:skewprod}.
\end{rem}

The first step in the proof of Proposition \ref{prop:epsaccesibility} can be seen as a discrete time analogue of the classical Ball-Box theorem (see \cite{MR1867362}).

\begin{prop}\label{prop:Chow}
    Let $m\geq 2$ and let  $\{S_{i,\varepsilon}\}_{i=1}^m:N\to N$ satisfy assumption \ref{it:B6}. Then, there exists $K>0$ such that for any two points $z,z_*\in N$ there exists $L\in \mathbb N$  and a pair
    \[
    \iota\in\{1,2\}^{L},\qquad \varsigma\in\{+1,-1\}^{L}
    \]
    such that 
    \begin{equation}\label{eq:orbitIFSChow}
\mathrm{dist}(S_{\iota_{L-1}}^{\varsigma_{L-1}}\circ\cdots S_{\iota_{0}}^{\varsigma_{0}}(z),\  z_*)\leq K\varepsilon.
    \end{equation}
\end{prop}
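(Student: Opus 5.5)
The plan is a discrete-time version of the proof of the Chow--Rashevskii/Ball-Box theorem. We use only $S_1=S_{1,\varepsilon}$, $S_2=S_{2,\varepsilon}$ and their inverses, and we read assumption \ref{it:B6} through its recursion \eqref{eq:Liedistributions}, whose base level involves the pair $Y_1,Y_2$. Concretely, we use that at every $z\in U_j$ the iterated brackets of $X_1^{(j)},X_2^{(j)}$ of length at most $\tilde r+1$ span $\mathbb R^{2d}$. For $m=2$ this is literally \ref{it:B6}.

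\textbf{Step 1: expansion of group commutators.} In a chart $U_j$ write $S_i=\mathrm{id}+\varepsilon X_i+\varepsilon^2R_i$ and $S_i^{-1}=\mathrm{id}-\varepsilon X_i+O(\varepsilon^2)$. Define words recursively by $W_i=S_i$ for $i=1,2$ and $[W,W']=W^{-1}W'^{-1}WW'$.
\begin{itemize}
\item By Taylor expansion (a BCH-type computation), a bracket word $W_I$ of length $k\le\tilde r+1$ built this way satisfies
\[
W_I=\mathrm{id}+\varepsilon^kX_I+O_{C^1}(\varepsilon^{k+1}),
\]
where $X_I$ is the corresponding iterated Lie bracket of $X_1,X_2$ (up to a universal sign).
\item The reverse word gives $W_I^{-1}=\mathrm{id}-\varepsilon^kX_I+O(\varepsilon^{k+1})$.
\item The constants are uniform because $|X_i^{(j)}|_{C^{\tilde r}},|R_i^{(j)}|_{C^{\tilde r}}\le C$.
\end{itemize}
This is exactly where the $C^{\tilde r}$ bounds enter. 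Keeping the remainder of order $\varepsilon^{k+1}$ in $C^1$ through nested commutators is the technical point I expect to be the main obstacle. I would first prove, by induction on $k$, that the conjugation error stays of order $\varepsilon^{k+1}$ in $C^1$, using the $C^{\tilde r}$ control at each level.

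\textbf{Step 2: approximating flows (Euler scheme).} Fix $z_0$. By \ref{it:B6}, choose brackets $X_{I_1},\dots,X_{I_{2d}}$, of lengths $k_1,\dots,k_{2d}$, spanning $\mathbb R^{2d}$ on a ball $B(z_0,\rho)$. By compactness of $N$, $\rho$ can be taken uniform. For $t\in[-1,1]$, apply $W_{I_l}^{\pm1}$ a number $\lfloor |t|\varepsilon^{-k_l}\rfloor$ of times. This is an Euler scheme for the time-$t$ flow $\phi^t_{X_{I_l}}$.
\begin{itemize}
\item Each step has local error $O(\varepsilon^{k_l+1})$.
\item There are $O(\varepsilon^{-k_l})$ steps.
\item The flow is Lipschitz, so a Gronwall argument gives a total error $O(\varepsilon)$.
\item Rounding the number of steps costs at most one extra step, of size $O(\varepsilon^{k_l})\le O(\varepsilon)$.
\end{itemize}
Composing the $2d$ such words gives, for every $\bs t\in[-1,1]^{2d}$, a word $\mathcal W_{\bs t}$ in $S_1^{\pm1},S_2^{\pm1}$ such that
\[
\mathcal W_{\bs t}(z)=\Theta_z(\bs t)+O(\varepsilon),\qquad \Theta_z(\bs t)=\phi^{t_{2d}}_{X_{I_{2d}}}\circ\cdots\circ\phi^{t_1}_{X_{I_1}}(z).
\]

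\textbf{Step 3: local reachability.} Because the $X_{I_l}$ span, $D_{\bs t}\Theta_z(0)$ is invertible with uniformly bounded inverse. By a quantitative inverse function theorem, $\Theta_z$ maps a small cube diffeomorphically onto a set containing $B(z,\rho')$, with $\rho'$ independent of $z$ and $\varepsilon$. Hence every $z'\in B(z,\rho')$ satisfies $z'=\Theta_z(\bs t)$ for some $\bs t$, and therefore $\mathrm{dist}(\mathcal W_{\bs t}(z),z')\le K_0\varepsilon$.

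\textbf{Step 4: globalization.} Since $N$ is compact and connected, join $z$ to $z_*$ by a chain of points $z=y_0,y_1,\dots,y_M=z_*$ with consecutive distances less than $\rho'/2$, where $M$ is independent of $\varepsilon$. Apply Step 3 successively, each time aiming from the actual current point at the next $y_k$. The current point is $K_0\varepsilon$-close to $y_k$, so $y_{k+1}$ is still within $\rho'$ of it once $\varepsilon$ is small.
\begin{itemize}
\item Errors do not accumulate: each step re-aims at the exact next target.
\item The final error is therefore $K_0\varepsilon$, so $K=K_0$ works.
\end{itemize}
Concatenating the words gives $\iota\in\{1,2\}^L$ and $\varsigma\in\{\pm1\}^L$ satisfying \eqref{eq:orbitIFSChow}.
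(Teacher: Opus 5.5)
Your argument is sound and follows a genuinely different route from the paper's, although both rest on the same commutator expansion.

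\textbf{Comparison with the paper.} Your Step 1 is exactly the paper's claim \eqref{eq:inductionconmutators}. The paper proves it by induction on the nesting depth, losing one derivative of the remainder per level; that is how the $C^1$ bookkeeping you flagged is handled. After that the paper stays at scale $\varepsilon$. It tiles a chart by boxes $B_{\bs a}$ of side of order $K\varepsilon$. In the linear coordinates $y\mapsto \bs a+G(\bs a)y$, built from a spanning family of brackets at the box centre, each commutator word acts as an almost-translation $y\mapsto y+\varepsilon^{w_j}e_j+O(\varepsilon^{w_j+1})$; this uses that the brackets vary by $O(\varepsilon)$ across a box. It then walks along a chain of $O(\varepsilon^{-1})$ adjacent boxes, re-aiming at each one, and leaves this walk largely implicit.

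You work at macroscopic scale instead:
\begin{itemize}
\item an Euler scheme reproduces the time-$t$ flows of the brackets with $O(\varepsilon)$ global error;
\item a uniform inverse function theorem for the endpoint map $\Theta_z$ gives reachability of a ball of fixed radius $\rho'$;
\item a chain of $O(1)$ such steps, with re-aiming, finishes the argument.
\end{itemize}
Yours is the classical Chow--Rashevskii proof transplanted to the IFS. It separates the discrete-versus-continuous error from the controllability geometry, and its chain length is independent of $\varepsilon$. The price is genuine flows and an inverse function theorem. The paper needs only near-translations, but many more stages. In both, re-aiming at every stage is what keeps the final error at $K\varepsilon$.

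\textbf{Correction: the alphabet must be enlarged, not the hypothesis.} You reduce to $S_1,S_2$ by reading \ref{it:B6} as a bracket-generating condition on $X_1,X_2$ alone. For $m>2$ that is strictly stronger than \ref{it:B6}, because the bracket term in \eqref{eq:Liedistributions} runs over $\mathrm{Lie}^0=\mathrm{span}\{Y_1,\dots,Y_m\}$. In the setting where the paper uses the result (Proposition \ref{prop:assumptionB6}: $m\geq 4d$, $\tilde r=0$), only the whole family spans.

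With only $S_1^{\pm1},S_2^{\pm1}$ the conclusion can genuinely fail. The $\varepsilon$-translations of $\mathbb T^4$ along the four coordinate directions satisfy \ref{it:B6}, already at level $0$. Yet words in the first two of them never leave a $2$-torus, so points differing by $1/2$ in the third coordinate are never approached within $K\varepsilon$.

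The $\{1,2\}$ in the statement is a leftover from the case $m=2$. The paper's proof picks the spanning brackets from $\{1,\dots,m\}^{\tilde r}$ and inducts with $i_{n+1}\in\{1,\dots,m\}$. So the right repair is to allow $\iota\in\{1,\dots,m\}^L$; your Steps 1--4 then go through verbatim.

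\textbf{Minor regularity point.} Brackets of length $\tilde r+1$ of $C^{\tilde r}$ fields are only $C^0$. Your Steps 2--3 need the spanning brackets to be $C^1$, both for Lipschitz flows and for a $C^1$ endpoint map. Either stop at length $\tilde r$, as the paper's claim \eqref{eq:inductionconmutators} does, or assume one more derivative. That costs nothing in the application.
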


\begin{proof}
Let $2d=\mathrm{dim}(N)$ and let $\psi:[-2,2]^{2d}\subset\mathbb R^{2d}\to N$ be a chart of $N$ as in assumption \ref{it:B6}. For $i=1,\dots, m$, the expression of the scattering map $S_i$ in this coordinates reads (we abuse notation and write $S_{i,\varepsilon}$ instead of $\psi^{-1}\circ S_{i,\varepsilon}\circ\psi$)
\[
S_{i,\varepsilon}(z)=z+\varepsilon X_i(z)+\varepsilon^2 R_{i,\varepsilon}(z)\qquad\qquad |R_{i,\varepsilon}|_{C^r}\lesssim 1.
\]
We start by introducing some notation. Given two vector fields $X,Y$ we denote by $[X,Y]$ their Lie bracket. Abusing notation, for diffeomorphisms $\phi,\psi$ we define their commtator
\[
[\phi,\psi]=\phi^{-1}\circ\psi^{-1}\circ\phi\circ\psi.
\]
Now, for any $n\geq 2$ and $\iota\in\{1,\dots,m\}^n$ we define 
\[
Y_\iota=[X_{\iota_n},[\dots[X_{\iota_3},[X_{\iota_2},X_{\iota_1}]]\dots]]\qquad\qquad \mathcal S_{\iota}=[S_{\iota_n},[\dots[S_{\iota_3},[S_{\iota_2},S_{\iota_1}]]\dots].
\]
In the case $n=1$ we write
\[
Y_\iota=X_{\iota_1}\qquad\qquad \mathcal S_\iota=S_{\iota_1}.
\]
We claim that for any $n\leq \tilde r$ (recall that $\tilde r\geq 0$ enters in \ref{it:B6} as the order of the distribution which generates $\mathbb R^{2d}$) and any $\iota\in\{1,2\}^n$ we have 
\begin{equation}\label{eq:inductionconmutators}
\mathcal S_{\iota,\varepsilon}(z)=z+\varepsilon^n Y_\iota(z)+\varepsilon^{n+1}\mathcal R_{\iota,\varepsilon}(z)\qquad\qquad|\mathcal R_{\iota,\varepsilon}|_{C^{\tilde r-n+1}}\lesssim 1.
\end{equation}
We now show how to conclude the proof of the proposition and verify the claim  afterwards. To do so, fix $K>0$ large enough (independent of $\varepsilon$) and  cover $[-2,2]^d$ by the family of boxes $\{B_{\bs a}\}_{\bs a}$
\[
B_{\bs a}=\bs a+[-2K\varepsilon, 2K\varepsilon]^{2d}
\]
where
\[
\bs a =(a_1 K\varepsilon, a_1 K\varepsilon,\dots, a_d K\varepsilon), \qquad\qquad (a_1,\dots,a_d)\in\{-[2/(K\varepsilon)],\dots,0,\dots,[2/(K\varepsilon)]\}^d\subset\mathbb N^d.
\]
Given any $\bs a,\bs a'$ there exists a finite chain $\{\bs a^{(l)}\}_{l=1}^L$ such that $\bs a^{(1)}=\bs a$ and $\bs a^{(L)}=\bs a'$. We will show that given any $z^{(l)}\in B_{\bs a^{(l)}}$ there exists an orbit of the IFS generated by $\{S_{i,\varepsilon}\}_{i=1}^m$ and their inverses which moves $z^{(l)}$ to $B_{\bs a^{(l+1)}}$.

\medskip

We start by noticing that we can find $\iota^{(1)},\dots \iota^{(d)}\subset \{1,\dots,m\}^{\tilde r}$ such that $\{Y_{\iota^{(j)}}\}_{j=1}^d\subset \mathrm{Lie}^{\tilde r}\{X_1,\dots, X_m\}$   satisfying at each $z\in B_{\bs a}$
\[
\mathrm{det} \underbrace{\left(Y_{\iota^{(1)}}|\dots|Y_{\iota^{(d)}}\right)(z)}_{G(z)}\neq 0.
\]
Then, we introduce the linear coordinate transformation on $B_{\bs a^{(i)}}$ given by 
\[
\psi_l:y\mapsto z=\bs a^{(l)}+G(\bs a^{(l)}) y.
\]
By construction, in the new coordinate system we have (observe that, by continuity, for any $\tilde z\in B_{\bs a^{(l)}}$ and any $j=1,\dots, d$ we have the upper bound $|Y_{\iota^{(j)}}(\tilde z)-Y_{\iota^{(j)}}(\bs a^{(l)})|\lesssim \varepsilon$), 
\[
\psi^{-1}_l\circ\mathcal S_{\iota^{(j)},\varepsilon}\circ\psi_l:y\mapsto y+\varepsilon^{w_j}e_j+O(\varepsilon^{w_j+1})\qquad\qquad w_j=\mathrm{card}\{i\in\mathbb N\colon \iota^{(j)}_i\neq 0\}.
\]
and $e_j\in \mathbb R^d$ is the $j$-th element of the canonical  basis of $\mathbb R^d$. This concludes the proof of Proposition \ref{prop:Chow} modulo verification of the claim \eqref{eq:inductionconmutators}.

\medskip

To prove the validity of this claim we first observe that
\eqref{eq:inductionconmutators} trivially  holds for $n=1$. Now suppose that \eqref{eq:inductionconmutators}  is true for some $n\in\mathbb N$ and a given $\iota\in\{1,\dots,m\}^n$. Let $i_{n+1}\in\{1,\dots,m\}$ and define $\iota'=(\iota,i_{n+1})$.  Then, 
\[
S_{\iota_{n+1},\varepsilon}\circ \mathcal S_{\iota,\varepsilon}(z)=S_{i_{n+1},\varepsilon}(z)+\varepsilon^n Y_\iota(z)+\varepsilon^{n+1}(\mathcal R_{\iota,\varepsilon}(z)+DX_{i_{n+1}}\ Y_\iota(z))+\varepsilon^{n+2}\ \widetilde{R}_{\iota',\varepsilon}(z),\quad\quad |\widetilde R_{\iota',\varepsilon}|_{C^{\tilde r-n}}\lesssim 1.
\]
Using this formula and the fact that 
\[
\mathcal S_{\iota,\varepsilon}^{-1}(z)=z-\varepsilon^n Y_\iota-\varepsilon^{n+1} \mathcal R_{\iota,\varepsilon}(z)+\varepsilon^{2n} DY_\iota Y_\iota+ \varepsilon^{\mathrm{min}\{2n,n+1\}+1} \widehat R_{\iota,\varepsilon}(z),\qquad\qquad |\widehat R_{\iota',\varepsilon}|_{C^{\tilde r-n}}\lesssim 1,
\]
is easy to check that 
\[
\mathcal S_{\iota,\varepsilon}^{-1}\circ S_{\iota_{n+1},\varepsilon}\circ \mathcal S_{\iota,\varepsilon}(z)=S_{i_{n+1},\varepsilon}(z)+\varepsilon^{n+1}[X_{i_{n+1}},Y_\iota](z)+\varepsilon^{n+2} \bar R_{\iota',\varepsilon}(z)\qquad\qquad|\bar R_{\iota'}|_{C^{\tilde r-n}}\lesssim 1,
\]
Since
\[
\mathcal S_{\iota',\varepsilon}=S_{i_{n+1},\varepsilon}^{-1}\circ\mathcal S_{\iota,\varepsilon}^{-1}\circ S_{\iota_{n+1},\varepsilon}\circ \mathcal S_{\iota,\varepsilon},
\]
the claim now follows by a simple computation. This concludes the proof.
\end{proof}
\medskip

With the intention of applying it to the skew-product setting in Proposition \ref{prop:mainnhim} we now make use of Poincar\'e recurrence to upgrade the  result in Proposition \ref{prop:Chow} to forward orbits of the IFS generated by
\[
\{\{T_\varepsilon^n\circ S_{1,\varepsilon}\}_{n=n_*}^\infty,\dots, \{T^n\circ S_{m,\varepsilon}\}_{n=n_*}^\infty\}
\]
where $n_*\in\mathbb N$ is some large natural number, $T_\varepsilon:N\to N$ is a diffeomorphism which a smooth measure  on $N $ and $\{S_{i,\varepsilon}\}_{i=1}^M:N\to N$ preserve the same measure and satisfy \ref{it:B6}  (later we will think of these maps as those driving the skew-product in Proposition \ref{prop:mainnhim}). 
\begin{rem}
In order to  alleviate the notation, as we did in Section \ref{sec:blenderIFS}, for the rest of this section we simply write $T$ and $\{S_i\}_{i=1}^m$ instead of $T_\varepsilon$ and $\{S_{i,\varepsilon}\}_{i=1}^m$.
\end{rem}

We introduce the sets
\[
\mathcal R_0=\{z\in N\colon z\text{ is recurrent for } T\},\qquad\qquad \mathcal R_{i}=\{z\in N\colon z\text{ is recurrent for } S_i\},\qquad i=1,\dots,m
\]
and define
\begin{equation}\label{eq:recurrentset}
    \mathcal R=\bigcap_{(j,l)\in \mathbb Z^{1+m}} T^j\circ S_1^{l_1}\circ \dots\circ S_m^{l_m} \big( \mathcal R_0 \cap \mathcal R_1\cap\dots\cap \mathcal R_m\big)
\end{equation}
i.e. the set of points which are recurrent for $T$, are recurrent for  $S_i$ with $i=1,\dots,m$, and which, moreover, is invariant under iteration of any of the maps $T$ or $S_i$ with $i=1,\dots, m$.
\begin{lem}\label{lem:recurrentset}
    Suppose that $T_\varepsilon:N\to N$ and  $\{S_{i,\varepsilon}\}_{i=1}^M:N\to N$ preserve a common smooth  measure on $N$. Then the  set $\mathcal R\subset N$  is of full measure.
\end{lem}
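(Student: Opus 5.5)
The plan is to combine the Poincaré recurrence theorem with the fact that a countable intersection of full-measure sets has full measure. Let $\mu$ be the common smooth invariant measure. Since $N$ is compact, $\mu$ is finite, so we may normalize it to a probability measure. Each map $T$ and $S_i$ is a measurable, measure-preserving transformation of $(N,\mu)$.

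\textbf{Step 1: each recurrent set has full measure.} By the Poincaré recurrence theorem, for every measurable $U\subset N$, $\mu$-almost every point of $U$ returns to $U$ infinitely often under $T$. To turn this into recurrence in the topological sense, fix a countable basis $\{U_k\}_k$ of the topology of $N$, which exists because $N$ is a compact manifold. Let $E_k\subset U_k$ be the null set of points of $U_k$ that do not return to $U_k$. Then every point outside $\bigcup_k E_k$ is recurrent for $T$, since each of its neighbourhoods contains some $U_k$ containing the point. Hence $\mu(N\setminus \mathcal R_0)=0$. The same argument applied to each $S_i$ gives $\mu(N\setminus\mathcal R_i)=0$ for $i=1,\dots,m$. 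Consequently the finite intersection $\mathcal R_*:=\mathcal R_0\cap\mathcal R_1\cap\dots\cap\mathcal R_m$ has full measure.

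\textbf{Step 2: images of $\mathcal R_*$ still have full measure.} For each $(j,l)\in\mathbb Z^{1+m}$, the map $G_{j,l}=T^j\circ S_1^{l_1}\circ\cdots\circ S_m^{l_m}$ is a diffeomorphism preserving $\mu$. Since $G_{j,l}$ is a bijection, $N\setminus G_{j,l}(\mathcal R_*)=G_{j,l}(N\setminus\mathcal R_*)$. Invariance of $\mu$ under $G_{j,l}^{-1}$ then gives
\[
\mu\big(G_{j,l}(N\setminus\mathcal R_*)\big)=\mu(N\setminus\mathcal R_*)=0 .
\]
Measurability is clear: $\mathcal R_*$ is Borel (indeed $G_\delta$-type, being defined through countably many return conditions on basic open sets), and diffeomorphisms map Borel sets to Borel sets.

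\textbf{Step 3: countable intersection.} The index set $\mathbb Z^{1+m}$ is countable, so $\mathcal R$ in \eqref{eq:recurrentset} is a countable intersection of full-measure sets. Therefore $\mu(N\setminus\mathcal R)\le\sum_{(j,l)}\mu\big(N\setminus G_{j,l}(\mathcal R_*)\big)=0$.

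If the invariance described in the text is wanted under arbitrary words in $T$ and the $S_i$ (not just the ordered products $T^j S_1^{l_1}\cdots S_m^{l_m}$), the same argument applies verbatim. Index instead over all finite words in $\{T^{\pm1},S_i^{\pm1}\}$; this set is still countable, and the resulting set is genuinely invariant under every map.

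The only mildly delicate point is Step 1, namely passing from measure-theoretic recurrence to topological recurrence via the countable basis. This is standard, so I expect no real obstacle; the remaining steps are bookkeeping.
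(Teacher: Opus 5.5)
Your proof is correct and is the same argument as the paper's, which simply asserts that $\mathcal R$ is a countable intersection of full-measure sets; you supply the details it leaves implicit, namely Poincar\'e recurrence over a countable basis and invariance of null sets under the measure-preserving bijections $T^j\circ S_1^{l_1}\circ\cdots\circ S_m^{l_m}$. Keep your closing remark about indexing over all finite words in $T^{\pm1},S_i^{\pm1}$: since the maps need not commute, the set in \eqref{eq:recurrentset} is invariant under $T$ but not necessarily under the $S_i$, whereas the proof of Proposition \ref{prop:epsaccesibility} uses $T$-recurrence of points such as $S_{\iota_{r-1}}^{\varsigma_{r-1}}\circ\cdots\circ S_{\iota_0}^{\varsigma_0}(z)$, which the word-indexed set does guarantee.
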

\begin{proof}
    It is a straightforward consequence of the fact that all the maps preserve a common smooth measure. Indeed, $\mathcal R$ is a countable union of full measure subsets. Hence, it is of full measure.
\end{proof}

The following is the main result of this section. It will allow us to establish $\varepsilon$-reachability for the skew-product dynamics.

\begin{prop}\label{prop:epsaccesibility}
    Let $T_\varepsilon:N\to N$ be a diffeomorphism which preserves a smooth measure and let  $\{S_{i,\varepsilon}\}_{i=1}^m:N\to N$ be diffeomorphisms preserving the same measure  and satisfying \ref{it:B6}. Let $\mathcal R$ be the recurrent set in \eqref{eq:recurrentset}.  Then, there exists $K>0$ and $\varepsilon_0>0$ such that for any $0<\varepsilon\leq \varepsilon_0$ and any two points $z,z_*\in N$ with $z\in \mathcal R$ there exists $L\in \mathbb N$,   $\iota\in\{1,2\}^L$ such that for any sequence $\{n_r\}_{r=1}^{L-1}$ satisfying $ n_1\gg \cdots \gg n_{L-1}\gg 1$ there exists  $\omega\in \mathbb N^L$ satisfying 
    \begin{equation}\label{eq:increasingtimes}
\omega_{r-1}-\omega_{r}\geq n_r \qquad\qquad \text{for all }r=1,\dots, L-1\quad\text{and } \omega_{L-1}\geq n_*
    \end{equation}
    and such that 
    \[
    \mathrm{dist}(T^{\omega_{L-1}}\circ S_{\iota_{L-1}}\circ\cdots T^{ \omega_0}\circ S_{\iota_0}(z),\ z_*)\leq K\varepsilon.
    \]
\end{prop}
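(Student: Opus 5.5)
Starting from Proposition \ref{prop:Chow}, I first take a word $S_{\iota_{L-1}}^{\varsigma_{L-1}}\circ\cdots\circ S_{\iota_0}^{\varsigma_0}$ that moves $z$ to within $K\varepsilon$ of $z_*$. The task is then to turn it into an orbit of the restricted system $\{T^n\circ S_i\}$ with prescribed, strongly decreasing gaps in the times. This is done in two moves. First, each inverse $S_i^{-1}$ is replaced by a forward power of $S_i$, using recurrence for $S_i$. Second, a long block $T^{\omega}$ is inserted between consecutive applications of the $S$'s, using recurrence for $T$. Both moves are legitimate for points of $\mathcal R$. By Lemma \ref{lem:recurrentset} and the definition \eqref{eq:recurrentset}, $\mathcal R$ has full measure, is invariant under $T$ and every $S_i$, and consists of points that are recurrent for $T$ and for each $S_i$. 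The orbit therefore stays in $\mathcal R$ once it starts there, which is why we assume $z\in\mathcal R$.

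The inductive step goes as follows. Let $w_r\in\mathcal R$ be the current point of the true orbit, and let $\tilde w_r$ be the corresponding point of the ideal Chow chain. For an inverse step $S_i^{-1}$ I use that $w_r$ is $S_i$-recurrent, which makes $S_i^{-1}(w_r)$ recurrent as well. Hence there are arbitrarily large $k$ with $S_i^{k}(w_r)$ within a prescribed $\eta_r$ of $S_i^{-1}(w_r)$, so $S_i^{-1}$ can be replaced by $S_i^{k}$ at arbitrarily small cost. This expands the word into one that uses only forward maps $S_{\iota}$. Then, before each $S_{\iota}$, I insert $T^{\omega_r}$ with $\omega_r$ chosen so that $T^{\omega_r}(w_r)$ lies within $\eta_r$ of $w_r$. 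Recurrence of $w_r$ for $T$ gives infinitely many such return times, so $\omega_r$ can be taken as large as needed. That is what produces the gap conditions \eqref{eq:increasingtimes}, $\omega_{r-1}-\omega_r\ge n_r$ and $\omega_{L-1}\ge n_*$.

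The order in which the times are chosen needs care, because the constraints are stated relative to the later times. I would fix the final time $\omega_{L-1}$ only after the others, working backwards so that each $\omega_{r-1}$ is chosen large relative to $\omega_r+n_r$. Since the return time to $w_r$ can be as large as we like, I would instead pick the times forward, each chosen large compared with everything chosen later, and use the hypothesis $n_1\gg\cdots\gg n_{L-1}$ to guarantee consistency. The alternative is to select all return times as elements of the infinite return-time sets, which contain arbitrarily large elements, and choose them from the last index to the first.

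The main obstacle is error control. Each insertion introduces an error $\eta_r$, and later maps amplify it by at most a uniform Lipschitz constant $\mathrm{Lip}(S_i)^{\text{number of remaining steps}}$. The later $T$-returns do not amplify it, because each is chosen for the actual current point, not the ideal one. So I choose $\eta_r$ inductively so small that the total accumulated deviation is at most $\varepsilon$. Together with the $K\varepsilon$ from Proposition \ref{prop:Chow}, this gives the bound $(K+1)\varepsilon$.

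There is a second concern. The Chow word is built for the ideal chain, but the true point $w_r$ differs from $\tilde w_r$ by a small amount. I would handle this by rerunning the box-to-box argument from the proof of Proposition \ref{prop:Chow} adaptively: at every stage apply the commutator moves to the actual point, which stays in its box up to $O(\varepsilon)$ slack. This way the errors never need to be propagated through the whole word.
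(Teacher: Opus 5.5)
Your ingredients match the paper's: the word from Proposition \ref{prop:Chow}, removing inverses by recurrence, inserting long $T$-returns, and invariance of $\mathcal R$. The gap is in how you combine the order of choosing the times with your error control. Your third paragraph proposes both a forward and a backward order and does not resolve the conflict.

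Condition \eqref{eq:increasingtimes} forces the times to decrease along the orbit. The first block $T^{\omega_0}\circ S_{\iota_0}$ carries the largest time, so $\omega_{r-1}$ can only be fixed once $\omega_r$ is known. Your error bound instead takes each $\omega_r$ to be a return time of the \emph{actual} point $w_r$, which is defined only after $\omega_0,\dots,\omega_{r-1}$ are fixed.

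\begin{itemize}
\item Run forward, this fails. Recurrence gives arbitrarily large return times of $w_r$, but none is guaranteed below $\omega_{r-1}-n_r$. Enlarging $\omega_{r-1}$ to make room moves $w_r$, and its return times with it.
\item Run backward, the returns cannot be taken at actual points. They must be taken at points fixed in advance, namely the ideal chain points $p_r:=S^{\varsigma_{r-1}}_{\iota_{r-1}}\circ\cdots\circ S^{\varsigma_0}_{\iota_0}(z)\in\mathcal R$. An error created at an early block is then carried through every later block, including the $T^{\omega_j}$, whose $C^1$ norms are not bounded uniformly in $\omega_j$.
\end{itemize}

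So your claim that later $T$-returns do not amplify errors, and your bound by powers of $\mathrm{Lip}(S_i)$, both fail in the backward order that \eqref{eq:increasingtimes} forces.

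The missing idea, which is how the paper argues, is to build the restricted word from the end of the Chow chain backwards and calibrate each new error against the tail already built. Suppose the restricted word $\mathcal T_r$ realizing Chow steps $r,\dots,L_0-1$ has been constructed. Its times are fixed, so $K_r:=|\mathcal T_r|_{C^1}$ is a known finite number.

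If $\varsigma_{r-1}=+$, prepend $T^{\tilde\omega}\circ S_{\iota_{r-1}}$, where $\tilde\omega$ is a return time of the exact point $p_r$ with $\mathrm{dist}(T^{\tilde\omega}(p_r),p_r)\le \varepsilon 2^{-r}/K_r$. Also require $\tilde\omega$ to exceed the first (largest) time of $\mathcal T_r$ by the prescribed gap. Both can be met because $p_r$ does not depend on $\mathcal T_r$ and its set of return times is infinite. This step then contributes at most $\varepsilon 2^{-r}$ at the end, and these contributions sum to at most $\varepsilon$.

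If $\varsigma_{r-1}=-$, the same calibration applies to replacing $S^{-1}_{\iota_{r-1}}$. Your $S_i$-recurrence works here. Choose $k$ only after $K_r$ is known, so that $S_i^{k}(p_{r-1})$ is within $\varepsilon 4^{-r}/K_r$ of $p_r$. Then insert $T$-returns at the points $S_i^{j}(p_{r-1})$, again chosen backwards inside the block. The paper instead uses recurrence of $p_r$ under $T^{n_*}\circ S_{\iota_{r-1}}$. Your variant has the advantage of using only the recurrence built into the definition of $\mathcal R$.

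In this scheme your last paragraph is unnecessary. All returns are taken at exact points of the ideal chain, so the Chow word never has to be re-planned along the actual orbit.
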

\medskip

The observation that $\omega$ can be chosen satisfying \eqref{eq:increasingtimes} will prove crucial in Section \ref{sec:skewprod} to implement  a shadowing argument and substitute the IFS dynamics by the true skew-product dynamics.

\begin{proof}
Fix $z,z_*\in N$ and let  $L_0\in \mathbb N$, $
    \iota\in\{1,2\}^{L_0}$ and  $\varsigma\in\{+1,-1\}^{L_0}$ be as in Proposition \ref{prop:Chow}. Assume moreover that $z\in\mathcal R$ and that $\varsigma_0=+$ (otherwise add $S_1^{-1}\circ S_1$ at the beginning). The proof follows from an inductive argument:
\medskip

\noindent\textit{Inductive hypothesis:} Fix  $r=1,\dots,L_0-1$ and suppose that we have found $L_r\in\mathbb N$ such that for any choice of $n_{1}^{(r)}\gg \cdots\gg n_{L_r-1}^{(r)}\gg 1$ there exists $\omega^{(r)}=(\omega^{(r)}_{L_r-1},\dots,\omega^{(r)}_0)\in \mathbb N^{L_r}$ satisfying 
\[
\omega^{(r)}_{j-1}-\omega^{(r)}_{j}\geq n_j^{(r)}\qquad\qquad j=1,\dots,L_r-1\quad \text{and } \omega^{(r)}_{L_r-1}\geq n_*
\]
and $\iota^{(r)}=(\iota^{(r)}_{L_r-1},\dots,\iota^{(r)}_0)\in \{1,2\}^{L_r}$ such that 
\begin{equation}\label{eq:inductiontransportIFS}
\mathrm{dist}(T^{\omega^{(r)}_{L_r-1}}\circ S_{\iota^{(r)}_{L_r-1}}\circ\cdots T^{\omega^{(r)}_{0}}\circ S_{\iota_0^{(r)}}\circ T^{\chi_r}\circ  S_{\iota_{r-1}}^{\varsigma_{r-1}}\circ\cdots S_{\iota_{0}}^{\varsigma_{0}}(z),\ S_{\iota_{M-1}}^{\varsigma_{M-1}}\circ\cdots S_{\iota_{0}}^{\varsigma_{0}}(z))\leq \varepsilon(1-\sum_{l=1}^{r}2^{-l})
\end{equation}
where $\chi_r=0$ if $\varsigma_r=+$ and $\chi_r=n_*$ if $\varsigma=-$.
\medskip

\noindent\textit{Inductive step:} We suppose the inductive hypothesis holds for some $r=1,\dots,L_0-1$ and now show that it also holds with $r$ replaced by $r-1$. We let 
\[
K_r=|T^{\omega^{(r)}_{L_r-1}}\circ S_{\iota^{(r)}_{L_r-1}}\circ\cdots T^{\omega^{(r)}_{0}}\circ S_{\iota_0^{(r)}}\circ T^{\chi_r}|_{C^1}
\]
We distinguish two cases:
\begin{enumerate}
    \item If $\varsigma_{r-1}=+$ then we let $L_{r-1}=L_r+1$ and choose any $n_1^{(r-1)}\gg n_1^{(r)}$. Then, there exist infinitely many  $\tilde\omega_0^{(r-1)}\in\mathbb N$ satisfying  $\tilde\omega_0^{(r-1)}-\tilde\omega_0^{(r)}\geq n_1^{(r-1)}$ and such that 
    \[
    \mathrm{dist}( T^{\tilde\omega_0^{(r+1)}}\circ S_{\iota_{r-1}}^{\varsigma_{r-1}}\circ\cdots S_{\iota_{0}}^{\varsigma_{0}}(z),\ S_{\iota_{r-1}}^{\varsigma_{r-1}}\circ\cdots S_{\iota_{0}}^{\varsigma_{0}}(z)))\leq \frac{\varepsilon}{K_r2^r}
    \]
    Thus, for any such choice of $\tilde\omega_0^{(r-1)}$, if we relabel
    \[
   n^{(r-1)}=(n_1^{(r-1)},n_1^{(r)},\dots,n_{L_{r-1}}^{(r)}), \qquad\omega^{(r-1)}=(\chi_r+\tilde\omega^{(r-1)}_0,\omega^{(r)}_{0},\dots,\omega^{(r)}_{L_r-1})
   \]
   and
   \[
   \iota^{(r-1)}=(\iota_{r-1},\iota^{(r)}_{0},\dots,\iota^{(r)}_{L_r-1}),
    \]
    one can check, by means of the triangle inequality, the inductive assumption, and the mean value theorem, that the inductive assumption  holds with $r$ replaced by $r-1$. 
\medskip

    \item If $\varsigma_{r-1}=-$ (recall that this can only happen if $r\geq 2$) we first observe that exist infinitely many $k\in \mathbb N$ such that 
    \[
    \mathrm{dist}( T^{n_*}\circ (S_{\iota_{r-1}}\circ T^{n_*})^k\circ  S^{\varsigma_{r-2}}_{\iota_{r-2}}\circ\cdots\circ S_{\iota_{0}}^{\varsigma_{0}}(z),\ S_{\iota_{r-1}}^{\varsigma_{r-1}}\circ \cdots \circ S_{\iota_{0}}^{\varsigma_{0}}(z))\leq \frac{\varepsilon}{K_r4^r}.
    \]
    Hence, it we define
    \begin{align*}
    z_{a,r}=&T^{\omega^{(r)}_{M_r-1}}\circ S_{\iota^{(r)}_{M_r-1}}\circ\cdots\circ  T^{\omega^{(r)}_{0}}\circ S_{\iota_0^{(r)}}\circ T^{\chi_r+n_*}\circ (S_{\iota_{r-1}}\circ T^{n_*})^k\circ  S^{\varsigma_{r-2}}_{\iota_{r-2}}\circ\cdots\circ  S_{\iota_{0}}^{\varsigma_{0}}(z)\\
    z_{b,r}=&S_{\iota_{M-1}}^{\varsigma_{M-1}}\circ \cdots \circ S_{\iota_{0}}^{\varsigma_{0}}(z),
    \end{align*}
    it follows from the triangle inequality, the inductive assumption, and the mean value theorem we deduce that
    \[        \mathrm{dist}(z_{a,r}(z), z_{b,r}(z))\leq \varepsilon(1-4^{-r}-\sum_{l=1}^{r-1}2^{-l}).
   \]
    Choose any such $k$. Then, we let $L_{r-1}=L_r+k$ and choose any $n_1^{(r-1)}\gg \cdots \gg n_k^{(r-1)}\gg n_1^{(r)}$. There exist infinitely many choices of $\tilde\omega_0^{(r-1)},\dots,\tilde\omega_{k-1}^{(r-1)}\in\mathbb N$ satisfying 
    \[
    \tilde\omega^{(r-1)}_{j-1}-\tilde\omega^{(r-1)}_j\geq n_j^{(r-1)}\qquad\qquad\text{for }j=1,\dots,k-1,\qquad \tilde\omega_{k-1}^{(r-1)}-\omega_0^{(r)}\geq n_k^{(r-1)}
    \]
    and for which (here   $z_r= S^{\varsigma_{r-2}}_{\iota_{r-2}}\circ\cdots\circ  S_{\iota_{0}}^{\varsigma_{0}}(z)$)
    \begin{align*}
    \mathrm{dist}(T^{n_*+\tilde\omega_{k-1}^{(r-1)}}\circ S_{\iota_{r-1}}\circ\cdots\circ T^{n_*+\tilde\omega_{0}^{(r-1)}}\circ S_{\iota_{r-1}}\circ T^{n_*}(z_r),\ 
    T^{n_*}\circ (S_{\iota_{r-1}}\circ T^{n_*})^k(z_r))\leq \frac{\varepsilon}{K_r 4^r}.
    \end{align*}
    Thus, if we relabel
     \[
   n^{(r-1)}=(n_1^{(r-1)},n_1^{(r)},\dots,n_k^{(r-1)},n_{L_{r-1}}^{(r)}), \qquad\omega^{(r-1)}=(\tilde\omega^{(r-1)}_0,\dots,\chi_r+\tilde\omega_{k-1}^{(r-1)},\omega^{(r)}_{0},\dots,\omega^{(r)}_{L_r-1})
   \]
   and
   \[
   \iota^{(r-1)}=(\underbrace{\iota_{r-1},\dots,\iota_{r-1}}_{k\text{ times}},\iota^{(r)}_{0},\dots,\iota^{(r)}_{L_r-1}),
    \]
    one can check, by means of the triangle inequality, the inductive assumption, and the mean value theorem, that the inductive assumption  holds with $r$ replaced by $r-1$.
\end{enumerate}
\medskip

\noindent\textit{Conclusion:} The proof is completed in a finite number of steps following the above construction.
\end{proof}

\medskip

\section{Robustly transitive skew-products over the shift}\label{sec:skewprod}

In this section we prove Proposition \ref{prop:skewprodmain}. The proof builds on the results obtained in Sections \ref{sec:blenderIFS}-\ref{sec:transporIFS}. The fact that our shadowing argument is two-sided introduces some technicalities (not present in the one sided case). We  divide the proof in a number of steps. 

\begin{enumerate}
    \item In Section \ref{sec:compactification} we introduce an extension of the skew-product dynamics to a compactified space. The reason is that to shadow the orbits of the IFSs which were constructed in Sections \ref{sec:blenderIFS}-\ref{sec:transporIFS} we will need to approach arbitrarily close to the invariant sets $W^{u,s}(\Lambda)$. The purpose of this compactification  is to give an extension of the center dynamics for orbits in $W^{u,s}(\Lambda)$. 

    \item In Section \ref{sec:transportskewprod} we show that the skew-product dynamics also displays the $\varepsilon$-reachability property established in Section \ref{sec:transporIFS} for the IFS (see Lemma \ref{lem:transportskewprod}). In order to establish this result we will find convenient to make use of the compactification in Section \ref{sec:compactification}. 

    \item In Section \ref{sec:blenderskewprod} we make use of the results in Section \ref{sec:blenderIFS} (i.e. the existence of a symbolic blender for the IFS) to conclude the proof of Proposition \ref{prop:skewprodmain}.
\end{enumerate}

\subsection{Compactification}\label{sec:compactification}
In  Proposition \ref{prop:nhimexpanded} we showed that the dynamics on the lamination $\mathcal X$ in \eqref{eq:lamination} is driven by the skew-product 
  \begin{align*}
    \mathcal F_\varepsilon: (\mathbb N\times\{1,\dots, m\})^\mathbb Z\times N&\to (\mathbb N\times\{1,\dots,m\})^\mathbb Z\times N\\
    (\bs \omega,z)&\mapsto (\sigma(\bs\omega), F_{\bs\omega,\varepsilon}(z))
    \end{align*}
   where 
   \[
   F_{\bs\omega,\varepsilon}(z)=T_\varepsilon^{\omega_0}\circ (S_{\iota_0,\varepsilon}+R_{\bs\omega,\varepsilon})(z).
   \]
   with $T_\varepsilon:N\to N$ being the inner map \eqref{eq:innerdynamicsdefn} and $\{S_{i,\varepsilon}\}_{i=1}^m:N\to N$ being the system of scattering maps in \eqref{eq:scattmapdynamicsdefn}. Following Moser \cite{MR442980} we introduce the compactification of  \[
 \Sigma=(\mathbb N\times\{1,\dots,m\})^\mathbb Z
 \]
 obtained by including also sequences of the form $\bs\omega=(\omega,\iota)$ with 
    \begin{enumerate}
        
         \item (right-finite) $\omega=(\dots\omega_{-l},\dots;\omega_0,\dots,\omega_{r},\infty)$ and $\iota=(\dots,-\iota_{-l},\dots;\iota_0,\dots,\iota_r)$ for some $r\in\mathbb N$,
          \item (left-finite) $\omega=(\infty,\omega_{-l},\dots;\omega_0,\dots,\omega_{r},\dots)$ and $\iota=(-\iota_{-l},\dots;\iota_0,\dots,\iota_r,\dots)$ for some $l\in\mathbb N$,
         \item (finite) $\omega=(\infty,\omega_{-l},\dots;\omega_0,\dots,\omega_{r},\infty)$ and $\iota=(-\iota_{-l},\dots;\iota_0,\dots,\iota_r)$ for some $l,r\in\mathbb N$.
    \end{enumerate}
  We denote the corresponding space by $\Sigma_c$ and introduce as neighbourhoods of an element $\bs\omega^*$ as in (1), the sets of $\bs \omega$ such that 
  \[
  \omega^*_k=\omega_k\qquad\text{for }-n\leq k\leq r,\qquad\qquad \omega_{r+1}\geq n
  \]
  and
  \[
  \iota^*_k=\iota_k\qquad\text{for }-n\leq k\leq r
  \]
  for $n\in\mathbb N$. Similarly we introduce the set of neighbourhoods of an element $\bs\omega^*$ as in (2) or (3). In this way we obtain a topology on $\Sigma_c$ extending that of $\Sigma$ and for which $\Sigma_c$ is compact.

  The key point in introducing this extension is that, if we fix any $\bs\omega^*$ as in (1), for any sequence $\{\bs\omega^{(k)}\}_k\in \Sigma_c$ such that $\bs\omega^{(k)}\to \bs\omega^*$ the sequence of functions
  \[
R_{\bs\omega^{(k)},\varepsilon}:N\to N
  \]
  converges uniformly to a function
  \[
  R_{\bs\omega^*,\varepsilon}:N\to N.
  \]
Similar (suitably modified) statements hold sequences $\bs\omega^*$ of types (2) or (3). The skew-product dynamics is only defined on  $\{(\bs\omega,z)\in \Sigma_c\times N\colon \omega_0<\infty\}$. However, for $\bs\omega^*\in \Sigma$ such that $\omega_0^*=\infty$ we can define a map
\begin{align*}
F_{\bs\omega^*,\varepsilon}(z):N&\to N\\z&\mapsto (S_{\iota_0,\varepsilon}+R_{\bs\omega^*,\varepsilon})(z).
\end{align*}

\subsection{Transport in the skew-product system}\label{sec:transportskewprod}
In this section we show that the orbits constructed in Section \ref{sec:transporIFS} (see in particular Proposition \ref{prop:epsaccesibility}) can be shadowed by the actual skew-product dynamics.
\begin{lem}\label{lem:transportskewprod}
   Consider the skew-product map $\mathcal F_\varepsilon:(\bs\omega,z)\mapsto (\sigma(\bs\omega),F_{\bs\omega,\varepsilon}(z))$ in Proposition \ref{prop:nhimexpanded} and suppose that for every $\varepsilon\geq 0$ small enough the maps  $T_\varepsilon:N\to N$  and $\{S_{i,\varepsilon}\}_{i=1}^m:N\to N$ involved in its definition preserve a smooth measure  and satisfy \ref{it:B6}. There exists $K>0$  and $\varepsilon_0>0$ such that for any $0<\varepsilon\leq \varepsilon_0$, given any pair of  open balls $B, B_*\subset N$, any $\bs\omega\in \Sigma$ and any $n\in\mathbb N$ there exists $L\in \mathbb N$ and $\tilde{\bs\omega}=(\tilde\omega,\tilde\iota)\in (\mathbb N\times\{1,\dots,m\})^L$ such that for any $\bs\omega'=(\omega',\iota')\in\Sigma$ with
    \begin{equation}\label{eq:modifiedomegaprimetransport}
    \omega'=(\dots,\omega_{-n},\dots,\omega_{-1};\omega_0,\dots,\omega_{n},\tilde\omega_0,\dots,\tilde\omega_{L-1},\omega'_{n+L+1},\dots)
    \end{equation}
    and
    \begin{equation}\label{eq:modifiediotaprimetransport}
     \iota'=(\dots,\iota_{-n},\dots,\iota_{-1};\iota_0,\dots,\iota_{n},\tilde\iota_0,\dots,\tilde\iota_{L-1},\iota'_{n+L+1},\dots)
    \end{equation}
we have 
\[
\inf_{z\in B,z_*\in B_*}\mathrm{dist}(F^{n+L}_{\bs\omega',\varepsilon}(z),z_*)\leq K \varepsilon.
\]
\end{lem}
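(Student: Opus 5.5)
The plan is to reduce the lemma to Proposition \ref{prop:epsaccesibility}, using the comparison estimate \eqref{eq:comparisonskewprod} to replace the true fiber maps $F_{\bs\omega',\varepsilon}$ by the IFS maps $T^{\omega_k}\circ S_{\iota_k}$ with errors $C\bar\lambda^{\min\{\omega_k,\omega_{k+1}\}}$. The freedom in \eqref{eq:increasingtimes} to take the return times as large as we like and rapidly decreasing is what makes these errors summable.

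First, I fix the initial block. Pick any $z\in B$ with $w:=F^{n+1}_{\bs\omega,\varepsilon}$-image defined along the prescribed symbols $\omega_0,\dots,\omega_n$; more precisely, the first $n+1$ fiber maps depend on $\omega_{n+1}$ only through an error of size $C\bar\lambda^{\omega_{n+1}}$ by \eqref{eq:comparisonskewprod}. Since the recurrent set $\mathcal R$ of Lemma \ref{lem:recurrentset} has full measure and $F^{n+1}_{\bs\omega',\varepsilon}(B)$ contains an open set (the maps are diffeomorphisms, uniformly in the tail via the compactification of Section \ref{sec:compactification}), I choose $z$ so that the image point $w$, computed with $\omega_{n+1}=\infty$ in the compactified space, lies in $\mathcal R$. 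I also fix $z_*\in B_*$.

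Second, I apply Proposition \ref{prop:epsaccesibility} to $w$ and $z_*$. This gives $L$ and $\iota$, and for any gap sequence $n_1\gg\cdots\gg n_{L-1}\gg1$ it gives times $\tilde\omega$ with $\mathrm{dist}(T^{\tilde\omega_{L-1}}S_{\iota_{L-1}}\cdots T^{\tilde\omega_0}S_{\iota_0}(w),z_*)\le K\varepsilon$. I set $\tilde\iota=\iota$.

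The main step is shadowing. I write the true orbit as a composition of $F_{\sigma^k\bs\omega'}=T^{\omega'_k}\circ(S_{\iota'_k}+R_{\sigma^k\bs\omega'})$, with $|R|_{C^1}\le C\bar\lambda^{\min\{\omega'_k,\omega'_{k+1}\}}$. I then telescope backwards from the last step, as in the induction of Proposition \ref{prop:epsaccesibility}. The error introduced at step $k$ is at most $C\bar\lambda^{\tilde\omega_{k+1}}$, and it is propagated by the remaining composition, whose Lipschitz constant is $K_k\lesssim \alpha^{\sum_{j>k}\tilde\omega_j}$.

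The key obstacle is that a naive choice would make $\alpha^{\tilde\omega_{k+1}+\dots}$ beat $\bar\lambda^{\tilde\omega_{k+1}}$. I would handle this by exploiting the pinching condition $\alpha\bar\lambda<1$ from \ref{it:H1}. The times $\tilde\omega$ are decreasing, with $\tilde\omega_{k}\ge \tilde\omega_{k+1}+n_{k+1}$. Estimating instead with the error bound $\bar\lambda^{\min\{\tilde\omega_k,\tilde\omega_{k+1}\}}=\bar\lambda^{\tilde\omega_{k+1}}$ against the propagation factor $\alpha^{\sum_{j>k}\tilde\omega_j}$, we have $\sum_{j>k}\tilde\omega_j\le (L)\tilde\omega_{k+1}$ only up to a fixed multiple. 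So I would rather choose the order of selection carefully. Since the inductive construction in Proposition \ref{prop:epsaccesibility} picks later times first and lets earlier times be arbitrarily larger, I instead bound each error by $\bar\lambda^{\tilde\omega_{k}}$ (comparison with neighbour $\omega'_{k-1}$ via \eqref{eq:comparisonskewprod2}-type control at the junction). Then choosing $n_k$ large relative to $\sum_{j>k}\tilde\omega_j\log\alpha/|\log\bar\lambda|$ makes each propagated error $\le \varepsilon 2^{-k}$.

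The final junction at $\omega'_{n+L+1}$ is arbitrary. Its contribution is controlled by \eqref{eq:comparisonskewprod} with $\tilde\omega_{L-1}\ge n_*$ large, giving an $O(\bar\lambda^{n_*})\le\varepsilon$ term; this is where uniformity over $\bs\omega'$ comes from. Summing all contributions gives distance at most $(K+2)\varepsilon$.
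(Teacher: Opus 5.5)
Your proposal has a genuine gap at the junction between the prescribed symbols and the inserted block.

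\textbf{The remainder bound.} The improved remainder bound you invoke is the right one, but not because of \eqref{eq:comparisonskewprod2}. It comes from \eqref{eq:localizationNomega} together with $\widetilde S_i(0,0,z)=0$ (see \eqref{eq:propertiesfunctionsoutermap}). Take the scattering step performed just before an inner passage of length $b$, which itself follows a passage of length $a$. There $q\lesssim\bar\lambda^{a}$ and $p\lesssim\bar\lambda^{b}$, so the remainder is $O(\bar\lambda^{\min\{a,b\}})$. This is effectively what the paper's proof uses. Inside a decreasing block it gives $\bar\lambda^{\tilde\omega_k}$. Combined with $\alpha\bar\lambda<1$, the errors can then be made summable. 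Note that this is the bunching condition, and that the propagation factor must include $T^{\tilde\omega_k}$ itself, not only the later passages.

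\textbf{Where it fails.} Between the last prescribed passage, of length $\omega_n$, and the first inserted one, $T^{\tilde\omega_0}$, the true dynamics performs a scattering step. Its remainder has size $\bar\lambda^{\min\{\omega_n,\tilde\omega_0\}}=\bar\lambda^{\omega_n}$. This quantity is fixed by the data and does not decay as $\tilde\omega_0$ grows. It is then amplified by $T^{\tilde\omega_0}$, the longest passage of the whole word. Estimate \eqref{eq:comparisonskewprod} only controls how the image depends on $\omega'_{n+1}$, not the size of this remainder. The word from Proposition \ref{prop:epsaccesibility} is built for the unperturbed orbit of $w$. So your claim that every error is at most $\bar\lambda^{\tilde\omega_k}$ fails exactly at $k=0$, and nothing in your argument recovers it.

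\textbf{The missing idea.} What is needed is a second use of Poincar\'e recurrence at this junction.
\begin{enumerate}
\item The paper takes the recurrent reference point after the perturbed scattering step, computed in the compactified limit $\omega'_{n+1}=\infty$. That is, $\tilde z\in\widetilde B(\bs\omega^*)\cap\mathcal R$ with $\widetilde B(\bs\omega^*)=(S_{\iota_n}+h_{\sigma^n(\bs\omega^*)})\circ F^n_{\bs\omega^*}(B)$. Since the remainders converge uniformly as $\omega'_{n+1}\to\infty$, the true point is within $O(\bar\lambda^{\omega'_{n+1}})$ of $\tilde z$.
\item It then applies Proposition \ref{prop:epsaccesibility} from $\tilde z$ to obtain $\hat{\bs\omega}$ with $\bar\lambda^{\hat\omega_r}\le\varepsilon/(LK_r)$.
\item Finally it prepends a dummy symbol $\tilde\omega_0\gg\hat\omega_0$. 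This is a return time of $\tilde z$ under $T$ and satisfies $\bar\lambda^{\tilde\omega_0}\le\varepsilon/K_0$.
\end{enumerate}
The inserted word is thus $(\tilde\omega_0,\hat\omega_0,\dots,\hat\omega_{L-1})$. The long passage $T^{\tilde\omega_0}$ brings $\tilde z$ back close to itself, while the junction error becomes $(\alpha\bar\lambda)^{\tilde\omega_0}$-small. Every later remainder is $\bar\lambda^{\hat\omega_r}$, because each preceding time is larger.

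\textbf{A smaller point.} At the other end of the word, the bound $O(\bar\lambda^{n_*})\le\varepsilon$ is not available, since $n_*$ does not depend on $\varepsilon$. The last inserted time must itself be large in terms of $\varepsilon$. The paper secures this by imposing $\bar\lambda^{\hat\omega_r}\le\varepsilon/(LK_r)$ for every $r$.
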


\begin{proof}
Throughout the proof we omit the dependence of $\varepsilon$ from the notation. Also, we make use of the symbol $\lesssim$ with the same meaning as that in Remark \ref{rem:lessymsymbol}. Consider the right-finite sequence 
\[
\bs\omega^*=(\dots,\omega_{-n},\dots;\omega_0,\dots,\omega_{n},\infty)
\]
and define the ball
   \[
   \widetilde B(\bs\omega^*)=F^{n+1}_{\bs\omega^*}(B)=(S_{\iota_n}+h_{\sigma^n(\bs\omega^*)})\circ F^{n}_{\bs\omega^*}(B).
   \]
   Choose now any $\tilde z(\bs\omega^*)\in \widetilde B(\bs\omega^*)\cap \mathcal R$ (this intersection is non-empty since $\mathcal R$ in \eqref{eq:recurrentset} is of full measure).
   Observe that for any $\bs\omega'=(\omega',\iota')$ with $\omega'$ as in \eqref{eq:modifiedomegaprimetransport} and $\iota'$ as in \eqref{eq:modifiedomegaprimetransport} we have that (use \eqref{eq:comparisonskewprod})
   \begin{equation}\label{eq:comparisoninftyorbit}
   \inf_{z\in B}\mathrm{dist}( (S_{\iota_n'}+h_{\sigma^n(\bs\omega')})\circ F_{\bs\omega^*}^{n} (z),\ \tilde z(\bs\omega^*))\lesssim \bar\lambda^{\omega'_{n+1}}=\bar\lambda^{\tilde\omega_0}.
   \end{equation}
   Moreover, using the same argument
   \begin{equation}\label{eq:comparisoninftyorbit2}
       \mathrm{dist}(F^n_{\bs\omega^*}(z),\ F^n_{\bs\omega'}(z))\lesssim \bar\lambda^{\omega'_{n+1}}=\bar\lambda^{\tilde\omega_0}.
   \end{equation}
Inequalities \eqref{eq:comparisoninftyorbit}-\eqref{eq:comparisoninftyorbit2} show that we may take $\tilde z(\bs\omega^*)\in \mathcal R$ as reference point to construct the sequence $\bs{\tilde\omega}$ and will be useful later.

\medskip

   Appealing to Proposition \ref{prop:epsaccesibility}
   we can find $L\in \mathbb N$    such that for any sequence $\{n_r\}_{r=1}^{L-1}$ satisfying $ n_1\gg \cdots \gg n_{L-1}\gg 1$, there exists  $\bs{\hat\omega}\in (\mathbb N\times\{1,\dots,m\})^L$ satisfying 
    \[
\hat\omega_{r-1}-\hat\omega_{r}\geq n_r \qquad\qquad \text{for all }r=1,\dots, L-1\quad\text{and } \hat\omega_{L-1}\geq n_*
\]
    and such that 
\begin{equation}\label{eq:closenessproof}
    \mathrm{dist}(T^{\hat\omega_{L-1}}\circ S_{\hat\iota_{L-1}}\circ\cdots T^{ \hat\omega_0}\circ S_{\hat\iota_0}(\tilde z({\bs\omega}^*)),\ z_*)\lesssim \varepsilon.
    \end{equation}
    In particular, we can find $\hat{\bs\omega}\in(\mathbb N\times\{1,\dots,m\})^L$ verifying \eqref{eq:closenessproof} and such that
    for any $r=1,\dots,L-1$ 
    \[
    \bar\lambda^{\hat\omega_r}\leq \frac{\varepsilon}{LK_r}\qquad\qquad\text{where}\qquad K_r=|T^{\hat\omega_{L-1}}\circ S_{\hat\iota_{L-1}}\circ\cdots T^{\hat\omega_r}\circ S_{\hat\iota_r}|_{C^1}
    \]
    Fix now any $\tilde\omega_0\gg \hat\omega_0$ such that 
    \[
  \bar\lambda^{\tilde\omega_0}\leq\frac{\varepsilon}{K_0}\qquad\qquad\text{and}\qquad\qquad \mathrm{dist}( T^{\tilde\omega_0}\tilde z(\bs\omega^*),\ \tilde z(\bs\omega^*))\leq \frac{\varepsilon}{K_0}
    \]
We claim that $\tilde{\bs\omega}=(\tilde\omega,\tilde\iota)\in (\mathbb N\times\{1,\dots,m\})^{L+1}$ with 
\[
\tilde\omega=(\tilde\omega_0,\hat\omega_0,\dots,\hat\omega_{L-1}),\qquad\qquad \tilde\iota=(\iota_n,\hat\iota_0,\dots,\hat\iota_{L-1})
\]
verifies the desired conclusion. To see this, take any $\bs\omega'$ as in \eqref{eq:modifiedomegaprimetransport}-\eqref{eq:modifiediotaprimetransport} and observe that, in view of the estimate  for any $r=0,\dots,L-1$ and any $\hat z\in N$ 
\begin{align*}
\mathrm{dist}(F_{\sigma^{n+r+2}(\bs\omega')} ^{L-r-1}\circ T^{\omega_{n+r+1}}\circ S_{\iota_{n+r+1}'}(\hat z),\ F_{\sigma^{n+r+2}(\bs\omega')} ^{L-r-1}\circ T^{\omega_{n+r+1}}\circ( S_{\iota_{n+r+1}'}+h_{\sigma^{n+r+1}(\bs\omega')})(\hat z)) \\
\lesssim K_r \bar\lambda^{\min\{\omega'_{n+r+1},\omega'_{n+r}\}}
\leq K_r\bar\lambda^{\hat\omega_r}\leq \varepsilon/L.
\end{align*}
Also, by construction (use \eqref{eq:comparisoninftyorbit} and \eqref{eq:comparisoninftyorbit2})
\begin{equation}\label{eq:keycontroltransportproof}
\begin{split}
\mathrm{dist}((S_{\iota'_n}+h_{\sigma^n(\bs\omega')})\circ F^{n}_{\bs\omega'}(z),\ \tilde z(\bs\omega*))\leq &\  \mathrm{dist}((S_{\iota'_n}+h_{\sigma^n(\bs\omega')})\circ F^{n}_{\bs\omega*}(z),\ \tilde z(\bs\omega*))\\
&+\mathrm{dist}((S_{\iota'_n}+h_{\sigma^n(\bs\omega')})\circ F^{n}_{\bs\omega'}(z),\ (S_{\iota'_n}+h_{\sigma^n(\bs\omega')})\circ F^{n}_{\bs\omega*}(z))\\
\lesssim & \bar\lambda^{\omega'_{n+1}}=\bar\lambda^{\tilde\omega_0}
\end{split}
\end{equation}
Hence, we deduce that
\begin{align*}
\mathrm{dist}(\underbrace{F_{\sigma^n(\bs\omega')}^{L}\circ T^{\omega'_n}\circ(S_{\iota'_n}+h_{\sigma^n(\bs\omega')})\circ F^{n-1}_{\bs\omega'}(z)}_{F^{L+n+1}_{\bs\omega'}(z)},\ F_{\sigma^n(\bs\omega')}^{L}(\tilde z(\omega^*)) )\lesssim K_0\left( \bar\lambda^{\tilde\omega_0}+\mathrm{dist}( T^{\tilde\omega_0}\tilde z(\bs\omega^*),\ \tilde z(\bs\omega^*))\right)\lesssim \varepsilon.
\end{align*}
Thus, the claim follows by repeated application of the mean value theorem and the triangle inequality.
\end{proof}

\subsection{Blender dynamics in the skew-product system}\label{sec:blenderskewprod}

We now translate the result in Theorem \ref{thm:mainIFS} (local topological mixing of the IFS) to the skew-product setting. 

\begin{prop}\label{prop:blenderskewprod}
Consider the skew-product map $\mathcal F_\varepsilon:(\bs\omega,z)\mapsto (\sigma(\bs\omega),F_{\bs\omega,\varepsilon}(z))$ in Proposition \ref{prop:nhimexpanded} and suppose that for every $\varepsilon\geq 0$ small enough the maps  $T_\varepsilon:N\to N$  and $\{S_{i,\varepsilon}\}_{i=1}^m:N\to N$ involved in its definition such that for all $0<\varepsilon\leq \varepsilon_0$ satisfy the following. There exists a subset $\mathcal A\subset N$ diffeomorphic to the $2d$-dimensional annulus $\mathbb A^d=\mathbb T^d\times[-1,1]^d$ (here $2d=\mathrm{dim}(N)$) on which the maps $\{T_\varepsilon, S_{1,\varepsilon}\}$ are of the form \eqref{eq:innermapsymbolic} and \eqref{eq:scattmapsymbolic} and verify hypothesis \ref{it:itemB1}-\ref{it:itemB5} with $\gamma=\gamma(F_0)$.  Let 
\[
\mathbb A^d_{\sqrt\varepsilon}=\mathbb T^d\times [-\sqrt\varepsilon,\sqrt\varepsilon]^d \subset \mathbb A^d
\]
and denote by $\mathcal A_{\sqrt\varepsilon}\subset\mathcal A$ the corresponding subset of $N$.

Then, given any open balls $B_0,B_1\subset\mathcal A_{\sqrt{\varepsilon}}$ there exists $\widetilde L_f,\widetilde L_b\in\mathbb N$, such that for any $L_f\geq \widetilde L_f$ and any $L_b\geq \widetilde L_b$ there exist 
    \[
    \omega^f=(\omega_{-L_f+1}^f,\dots,\omega_0^f)\in \mathbb N^{L_f}\qquad\text{and}\qquad \omega^b=(\omega_{-L_f-L_b+1}^b,\dots,\omega_{-L_f}^b)\in \mathbb N^{L_b}
    \]
    such that, for any $\bs\omega=(\omega,\iota)\in(\mathbb N\mathbb \times\{1,\dots,m\})^\mathbb Z$ with
\begin{align*}
(\omega_k,\iota_k)=&(\omega^f_k,1)\quad\text{for all}\quad k\in\{-L_f+1,\dots,0\}\\
\quad(\omega_k,\iota_k)=&(\omega^b_k,1)\quad\text{for all}\quad k\in\{-L_f-L_b+1,\dots,-L_f\},
\end{align*}
we have
\[
F_{\sigma^{L_f-1}(\bs\omega),\varepsilon}\circ\dots\circ F_{\bs\omega,\varepsilon}(B_0)\cap F_{\sigma^{L_{f}}(\bs\omega),\varepsilon}^{-1}\circ\dots \circ F^{-1}_{\sigma^{L_f+L_b-1}(\bs\omega),\varepsilon}(B_1)\neq\emptyset.
\]
In particular,
\[
F_{\sigma^{L_f+L_b-1}(\bs\omega),\varepsilon}\circ\cdots\circ   F_{\bs\omega,\varepsilon}(B_0)\pitchfork B_1\neq\emptyset.
\]
\end{prop}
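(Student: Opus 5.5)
We transfer the local topological mixing from Theorem \ref{thm:mainIFS} to the skew-product. The idea is to build both codes from blocks $T^{n}\circ S_1$ whose times $n$ are huge, so that the correction $R_{\bs\omega}$ in \eqref{eq:expressioncenterdynskewprod} is negligible.

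\textbf{Step 1: extract admissible IFS words.} In Section~\ref{sec:blenderIFS}, every orbit connecting $B_0$ to $B_1$ inside $\mathcal A_{\sqrt\varepsilon}$ is built from three kinds of pieces. Step~3 of Section~\ref{sec:completionproofmainifs} uses blocks $S\circ T^{n}$. Step~2 uses iterates $T^{n}$ with bounded $n$. The blender uses compositions of the maps $F_n=T^n\circ S$ with $n\in\mathcal N$, together with their inverses on the $cs$ side. After regrouping consecutive powers of $T$ and inserting $T^{0}$-free blocks where needed, we write these orbits as compositions of maps of the form $T^{n}\circ S_1$.

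We then reorganize the mixing statement as a meeting of a forward and a backward image. We take a forward word $T^{\omega^f_{0}}\circ S_1\circ\cdots\circ T^{\omega^f_{-L_f+1}}\circ S_1$ that sends $B_0$ to a set containing a $cu$-strip crossing $W^s(P^{cu})$. We take a backward word that pulls $B_1$ back to a set containing a $cs$-strip crossing $W^u(P^{cs})$. The transverse intersection $W^u(P^{cs})\pitchfork W^s(P^{cu})$ (Lemma~\ref{lem:homrelatedblenders}), combined with the $\lambda$-lemma, makes the forward image of $B_0$ meet the backward image of $B_1$.

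To obtain every length $L_f\ge\widetilde L_f$ and $L_b\ge\widetilde L_b$, we pad with extra loops around the hyperbolic fixed points $P^{cu}$ and $P^{cs}$. These loops are the maps $F_{N_l}$ and $\widetilde F_{N_l}$, which are again of the form $T^n\circ S_1$ up to regrouping. Each extra loop changes the word length by one and preserves the crossing properties.

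\textbf{Step 2: make the times large.} All of these words use times of order at most $N_0/\varepsilon$. Since the map $T$ is not affected by the shadowing error, we replace each time $n$ by $n+k$, where $T^{k}$ returns the relevant point very close to itself. This uses Poincar\'e recurrence exactly as in the proof of Proposition~\ref{prop:epsaccesibility}, applied along the full-measure recurrent set $\mathcal R$ of Lemma~\ref{lem:recurrentset}. We choose base points in $\mathcal R$ inside $B_0$, inside $B_1$, and along the strips. The resulting times can be made arbitrarily large and satisfy the separation condition \eqref{eq:increasingtimes}. All the intersections in Step~1 are transverse, hence open, so small $C^1$ errors do not destroy them.

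\textbf{Step 3: shadowing.} By \eqref{eq:expressioncenterdynskewprod}, for any $\bs\omega$ carrying the chosen symbols we have
\[
F_{\bs\omega}=T^{\omega_0}\circ(S_1+R_{\bs\omega}),\qquad |R_{\bs\omega}|_{C^1}\le C\bar\lambda^{\min\{\omega_0,\omega_1\}}.
\]
Since this bound uses only the adjacent symbols, it holds uniformly in the free symbols outside the window. We choose the times inductively, as in Lemma~\ref{lem:transportskewprod}: each time is taken so large that $\bar\lambda^{\omega_k}$ is smaller than the inverse $C^1$ norm of the remaining composition, divided by the word length. With this choice, the true compositions
\[
F_{\sigma^{L_f-1}(\bs\omega)}\circ\cdots\circ F_{\bs\omega}\qquad\text{and}\qquad F_{\sigma^{L_f}(\bs\omega)}^{-1}\circ\cdots\circ F_{\sigma^{L_f+L_b-1}(\bs\omega)}^{-1}
\]
are $C^1$-close to the IFS words on the relevant strips. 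Transversality then persists, and the forward and backward images intersect.

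The two boundary symbols, where the free parts of $\bs\omega$ attach, involve $\min\{\omega_0,\omega_1\}$ with a free neighbour. This is where a two-sided difficulty could appear. We handle it either through the estimates \eqref{eq:comparisonskewprod}--\eqref{eq:comparisonskewprod2} or by noting that a uniformly small perturbation acting on an open ball, taken before the first and after the last step, does not affect the intersection.

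\textbf{Main obstacle.} The main obstacle is Step~2 combined with the blender iterations. The blender argument, Lemma~\ref{lem:graphtransofrmIFS}, needs the exact maps $F_n$ with $n\in\mathcal N$. Inserting recurrence times breaks the precise affine structure, because $T^{k}$ is only $C^0$-close to the identity near a recurrent point and not $C^1$-close. The plan is to avoid modifying times inside the blender. We shadow only the finitely many blender steps actually needed by a given strip, and we make the error accumulate only through $R_{\bs\omega}$. Since the blender times are of size about $N_0/\varepsilon$, that is only moderately large, we would instead use recurrence of $T$ restricted to the KAM torus region, where $T$ is nearly a rotation by $\beta$. There, $T^{k}$ is $C^1$-close to a translation for suitable large $k$, because $\beta$ is Diophantine and $T$ is close to $T_0$ on $\mathbb A^d_{\sqrt\varepsilon}$. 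This keeps the cone and covering estimates of Propositions~\ref{prop:covering} and~\ref{prop:welldistributed} intact.
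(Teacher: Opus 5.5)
Your proposal has a genuine gap, and it is exactly the point you flag as the ``main obstacle''. The missing observation is that no enlargement of times is needed. The blender only uses times $n\in\mathcal N\subset\{N_0/\varepsilon,\dots,N_0/\varepsilon+N_*\}$, where $N_*$ does not depend on $\varepsilon$. So whenever two adjacent symbols lie in $\mathcal N$, \eqref{eq:expressioncenterdynskewprod} already gives $|R_{\sigma^k(\bs\omega)}|_{C^1}\le C\bar\lambda^{N_0/\varepsilon}$, which is exponentially small in $1/\varepsilon$. By contrast, the margins in Section~\ref{sec:blenderIFS} degrade only polynomially in $\varepsilon$: the chart $\Psi_{\varepsilon,\kappa,\chi}$ and the derivatives of $T^n$, $n\in\mathcal N$, cost only powers of $\varepsilon^{-1}$ (cf. Lemma~\ref{lem:C1control} and Proposition~\ref{prop:uniformcoordinatesIFS}). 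Hence each true fiber map $T^{\omega_k}\circ(S_1+R_{\sigma^k(\bs\omega)})$ is, in the blender chart, a tiny $C^1$ perturbation of some $\mathcal F_{\omega_k}$, uniformly in the other symbols away from the ends of the window. The covering property, cone invariance, width expansion (Lemma~\ref{lem:graphtransofrmIFS}) and transverse invariant manifolds are open conditions with uniform margins. They therefore hold step by step for the non-autonomous sequence of true fiber maps, and the perturbation is absorbed at each step rather than accumulated along the word. This is what the paper's short argument rests on: the conclusion of Theorem~\ref{thm:mainIFS} holds for orbits of $\{T^n\circ S\}_{n\in\mathcal N}$ with $\mathcal N$ finite, so Poincar\'e recurrence is unnecessary. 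Your judgment that these times are ``only moderately large'' is what sends you the wrong way.

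The route you take instead does not work.
\begin{itemize}
\item \textbf{Your remedy for the blender steps is false.} As you note, inserting recurrence times into blender steps destroys the affine structure. But near the KAM torus, \eqref{eq:innermapsymbolic} gives $DT^k\approx\begin{pmatrix}\mathrm{id}&kA\\0&\mathrm{id}\end{pmatrix}$ with $A$ invertible (forced by \ref{it:itemB3}). So $T^k$ is not $C^1$-close to a translation on any open set, not even at points of the torus.
\item \textbf{Shifting blender times breaks the blender.} Replacing $n\in\mathcal N$ by $n+k$ with $k\varepsilon\gtrsim1$ turns $A^{(n)}$ into $A^{(n+k)}$. This leaves the window of Lemma~\ref{lem:simultaneouslinearization} and breaks Propositions~\ref{prop:covering} and~\ref{prop:welldistributed}.
\item \textbf{Step~3 forces unbounded times.} You control accumulated errors through the $C^1$ norm of the remaining composition, which forces the times to grow along the word. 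That cannot coexist with a finite $\mathcal N$, nor with the arbitrarily long padded words needed for every $L_f\ge\widetilde L_f$ and $L_b\ge\widetilde L_b$.
\item \textbf{Step~1 regrouping does not produce pure $T^n$ moves.} The bounded-time moves $T^n$ of the entry step cannot be obtained by regrouping, since every skew-product step applies $S_1$. They must themselves be realized by blocks $T^n\circ S_1$ with large $n$, which is what the paper asserts.
\item \textbf{The end of the window is not handled.} There the correction is controlled only through the free neighbouring symbol, by $C\bar\lambda^{n_*}$. This is small neither compared with $\sqrt\varepsilon$ nor compared with $B_0$. Also, \eqref{eq:comparisonskewprod} bounds differences by the first free symbol, which is again uncontrolled. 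The paper's one-paragraph argument is silent on this point as well.
\end{itemize}
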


The proof of this result follows as a straightforward consequence of that of Theorem \ref{thm:mainIFS}. The main observation is that the conclusion in Theorem \ref{thm:mainIFS} actually holds for orbits of the iterated function system $\{T_\varepsilon^n\circ S_{\varepsilon}\}_{n\in\mathcal N}$ where $\mathcal N$ is some finite subset of the natural numbers (depending on $\varepsilon$). In particular, the difficulties found in the previous section (in proving Lemma \ref{lem:transportskewprod}) are not present and there is no need to use the Poincaré recurrence theorem to insert long strings of iterates of the inner map.

\subsection{Proof of Proposition \ref{prop:skewprodmain}}

Combining the argument in the proof of Lemma \ref{lem:transportskewprod} together with Proposition \ref{prop:blenderskewprod} we obtain the following result, which is equivalent to Proposition \ref{prop:skewprodmain}.

\begin{prop}
    Consider the skew-product map $\mathcal F_\varepsilon:(\bs\omega,z)\mapsto (\sigma(\bs\omega),F_{\bs\omega,\varepsilon}(z))$ in Proposition \ref{prop:nhimexpanded} and let   $T_\varepsilon:N\to N$  and $\{S_{i,\varepsilon}\}_{i=1}^m:N\to N$ be the maps involved in its definition. Suppose that there exists $\varepsilon_0>0$ such that for all $0<\varepsilon\leq \varepsilon_0$:
\begin{enumerate}
\item The maps $T_\varepsilon$ and $\{S_i\}^m_{i=1}$ preserve a common smooth  measure on $N$,
    \item There exists a subset $\mathcal A\subset N$ diffeomorphic to the $2d$-dimensional annulus $\mathbb A^d=\mathbb T^d\times[-1,1]^d$ (here $2d=\mathrm{dim}(N)$) on which the maps $\{T_\varepsilon, S_{1,\varepsilon}\}$ are of the form \eqref{eq:innermapsymbolic} and \eqref{eq:scattmapsymbolic} and verify hypothesis \ref{it:itemB1}-\ref{it:itemB5} with $\gamma=\gamma(F_0)$.   \item The scattering maps $\{S_{i,\varepsilon}\}_{i=1}^m$ satisfy assumption \ref{it:B6}. 
    \end{enumerate}
    Then, given any open balls $B,B_*\subset N$, any $\bs\omega\in \Sigma$ and any $n\in \mathbb N$ there exists $\widetilde L$, such that for any $L\geq \widetilde L$ there exists $\hat{\bs\omega}\in (\mathbb N\times\{1,\dots,m\})^L$ such that for any $\bs\omega'=(\omega',\iota')\in \Sigma$ with\begin{equation}\label{eq:modifiedomegaprimetransport}
    \omega'=(\dots,\omega_{-n},\dots,\omega_{-1};\omega_0,\dots,\omega_{n},\hat\omega_0,\dots,\hat\omega_{L-1},\omega'_{n+L+1},\dots)
    \end{equation}
    and
    \begin{equation}\label{eq:modifiediotaprimetransport}
     \iota'=(\dots,\iota_{-n},\dots,\iota_{-1};\iota_0,\dots,\iota_{n},\hat\iota_0,\dots,\hat\iota_{L-1},\iota'_{n+L+1},\dots)
    \end{equation}
we have 
\[
F^{n+L}_{\bs\omega',\varepsilon}(B)\cap B_*\neq\emptyset.
\]
\end{prop}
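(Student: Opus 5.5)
The plan is to concatenate three blocks of symbols: a transport block that carries $B$ into the blender region, the mixing block of Proposition \ref{prop:blenderskewprod}, and a backward transport block that carries $B_*$ into the blender region. Fix once and for all two small open balls $B_0,B_1\subset\mathcal A_{\sqrt\varepsilon}$ whose radius $\rho$ is much larger than $K\varepsilon$, where $K$ is the constant of Lemma \ref{lem:transportskewprod}; this is possible because $\mathcal A_{\sqrt\varepsilon}$ has width $\sqrt\varepsilon\gg K\varepsilon$. Let $B_0',B_1'$ be the concentric balls of radius $\rho/2$.

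\textbf{Step 1 (forward transport).} Apply Lemma \ref{lem:transportskewprod} to $B$, with target $B_0'$, the given $\bs\omega$ and $n$. This produces a word $\tilde{\bs\omega}^{(1)}$ of length $L_1$ such that, for every continuation $\bs\omega'$, some point of $B$ lands $K\varepsilon$-close to $B_0'$, and hence inside $B_0$, after $n+L_1$ steps. The proof of that lemma only uses the block $\tilde{\bs\omega}^{(1)}$ and the past of $\bs\omega$. The dependence on later symbols enters through the errors $R_{\bs\omega}$, and these are controlled by \eqref{eq:comparisonskewprod} through the size of $\bar\lambda^{\omega_k}$ for the symbols adjacent to the block. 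I will therefore require that all symbols following the block be large; this is automatic for the blender block, since it uses $\omega_k\sim N_0/\varepsilon$.

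\textbf{Step 2 (backward transport).} Apply the time-reversed version of Lemma \ref{lem:transportskewprod} to $F^{-1}$. The inverse skew-product has the same structure, with the maps $T^{-1}$ and $S_i^{-1}$, the same common invariant measure, and \ref{it:B6} holding for the inverses with vector fields $-X_i$, whose Lie algebra is unchanged. Using \eqref{eq:comparisonskewprod2}, this yields a word $\tilde{\bs\omega}^{(2)}$ of length $L_2$ such that the backward image of $B_*$ under the symbols $\tilde{\bs\omega}^{(2)}$ meets $B_1$, uniformly over the symbols preceding the block, as long as they are large.

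\textbf{Step 3 (glue).} Insert between the two words the block $(\omega^f,\omega^b)$ of Proposition \ref{prop:blenderskewprod}, with all $\iota$-symbols equal to $1$, applied to the balls $B_0,B_1$. This block has length $L_f+L_b$ for every $L_f\ge\widetilde L_f$ and $L_b\ge\widetilde L_b$, so every total length $L\ge \widetilde L:=L_1+\widetilde L_f+\widetilde L_b+L_2$ is attainable by adjusting $L_f$. Proposition \ref{prop:blenderskewprod} holds for arbitrary surrounding symbols, so it gives a point $w\in B_0$ that the middle block sends into $B_1$ and that is connected to both ends. The forward image of $B$ contains an open set inside $B_0$, not merely a single point of it. I would therefore apply Proposition \ref{prop:blenderskewprod} with $B_0$ replaced by that open set $U\subset B_0$, and symmetrically on the other side. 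Since $U$ depends on the symbols after it only through $O(\bar\lambda^{\min\omega})$ errors, I would instead take $U$ to be a ball of radius $\rho/4$ around the point $\tilde z$ given by Step 1; it is robust because the $K\varepsilon$ error is smaller than $\rho/4$. I would then choose the blender block for the fixed balls $U$ and $V$, and only afterwards check that the transport estimates remain valid.

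The main obstacle is exactly this two-sided consistency. The word $\tilde{\bs\omega}^{(1)}$ must be chosen before the blender block and $\tilde{\bs\omega}^{(2)}$ after it, yet each perturbs the fibre maps of the other block through $R_{\bs\omega}$. I would resolve this with the compactification of Section \ref{sec:compactification}. First, fix limit objects using right-finite and left-finite sequences with $\infty$ at the junctions, as in the proof of Lemma \ref{lem:transportskewprod}. Then choose the actual junction symbols large enough that \eqref{eq:comparisonskewprod}--\eqref{eq:comparisonskewprod2} make the discrepancy smaller than $\rho/4$. Because the balls have fixed size independent of the junction symbols, this error absorption closes the argument.
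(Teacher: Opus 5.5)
Your three-block architecture (forward transport of $B$ into $\mathcal A_{\sqrt\varepsilon}$, a blender word, backward transport of $B_*$) is the natural reading of the paper's one-line proof, which combines the argument of Lemma~\ref{lem:transportskewprod} with Proposition~\ref{prop:blenderskewprod}. However, the gluing in Step~3 does not work as written. Proposition~\ref{prop:blenderskewprod} applied to a ball $U$ of radius $\rho/4$ around $\tilde z$ only produces \emph{some} point of $U$ whose image under the blender word meets $V$. Step~1 only tells you that $F^{n+L_1}_{\bs\omega',\varepsilon}(B)$ has a point within $K\varepsilon$ of $\tilde z$, not that it contains $U$. Suppose the image of $B$ meets $U$, the blender image of $U$ meets $V$, and the preimage of $B_*$ meets $V$. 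None of this implies that the image of $B$ meets the preimage of $B_*$; likewise, the point $w$ of your Step~3 is not ``connected to both ends''.

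The blender word has to be chosen for open sets lying \emph{inside} the forward image of $B$ and the backward image of $B_*$. Their size is dictated by $B$, $B_*$ and the transport words. It can be far below $K\varepsilon$ and $\rho$, and even below the discrepancy $C\bar\lambda^{N_0/\varepsilon}$ that \eqref{eq:comparisonskewprod} allows when the blender symbols, which are confined to the bounded window $\mathcal N$, are varied. You correctly flag a circularity: the blender word depends on $U$, while $U$ depends on the blender word and on the free tail through $R_{\bs\omega}$. That circularity has to be broken at the scale of these images. Absorbing errors at scale $\rho/4$, or noting that the blender symbols are large, does not do it.

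What is missing is a set-level robustness step. You need an open set $U$, independent of all symbols beyond the first junction, with $U\subset F^{n+L_1}_{\bs\omega',\varepsilon}(B)$ for every admissible $\bs\omega'$. Symmetrically, you need $V$ inside the preimage of $B_*$ under the last word, again for every admissible $\bs\omega'$. One way to get this is as follows. First build these sets from the images of $B$ and $B_*$ under the compactified limit sequences with $\infty$ at the junctions. Then fix the junction symbols, which, unlike the blender symbols, you \emph{can} enlarge freely using recurrence at a point of $\mathcal R$, as in the proof of Lemma~\ref{lem:transportskewprod}. Take them so large that the $R_{\bs\omega}$-discrepancy is smaller than the inner radius of those sets. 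Only then choose the blender word for $U,V$.

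Step~2 has the same problem in reverse. The time-reversed lemma needs a \emph{fixed} sequence on the far side of the word, but here the tail after $\hat{\bs\omega}$ is arbitrary and its symbols need not be large. So the preimage of $B_*$ must be made tail-independent by the same large-junction-symbol device. You only required the symbols \emph{preceding} that word to be large.
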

The proof is analogue to that of Lemma \ref{lem:transportskewprod}.

\section{Genericity of assumptions}\label{sec:genericity}

Throughout this section we we write $2d=\mathrm{dim}(N)$, assume that $r_0=r_0(d)<\infty$ and $\kappa_0=\kappa_0(d)<\infty$ are positive constants which we may take as large as we need, and give a proof of Proposition \ref{prop:genericityofassumptions}.

\subsection{Standing assumptions}
We start by recalling some basic consequences of the assumptions \ref{it:Assumption1}-\ref{it:Assumption2} in Proposition \ref{prop:genericityofassumptions} (i.e. the assumptions in Theorem \ref{thm:mainham}).
\medskip

Consider the symplectic map  
\begin{align*}
F_0:M\times N&\to M\times N\\
(x_M,x_N)&\mapsto (F_{M}(x_M),F_N(x_N))
\end{align*}
in Theorem \ref{thm:mainham}. On one hand, by assumption \ref{it:Assumption2}, the map  $F_M:M\to M$ has a hyperbolic fixed point $a\in M$ such that, for $F_0$, the submanifold \begin{equation}\label{eq:unperturbednhim}
  \Lambda_0=\{a\}\times N
  \end{equation}
is a real-analytic $\kappa$-normally hyperbolic invariant manifold  with $\kappa>\kappa_0$ . Obviously, the dynamics restricted to this invariant manifold is given by the map 
\begin{equation}\label{eq:unperturbedinnerdynam}
T_0=F_N:N\to N
\end{equation}
which, by assumption \ref{it:Assumption1}, displays a non-degenerate elliptic equilibrium.

On the other hand, assumption \ref{it:Assumption2} also guarantees that the hyperbolic fixed point $a$ of the map $F_M$ displays a transverse homoclinic point. In particular, for any $m\in\mathbb N$ we may assume that there exist $m$ different transverse homoclinic points $\{\gamma_i\}_{i=1}^m\in W^{u}(a)\pitchfork W^s(a)$ such that 
    \[
    \bigcap_{i=1}^m\left(\bigcup_{n\in\mathbb Z}F_M^n(\gamma_i)\right)=\emptyset.
    \]
 In particular, for $i=1,\dots m$ 
\begin{equation}\label{eq:unperturbedchannels}
\Gamma^i_0=\{\gamma_i\}\times N\subset W^{s}(\Lambda_0)\pitchfork W^u(\Lambda_0)
\end{equation}
are disjoint full homoclinic manifolds as in Assumption \ref{it:H2}. From the construction, since the map $F_0$ is a direct product, the corresponding scattering maps $\{S_{i,0}\}_i:N\to N$ are both given by the identity map.
\medskip

Now, given a perturbation $f\in \mathcal B^\omega_{r_0}(M\times N)$ let 
 \begin{equation}\label{eq:unfoldingproof}
 F_{\varepsilon,f}:M\times N\to M\times N
 \end{equation}
 be the corresponding one-parameter family of symplectic maps unfolding $F_{0}$. By regular perturbation theory (see \cite{MR501173} or Section 4.2 in \cite{DelaLLaveScattmap}, in particular Theorem 23) there exists $\varepsilon_0(F_0)$ such that, for any $f\in \mathcal B^\omega_{r_0}(M\times N)$ and for any $0\leq \varepsilon\leq \varepsilon_0(F_0)$:
\begin{enumerate}
    \item \label{it:P1} the submanifold \eqref{eq:unperturbednhim} admits a unique continuation $\Lambda_{\varepsilon}(f)$ as a $C^{\kappa+1}$ normally hyperbolic invariant submanifold for $F_{\varepsilon,f}$ which satisfies Assumption \ref{it:H1} with $\kappa\geq \kappa_0$. Moreover, $\Lambda_{\varepsilon}(f)$ can be parametrized by means of a $C^{\kappa+1}$ embedding 
    \begin{equation}\label{eq:embeddinggenericity}
    \psi_{\varepsilon,f}:N\to \Lambda_{\varepsilon}(f)\subset M\times N
    \end{equation}
    which depends smoothly on $\varepsilon$ and satisfies $\psi_{0,f}:z\in N\to (a,z)\in \Lambda_0$. We denote by 
    \begin{equation}\label{eq:innermapproof}
    T_{\varepsilon}(f)=\psi_{\varepsilon,f}^{-1}\circ F_{\varepsilon,f}|_{\Lambda_{\varepsilon}(f)} \circ\psi_ {\varepsilon,f}:N\to N
    \end{equation}
    the corresponding inner map and note that it is a symplectomorphism with respect to the induced symplectic form on $N$ (see Theorem 26 in \cite{DelaLLaveScattmap}). Moreover, the map \eqref{eq:innermapproof} displays a non-degenerate elliptic equilibrium.

    \item \label{it:P2} the  submanifolds  \eqref{eq:unperturbedchannels} admit a unique continuation  $\{\Gamma_{\alpha,\varepsilon}^i\}_i$ as $C^{\kappa}$  disjoint full homoclinic manifolds to $\Lambda_{\varepsilon}(f)$ as in \ref{it:H2}. This follows from the fact that $W^{u,s}(\Lambda_{\varepsilon}(f))$ are $C^\kappa$ close to $W^{u,s}(\Lambda_0)$. We denote by $\{S_{\Gamma_{\varepsilon}^i(f)}\}_i:\Lambda_{\varepsilon}(f)\to \Lambda_{\varepsilon}(f)$ the corresponding scattering maps and write 
    \begin{equation}\label{eq:scattmapproof}
    S_{i,\varepsilon}(f)=\psi_{\varepsilon,f}^{-1}\circ S_{\Gamma_{\varepsilon}^i(f)}\circ\psi_{\varepsilon,f}: N\to N
    \end{equation}
    which, by Theorem 8 in \cite{DelaLLaveScattmap}, are also symplectic with respect to the induced symplectic form on $N$. 
\end{enumerate}
In particular, for any $f\in \mathcal B^\omega_{r_0}(M\times N)$ the one-parametric family of real-analytic diffeomorphisms \eqref{eq:unfoldingproof} satisfies assumptions \ref{it:H1}-\ref{it:H2} for $0\leq \varepsilon\leq \varepsilon_0(F_0)$. Consequently, the first item in Proposition \ref{prop:genericityofassumptions} holds. Moreover, these maps preserve a common smooth  measure (as they preserve a smooth symplectic structure). To conclude the proof of Proposition \ref{prop:genericityofassumptions} we now show that, for $f$ in an open and dense subset of $\mathcal B^\omega_{r_0}(M\times N)$ the maps \eqref{eq:innermapproof} and \eqref{eq:scattmapproof} satisfy assumptions \ref{it:itemB1}-\ref{it:B6}.

\subsection{Inner dynamics}
Our first observation is the following.

\begin{lem}\label{lem:Weinsteinlemma}
  Given $\gamma>0$ let $\mathcal B_\gamma\subset \mathbb R^d$ be the set of constant type vectors  in \eqref{eq:constanttype} There exists $\gamma>0$  such that the symplectic  map $T_0:N\to N$ in \eqref{eq:unperturbedinnerdynam}  displays a real-analytic Lagrangian invariant torus $\mathcal T\subset N$ on which the dynamics is conjugated to a rigid rotation with vector $\beta\in \mathcal B_\gamma$. Moreover, there exists a neighbourhood $\mathcal A$ of $\mathcal T$ and a real-analytic  map 
$\psi_{\mathrm{pol}}:(\varphi,J)\in\mathbb A^d\to z\in \mathcal A $ which   transforms $\psi^*_{\mathrm{pol}}\varpi_N=\mathrm dJ\wedge\mathrm d\varphi$ and (locally) conjugates $T_0$ to 
\[
\widetilde T_0:=\psi_\mathrm{pol}^{-1}\circ T_0\circ\psi_\mathrm{pol}:\binom{\varphi}{J}\mapsto \binom{\varphi+\beta +R_{0,\varphi}(\varphi,J)}{J+R_{0,J}(\varphi,J)}
\]
with $R_{0,\varphi}(\varphi,0)=0$ and  $\partial^n_JR_{0,J}(\varphi,0)=0$ for $n=0,1$.
\end{lem}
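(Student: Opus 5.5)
We use the non-degenerate elliptic equilibrium $q$ of $T_0=F_N$ from \ref{it:Assumption1}. In the coordinates $\psi$ of \ref{it:Assumption1},
\[
z\mapsto z\bigl(\exp(i\Omega(|z|^2))+O(|z|^3)\bigr),\qquad \det D\Omega(0)\neq 0 .
\]
Write $z_j=\sqrt{2I_j}\,e^{i\theta_j}$, so the symplectic form becomes $\mathrm dI\wedge\mathrm d\theta$ up to normalization. On a small punctured polydisc $\{0<|z_j|<\rho\}$ the map then has the form
\[
(\theta,I)\mapsto\bigl(\theta+\Omega(2I)+O(|I|^{1})\,,\;I+O(|I|^{2})\bigr),
\]
a real-analytic exact symplectic twist map close to the integrable map $(\theta,I)\mapsto(\theta+\Omega(2I),I)$. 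The twist condition holds because $\det D\Omega(0)\neq0$, so $I\mapsto \Omega(2I)$ is a local diffeomorphism near $0$.

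The plan is to rescale and apply the real-analytic KAM theorem for twist maps (as in \cite{PoschelKAM}). Fix a small scale $\rho$ and set $I=\rho^2 I_0+\rho^2\tilde J$ with $I_0$ in a fixed compact set of positive actions and $|\tilde J|$ small. After rescaling the actions by $\rho^{-2}$, which is conformally symplectic and harmless, the frequency map $\tilde J\mapsto\Omega(2\rho^2 (I_0+\tilde J))$ has twist of size $\rho^2$ and nonlinear remainder of size $O(\rho^3)$ relative to the twist. KAM therefore applies for $\rho$ small. The image of a ball in actions under $\Omega$ is an open set $U\subset\mathbb R^d$ containing vectors of constant type, because $\mathcal B_\gamma$ has positive measure in any open set (in fact full measure as $\gamma\to0$). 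We pick $\gamma>0$ and $\beta\in\mathcal B_\gamma\cap U$ that lies deep inside the Diophantine set, and KAM yields a real-analytic Lagrangian invariant torus $\mathcal T$ on which $T_0$ is analytically conjugate to the rotation by $\beta$. This is the main obstacle. Real-analytic KAM is usually stated for Diophantine vectors with exponent $\tau>d-1$, whereas constant type is the borderline exponent $\tau=d$ with the normalization $|k|^{-d}$ in \eqref{eq:constanttype}. That still fits the standard condition $\tau\geq d$ for maps, where the small divisors are $|k\cdot\beta-l|$. A second subtlety is that one must prescribe the frequency exactly rather than obtain a Cantor family; the frequency-preserving version of KAM (Kolmogorov non-degeneracy, or translated-torus / counterterm arguments using $\det D\Omega\neq0$) handles this.

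Next we obtain the normal form in a neighbourhood $\mathcal A$ of $\mathcal T$. Let $K:\mathbb T^d\to N$ be the analytic KAM embedding with $T_0\circ K=K\circ(\cdot+\beta)$. Since $\mathcal T$ is Lagrangian, the Weinstein Lagrangian neighbourhood theorem, in its real-analytic version via a generating function or the analytic Moser path method, gives an analytic symplectic map $\psi_{\mathrm{pol}}:\mathbb T^d\times[-\rho',\rho']^d\to\mathcal A$ with $\psi_{\mathrm{pol}}(\varphi,0)=K(\varphi)$ and $\psi_{\mathrm{pol}}^*\varpi_N=\mathrm dJ\wedge\mathrm d\varphi$. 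After rescaling $J$ to get $[-1,1]^d$, the conjugated map $\widetilde T_0$ preserves $\{J=0\}$ and acts on it as $\varphi\mapsto\varphi+\beta$. This gives $R_{0,\varphi}(\varphi,0)=0$ and $R_{0,J}(\varphi,0)=0$.

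Finally we arrange $\partial_JR_{0,J}(\varphi,0)=0$. Write $D\widetilde T_0$ along $J=0$ as $\begin{pmatrix}\mathrm{id}&A(\varphi)\\0&C(\varphi)\end{pmatrix}$. The lower-left block vanishes because the torus is invariant, and the upper-left block is the identity because the restriction is a translation. Symplecticity forces $C(\varphi)=\mathrm{id}$, since the matrix must be symplectic and its upper-left block is $\mathrm{id}$. Hence $\partial_JR_{0,J}(\varphi,0)=C-\mathrm{id}=0$.

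The statement asks for nothing on $\partial_J R_{0,\varphi}$; the constant twist $A$ in \eqref{eq:innermapsymbolic} is only needed later. It could be obtained by one more symplectic change $\varphi\mapsto\varphi+\nabla_J(\ldots)$ solving a cohomological equation with the constant-type $\beta$, but we do not perform it here.
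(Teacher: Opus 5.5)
Your route is the paper's: symplectic polar coordinates at the elliptic point, then analytic KAM. Your extraction of the normal form near the torus is correct and more explicit than the paper's one-line proof. That step combines an analytic Weinstein neighbourhood with the observation that a symplectic matrix $\begin{pmatrix}\mathrm{id}&A\\0&C\end{pmatrix}$ must have $C=\mathrm{id}$.

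The gap is in producing $\beta$ and $\gamma$. First, the reason you give is false. By Khintchine--Groshev (the series $\sum_q q^{d-1}q^{-d}$ diverges), each $\mathcal B_\gamma$, and even $\bigcup_{\gamma>0}\mathcal B_\gamma$, is Lebesgue-null. So these sets have neither positive measure in open sets nor full measure as $\gamma\to0$; this is consistent with your own later remark that $\tau=d$ is borderline.

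More seriously, the order of choices is circular. With your scaling the twist is $\sim\rho^2$ and the remainder $\sim\rho^3$. The usual smallness condition for KAM with prescribed frequency (remainder $\ll\gamma^2/$twist) then reads $\gamma\gg\rho^{5/2}$. Hence ``KAM applies for $\rho$ small'' cannot be asserted before $\gamma$ is fixed. But $U=U_\rho$ has diameter $\sim\rho^2$ and lies within $O(\rho^2)$ of $\Omega(0)/2\pi$, which may be resonant. If $k_0\cdot\Omega(0)/2\pi\in\mathbb Z$ for some $k_0\neq0$, every $\beta\in U_\rho$ has $\gamma\lesssim|k_0|^{d+1}\rho^2$. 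So you must find $\beta\in U_\rho\cap\mathcal B_\gamma$ with $\rho^{5/2}\ll\gamma\lesssim\rho^2$, and mere density of constant-type vectors does not give that.

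The missing ingredient is a quantitative density statement: every ball of radius $\eta$ meets $\mathcal B_{c\eta}$. This is elementary. Fix one $v\in\mathcal B_{\gamma_0}$, for instance with $1,v_1,\dots,v_d$ a basis of a real number field of degree $d+1$. For $p\in\mathbb Z^d$ and $q\in\mathbb N$,
$\bigl|k\cdot\tfrac{v+p}{q}-l\bigr|=\tfrac1q\,|k\cdot v-(ql-k\cdot p)|\ge\tfrac{\gamma_0}{q}|k|^{-d}$.
Hence the $\sqrt d/q$-dense grid $(v+\mathbb Z^d)/q$ lies in $\mathcal B_{\gamma_0/q}$. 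Taking $q\sim\rho^{-2}$ gives $\beta\in U_\rho\cap\mathcal B_{c\rho^2}$, and $c\rho^2\gg\rho^{5/2}$ once $\rho$ is small. Only then set $\gamma:=c\rho^2$ and apply KAM; with this inserted, the rest of your argument goes through.

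A minor point: rescaling $J$ alone to reach $[-1,1]^d$ multiplies the symplectic form by a constant. You should keep $J\in[-\rho',\rho']^d$, and the $\mathbb A^d$ in the statement can only be meant in this sense.
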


\begin{proof}
    By assumption, the map $T_0$ displays a non-degenerate elliptic equilibrium $z_0\in N$. After introducing symplectic polar coordinates on a neighbourhood of $z_0$ the proof follows by the classical KAM theorem  (see for instance \cite{MR442980,PoschelKAM}).
\end{proof}

\begin{prop}\label{prop:innerdynamicsassumption}
   Fix $l>0$ and assume that $\kappa_0$ and $r_0$ are sufficently large.   Given any $f\in \mathcal B^{\omega}_{r_0}(M\times N)$, let $T_{\varepsilon}(f):N\to N$ be the symplectic map defined in \eqref{eq:innermapproof}. Then, there exists $\varepsilon_0(F_0)>0$ such that for any  $0\leq \varepsilon\leq \varepsilon_0$ there exists a coordinate chart  $\psi_{\mathrm{BNF}}:(\varphi,J)\in\mathbb A^d\to z\in N$ which transforms $\psi_{\mathrm{BNF}}^*\varpi_N=\mathrm dJ\wedge\mathrm d\varphi$ and  such that the map 
    \begin{equation}\label{eq:KAMmap}
        \widetilde T_{\varepsilon}(f)=\psi_{\mathrm{BNF}}^{-1} \circ T_{\varepsilon}(f) \circ  \psi_{\mathrm{BNF}}
    \end{equation}
    is $C^l$, is of the form \begin{equation}\label{eq:BNFform}
    \widetilde T_\varepsilon(f):\binom{\varphi}{J}\mapsto \binom{\varphi+\beta+A(f)J+R_{\varepsilon,\varphi}(f)(\varphi,J)}{J+ R_{\varepsilon,J}(f)(\varphi,J)}
    \end{equation}
    with\footnote{We denote by $\mathrm{Symm}(d,\mathbb R)$ the set of symmetric square matrices of dimension $d$ and real entries.} 
    $A(f)\in GL(d,\mathbb R)\cap\mathrm{Symm}(d,\mathbb R)$, 
    and satisfies assumptions \ref{it:itemB1}-\ref{it:itemB2} and \ref{it:itemB5}. Moreover, the map $\psi_\mathrm{pol}^{-1}\circ\psi_{\mathrm{BNF}}$ preserves the symplectic form $\mathrm dJ\wedge\mathrm d\varphi$ and is $O(\varepsilon)$-close to the identity in the $C^l$ topology.
\end{prop}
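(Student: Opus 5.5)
The plan is to obtain the normal form \eqref{eq:BNFform} in three stages: a $\varepsilon$-uniform Birkhoff-type normalization at $\varepsilon=0$, a KAM continuation of a single Diophantine torus for $\varepsilon>0$, and a final reduction to the normal form along that torus.

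First, at $\varepsilon=0$, we use the non-degenerate elliptic equilibrium of $T_0=F_N$. In the coordinates $\psi$ from assumption \ref{it:Assumption1}, with symplectic polar coordinates $z_j=\sqrt{I_j}e^{i\theta_j}$, $T_0$ becomes
\[
(\theta,I)\mapsto(\theta+\Omega(I)+O(|I|^{3/2}),\ I+O(|I|^{2})).
\]
Since $\det D\Omega(0)\neq0$, the frequency map $I\mapsto\Omega(I)$ is a local diffeomorphism near $0$. The set $\mathcal B_\gamma$ of constant type vectors is nonempty, because badly approximable vectors exist in every open set. So we fix some $\beta\in\mathcal B_\gamma\cap\Omega(\{|I|<\rho\})$, and Lemma \ref{lem:Weinsteinlemma} provides the torus $\mathcal T$ with rotation $\beta$ and the chart $\psi_{\mathrm{pol}}$. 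We then apply finitely many symplectic Birkhoff steps along $\mathcal T$, using generating functions and solving cohomological equations $u(\varphi+\beta)-u(\varphi)=g(\varphi)-\langle g\rangle$. These are solvable with finite loss of derivatives because $\beta$ is of constant type, with loss $\sim d$. This removes the $\varphi$-dependence of the linear-in-$J$ term in the $\varphi$-component and of the terms up to order $J^2$ in the $J$-component. The twist matrix is $A=D\Omega(I_*)$, which is symmetric because it is the Hessian of the generating Hamiltonian, and it is invertible by non-degeneracy for $\rho$ small.

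Second, for $\varepsilon>0$, we note that $T_\varepsilon(f)$ is $C^{\kappa+1}$ and $O(\varepsilon)$-close to $T_0$ in $C^{\min\{r_0,\kappa_0\}-1}$; this follows from the regular dependence of $\Lambda_\varepsilon(f)$ and $\psi_{\varepsilon,f}$ on $\varepsilon$ recalled in item \eqref{it:P1}. We then apply a finitely differentiable KAM theorem for exact symplectic twist maps of the annulus, as in P\"oschel or Herman or Salamon--Zehnder. The twist is non-degenerate since $\det A\neq0$, and $\beta$ is Diophantine. This yields a Lagrangian invariant torus $\mathcal T_\varepsilon$ with the \emph{same} frequency $\beta$, so $\gamma$ is independent of $\varepsilon$ and item \ref{it:itemB1} holds. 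The torus and its conjugacy are $O(\varepsilon)$-close to those of $\mathcal T$ in $C^{l'}$, where $l'$ is large once $\kappa_0$ and $r_0$ are large. Exactness of $T_\varepsilon(f)$ should follow from its construction as the restriction of a Hamiltonian deformation. Alternatively, if a translation parameter enters, we use a parametrized KAM argument that absorbs it, relying on the twist.

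Third, we build $\psi_{\mathrm{BNF}}$ as follows. The Weinstein neighbourhood theorem, applied to the Lagrangian embedding of $\mathcal T_\varepsilon$ conjugating the dynamics to $\varphi+\beta$, gives symplectic coordinates in which $\{J=0\}$ is invariant with rigid rotation; this makes $R_{\varepsilon,\varphi}(\varphi,0)=0=R_{\varepsilon,J}(\varphi,0)$. We then repeat the cohomological steps of the first stage, now with $O(\varepsilon)$-small data, to kill the $\varphi$-dependence in $\partial_J R_{\varepsilon,\varphi}(\varphi,0)$ and $\partial_J^2R_{\varepsilon,J}(\varphi,0)$. For $\partial_J R_{\varepsilon,J}$ this vanishing is automatic from symplecticity and invariance once the torus is rotated rigidly. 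The averaged twist becomes $A(f)$, which stays symmetric and invertible, being $O(\varepsilon)$-close to $A$; this gives \ref{it:itemB2}. Each change of coordinates depends $C^2$ on $\varepsilon$, which follows from the smooth $\varepsilon$-dependence of the KAM solution in the finitely differentiable implicit-function formulation; this gives \ref{it:itemB5}. Composing all the changes of coordinates gives $\psi_\mathrm{pol}^{-1}\circ\psi_{\mathrm{BNF}}=\mathrm{id}+O(\varepsilon)$ in $C^l$.

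The main obstacle is the bookkeeping of derivatives. The KAM step and the several cohomological equations each lose about $d$ derivatives, and we need to finish in $C^l$ with $C^2$ dependence on $\varepsilon$. This fixes how large $\kappa_0(d)$ and $r_0(d)$ must be, and it requires a finitely-differentiable KAM theorem that gives smooth parameter dependence rather than merely an analytic one.
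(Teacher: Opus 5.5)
Your proposal is correct and follows essentially the paper's route: continue the constant-type torus of Lemma~\ref{lem:Weinsteinlemma} with frozen frequency $\beta$ by a finitely differentiable KAM theorem, straighten it in symplectic coordinates, and perform one Birkhoff step to replace $A(\varphi)$ by its average, with the $J^2$-term of $R_{\varepsilon,J}$ then vanishing by symplecticity. The Birkhoff step is a cohomological equation, solvable because $\beta\in\mathcal B_\gamma$.

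Your extra normalization at $\varepsilon=0$, once absorbed into $\psi_{\mathrm{pol}}$, is what makes $\psi_{\mathrm{pol}}^{-1}\circ\psi_{\mathrm{BNF}}=\mathrm{id}+O(\varepsilon)$ true; the paper leaves this implicit. The one thing to drop is your fallback on exactness, because a twist cannot absorb a translation in $J$. None is needed anyway: $T_\varepsilon(f)$ is a global symplectomorphism, and the cycles of the annulus around the torus bound disks in a ball around the elliptic point where $\varpi_N$ is exact. Those disks keep their symplectic area, so the intersection property holds.
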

\begin{proof}

By Lemma \ref{lem:Weinsteinlemma}, the real-analytic symplectic map $T_0:N\to N$ displays a real-analytic KAM torus $\mathcal T\subset N$ on which the dynamics is conjugated to a Diophantine rotation with vector $\beta\in\mathcal B_\gamma>0$. Since for any $f\in \mathcal B_{r_0}^\omega(M\times N)$ the map $T_{\varepsilon}(f):N\to N$ in \eqref{eq:innermapproof} is $O(\varepsilon)$-close to the map $T_0$ in the $C^{\min\{\kappa+1,r_0\}}$ topology, assuming $\kappa_0,r_0$ are large enough (depending only on $d$ and $F_0$) the main result in  \cite{MR1790659} (see also \cite{MR595866,MR668410})  shows that $\mathcal T$ admits a continuation $\mathcal T_\varepsilon(f)$ as a Lagrangian invariant torus of the map $T_{\varepsilon}(f)$ and, on which, the dynamics is conjugated to a rigid rotation with frequency vector $\beta$. Moreover,  there exists a neighbourhood $\mathcal A$ of $\mathcal T_\varepsilon(f)$ in $N$ and a  symplectomorphism $
\hat\psi:(\varphi,J)\in\mathbb A^d\to z\in \mathcal A $ which   transforms $\hat\psi^*\varpi_N=\mathrm dJ\wedge\mathrm d\varphi$ and conjugates
\[
\widehat T_{\theta,\varepsilon}:=\hat\psi^{-1}\circ T_{\theta,\varepsilon}\circ \hat\psi:\binom{\varphi}{J}\mapsto \binom{\varphi+\beta+A(\varphi)J+\widehat R_{\varepsilon,\varphi}(\varphi,J)}{J+\widehat R_{\varepsilon,J}(\varphi,J)}
\]
with $A:\mathbb T^d\to GL(d,\mathbb R)\cap \mathrm {Symm}(d,\mathbb R)$ and $\widehat R_{\varepsilon,\varphi},\widehat R_{\varepsilon,J}$  satisfying $ \partial_{J}^n \widehat R_{\varepsilon,\star}(\varphi,0)=0$ for $n\leq 1$. For any fixed $\tilde l>0$,  we may assume (by increasing $r_0,\kappa_0$ if necessary) that $A,\widehat R_{\varepsilon,\varphi}$ and $\widehat R_{\varepsilon,\varphi}$ are of class $C^{\tilde l}$.
\medskip

The conclusion now follows after one step of the classical  Birkhoff normal form iteration. We  look for a generating function 
\[
\mathcal G(\varphi,I)=\varphi I+G(\varphi,I)
\] 
such that the associated change of coordinates $\tilde\psi:(\theta,I)\mapsto (\varphi,J)$
\[
\theta=\varphi+\partial_IG(\varphi,I)\qquad\qquad J=I+\partial_\varphi G(\varphi,I),
\]
eliminates the term 
\[
[A](\varphi)=A(\varphi)-\int_{\mathbb T^d} A(\varphi)\mathrm d\varphi
\]
(by symplectic symmetry this transformation will also eliminate the term of order $2$ in $R_{\varepsilon,J}$). To do so, define 
\[
G(\varphi, I)=g(\varphi) I^{2},
\]
with 
\[
g(\varphi)=\frac{1}{2}\sum_{l\in\mathbb Z^d\setminus\{0\}} \frac{{A}^{[l]}}{1-e^{il\beta}}e^{il\varphi}
\]
Since $\beta\in \mathcal B_\gamma$ a standard computation shows that
\[
 \partial_{I}  G(\varphi+\beta,I)- \partial_{I} G(\varphi, I)= [A](\varphi,I)
\]
and $
|G|_{C^{\tilde l-2d-1}}\lesssim |A|_{C^{\tilde l}}$. Again we may assume that  the change of coordinates $\tilde\psi:(\theta,I)\mapsto (\varphi,J)$ is as regular as we want and conjugates $\widehat T_{\theta,\varepsilon}$ to a map of the form \eqref{eq:innermapsymbolic} which satisfies assumptions \ref{it:itemB1}-\ref{it:itemB2} and \ref{it:itemB5}.
\end{proof}

\subsection{Deformation theory for the scattering maps}

It now remains to show that for a typical $f$, at least one of the scattering maps $\{S_{i,\varepsilon}(f)\}_i:N\to N$ in \eqref{eq:scattmapproof} verifies the hypothesis \ref{it:itemB5} and that, together, they satisfy  assumption \ref{it:B6}. To do so we will make use of the perturbation theory of the scattering map developed in \cite{DelaLLaveScattmap}. 

\subsubsection*{Deformation theory for Hamiltonian unfoldings}
We first need the following technical lemma. Given a $C^2$ time-dependent Hamiltonian $f:X\times[0,1]\to \mathbb R$ on a smooth compact symplectic manifold $X$, for any $s,u\in [0,1]$ we denote by $\phi^{s,u}_f:X\to X$ the time-$(u-s)$ map associated to the corresponding time-dependent Hamiltonian vector field with initial condition  in the section $\{t=s\}$. When $s=0$ we simply write $\phi^{u}_f$ instead of $\phi^{0,u}_f$.

\begin{lem}\label{lem:deformationoriginaldyn}
Let $f$ be a $C^2$ time-dependent Hamiltonian on a smooth symplectic manifold $X$. Then, for any $\varepsilon\geq 0$, the corresponding time-one map $\Phi_{\varepsilon f}:X\to X$ satisfies the initial value problem
\[
\begin{cases}\frac{\mathrm d}{\mathrm d\varepsilon}\Phi_{\varepsilon f}=Z_{\varepsilon, f}\circ\Phi_{{\varepsilon f}}\\
\Phi_{{0,f}}=\mathrm{id}
\end{cases}
\]
where $Z_{\varepsilon,f}$ is the vector field associated to the Hamiltonian function $\widetilde K_{\varepsilon}(f):X\to \mathbb R$ given by 
\begin{equation}\label{eq:deformationHam}
\widetilde K_{\varepsilon}(f)(\cdot)=\int_0^1 f(\phi^{1,u}_{{\varepsilon f}}(\cdot),u) \mathrm du
\end{equation}
\end{lem}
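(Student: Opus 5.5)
We will prove the lemma by differentiating the time-dependent flow with respect to the parameter $\varepsilon$ and identifying the resulting vector field as a Hamiltonian one. Write $\phi^{s,u}_{\varepsilon f}$ for the flow of $X_{\varepsilon f_t}$, so that $\Phi_{\varepsilon f}=\phi^{0,1}_{\varepsilon f}$. Set $Y_\varepsilon(x)=\frac{\mathrm d}{\mathrm d\varepsilon}\phi^{0,1}_{\varepsilon f}\big((\phi^{0,1}_{\varepsilon f})^{-1}(x)\big)$. Then the identity $\frac{\mathrm d}{\mathrm d\varepsilon}\Phi_{\varepsilon f}=Y_\varepsilon\circ\Phi_{\varepsilon f}$ holds by definition, and the initial condition $\Phi_{0,f}=\mathrm{id}$ is immediate because the vector field vanishes at $\varepsilon=0$. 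It remains to show that $Y_\varepsilon=Z_{\varepsilon,f}$, i.e. that $Y_\varepsilon$ is the Hamiltonian vector field of \eqref{eq:deformationHam}.

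The key step is the standard variation-of-parameters (Duhamel) formula. Consider $V_u=\partial_\varepsilon\phi^{0,u}_{\varepsilon f}$, viewed as a vector field along the flow. Differentiating the ODE $\partial_u\phi^{0,u}=X_{\varepsilon f_u}\circ\phi^{0,u}$ in $\varepsilon$ gives the linear inhomogeneous equation
\[
\partial_u V_u=DX_{\varepsilon f_u}(\phi^{0,u})V_u+X_{f_u}\circ\phi^{0,u},\qquad V_0=0 ,
\]
where we use that $X_{\varepsilon f_u}=\varepsilon X_{f_u}$ is linear in $\varepsilon$. The fundamental solution of the homogeneous part is $D\phi^{s,u}_{\varepsilon f}$, and therefore
\[
V_1=\int_0^1 D\phi^{u,1}_{\varepsilon f}\,X_{f_u}\circ\phi^{0,u}_{\varepsilon f}\,\mathrm du .
\]
Composing with $(\phi^{0,1})^{-1}$ and using the cocycle identity $\phi^{0,u}\circ(\phi^{0,1})^{-1}=\phi^{1,u}$, we obtain $Y_\varepsilon=\int_0^1 (\phi^{1,u}_{\varepsilon f})^*X_{f_u}\,\mathrm du$, where the pullback is $((\phi^{u,1})_*X)=D\phi^{u,1}X\circ\phi^{1,u}$. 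The regularity needed to differentiate under the flow is supplied by $f\in C^2$.

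Finally, since each $\phi^{1,u}_{\varepsilon f}$ is symplectic, the pullback of a Hamiltonian vector field is Hamiltonian: $(\phi^{1,u})^*X_{f_u}=X_{f_u\circ\phi^{1,u}}$, which follows from the defining relation \eqref{eq:correspondenceformsvectors}. By linearity of $H\mapsto X_H$ and exchange of the integral with $\mathrm d$, we conclude $Y_\varepsilon=X_{\widetilde K_\varepsilon(f)}$ with $\widetilde K_\varepsilon(f)=\int_0^1 f(\phi^{1,u}_{\varepsilon f}(\cdot),u)\,\mathrm du$, as claimed.

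The only delicate point is bookkeeping of the direction of the flows (whether $\phi^{1,u}$ or $\phi^{u,1}$ appears) and of signs in the convention \eqref{eq:correspondenceformsvectors}. We will check this directly by verifying the identity at first order in $\varepsilon$, where both sides reduce to $\int_0^1 X_{f_u}\,\mathrm du$.
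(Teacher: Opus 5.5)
Your proposal is correct and follows essentially the same route as the paper: differentiate the flow in $\varepsilon$, solve the variational equation by Duhamel's formula, recompose with $\Phi_{\varepsilon f}^{-1}$ via the cocycle identity, and recognize $\int_0^1(\phi^{u,1}_{\varepsilon f})_*X_{f_u}\,\mathrm du$ as a Hamiltonian vector field. You also make explicit the final step that the paper leaves implicit, namely $(\phi^{1,u}_{\varepsilon f})^*X_{f_u}=X_{f_u\circ\phi^{1,u}_{\varepsilon f}}$ for symplectic maps, which holds independently of the sign convention; note only that your proposed first-order check cannot detect the $\phi^{1,u}$ versus $\phi^{u,1}$ issue (both are the identity at $\varepsilon=0$), but your derivation already settles that direction correctly.
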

\begin{proof}
    Differentiating the equation satisfied by the time-$t$ flow we obtain that $Y^t_{\varepsilon,f}:=\frac{\mathrm d}{\mathrm d\varepsilon} \phi^t_{{\varepsilon f}}$ satisfies the linear ODE
    \[
    \dot Y^t_{\varepsilon,f}= DX_{{\varepsilon f}}( \phi^t_{{\varepsilon f}}) Y^t_{\varepsilon,f}+X_f(\phi^t_{{\varepsilon f}})
    \]
    with initial condition $Y^0_{\varepsilon,f}=0$. It is a standard exercise to check that the solution to this initial value problem is given by (use that $\phi^{0,1}_{{\varepsilon f}}=\phi^{u,1}_{{\varepsilon f}}\circ\phi^{0,u}_{{\varepsilon f}}$)
    \begin{align*}
    Y_{\varepsilon,f}:=Y^1_{\varepsilon,f}= &D\phi^1_{{\varepsilon f}}\int_0^1 (D\phi^u_{{\varepsilon f}})^{-1} X_f(\phi^u_{{\varepsilon f}})\mathrm du\\
    =&\left(\int_0^1 (D\phi^{u,1}_{{\varepsilon f}} X_f)\circ(\phi^{u,1}_{{\varepsilon f}})^{-1}\mathrm du \right)\circ\Phi_{{\varepsilon f}}\\
    =&\left(\int_0^1 (\phi^{u,1}_{{\varepsilon f}})_* X_f\mathrm du \right)\circ\Phi_{{\varepsilon f}}
    \end{align*}
    Hence, the desired conclusion follows.
\end{proof}

\subsubsection*{Delshams-de la Llave-Seara theory of the scattering map}
We will exploit the following result, which can be easily read from the (more general) statement in Theorem 31 of that paper.

\begin{thm}[After Theorem 31 in \cite{DelaLLaveScattmap}]\label{thm:deformationtheoryscattmap}
  Given any $i=\{1,\dots,m\}$ and $f\in \mathcal B^\omega_{r_0}(M\times N)$ let $S_{i,\varepsilon}(f):N\to N$ be the scattering map in \eqref{eq:scattmapproof}, let $\widetilde K_\varepsilon(f):M\times N\to \mathbb R$ the Hamiltonian \eqref{eq:deformationHam} and define\footnote{$K_\varepsilon(f)$ is the Hamiltonian generating the deformation of the map $F_{\varepsilon,f}=F_0\circ\Phi_{\varepsilon f}$.} 
  \[
  K_\varepsilon(f)=\widetilde K_\varepsilon(f)\circ F_0^{-1}.
  \]
  Then, $ S_{i,\varepsilon}(f)$  solves the initial value problem 
  \[
 \begin{cases}
     \frac{\mathrm d}{\mathrm d\varepsilon} S_{i,\varepsilon}(f)=X_{s_\varepsilon^i(f)}\circ S_{i,\varepsilon}(f)\\
 S_{i,\varepsilon}(f)|_{\varepsilon=0}=S_{i,0}
 \end{cases} \qquad\qquad\qquad \varpi_N(\cdot,X_{s_\varepsilon^i(f)})=\mathrm ds_\varepsilon^i(f)(\cdot)
  \]
where $s_\varepsilon^i(f):N\to \mathbb R$ is given by the absolutely convergent series \begin{equation}\label{eq:deformationscattmap}
    \begin{aligned}
      s_{\varepsilon}^i(f)=&  \lim_{N\to \infty} \left(\sum_{j=0}^{N-1} \left(K_{\varepsilon}(f)\circ F_{{\varepsilon,f}}^{-j}\circ(\Omega_{i,\varepsilon}^{u}(f))^{-1}\circ ( S_{ \Gamma^i_\varepsilon(f)})^{-1}\circ \psi_\varepsilon-K_{\varepsilon}(f)\circ F_{{\varepsilon,f}}^{-j}\circ ( S_{ \Gamma^i_\varepsilon(f)})^{-1}\circ \psi_\varepsilon \right)\right.\\
      &\left.\qquad\quad +\sum_{j=1}^N \left( K_{\varepsilon}(f)\circ F_{{\varepsilon,f}}^{j}\circ(\Omega_{i,\varepsilon}^{s}(f))^{-1}\circ \psi_\varepsilon-K_{\varepsilon}(f)\circ F_{{\varepsilon,f}}^{j}\circ \psi_\varepsilon\right) \right).
      \end{aligned}
    \end{equation}
    
\end{thm}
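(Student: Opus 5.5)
The plan is to derive the formula by differentiating the defining relation of the scattering map with respect to $\varepsilon$, as in the deformation theory of \cite{DelaLLaveScattmap}. First, Lemma \ref{lem:deformationoriginaldyn} shows that the family $F_{\varepsilon,f}=F_0\circ\Phi_{\varepsilon f}$ is a Hamiltonian deformation: $\frac{\mathrm d}{\mathrm d\varepsilon}F_{\varepsilon,f}=\mathcal K_\varepsilon\circ F_{\varepsilon,f}$, where $\mathcal K_\varepsilon=(F_0)_*Z_{\varepsilon,f}$. Because $F_0$ is symplectic, $\mathcal K_\varepsilon$ is the Hamiltonian vector field of $\widetilde K_\varepsilon(f)\circ F_0^{-1}=K_\varepsilon(f)$. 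This places us in the setting of Theorem 31 of \cite{DelaLLaveScattmap}, namely a smooth family of exact symplectic maps generated by a family of Hamiltonians. The remaining work is to check its hypotheses and to transcribe its conclusion into the coordinates $\psi_\varepsilon$.

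Next, the standing facts \ref{it:P1}--\ref{it:P2} give a continuation of $\Lambda_\varepsilon(f)$, of its stable and unstable foliations, and of the channels $\Gamma^i_\varepsilon(f)$. These continuations are $C^{\kappa}$ in the phase variables and differentiable in $\varepsilon$, which is guaranteed by taking $\kappa_0,r_0$ large. Write $x^u=\Omega^u(x)$ and $x^s=\Omega^s(x)$ for $x\in\Gamma^i_\varepsilon$. I would differentiate the asymptotic relations $\mathrm{dist}(F^{-j}_{\varepsilon,f}(x),F^{-j}_{\varepsilon,f}(x^u))\to0$ and the analogous forward relation in $\varepsilon$, and then pair the resulting vector fields with $\varpi$. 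Because the maps are symplectic, each variation $\partial_\varepsilon$ of an orbit contributes $K_\varepsilon$ evaluated along that orbit. The variations of $x$, $x^u$ and $x^s$ then combine into telescoping sums. These are differences of $K_\varepsilon\circ F^{\mp j}$ evaluated at the homoclinic point and at its foot point on $\Lambda$. Pulled back by $\psi_\varepsilon$ (the embedding \eqref{eq:embeddinggenericity} is symplectic for the induced form), they give the Hamiltonian $s^i_\varepsilon(f)$ of the vector field $\partial_\varepsilon S_{i,\varepsilon}\circ S_{i,\varepsilon}^{-1}$. The exactness is automatic here because the deformation is Hamiltonian, and the initial condition at $\varepsilon=0$ is just $S_{i,0}$.

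Convergence of the series \eqref{eq:deformationscattmap} follows from two facts. The terms are differences at points of the same strong stable (respectively unstable) fibre, which converge at rate $\lambda^j$. The derivatives of $K_\varepsilon$ are bounded on the compact neighbourhood of $\Lambda_\varepsilon$. Moreover, the $C^1$ convergence needed to differentiate termwise holds provided the pinching $\alpha\lambda<1$ of \ref{it:H1} holds.

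The main obstacle is justifying the interchange of $\frac{\mathrm d}{\mathrm d\varepsilon}$ with the limit. This requires regularity in $\varepsilon$ of the wave maps $\Omega^{u,s}_{i,\varepsilon}(f)$, uniformly in $j$. I would not redo this argument: I would invoke the corresponding estimates in \cite{DelaLLaveScattmap}, which only need $\kappa\ge 2$ and $r_0\ge 3$, and both are satisfied by our choice of $\kappa_0,r_0$.
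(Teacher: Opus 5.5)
Your proposal is correct and follows the paper's route. The only real input is that, by Lemma \ref{lem:deformationoriginaldyn} and the symplecticity of $F_0$, $\frac{\mathrm d}{\mathrm d\varepsilon}F_{\varepsilon,f}=((F_0)_*Z_{\varepsilon,f})\circ F_{\varepsilon,f}=X_{K_\varepsilon(f)}\circ F_{\varepsilon,f}$, which is the content of the footnote defining $K_\varepsilon(f)$; after that the formula is read off from Theorem 31 of \cite{DelaLLaveScattmap}, and the paper's proof consists of exactly this citation. Your sketch of the telescoping sums and of the convergence is consistent with that theorem but not needed, since you, like the paper, defer to \cite{DelaLLaveScattmap} both the interchange of $\frac{\mathrm d}{\mathrm d\varepsilon}$ with the limit and the transcription through the $\varepsilon$-dependent parametrization $\psi_\varepsilon$.
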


\begin{rem}
    Since in  Theorem \ref{thm:mainham}  we  only consider deformations of direct products all the scattering maps for $\varepsilon=0$ (i.e. those along the unperturbed homoclinic channels \eqref{eq:unperturbedchannels})  are given by the identity map. 
\end{rem}

 Appealing  to the regular dependence on parameters of all the objects involved (see  Section 4.2 in \cite{DelaLLaveScattmap}, in particular Theorem 23), we  obtain the following perturbative result.

\begin{prop}\label{lem:Melnikovformula}
 Given any $i=\{1,\dots,m\}$ and $f\in \mathcal B^\omega_{r_0}(M\times N)$, let $K_\varepsilon(f)$ be given by \eqref{eq:deformationHam} and let  $s^i_{\varepsilon}(f)$ be as in \eqref{eq:deformationscattmap}. Fix $l>0$ and assume that $\kappa_0$ and $r_0$ are sufficently large. Then, \begin{equation}\label{eq:deformationscattmapasymp}
    \begin{aligned}
      s_{\varepsilon}^i(f)= \mathcal L_i(f)+O_{C^{l}}(\varepsilon ) 
      \end{aligned}
    \end{equation}
    where the function $\mathcal L_i(f)$ is given by the absolutely convergent series
    \begin{equation}\label{eq:Melnikovpotential}
    \begin{aligned}
    \mathcal L_i(f)=&  \lim_{N\to \infty} \left(\sum_{j=0}^{N-1} \left(K_{0}(f)\circ F_{{0}}^{-j}\circ(\Omega_{i,0}^{u})^{-1}\circ ( S_{ \Gamma^i_0})^{-1}\circ \psi_0-K_{0}(f)\circ F_{{0}}^{-j}\circ ( S_{ \Gamma^i_0})^{-1}\circ \psi_0 \right)\right.\\
      &\left.\qquad\quad +\sum_{j=1}^N \left( K_{0}(f)\circ F_{{0}}^{j}\circ(\Omega_{i,0}^{s})^{-1}\circ \psi_0-K_{0}(f)\circ F_{{0}}^{j}\circ \psi_0\right) \right).
    \end{aligned}
    \end{equation}
In particular, the map $S_{i,\varepsilon}(f):N\to N$ in \eqref{eq:scattmapproof} is of the form
\begin{equation}\label{eq:expansionscattmap}
    S_{i,\varepsilon}(f)=\mathrm{id}+\varepsilon X_{\mathcal L_i(f)}+O_{C^{l}}(\varepsilon^2)
\end{equation}
where the vector field $X_{\mathcal L_i(f)}:N\to TN$ is defined implicitly by 
\begin{equation}\label{eq:vfieldsscattmaps}
\varpi_N(\cdot, X_{\mathcal L_i(f)})= \mathrm d\mathcal L_i(f).
\end{equation}
\end{prop}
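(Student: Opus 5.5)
The plan is to derive the expansion \eqref{eq:deformationscattmapasymp} from Theorem \ref{thm:deformationtheoryscattmap} by showing that each ingredient of the series \eqref{eq:deformationscattmap} depends smoothly on $\varepsilon$. We then integrate the initial value problem for $S_{i,\varepsilon}(f)$, using $S_{i,0}=\mathrm{id}$, to obtain \eqref{eq:expansionscattmap}.

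\textbf{Step 1: smooth dependence of the building blocks.} First, by Lemma \ref{lem:deformationoriginaldyn}, $K_\varepsilon(f)=\widetilde K_\varepsilon(f)\circ F_0^{-1}$ satisfies $K_\varepsilon(f)=K_0(f)+O_{C^{l'}}(\varepsilon)$, since $\phi^{1,u}_{\varepsilon f}$ is $O(\varepsilon)$-close to the identity in $C^{r_0-1}$. Second, by the regular dependence on parameters of normally hyperbolic manifolds, their stable and unstable foliations, the wave maps $\Omega^{u,s}_{i,\varepsilon}(f)$, the parametrizations $\psi_\varepsilon$ and the scattering maps (Theorem 23 in \cite{DelaLLaveScattmap}), all the compositions appearing in \eqref{eq:deformationscattmap} are $C^{l'}$ in the space variable. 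Moreover, they are $C^1$ in $\varepsilon$ with uniform bounds, provided $\kappa_0$ and $r_0$ are large compared to $l$. This uses \ref{it:H1}, which makes the foliations $C^{\kappa}$.

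\textbf{Step 2: uniform control of the series (main obstacle).} Next, each term of the series is differentiated in $\varepsilon$. The key point is that the two compared points in each summand, for instance $F^j_{\varepsilon,f}\circ(\Omega^s_{i,\varepsilon})^{-1}\circ\psi_\varepsilon(z)$ and $F^j_{\varepsilon,f}\circ\psi_\varepsilon(z)$, lie on the same stable fiber. Hence their distance decays like $\lambda^j$, while $C^{l}$ derivatives in $z$ and in $\varepsilon$ grow at most like $\alpha^{(l+1)j}$. The spectral gap $\alpha^{\kappa+1}\lambda<1$ with $\kappa\geq\kappa_0\ge l+1$ therefore gives geometric convergence of the differentiated series, uniformly in $\varepsilon\in[0,\varepsilon_0]$. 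The same bound holds for the backward sum along unstable fibers.

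This is the delicate point. The $\varepsilon$-derivative of $F^j_{\varepsilon,f}$ grows like $j\alpha^{j}$, so the estimate must pair the fiber contraction with the growth of the tangent dynamics, exactly as in the proof of Theorem 31 of \cite{DelaLLaveScattmap}. I would follow that proof, keeping track of the loss of derivatives to fix $\kappa_0(l)$ and $r_0(l)$. Once this is done, $s^i_\varepsilon(f)-s^i_0(f)=O_{C^l}(\varepsilon)$ by the mean value theorem in $\varepsilon$. Setting $\varepsilon=0$ in \eqref{eq:deformationscattmap}, where $F_{0,f}=F_0$ and the objects are unperturbed, gives $s^i_0(f)=\mathcal L_i(f)$, which proves \eqref{eq:deformationscattmapasymp}.

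\textbf{Step 3: integration.} Finally, we write $S_{i,\varepsilon}(f)=\mathrm{id}+\int_0^\varepsilon X_{s^i_\tau(f)}\circ S_{i,\tau}(f)\,\mathrm d\tau$. From \eqref{eq:deformationscattmapasymp}, $X_{s^i_\tau}=X_{\mathcal L_i(f)}+O_{C^{l-1}}(\tau)$. A Gronwall bound gives $S_{i,\tau}=\mathrm{id}+O_{C^{l-1}}(\tau)$, and therefore $X_{\mathcal L_i}\circ S_{i,\tau}=X_{\mathcal L_i}+O(\tau)$. Integrating yields \eqref{eq:expansionscattmap}, with $l$ replaced by $l-2$ to absorb the derivative lost in passing from the Hamiltonian to its vector field; this is harmless since $l$ is arbitrary.
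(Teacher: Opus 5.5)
Your proposal is correct and takes the paper's route. The paper's whole argument is an appeal to the regular dependence on parameters of all objects in \eqref{eq:deformationscattmap} (Theorem 23 of \cite{DelaLLaveScattmap}), giving $s^i_\varepsilon(f)=s^i_0(f)+O_{C^l}(\varepsilon)$ with $s^i_0(f)=\mathcal L_i(f)$, followed implicitly by integrating the initial value problem of Theorem \ref{thm:deformationtheoryscattmap} from $S_{i,0}=\mathrm{id}$; you spell out the uniform convergence of the $\varepsilon$-differentiated series that the paper leaves to that reference, but note that your $j\alpha^j$ bound holds for the $\varepsilon$-derivatives of the composites $F^j_{\varepsilon,f}\circ(\Omega^s_{i,\varepsilon})^{-1}\circ\psi_\varepsilon$ and $\psi_\varepsilon\circ T^j_\varepsilon$, which stay on $W^s(\Lambda_\varepsilon)$ and $\Lambda_\varepsilon$, not for $\partial_\varepsilon F^j_{\varepsilon,f}$ at a fixed point, which in general grows at the unstable rate.
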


The approximation \eqref{eq:deformationscattmapasymp} in terms of the so-called \textit{Melnikov potential} \eqref{eq:Melnikovpotential} is key to verify assumptions \ref{it:itemB3} and \ref{it:B6}. Indeed, \eqref{eq:Melnikovpotential} depends linearly on the generator $f$ of the perturbation, so it is very easy to understand how it behaves as we move $f$.

\subsubsection*{Verification of assumption \ref{it:itemB3}}

We start by analyzing the coordinate representation of the scattering maps in the coordinate system built in Proposition \ref{prop:innerdynamicsassumption}.

\begin{lem}\label{lem:assumptionB4}
  Given any $i=\{1,\dots,m\}$ and $f\in \mathcal B^\omega_{r_0}(M\times N)$, let $S_{i,\varepsilon}(f):N\to N$ be the scattering map in \eqref{eq:scattmapproof} and let $\mathcal L_i(f)$ be given by \eqref{eq:Melnikovpotential}. Let $\psi_{\mathrm{pol}}:(\varphi,J)\in\mathbb A^d\to z\in \mathcal A\subset N $ be the coordinate chart in Lemma \ref{lem:Weinsteinlemma} and define the function
  \begin{equation}\label{eq:restrictedMelnikov}
 \widetilde {\mathcal L}_i(f)=\psi_{\mathrm{pol}}^*\  \mathcal L_i(f)=\mathcal L_i (f)\circ\psi_{\mathrm{pol}}. 
  \end{equation}
  Fix $l>0$ and let $\psi_{\mathrm{BNF}}:\mathbb A^d\to N$ be the coordinate chart in Proposition \ref{prop:innerdynamicsassumption}. Then,  the map
   \begin{equation}\label{eq:scattmapsinBNFcoords}
   \widetilde S_{i,\varepsilon}(f)=\psi_{\mathrm{BNF}}^{-1}\circ S_{i,\varepsilon}(f)\circ \psi_{\mathrm{BNF}}
   \end{equation}
   is $C^l$ and of the form 
   \[
   \widetilde S_{i,\varepsilon}(f):\binom{\varphi}{J}\mapsto \binom{\varphi+\varepsilon\partial_J ( \widetilde{\mathcal L}_i(f))+O_{C^l}(\varepsilon^2)}{J-\varepsilon\partial_{\varphi}( \widetilde{\mathcal L}_i(f))+O_{C^l}(\varepsilon^2)}.
   \]
\end{lem}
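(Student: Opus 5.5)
The plan is to conjugate the expansion \eqref{eq:expansionscattmap} of Proposition \ref{lem:Melnikovformula} by the symplectic charts, first $\psi_{\mathrm{pol}}$ and then the near-identity correction $\psi_{\mathrm{pol}}^{-1}\circ\psi_{\mathrm{BNF}}$. We start from $S_{i,\varepsilon}(f)=\mathrm{id}+\varepsilon X_{\mathcal L_i(f)}+O_{C^{l'}}(\varepsilon^2)$ on $N$, where $l'>l$ can be made as large as needed by enlarging $\kappa_0,r_0$. Since $\psi_{\mathrm{pol}}$ is real-analytic and pulls $\varpi_N$ back to $\mathrm dJ\wedge\mathrm d\varphi$, the pullback of a Hamiltonian vector field is Hamiltonian for the pulled-back function: $\psi_{\mathrm{pol}}^*X_{\mathcal L_i(f)}=X_{\widetilde{\mathcal L}_i(f)}$ with respect to $\mathrm dJ\wedge\mathrm d\varphi$. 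With the convention \eqref{eq:vfieldsscattmaps}, $\varpi(\cdot,X_H)=\mathrm dH$, a direct check gives $X_H=(\partial_J H,-\partial_\varphi H)$ in $(\varphi,J)$. This fixes the signs in the statement.

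The conjugation step then runs as follows. For a $C^{l'}$ diffeomorphism $\psi$ and a map $S=\mathrm{id}+\varepsilon X+O(\varepsilon^2)$, Taylor expansion gives $\psi^{-1}\circ S\circ\psi=\mathrm{id}+\varepsilon\,(D\psi)^{-1}X\circ\psi+O_{C^{l'-2}}(\varepsilon^2)$. The loss of derivatives is absorbed by choosing $l'\geq l+2$. Applying this to $\psi_{\mathrm{pol}}$ gives
\[
\psi_{\mathrm{pol}}^{-1}\circ S_{i,\varepsilon}(f)\circ\psi_{\mathrm{pol}}=\mathrm{id}+\varepsilon X_{\widetilde{\mathcal L}_i(f)}+O_{C^{l}}(\varepsilon^2).
\]
Next we write $\psi_{\mathrm{BNF}}=\psi_{\mathrm{pol}}\circ\Theta$. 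By Proposition \ref{prop:innerdynamicsassumption}, $\Theta$ is symplectic and $\Theta=\mathrm{id}+O_{C^{l'}}(\varepsilon)$. Conjugating once more by $\Theta$ changes the first-order term to $\varepsilon (D\Theta)^{-1}X_{\widetilde{\mathcal L}_i(f)}\circ\Theta$. This differs from $\varepsilon X_{\widetilde{\mathcal L}_i(f)}$ by $\varepsilon\cdot O_{C^{l}}(\varepsilon)$, since both $D\Theta-\mathrm{id}$ and $\Theta-\mathrm{id}$ are $O(\varepsilon)$ and $X_{\widetilde{\mathcal L}_i(f)}$ is uniformly bounded in $C^{l+1}$. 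All of this is absorbed into the $O_{C^l}(\varepsilon^2)$ remainder, which yields the claimed form.

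The main point of care is uniformity of the constants. We need $\widetilde{\mathcal L}_i(f)$ bounded in $C^{l+2}$ uniformly over $f\in\mathcal B^\omega_{r_0}$; this follows from the absolute convergence of \eqref{eq:Melnikovpotential} with $C^{r_0}$ control on $f$ and the domination \ref{it:Assumption2}, for $r_0,\kappa_0$ large. We also need the $O_{C^{l'}}(\varepsilon)$ closeness of $\Theta$ to be uniform. Both are guaranteed by Proposition \ref{lem:Melnikovformula} and Proposition \ref{prop:innerdynamicsassumption} after raising $r_0,\kappa_0$. Regularity $C^l$ of $\widetilde S_{i,\varepsilon}(f)$ is immediate as a composition of $C^{l'}$ maps.
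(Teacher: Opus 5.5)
Your proposal is correct and follows the same route as the paper. You conjugate the expansion \eqref{eq:expansionscattmap} by the symplectic chart $\psi_{\mathrm{pol}}$ to get $\mathrm{id}+\varepsilon X_{\widetilde{\mathcal L}_i(f)}+O_{C^l}(\varepsilon^2)$, then absorb the further conjugation by the $O(\varepsilon)$-near-identity map $\psi_{\mathrm{pol}}^{-1}\circ\psi_{\mathrm{BNF}}$ into the $O_{C^l}(\varepsilon^2)$ remainder; your explicit sign check for $X_H$ and your derivative-loss bookkeeping (taking $l'\geq l+2$) supply details the paper leaves implicit, and they are correct.
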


\begin{proof}
Let $\psi_{\mathrm{pol}}:(\varphi,J)\in \mathbb A^d\to z\in N$ be the coordinate patch in Lemma \ref{lem:Weinsteinlemma}. Then, 
\begin{equation}\label{eq:scattmapinpolarvars}
\psi_{\mathrm{pol}}^{-1}\circ S_{i,\varepsilon}(f)\circ \psi_{\mathrm{pol}}=\mathrm{id}+\varepsilon X_{\widetilde {\mathcal L}_i(f)}+O_{C^l}(\varepsilon^2) 
\end{equation}
where the vector field $X_{\widetilde{\mathcal L}_i(f)}:\mathbb A^d\to T\mathbb A^{d}$ is defined implicitly by 
\[
(\mathrm dJ\wedge\mathrm d\varphi)(\cdot, X_{\widetilde{\mathcal L}_i(f)})=\mathrm d \widetilde{\mathcal L}_i(f)(\cdot).
\]
This already shows that the  map \eqref{eq:scattmapinpolarvars} is of the form  of the form \eqref{eq:scattmapsymbolic}. In particular, 
\[
\psi_{\mathrm{pol}}^{-1}\circ S_{i,\varepsilon}(f)\circ \psi_{\mathrm{pol}}:\binom{\varphi}{J}\mapsto \binom{\varphi+\varepsilon\partial_J ( \widetilde{\mathcal L}_i(f))+O_{C^l}(\varepsilon^2)}{J-\varepsilon\partial_{\varphi}( \widetilde{\mathcal L}_i(f))+O_{C^l}(\varepsilon^2)}.
\]
Finally, we note that the very same conclusion holds for the map \eqref{eq:scattmapsinBNFcoords} since the map $\psi_{\mathrm{pol}}^{-1}\circ\psi_{\mathrm{BNF}}$ is $O(\varepsilon)$-close to identity in the $C^l$ topology (see Proposition \ref{prop:innerdynamicsassumption}). 
\end{proof}

Let now $\varphi_*\in\mathbb T^d$ be a critical point of the function 
\[
\varphi\in\mathbb T^d\mapsto \widetilde {\mathcal L}_i(f)(\varphi,0)\in\mathbb R
\]
and denote by 
\[
B=\partial_{\varphi^2}^2\widetilde{\mathcal L}_i(f)(\varphi_*,0).
\]
If $A$ is the matrix in \eqref{eq:BNFform} we now show that for a typical $f$ the product $AB$ satisfies Asssumption \ref{it:itemB3}, i.e. it has $d$ different real non-zero eigenvalues. To do so, the first step is to study    the correspondence between Hamiltonians $f\in \mathcal B^\omega_{r_0}(M\times N)$ and the jet of the associated Melnikov potential \eqref{eq:restrictedMelnikov}. Given a  function $h:N\to \mathbb R$ we denote by 
\[
\mathrm{Jet}^k(h)(z)=\{\partial^\alpha h(z)\colon \alpha\in \mathbb N^{2d},\  0\leq |\alpha|_1\leq k\}.
\]

The following lemma will prove crucial in establishing genericity of assumptions \ref{it:itemB3} and \ref{it:B6}.
\begin{lem}\label{lem:surjectivity}
  Given any $i=\{1,\dots,m\}$ and $f\in \mathcal B^\omega_{r_0}(M\times N)$ let $\mathcal L_i(f):N\to \mathbb R$ be the Melnikov potential defined in \eqref{eq:Melnikovpotential}. For any $k\in\mathbb N$ and $z\in N$ the Fréchet differential of the map
   \begin{equation}\label{eq:jetmap}
   f\in \mathcal B^\omega_{r_0}(X)\to \{\mathrm{Jet}^k ({\mathcal L}_i(f))(z)\}_i  \in \underbrace{\mathbb R^{n(d,k)}\times\dots\times \mathbb R^{n(d,k)}}_{m\text{ times}}\qquad\qquad n(d,k)=\binom{2d+k}{k}
   \end{equation}
   is surjective.
\end{lem}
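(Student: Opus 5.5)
The map $f\mapsto \mathcal L_i(f)$ is linear in $f$. Indeed, $K_0(f)=\widetilde K_0(f)\circ F_0^{-1}$ with $\widetilde K_0(f)=\int_0^1 f(\cdot,u)\,\mathrm du$, since $\phi^{1,u}_{0}=\mathrm{id}$. So the Fréchet differential of \eqref{eq:jetmap} is the map itself, and it suffices to show that the linear map $f\mapsto\{\mathrm{Jet}^k(\mathcal L_i(f))(z)\}_{i=1}^m$ is onto $(\mathbb R^{n(d,k)})^m$.

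The first step is to make $\mathcal L_i$ explicit in the direct product setting. At $\varepsilon=0$ we have $\Lambda_0=\{a\}\times N$ and $\Gamma_0^i=\{\gamma_i\}\times N$. The wave maps are trivial in the $N$-factor: $(\Omega^{u,s}_{i,0})^{-1}(a,z)=(\gamma_i,z)$. The scattering maps $S_{\Gamma^i_0}$ are the identity, and $\psi_0(z)=(a,z)$. Writing $g=K_0(f)$, \eqref{eq:Melnikovpotential} becomes
\[
\mathcal L_i(f)(z)=\sum_{j\in\mathbb Z}\Big(g\big(F_M^{j}(\gamma_i),F_N^{j}(z)\big)-g\big(a,F_N^{j}(z)\big)\Big),
\]
which converges absolutely because $F_M^j(\gamma_i)\to a$ exponentially at a rate dominating the growth of derivatives of $F_N^j$, by the $\kappa$-normal hyperbolicity. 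This convergence gives control of the first $\kappa$ derivatives, and we take $k\le\kappa_0$.

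The second step is to produce, for each target jet, a perturbation realizing it. Since the orbits $\{F_M^j(\gamma_i)\}_{j}$ for different $i$ are pairwise disjoint and accumulate only at $a$, we choose a point $x_i=\gamma_i$ and a small ball $U_i\ni\gamma_i$ in $M$. We take $U_i$ disjoint from $a$, from the rest of the orbit of $\gamma_i$, and from the orbits of the other $\gamma_{i'}$. Take $g(x_M,x_N)=\chi_i(x_M)h(x_N)$, with $\chi_i$ a bump supported in $U_i$, $\chi_i(\gamma_i)=1$, and $h$ a polynomial (in a chart around $z$) with prescribed $k$-jet at $z$. Then $\mathcal L_{i'}(f)=\delta_{ii'}\,h$ exactly, with only the $j=0$ term surviving. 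This realizes every element of $(\mathbb R^{n(d,k)})^m$ using smooth $g$, and we then invert $f\mapsto K_0(f)$. For example, take $f$ time-independent with $f=g\circ F_0$.

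The obstacle is real-analyticity: bump functions are not allowed, and $f$ must lie in $\mathcal B^\omega_{r_0}$. I would handle this by density plus finite-dimensionality. The linear map $f\mapsto\{\mathrm{Jet}^k\mathcal L_i(f)(z)\}_i$ is continuous in the $C^{r_0}$ topology for $r_0\ge k$ large, by the uniform convergence of the series above in $C^k$. Its image on smooth $f$ is everything, and real-analytic functions are $C^{r_0}$-dense in smooth ones (Whitney/Grauert approximation, using compactness of $N$ and working on a compact piece of $M$, or via Appendix \ref{sec:realanalytictop}). Hence the image of real-analytic $f$ is a dense linear subspace of a finite-dimensional space, which is therefore the whole space. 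Scaling keeps $f$ inside the ball $\mathcal B^\omega_{r_0}$, and linearity gives surjectivity of the differential at every $f$.
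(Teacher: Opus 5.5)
Your argument is correct and follows the paper's proof: linearity of $f\mapsto\mathcal L_i(f)$, the explicit sum over the homoclinic orbit of $\gamma_i$, a perturbation localized near $\{\gamma_i\}\times N$ away from $\Lambda_0$ and from the orbits of the other channels, and density of real-analytic functions to pass from smooth to analytic $f$. The differences are only in execution. Taking $f=g\circ F_0$ time-independent realizes the jets exactly, whereas the paper uses a cutoff $\eta$ concentrating near $t=1/2$ and obtains them only up to $\delta$. You also spell out steps the paper leaves implicit: continuity of the jet map, the fact that a dense linear subspace of a finite-dimensional space is the whole space, and the restriction of $k$ by the normal hyperbolicity exponent.
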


\begin{proof}
Fix some $i\in\{1,\dots,m\}$. We start by observing that the map \eqref{eq:jetmap} is linear. In coordinates,
\begin{equation}\label{eq:coordinateexpressionMelnikov}
    \mathcal L_i(f)(z)=  \lim_{N\to \infty}\sum_{j=-N}^{N-1} \left(K_{0}(f)(F_{{M}}^{j}(\gamma_i),F_{N}^{j}(z))-K_{0}(f)(a, F_{{N}}^{j}(z))\right)
    \end{equation}
  where $a\in M$ is the hyperbolic fixed point of the map $F_{M}$ and $\gamma_i\in W^u(a)\pitchfork W^s(a)$ is a transverse homoclinic point.  Let  $V\in M\times N$ be a sufficiently small open set  such that (recall that the orbits of different channels are, by assumption, disjoint)
    \begin{equation}\label{eq:nointersectionsupports}
 V\cap  \Gamma_0^i\neq \emptyset\qquad\qquad  V\cap  \big(\bigcup_{j\neq 0} F^j_{0}(\Gamma_0^i)\big)=\emptyset\qquad\qquad V\cap \big( \bigcup_{k\neq i}\bigcup_{j} F^j_{0}(\Gamma_0^k) \big)=\emptyset.
    \end{equation}
    We will show that for a sufficiently small open set $W\subset V$, for any $z\in N$, the map 
    \begin{equation}\label{eq:surjectivemapCinfty}
   f\in C^\infty_c(W\times \mathbb R,\mathbb R)\to \mathrm{Jet}^k({\mathcal L}_i(f))(z)
    \end{equation}
    is surjective and then conclude the proof by appealing to the density of $\mathcal H^\omega(M\times N)$ (recall  the definition of this set in \eqref{eq:spaceofHams})  within $C^\infty_c(W\times \mathbb R,\mathbb R)$.
    
    To establish surjectivity of the map \eqref{eq:surjectivemapCinfty} we argue as follows. Fix any $\delta>0$ and let $\eta\in C^\infty_c(V\times\mathbb R,\mathbb R)$ be such that $\partial_{z}\eta(x,t)=0$ at all $(x,t)\in V\times \mathbb R$ and for any $g\in C^\infty_c(V\times \mathbb R,\mathbb R)$
    \[
    \left|\int_{0}^1 \eta g(\gamma_i,z,u)\mathrm du- g(\gamma_i,z,1/2)\right|\leq \delta |g|_{C^1}.
    \]
   Choose now any $c\in \mathbb R^{n(d,k)}$ and let $g\in C^\infty_c(V\times\mathbb R,\mathbb R)$ be such that
   \[
 \mathrm{Jet}^k (g|_{\substack{x_M=\gamma_i,t=\frac12}})(z)=c.
   \]
   Then, for the function $f=\eta g$ we have 
   \[
\lVert \mathrm{Jet}^k(K_0(f)|_{\substack{x_M=\gamma_i,t=\frac12}})(z)-c\rVert\lesssim \delta.
   \]
   so, by \eqref{eq:coordinateexpressionMelnikov} and  the  construction of the set $V$ 
   \[
   \lVert \mathrm{Jet}^k({\mathcal L}_i(f))(z)-c\rVert\lesssim \delta.
   \]
  Moreover, for any $j\neq i$ 
  \[
\mathrm{Jet}^k({\mathcal L}_j(f))(z)=0
  \]
  so the conclusion follows by linearity.
\end{proof}

Making use of Lemma \ref{lem:surjectivity}, establishing the genericity of assumption \ref{it:itemB3}  boils down to a local perturbation argument.

\begin{lem}\label{lem:morselemma}
    Given $f\in\mathcal B^\omega_{r_0}(M\times N)$, let $\widetilde{\mathcal L}_i(f)$ be the function defined in \eqref{eq:restrictedMelnikov}. For $f$ in an open and dense subset of $\mathcal B^\omega_{r_0}(M\times N)$ there exists a critical point $\varphi_*\in \mathbb T^d$ of the map $\varphi\mapsto \widetilde{\mathcal L}_i(f)(\varphi,0)$ such that matrix $A(f)$ in Proposition \ref{prop:innerdynamicsassumption} and the matrix 
    \[
    B(f)=\partial^2_{\varphi^2}\widetilde{\mathcal L}_i(f)(\varphi_*,0)
    \]
    are such that the matrix $A(f)B(f)$ has $d$ different non-zero real eigenvalues. In particular, assumption \ref{it:itemB3} is satisfied for $f$ in an open and dense subset of $\mathcal B^\omega_{r_0}(M\times N)$.
\end{lem}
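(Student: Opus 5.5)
The argument splits into openness and density. The condition is built from three things: a nondegenerate critical point $\varphi_*$ of $\varphi\mapsto\widetilde{\mathcal L}_i(f)(\varphi,0)$, the invertible symmetric matrix $A(f)$, and the requirement that $A(f)B(f)$ have $d$ distinct nonzero real eigenvalues. All three persist under small perturbations. Simple real spectrum is an open condition on matrices, and it already forces $\det B(f)\neq0$, so $\varphi_*$ is a nondegenerate critical point. By the implicit function theorem such a critical point continues to a nearby one, and $B$ varies continuously. The map $f\mapsto\mathcal L_i(f)$ is linear and continuous into $C^{l}$, since the series \eqref{eq:Melnikovpotential} converges absolutely; this gives $C^2$ continuity of $\widetilde{\mathcal L}_i(f)$. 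For $A(f)$, I would note from the proof of Proposition \ref{prop:innerdynamicsassumption} that it is the average of the twist of the continued KAM torus. Since the continuation is taken at $\varepsilon=0$, where $T_0=F_N$ does not depend on $f$, $A(f)=A_0$ is the torsion of $T_0$ at $\mathcal T$, up to an $O(\varepsilon)$ correction that depends continuously on $f$. Openness follows.

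\textbf{Density.} Fix $f$. Since $\varphi\mapsto\widetilde{\mathcal L}_i(f)(\varphi,0)$ is a function on the compact torus $\mathbb T^d$, it has a critical point $\varphi_0$, say at a maximum. I would add a small perturbation $g$ chosen through Lemma \ref{lem:surjectivity}, applied at the point $z_0=\psi_{\mathrm{pol}}(\varphi_0,0)$ with $k=2$. Surjectivity of the jet map lets me pick $g$ with $\mathcal L_i(g)$ having prescribed $2$-jet at $z_0$: zero gradient, and $\varphi$-Hessian $\eta\,(H-B_0)$, where $B_0$ is the current Hessian. Then $\varphi_0$ remains a critical point of $\widetilde{\mathcal L}_i(f+\eta g)$, with Hessian $B_0+\eta(H-B_0)$. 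In real-analytic coordinates the pullback by $\psi_{\mathrm{pol}}$ is a fixed linear change of the jet, so prescribing the jet in the $z$ coordinates prescribes it in the $(\varphi,J)$ coordinates.

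The remaining issue is choosing the target $H$. The torsion $A_0=D\Omega$ from the nondegenerate elliptic point is symmetric and invertible, and I may assume the same of $A(f)$. Write $A=A(f)$. I want a symmetric $H$ such that $AH$ has simple nonzero real spectrum. Take $H=A^{-1}D$ with $D$ diagonal with distinct nonzero entries; this $H$ is symmetric, because $A^{-1}$ is symmetric and so is $A^{-1}DA^{-1}$ after the adjustment $H=A^{-1}DA^{-1}$. With that adjustment, $AH=DA^{-1}$ is similar to $A^{-1/2}DA^{-1/2}$ when $A$ is positive definite. In general I would instead use the following fact. The set of symmetric $B$ for which $AB$ has simple real nonzero spectrum is nonempty: $B=A^{-1}$ gives $AB=\mathrm{id}$, and perturbing to $A^{-1}+\epsilon\,\mathrm{diag}$ yields simple real eigenvalues near $1$. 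This set is open and semialgebraic. For the good set to be reached by the segment $B_0+\eta(H-B_0)$ with arbitrarily small $\eta$, $B_0$ would have to lie in its closure. It need not, so instead I conclude by dimension count. Consider the polynomial $\eta\mapsto\mathrm{disc}\big(\mathrm{charpoly}(A(B_0+\eta(H-B_0)))\big)\cdot\det(\cdot)$. It is nonzero at $\eta=1$, so it has finitely many roots. Hence for all small $\eta\neq0$ the spectrum is simple and nonzero.

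\textbf{Main obstacle.} Reality of the eigenvalues is not a Zariski-open condition, so the discriminant argument alone does not suffice. If $AB_0$ has complex pairs, small perturbations keep them complex. I would handle this by also moving the critical point. Using $\varphi$-jets up to order $k=2$ at several points, or simply adding a large multiple $\eta$ is not allowed. The cleanest fix is to observe that Lemma \ref{lem:surjectivity} also allows replacing $\varphi_0$ by a new nondegenerate critical point created in a tiny neighbourhood: add $\eta\,\rho(\varphi)\,q(\varphi-\varphi_1)$, where $q$ is a quadratic with Hessian $H$ and $\rho$ is a bump of small radius $r$, realised approximately through the jet map. Taking $r\sim\sqrt\eta$ keeps the perturbation small in $C^{0}$ but not in $C^2$. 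So I would rather argue with the real-analytic topology and pick $\varphi_1$ at a point where the $\varphi$-gradient is small, checking this step carefully.
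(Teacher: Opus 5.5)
Your openness argument is fine, but the density part has a genuine gap, which you flag yourself: nothing in your construction makes the eigenvalues of $A(f)B$ real. The discriminant-in-$\eta$ argument only gives simple spectrum with $\det\neq 0$. Your proposed repair also fails. You suggest creating a new critical point with a bump of radius $r\sim\sqrt\eta$, or at a point where the gradient is only small. But a perturbation that is small in the real-analytic topology is, by Cauchy estimates, small in every $C^k$ norm on $N$. Since $f\mapsto\mathcal L_i(f)$ is linear and continuous, such a perturbation moves the Hessian at any (old or new) critical point by only a small amount, so you cannot prescribe it. And, as you note, at a general critical point (e.g.\ a saddle) complex pairs of $AB_0$ survive small perturbations.

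The missing observation is that you already chose $\varphi_0$ to be a maximum, and this removes the obstacle. There $B_0$ is negative semidefinite; write $-B_0=C^2$ with $C$ symmetric positive semidefinite. Then $AB_0=-(AC)C$ has the same nonzero eigenvalues as $-CAC$, which is symmetric, so complex pairs never occur at an extremum. Now choose the target $H$ negative definite with $AH$ of simple spectrum. For example, write $A=O\Lambda O^{T}$ and take $H=-ODO^{T}$ with $D$ positive diagonal and the products $\lambda_i d_i$ distinct. For $\eta\in(0,1]$ the matrix $B_\eta=(1-\eta)B_0+\eta H$ equals $-P$ with $P$ positive definite. Hence $AB_\eta=-P^{-1/2}(P^{1/2}AP^{1/2})P^{1/2}$, so its spectrum is real, and it is nonzero because $\det A\det B_\eta\neq 0$. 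Your own polynomial argument (nonzero at $\eta=1$) then gives simple spectrum for all but finitely many $\eta$.

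This is exactly the paper's argument: it takes a local minimum, makes $B$ positive definite, and writes $AB=B^{-1/2}(B^{1/2}AB^{1/2})B^{1/2}$. Simplicity and invertibility are then handled as positive-codimension conditions. Your first attempt with $H=A^{-1}DA^{-1}$ was heading toward this. The square-root trick has to be applied to $B$, which can be made definite, not to $A$, which is only symmetric and in general indefinite.
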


\begin{proof}
Fix any $\varphi_*\in\mathbb T^d$ which is a critical point of the map $\varphi\mapsto \widetilde{\mathcal L}_i(f)(\varphi,0)$. A straightforward adaptation of the proof of Lemma \ref{lem:surjectivity} shows that at any $f\in \mathcal B^\omega_{r_0}(M\times N)$ it is possible to find  $f+\Delta f\in \mathcal B^\omega_{r_0}(M\times N)$ arbitrarily close to $f$ in the real-analytic topology (see Appendix \ref{sec:realanalytictop}) such that
\[
\partial_{\varphi}\widetilde{\mathcal L}_i(f+\Delta f)(\varphi_*,0)=0,
\]
 and for which the product $A(f+\Delta f)B(f+\Delta f)$ has $d$-different non-zero eigenvalues (note that not having simple spectrum or full rank, are positive codimension phenomena in the space of square matrices). If moreover, we choose $\varphi_*$ to be a local minimum, the matrix $B(f+\Delta f)$ is positive definite so we can write (all the matrices being the ones associated to $f+\Delta f$)
\[
AB=B^{-1/2}(B^{1/2}AB^{1/2})B^{1/2}
\]
and deduce that the eigenvalues of $A(f+\Delta f)B(f+\Delta f)$ must be real (since $A(f+\Delta f)$ and $B^{1/2}(f+\Delta f)$ are real  symmetric matrices).
\end{proof}

\subsubsection*{Verification of assumption \ref{it:B6}}

Assumption \ref{it:B6} involves a global property of the system of scattering maps so we cannot rely anymore on local perturbation arguments (as we did in establishing genericity of assumption \ref{it:itemB3}). We will instead deploy the powerful machinery of transversality theory. The  following version of the parametric transversality theorem can be easily read from Theorem 19.1 in \cite{MR240836}.
\begin{thm}[Parametric transversality theorem]\label{thm:parametrictransvthm}
    Let $r>0$ and let $X,Y$ and $\mathcal S$ be $C^r$ Banach manifolds and let $G:X\times \mathcal S\to Y$ be a $C^r$  map. Let $W\subset Y$ be a submanifold (not necessarily closed) and suppose that 
    \begin{enumerate}
        \item $\mathrm{dim}(X)=n<\infty$,
        \item $\mathrm{codim}(W)=q<\infty$,
        \item $r>\max\{0,n-q\}$,
        \item the map $G$ is transverse to $W$
    \end{enumerate}
    Then, 
    \[
 \mathcal S_W=\{s\in \mathcal S\colon \ \text{the map }x\mapsto G(x,s)\text{ is transverse to }W\} 
    \]
    is a residual subset of $\mathcal S$.
\end{thm}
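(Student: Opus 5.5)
The plan is the standard Sard--Smale route, as in Abraham--Robbin. We pass to the preimage $Z=G^{-1}(W)\subset X\times\mathcal S$ and project it to the parameter space $\mathcal S$. Throughout we assume what is implicit in Theorem 19.1 of \cite{MR240836}: the Banach manifolds are separable (second countable), and $W$ is a split submanifold.

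\textbf{Step 1 ($Z$ is a submanifold).} By hypothesis (4), $G$ is transverse to $W$. Hence $Z=G^{-1}(W)$ is a $C^r$ split submanifold of $X\times\mathcal S$ of codimension $q$, with
\[
T_{(x,s)}Z=DG(x,s)^{-1}\big(T_{G(x,s)}W\big).
\]
Since $W$ need not be closed, we first argue locally. We cover $W$ by countably many open pieces $W_k$ that are closed submanifolds of open sets $U_k\subset Y$, using second countability (Lindel\"of). If each $\mathcal S_{W_k}$ is residual, then $\mathcal S_W=\bigcap_k\mathcal S_{W_k}$ is residual as well, since a countable intersection of residual sets is residual. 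So we may assume $W$ is closed in $Y$.

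\textbf{Step 2 ($\pi$ is Fredholm).} Let $\pi:Z\to\mathcal S$ be the restriction of the projection. We show that $\pi$ is a $C^r$ Fredholm map of index $n-q$. At $(x,s)\in Z$, write $L=D_xG(x,s):T_xX\to T_yY/T_yW$, a map from an $n$-dimensional space into a $q$-dimensional quotient. The linear algebra then gives two identifications:
\[
\ker D\pi(x,s)\cong\ker L,\qquad \mathrm{coker}\,D\pi(x,s)\cong\mathrm{coker}\,L.
\]
The cokernel identification uses transversality of the full map $G$ to write any $\sigma\in T_s\mathcal S$ modulo the image. It follows that $\dim\ker-\dim\mathrm{coker}=n-q$.

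\textbf{Step 3 (regular values are exactly the good parameters).} From the same identification, $D\pi(x,s)$ is surjective if and only if $L$ is surjective. Therefore $s$ is a regular value of $\pi$ if and only if $x\mapsto G(x,s)$ is transverse to $W$. In other words, $\mathcal S_W$ is precisely the set of regular values of $\pi$.

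\textbf{Step 4 (Sard--Smale).} We apply the Sard--Smale theorem to $\pi$. This requires $Z$ to be separable, which holds as a subspace of a separable manifold, and $r>\max\{0,\mathrm{index}\}=\max\{0,n-q\}$, which is hypothesis (3). The theorem shows that the regular values of $\pi$ form a residual subset of $\mathcal S$. By Step 3, $\mathcal S_W$ is residual.

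\textbf{Main obstacle.} The crux is the linear-algebra lemma of Steps 2--3: constructing the isomorphism between $\mathrm{coker}\,D\pi$ and $\mathrm{coker}\,L$ correctly in the Banach setting. This needs the splitting of $T_yW$ and closedness of the relevant ranges. I would prove it by directly chasing the diagram $T Z\hookrightarrow TX\times T\mathcal S\to TY/TW$.
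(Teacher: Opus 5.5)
Your proposal is correct and is essentially the standard Sard--Smale proof of Theorem 19.1 in \cite{MR240836}: form the transversal preimage $Z=G^{-1}(W)$, show the projection $\pi:Z\to\mathcal S$ is Fredholm of index $n-q$, and identify its regular values with $\mathcal S_W$. The paper gives no argument of its own and simply cites that theorem. You are also right to make second countability of $X$ and $\mathcal S$ explicit, since the paper's statement omits it and Sard--Smale genuinely needs it; your reduction to closed $W$ is harmless but unnecessary, because Sard--Smale only requires the domain $Z$ to be Lindel\"of.
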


A straightforward corollary of Theorem \ref{thm:parametrictransvthm} is the following.
\begin{cor}\label{cor:corollarytransv}
    Under the hypothesis in Theorem \ref{thm:parametrictransvthm}, if $n-q<0$, then for any $s\in\mathcal S_W$ we have $G(x,s)\notin W$ for all $x\in X$.
\end{cor}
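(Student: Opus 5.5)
The plan is to derive Corollary \ref{cor:corollarytransv} directly from the definition of transversality together with Theorem \ref{thm:parametrictransvthm}. Fix $s\in\mathcal S_W$, so that the map $G_s:=G(\cdot,s):X\to Y$ is transverse to $W$. Argue by contradiction: suppose there is $x\in X$ with $y=G_s(x)\in W$. Transversality at $x$ means
\[
DG_s(x)(T_xX)+T_yW=T_yY .
\]
Passing to the quotient, the composition $T_xX\to T_yY\to T_yY/T_yW$ is then surjective.

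Since $\mathrm{codim}(W)=q$, the quotient $T_yY/T_yW$ has dimension $q$. A linear surjection from the $n$-dimensional space $T_xX$ onto a $q$-dimensional space forces $n\geq q$, which contradicts the assumption $n-q<0$. Hence $G(x,s)\notin W$ for every $x\in X$.

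It is worth noting that the hypothesis $r>\max\{0,n-q\}$ reduces to $r>0$ here. This is consistent with $\mathcal S_W$ being residual, which is all we need, so Theorem \ref{thm:parametrictransvthm} applies as stated. Transversality only has to be checked at points of $G_s^{-1}(W)$, so the argument above shows that this preimage is empty.

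I do not expect a genuine obstacle. The only point requiring care is the infinite-dimensional setting: one should make sure that "codimension $q$" means that $T_yW$ is closed and complemented, with a $q$-dimensional quotient, so that the dimension count on the quotient is legitimate. This is the standard convention in \cite{MR240836}.
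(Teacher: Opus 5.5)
Your argument is correct and is the same dimension count the paper uses: for $s\in\mathcal S_W$, a point $x$ with $y=G_s(x)\in W$ would force $DG_s(x)(T_xX)$, which has dimension at most $n<q$, to surject onto the $q$-dimensional quotient $T_yY/T_yW$, and that is impossible. Your version is slightly more precise than the paper's one-line proof, because it invokes transversality of $G_s$ rather than of $G$ and notes that the theorem is needed only to make $\mathcal S_W$ residual.
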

\begin{proof}
    If $n<q$ then, at any $(x,s)\in X\times \mathcal S$, for the differential of the map $G_s:x\mapsto G(x,s)$ we have that $
    \mathrm{dim}(\mathrm{im}D G_s)<q$. Hence, since by assumption the map $G$ is transverse to $W$ we deduce  that for all $s\in\mathcal S_W$ we must have $G(x,s)\neq y$ for all $x\in X$.
\end{proof}

We now combine Lemma \ref{lem:surjectivity} with   Corollary \ref{cor:corollarytransv} to establish genericity of assumption \ref{it:B6}.

\begin{prop}\label{prop:assumptionB6}
    Let $m\geq 4d$ where $2d=\mathrm{dim}N$. Given any $i=\{1,\dots,m\}$ and $f\in \mathcal B^\omega_{r_0}(M\times N)$ let $S_{i,\varepsilon}(f):N\to N$ be the scattering map in \eqref{eq:scattmapproof}. Then, provided $f$ belongs to an open and dense subset of $\mathcal B^\omega_{r_0}(M\times N)$ the scattering maps $\{S_{i,\varepsilon}(f)\}_i:N\to N$ satisfy assumption \ref{it:B6}.
\end{prop}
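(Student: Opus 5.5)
The plan is to verify \ref{it:B6} with $\tilde r=0$: for generic $f$, the $m$ vector fields $X_{\mathcal L_1(f)},\dots,X_{\mathcal L_m(f)}$ already span $T_zN$ at every $z\in N$. Proposition \ref{lem:Melnikovformula} gives, in any chart, the expansion
\[
S_{i,\varepsilon}(f)=\mathrm{id}+\varepsilon X_{\mathcal L_i(f)}+O_{C^l}(\varepsilon^2),
\]
which is exactly the form required in \ref{it:B6} with $X_i^{(j)}=X_{\mathcal L_i(f)}$ in the chart. The uniform $C^{\tilde r}$ bounds on $X_i^{(j)}$ and $R_i^{(j)}$ follow once $\kappa_0,r_0$ are large, by compactness of $N$ and a finite atlas. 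Since $\varpi_N$ is nondegenerate, the span condition is equivalent to the differentials $\mathrm d\mathcal L_1(f)(z),\dots,\mathrm d\mathcal L_m(f)(z)$ spanning $T^*_zN\cong\mathbb R^{2d}$. So the whole task reduces to showing that, for $f$ in an open and dense set, the $2d\times m$ matrix $G_f(z)=\big(\mathrm d\mathcal L_1(f)(z)|\cdots|\mathrm d\mathcal L_m(f)(z)\big)$ has rank $2d$ at every $z\in N$.

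For density I apply Theorem \ref{thm:parametrictransvthm} and Corollary \ref{cor:corollarytransv}. I take $X=N$ (so $n=2d$) and $\mathcal S$ a finite-dimensional affine family $f+\sum_k t_k g_k$ passing through the given $f$. The map is $G(z,s)=G_{f_s}(z)\in Y=\mathrm{Mat}(2d\times m)$, and $W$ is the set of matrices of rank $<2d$. $W$ is a finite union of submanifolds, with the stratum of rank $k$ having codimension $(2d-k)(m-k)$. The minimal codimension is $m-2d+1$, attained at rank $2d-1$. With $m\ge 4d$ this gives $q\ge 2d+1>n$, so Corollary \ref{cor:corollarytransv} applies to each stratum and yields $G(z,s)\notin W$ for all $z$ and residual $s$. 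Residual implies dense, so arbitrarily small parameters $t$ (hence $f_s$ arbitrarily close to $f$ in the real-analytic topology) work.

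The key input is transversality of $G$ to each stratum, i.e.\ surjectivity of the $s$-derivative onto $Y$ at each $(z,s)$. This is where Lemma \ref{lem:surjectivity} (with $k=1$) enters: $f\mapsto\{\mathrm{Jet}^1\mathcal L_i(f)(z)\}_i$ is linear with surjective differential, hence $f\mapsto G_f(z)$ is surjective onto $\mathrm{Mat}(2d\times m)$ at each $z$. The difficulty is making this uniform in $z$ with only finitely many directions $g_k$. I would cover $N$ by finitely many small balls, and on each ball pick finitely many $g_k$ whose images span at the center. Surjectivity is an open condition, so it persists on a neighborhood, and compactness finishes it. Because the map is linear in $f$, the $s$-derivative is independent of $s$, so transversality holds globally in $s$.

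Openness follows from compactness: if $\min_{z\in N}\sigma_{2d}(G_f(z))>0$, this persists for $f'$ close in $C^{r_0}$, because $\mathcal L_i$ depends continuously (linearly) on $f$ in $C^2$ norm. It is then robust in $\varepsilon$ via the $O(\varepsilon)$ remainders. The main obstacle I expect is the regularity bookkeeping: $\mathcal L_i$ must be $C^{2}$ in $z$ and the remainders $C^{\tilde r}$ uniformly. This forces $r_0,\kappa_0$ large, which is consistent with their choice in Theorem \ref{thm:mainham}. The rest is routine.
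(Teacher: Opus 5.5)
Your proposal is correct and follows the paper's proof. Like the paper, you verify \ref{it:B6} with $\tilde r=0$, stratify the rank-deficient locus into submanifolds $W_k$ of codimension $(2d-k)(m-k)\geq m-2d+1\geq 2d+1>\dim N$, get transversality from Lemma~\ref{lem:surjectivity}, and conclude with Corollary~\ref{cor:corollarytransv}. Your two extra steps are sensible refinements: the reduction to a finite-dimensional affine family $f+\sum_k t_k g_k$, made uniform in $z$ by compactness, and the separate openness argument via $\min_z\sigma_{2d}(G_f(z))>0$. They matter because the paper applies the Banach-manifold transversality theorem directly to $\mathcal B^\omega_{r_0}(M\times N)$, which is not a Banach manifold under the real-analytic topology, and it never spells out openness.
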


\begin{proof}
  Write 
  \[
  W=\{v=(v_1,\dots,v_m)\in\mathbb (\mathbb R^{2d})^m\colon \mathrm{rank}(v_1,\dots,v_m)<2d\}=\bigcup_{k=0}^{2d-1}W_k
  \]
  with 
  \[
  W_k=\{v=(v_1,\dots,v_m)\in\mathbb (\mathbb R^{2d})^m\colon \mathrm{rank}(v_1,\dots,v_m)=k\}
  \]
  For any $k\in\{0,\dots,2d-1\}$ it is a standard fact\footnote{To see this it is enough to note that for any $k< 2d$, $\mathrm{dim} \mathrm{Gr}(k,2d)=k(2d-k)$. Hence, the locus where  $\mathrm{rank}(v_1,\dots,v_m)=k$ is a submanifold of dimension $mk+k(2d-k)$.} that 
  $W_k$ is a submanifold of $\mathbb R^{2dm}$ of codimension 
  \[
  \mathrm{codim}(W_k)=2dm-mk-(2d-k)k.
  \]
  On the other hand, Lemma \ref{lem:surjectivity} shows that  the map 
  \[
  G:(z,f)\in N\times\mathcal B^\omega_{r_0}(M\times N)\to  \{X_{\mathcal L_i(f)}\}_{i=1}^m(z)\in \mathbb R^{2dm}
  \]
  is transverse to $W_k$. Hence, since for all $0\leq k\leq 2d-1$ and $m\geq 4d$
  \[
 \mathrm{dim}(N)=2d< 2d+1\leq 2dm-mk-(2d-k)k=\mathrm{codim}(W_k),
  \]
  a direct application of Corollary \ref{cor:corollarytransv} shows that for $f$ belonging to an open and dense subset of ${\mathcal B}^\omega_{r_0}(M\times N)$ we have $\mathrm{rank}(\{X_{\mathcal L_i(f)} (z)\}_{i=1}^m) (z)=2d$ for all $z\in N$. 
\end{proof}

\subsection{Proof of Proposition \ref{prop:genericityofassumptions}}
The proof of Proposition \ref{prop:genericityofassumptions} follows directly from Proposition \ref{prop:innerdynamicsassumption} (assumptions \ref{it:itemB1}-\ref{it:itemB2} and \ref{it:itemB5} for the inner map), Lemmas  \ref{lem:assumptionB4} and \ref{lem:morselemma} (transversality-torsion assumption \ref{it:itemB3}) and Proposition \ref{prop:assumptionB6} (assumption \ref{it:B6} for the system of scattering maps).

\appendix

\section{The real-analytic topology in the space of diffeomorphisms}\label{sec:realanalytictop}

A differentiable  manifold $X$ is said to be \textit{real-analytic} if given any two charts 
\[
\varphi_i: U_i\subset \mathbb R^d\to V_i\subset X,\qquad\qquad \varphi_j: U_j\subset \mathbb R^d\to V_j\subset X
\]
with $V_i\cap V_j\neq \emptyset$ then, the corresponding transition function
\[
\varphi_{i,j}:=\varphi_j^{-1}\circ \varphi_i:U_{i,j}\subset \mathbb R^d \to U_{j,i}\subset\mathbb R^d\qquad\qquad \text{where}\qquad U_{i,j}:=\varphi_i^{-1}(V_i\cap V_j)
\]
is real-analytic.  Provided $X$ is compact there exists a complexification $X_{\mathbb C}$, i.e. a complex manifold $X_\mathbb C$ containing $X$ as a totally real submanifold and, moreover, this complexification is unique up to biholomorphisms\footnote{If $X_{\mathbb C}$ and $X_{\mathbb C}$ are two such complexifications then there exists neighbourhoods $U\subset X_{\mathbb C}$ and $U'\subset X_{\mathbb C}'$ of $X$ in $X_{\mathbb C}$ and in $X_{\mathbb C}'$, and a biholomorphism $\varphi:U\to U'$.} (see Proposition 1 in \cite{MR102094}). Then, given a Riemannian compact manifold $X$, the real-analytic topology on $\mathrm{Diff}^\omega(X)$ is the one generated by the open sets 
\begin{align*}
\mathcal U(F,K,\varepsilon)=&\{G\in \mathrm{Diff}^\omega(X)\colon G\text{ admits a holomorphic extension $\bar G$ to an open neighbourhood of }K\text{ and}\\
&\qquad \sup_{z\in K}\mathrm{dist}(\bar G(z),\bar F(z))<\varepsilon\}
\end{align*}
where $X\subset K\subset X_\mathbb C$ is compact and $F\in \mathrm{Diff}^\omega(X)$ admits a holomorphic extension $\bar F$  to an open neighbourhood of $K$. Namely, two elements are close if they are close in some compact neighbourhood of $X$ in $X_{\mathbb C}$.

\section{Orbit segments near a normally hyperbolic invariant manifold}\label{sec:prooflocaldynamics}
In this section we provide the proofs of Propositions \ref{prop:localdynamics} and \ref{prop:globalmap}.

\subsection{Proof of Proposition \ref{prop:localdynamics}} From the expression \eqref{eq:mapF}  we observe that (as long as it stays in $V_\delta$) any  $\{(q_i,p_i,z_i)\}_{i=0}^n$  which satisfies the recursion
    \begin{equation}\label{eq:recursion}
    \begin{aligned}
        q_i=&\left(\prod_{k=0}^{i-1} \lambda (z_k)\right)  q +\sum_{k=0}^{i-1} \left(\prod_{j=k+1}^{i-1} \lambda(z_j)\right) Q(q_k,p_k,z_k)\\
        p_i=&\left(\prod_{k=i}^{n-1} \mu^{-1} (z_k)\right)  \bar p -\sum_{k=i}^{n-1}\left( \prod_{j=i}^k \mu^{-1} (z_j)\right) P(q_k,p_k,z_k)\\
        z_i=&T^{i-n}(\bar z)
    \end{aligned}
    \end{equation}
    is an orbit segment of the map $F$ and satisfies $q_0= q $, $p_n=  \bar p $ and $z_n=\bar z$. We will show the existence of solutions to \eqref{eq:recursion} by means of a fixed point argument. To that  we introduce the notation
    \[
    \bs w( q ,  \bar p ,\bar z)=\{w_i( q ,  \bar p ,\bar z)\}_{i=0}^n=\{(q_i( q , \bar  p ,\bar z),p_i( q , \bar  p ,\bar z))\}_{i=0}^n
    \]
    and define the Banach space
    \[
    \mathcal X=\{\bs w :[-\delta_0,\delta_0]^2\times \Lambda \to \mathbb R^{2n}\colon \lVert \bs w\rVert<\infty\}
    \]
    where $\lVert \cdot\rVert$ stands for the weighted $C^1$ norm
    \begin{align*}
   \lVert \bs w\rVert= \max_{0\leq i\leq n} &\left( |(\lambda^{(0,i-1)} )^{-1}q_i|_{C^0}+ \delta_0|(\lambda^{(0,i-1)} )^{-1}\partial_{( q ,  \bar p )}q_i|_{C^0}  +\delta_0 \alpha^{-n}|(\lambda^{(0,i-1)} )^{-1} \partial_{\bar z} q_i|_{C^0}\right. \\
   &\qquad\left. +|(\mu^{(i,n-1)})^{-1} p_i|_{C^0}+\delta_0|(\mu^{(i,n-1)})^{-1} \partial_{( q , \bar  p )}p_i|_{C^0}+ \delta_0 \alpha^{-n}|(\mu^{(i,n-1)})^{-1} \partial_{\bar z} p_i|_{C^0} \right).
    \end{align*}
    and, for $i,k=0,\dots,n$ we have written
\[
\lambda^{(k,i)}(\bar z)=\prod_{j=k}^{i}\lambda(z_j)=\prod_{j=k}^{i} \lambda(T^{j-n}(\bar z))\qquad\qquad \mu^{(i,k)}(\bar z)=\prod_{j=i}^k \mu^{-1}(z_j)=\prod_{j=i}^k \mu^{-1}(T^{j-n}(\bar z)).
\]
Then, we define
    \[
    \mathcal F(\bs w)=\begin{pmatrix}
        \left\{\lambda^{(0,i-1)}  q  +\sum_{k=0}^{i-1} \lambda^{(k+1,i-1)}Q(\pi_k\bs w)\right\}_{i=0}^{n}\\
          \left\{\mu^{(i,n-1)}   \bar p  -\sum_{k=i}^{n-1} \mu^{(i,k)} P(\pi_k\bs w)\right\}_{i=0}^n
    \end{pmatrix}
    \]
    We will show that the operator $\mathcal F$ is well defined and contracting on a a neighbourhood of the origin and conclude the existence of a (locally) unique fixed point. This is done in a number of steps:

\medskip

\noindent\textit{Preliminary estimates:} Recall that $Q,P$ are $C^2$ and satisfy 
\[
Q(0,p,z)=0=\partial_q Q(0,0,z)\qquad\qquad P(q,0,z)=0=\partial_p P(q,0,z).
\]
In particular, 
\[
|\partial_p Q(q,p,z)|\lesssim |q|,\qquad|\partial_q Q(q,p,z)|,|\partial_p P(q,p,z)|\lesssim |q|+|p|,\qquad|\partial_q P(q,p,z)|\lesssim |p|.
\]
Then, at any $(q,p,z), (\tilde q,\tilde p,z)\in[-2\delta_0,2\delta_0]^2\times\Lambda$:
\begin{itemize}
\item Lipschitz estimates at $C^0$ level. Observe that
\begin{align*}
|Q(q,p,z)-Q(\tilde q,\tilde p,z)|\lesssim (|q|+|p|+|\tilde q|+|\tilde p|)|q-\tilde q|+ (|q|+|\tilde q|) |p-\tilde p|\\
|P(q,p,z)-P(\tilde q,\tilde p,z)|\lesssim (|p|+|\tilde p|)|q-\tilde q|+ (|q|+|p|+|\tilde q|+|\tilde p|) |p-\tilde p|
\end{align*}
In particular, if $\bs w, \tilde{\bs w}\in \mathcal X$  we deduce that 
\begin{equation}\label{eq:Lipestimlocaldynproof}
\begin{split}
\lambda^{(k+1,i-1)}(\bar z) |Q(\pi_k \bs w)-Q(\pi_k \tilde{\bs w})|\lesssim \lambda^{(0,i-1)}(\bar z)(\lambda^{(0,k-1)}(\bar z)+\mu^{(k,n-1)}(\bar z)) (\lVert \bs w \rVert+\lVert \tilde{\bs w}\rVert) \lVert \bs w-\tilde{\bs w}\rVert\\
\mu^{(i,k)}(\bar z) |P(\pi_k \bs w)-P(\pi_k \tilde{\bs w})|\lesssim \mu^{(i,n-1)}(\bar z)(\lambda^{(0,k-1)}(\bar z)+\mu^{(k,n-1)}(\bar z)) (\lVert \bs w \rVert+\lVert \tilde{\bs w}\rVert) \lVert \bs w-\tilde{\bs w}\rVert.
\end{split}
\end{equation}
\item Lipschitz $C^1$ estimates with respect to $( q ,  p )$. Start by observing that
\begin{align*}
|\partial_q Q(q,p,z)-\partial_q Q (\tilde q,\tilde p,z)|\lesssim &|q-\tilde q|+|p-\tilde p|\\
|\partial_p Q(q,p,z)-\partial_p Q(\tilde q,\tilde p,z)|\lesssim & |q-\tilde q|+f(|q|+|\tilde q|)|p-\tilde p|\\
|\partial_q P(q,p,z)-\partial_q P(\tilde q,\tilde p,z)|\lesssim& f(|p|+|\tilde p|)|q-\tilde q|+|p-\tilde p|\\
|\partial_p  P(q,p,z)-\partial_p P(\tilde q,\tilde p,z)|\lesssim &|q-\tilde q|+|p-\tilde p|
\end{align*}
for some continuous function $f$ wich satisfies $f(0)=0$.  Also, for $\bs w\in\mathcal X$ 
\[
|\partial_{( q ,  p )} \pi_{k,q} \bs w|\lesssim \lambda^{(0,k-1)}(\bar z)\lVert \bs w\rVert\qquad\qquad|\partial_{( q ,  p )} \pi_{k,p} \bs w|\lesssim \mu^{(k,n-1)}(\bar z)\lVert \bs w\rVert.
\]
Therefore, 
for $\bs w, \tilde{\bs w}\in \mathcal X$ we get 
\begin{equation}\label{eq:LipestimC1localdynproof}
\begin{split}
\lambda^{(k+1,i-1)}(\bar z) |\partial_{( q ,  p )}Q(\pi_k \bs w)-\partial_{( q ,  p )}Q(\pi_k \tilde{\bs w})|\lesssim \lambda^{(0,i-1)}(\bar z)(\lambda^{(0,k-1)}(\bar z)+\mu^{(k,n-1)}(\bar z)) (\lVert \bs w \rVert+\lVert \tilde{\bs w}\rVert) \lVert \bs w-\tilde{\bs w}\rVert\\
\mu^{(i,k)}(\bar z) |\partial_{( q ,  p )}P(\pi_k \bs w)-\partial_{( q ,  p )}P(\pi_k \tilde{\bs w})|\lesssim \mu^{(i,n-1)}(\bar z)(\lambda^{(0,k-1)}(\bar z)+\mu^{(k,n-1)}(\bar z)) (\lVert \bs w \rVert+\lVert \tilde{\bs w}\rVert) \lVert \bs w-\tilde{\bs w}\rVert.
\end{split}
\end{equation}
\item Lipschitz $C^1$ estimates with respect to $\bar z$. On one hand 
\begin{equation}\label{eq:C^1estimzetalambdalocaldynproof}
|\partial_{\bar z} \lambda^{(k,i)}(\bar z )| \lesssim \lambda^{(k,i)}(\bar z) \alpha^{n-k}\qquad\qquad |\partial_{\bar z} \mu^{(i,k)}(\bar z )| \lesssim \mu^{(i,k)}(\bar z) \alpha^{n-i}.
\end{equation}
On the other hand,
\[
|\partial_{\bar z} z_k(\bar z)|\lesssim \alpha^{n-k}
\]
Also, 
\begin{align*}
|\partial_z Q(q,p,z)-\partial_z Q(\tilde q,\tilde p,z)|\lesssim &(|q|+|\tilde q|+|p|+|\tilde p|)|q-\tilde q|+(|q|+|\tilde q|)|p-\tilde p|\\
|\partial_z P(q,p,z)-\partial_zP(\tilde q,\tilde p,z)|\lesssim& (|p|+|\tilde p|)|q-\tilde q|+(|q|+|\tilde q|+|p|+|\tilde p|)|p-\tilde p|.
\end{align*}
Hence, it is not difficult to show that for $\bs w,\tilde{\bs w}\in\mathcal X$ 
and
\begin{equation}\label{eq:LipestimC1zetalocaldynproof}
\begin{split}
\lambda^{(k+1,i-1)}(\bar z) |\partial_{\bar z}Q(\pi_k \bs w)-\partial_zQ(\pi_k \tilde{\bs w})|\lesssim \alpha^n\lambda^{(0,i-1)}(\bar z)(\lambda^{(0,k-1)}(\bar z)+\mu^{(k,n-1)}(\bar z)) (\lVert \bs w \rVert+\lVert \tilde{\bs w}\rVert) \lVert \bs w-\tilde{\bs w}\rVert\\
\mu^{(i,k)}(\bar z) |\partial_{\bar z}P(\pi_k \bs w)-\partial_zP(\pi_k \tilde{\bs w})|\lesssim \alpha^n \mu^{(i,n-1)}(\bar z)(\lambda^{(0,k-1)}(\bar z)+\mu^{(k,n-1)}(\bar z)) (\lVert \bs w \rVert+\lVert \tilde{\bs w}\rVert) \lVert \bs w-\tilde{\bs w}\rVert.
\end{split}
\end{equation}
\end{itemize}
\medskip

\noindent\textit{First iteration:} Let 
    \begin{equation}\label{eq:initialpointfixedpointlocaldyn}
    \bs w_0=
       \left( \{\lambda^{(0,i-1)}  q \}_{i=0}^n,\ 
          \{\mu^{(i,n-1)}   p \}_{i=0}^n \right)
    \end{equation}
with $(q_0,p_n)\in[-\delta_0,\delta_0]^2$. By direct computation, the definition of the norm $\lVert\cdot\rVert $ and the estimate \eqref{eq:C^1estimzetalambdalocaldynproof} we obtain
\begin{equation}\label{eq:estimatefirstiterationfixedpoint}
\lVert \bs w_0\rVert\lesssim \delta_0.
\end{equation}

\medskip
\noindent\textit{Lipschitz estimate:} Let now $C>0$ be any fixed constant and let $\bs w,\tilde{\bs w}\in B(C\delta_0)\subset \mathcal X$. Then, by the estimates \eqref{eq:Lipestimlocaldynproof},\eqref{eq:LipestimC1localdynproof} and\eqref{eq:LipestimC1zetalocaldynproof} it follows that 
\begin{equation}\label{eq:lipschitzconstantfixedpoint}
\lVert \mathcal F(\bs w)-\mathcal F(\tilde{\bs w})\rVert \lesssim \delta_0 \lVert \bs w-\tilde{\bs w}\rVert.
\end{equation}

\medskip
\noindent\textit{Conclusion:} In view of the inequailities \eqref{eq:initialpointfixedpointlocaldyn} and \eqref{eq:lipschitzconstantfixedpoint}, the Banach fixed point theorem ensures the existence of a fixed point $\bs w\in\mathcal X$ which satisfies $\lVert \bs w\rVert\lesssim \delta_0$. But then, 
\[
\lVert \bs w-\bs w_0\rVert=\lVert \mathcal F(\bs w)-\mathcal F(0)\rVert \lesssim \delta_0 \lVert \bs w\rVert\lesssim \delta_0^2
\]
and the asymptotic estimates \eqref{eq:asymptoticslocal} follow.

\subsection{Proof of Proposition \ref{prop:globalmap}}\label{sec:appendixreturnmap} Given $(q_0, p_n, z_n)\in[-\delta_0,\delta_0]^2\times N$ we denote by $\{(q_i,p_i,z_i)\}_i$,  the corresponding orbit segment in Proposition \ref{prop:localdynamics} and  define 
 $\Phi^s_{n}:(q_0, p_n,z_n)\mapsto (q_0,p_0,z_0)$
and $\Phi^u_{n}:(q_0,p_n,z_n)\mapsto (q_n,p_n,z_n)$. We divide the proof  in three steps. First, we consider the functions 
\[
(\tilde q,\tilde p,\tilde z)=\Phi_{\mathrm{out}}\circ\Psi(q,p,z).
\]
It then follows from Lemma \ref{lem:outerdynamics} that for $(x,y,z)\in[0,\delta]\times[0,K/\underline \mu^n]\times N$
\[
|\partial_q \tilde q|,|\partial_p \tilde q|,|\partial_z \tilde q|\lesssim 1, \qquad\qquad |\partial_q \tilde p|,|\partial_z \tilde p|\lesssim 1/\underline \mu^n,\qquad\qquad |\partial_q \tilde z|,|\partial_p \tilde z|,|\partial_z \tilde z|\lesssim 1,
\]
and
\[
1\lesssim |\partial_ p\tilde p|\lesssim 1.
\]
We now  consider the functions 
\[
(\hat q,\hat p,\hat z)=(\Phi^{s}_{n,\varepsilon})^{-1}\circ\Phi_{\mathrm{out}}\circ\Psi(q,p,z).
\]
In view of Proposition \ref{prop:localdynamics} 
\[
(\Phi^s_{n,\varepsilon})^{-1}:\begin{pmatrix}
    q\\p\\z
\end{pmatrix}\mapsto \begin{pmatrix}
    q\\ \mu^{(n)}(T^n(z)) p+\tilde g(x,y,z)\\ T^n(z)
\end{pmatrix}
\]
with $|\tilde g|,|\partial_q\tilde g|\lesssim 1$, $|\partial_p \tilde g|\lesssim (\bar\mu/\underline \mu)^n$, $|\partial_z \tilde z|\lesssim (\alpha/\underline \mu)^n$. Then, 
\[
\hat q=\tilde q,\qquad\qquad |\partial_q \hat p|,|\partial_z \hat p|\lesssim (\alpha \bar \alpha)^n,\qquad\qquad |\partial_q \tilde z|,|\partial_p \tilde z|,|\partial_z \tilde z|\lesssim \alpha^n
\]
and
\[
\underline\mu^n\lesssim |\partial_p \hat p|\lesssim \bar \mu^n.
\]
Finally, define 
\[
 (\bar q,\bar p,\bar z)=\Psi^{-1}\circ \Phi^u_{n,\varepsilon}\circ(\Phi^{s}_{n,\varepsilon})^{-1}\circ\Phi_{\mathrm{out}}\circ\Psi(q,p,z).
\]
Since we have seen in Proposition \ref{prop:localdynamics} that 
\[
\Phi^u_{n,\varepsilon}:\begin{pmatrix}
    q\\p\\z
\end{pmatrix}\mapsto \begin{pmatrix}
    \lambda^{(n)}(z)q+ h(q,p,z)\\ p\\ z
\end{pmatrix}
\]
with $|h|,|\partial_q h|,|\partial_p h|,|\partial_z h|\lesssim \bar \lambda^n$ we deduce that 
\[
|\partial_q \bar q|\lesssim (\bar\lambda \alpha\bar \alpha)^n,\quad |\partial_p \bar q|\lesssim (\bar\lambda \bar \mu)^n,\quad |\partial_z \bar q|\lesssim (\bar\lambda \alpha)^n,\qquad\qquad \bar p=\hat p,\qquad\qquad \bar z=\hat z.
\]

\medskip

 \section{Commutators of a generic pair of scattering maps}\label{sec:hormander}

 We present here a proof of Proposition \ref{prop:genericityhormanderwithtwomaps}. That is, we want to show that even if we take $m=2$, i.e. we just look at two homoclinic channels and their corresponding scattering maps, we can generically verify condition \ref{it:B6}. To do so, recall that, in virtue of Lemma \ref{lem:surjectivity}, at any point $(z,f)\in N\times \mathcal B_{r_0}^\omega(M\times N)$ we can freely move the jets of the vector fields $\{X_{\mathcal L_i(f)}\}_{i=1}^2$. We now upgrade this conclusion to commutators of these vector fields. For technical reasons, we will only consider a suitable class of commutators for which certain algebraic manipulations simplify considerably. More precisely we let $\tilde m\in\mathbb N$ to be chosen later and write
\[
\mathcal I_{\tilde m}=\bigcup_{k=1}^{\tilde m}\{\iota\in\{1,2\}^k\colon \iota_1=1,\ \iota_2=\cdots=\iota_k=2\}.
\]
Before stating our next result we recall the notation for commutators introduced in Section \ref{sec:transporIFS}. Namely, given vector fields $\{X_i\}_{i=1}^2:N\to TN$ and  $\iota\in\{1,2\}^k$  we write 
\[
Y_{\iota} =[X_{\iota_n},[\dots[X_{\iota_3},[X_{\iota_2},X_{\iota_1}]]\dots]].
\]

\begin{lem}\label{lem:surjectivitymaptobracket}
    At any $z\in N$ the Fréchet differential of the map 
    \[
   f\in \mathcal B^\omega (M\times N)\mapsto \{Y_{\iota}(f)\}_{\iota\in\mathcal I}(z)\simeq \mathbb{R}^{2d\tilde m}
    \]
    is surjective.
\end{lem}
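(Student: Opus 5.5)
The plan is to reduce the statement to Lemma \ref{lem:surjectivity} via an explicit triangular structure of the brackets in $\mathcal I_{\tilde m}$. Write $X_i=X_{\mathcal L_i(f)}$, $i=1,2$. For $\iota\in\mathcal I_{\tilde m}$ of length $k$ we have $Y_\iota=\mathrm{ad}_{X_2}^{k-1}X_1$. The map $f\mapsto\mathcal L_i(f)$ is linear, so $f\mapsto X_i$ is linear too. Hence the Fréchet differential at $f$ in the direction $g$ is
\[
D Y_\iota(f)[g]=\sum_{j=0}^{k-1}\mathrm{ad}_{X_2}^{j}\,\mathrm{ad}_{X_2^g}\,\mathrm{ad}_{X_2}^{k-2-j}X_1+\mathrm{ad}_{X_2}^{k-1}X_1^g ,
\]
where $X_i^g=X_{\mathcal L_i(g)}$ and the sum is read with the obvious convention for $k=1$. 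Evaluated at $z$, this expression depends only on the finite jets $\mathrm{Jet}^{\tilde m}(\mathcal L_1(g))(z)$ and $\mathrm{Jet}^{\tilde m}(\mathcal L_2(g))(z)$. By Lemma \ref{lem:surjectivity}, with $k=\tilde m+1$ and the two channels $i=1,2$, these jets can be prescribed independently and arbitrarily. It therefore suffices to show that the linear map
\[
(\mathrm{Jet}\,\mathcal L_1(g),\mathrm{Jet}\,\mathcal L_2(g))(z)\mapsto\{DY_\iota(f)[g](z)\}_{\iota\in\mathcal I_{\tilde m}}
\]
is onto $\mathbb R^{2d\tilde m}$.

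To prove this, I will only vary $\mathcal L_1(g)$ and set $\mathcal L_2(g)$ to have zero jet at $z$. With this choice the formula collapses to $DY_\iota(f)[g](z)=(\mathrm{ad}_{X_2}^{k-1}X_1^g)(z)$. I then work in a Darboux chart centered at $z$ and track the highest order derivative of $X_1^g$ that appears. The term $\mathrm{ad}_{X_2}^{k-1}X_1^g(z)$ equals $(-1)^{k-1}D^{k-1}X_1^g(z)[X_2(z),\dots,X_2(z)]$ plus terms involving only derivatives of $X_1^g$ of order $\le k-2$ at $z$. This is the point where I need $X_2(z)\neq 0$. Generically this holds off a finite set. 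Alternatively, one first perturbs $f$ in the $\mathcal L_2$ direction using Lemma \ref{lem:surjectivity} so that $X_2(z)\ne0$; since the statement concerns the differential at a fixed $f$, I would first try to handle points where $X_2(z)=0$ by noting that one may straighten $X_2$ near $z$ only when it is nonvanishing. If that fails, I would restrict the claim to the open dense set of $f$ for which $X_2(z)\neq0$, which is all that is needed for Proposition \ref{prop:genericityhormanderwithtwomaps}.

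Assuming $X_2(z)=v\neq0$, the map is block lower triangular with respect to the filtration by derivative order. The $k$-th block sends the symmetric tensor $D^{k-1}X_1^g(z)$ to its contraction against $v^{\otimes(k-1)}$. Hamiltonian vector fields with prescribed jets still realize every value of $D^{k-1}X_1^g(z)[v,\dots,v]\in\mathbb R^{2d}$: take $\mathcal L_1(g)=\ell(x)\,\langle v^\flat,x\rangle^{k-1}/(k-1)!$ with $\ell$ linear, suitably symplectically dualized. So each diagonal block is onto $\mathbb R^{2d}$. Solving inductively in $k=1,\dots,\tilde m$, correcting lower blocks by higher order homogeneous terms that do not affect them, gives surjectivity.

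I expect the main obstacle to be the bookkeeping in the triangular step. The lower order terms produced by expanding $\mathrm{ad}_{X_2}^{k-1}$ involve derivatives of $X_2$ contracted with lower jets of $X_1^g$. One must check that choosing homogeneous polynomials of degree exactly $k$ for the $k$-th component leaves the values of $Y_\iota$ already fixed for shorter $\iota$ unchanged. This holds because a degree-$k$ homogeneous term has vanishing derivatives of order $<k-1$ at $z$, so it does not affect the expressions for lengths $<k$.
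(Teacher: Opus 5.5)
Your argument is essentially the paper's: both write $Y_\iota=\mathrm{ad}_{X_2}^{k-1}X_1$, isolate the top-order term $D^{k-1}X_1^g(z)[X_2(z),\dots,X_2(z)]$, realize every vector there by contracting a suitable symmetric tensor (the paper's Lemma~\ref{lem:technicalmultilinear}), and use Lemma~\ref{lem:surjectivity} to move only that derivative of $\mathcal L_1(g)$ while freezing its lower jets and the jets of $\mathcal L_2(g)$, so the differential is block triangular with onto diagonal blocks. You are right that this needs $X_2(z)\neq 0$ --- the paper uses it silently through the hypothesis $v\neq 0$ of Lemma~\ref{lem:technicalmultilinear}, and without it the statement is false (at $f=0$ every bracket of length $\geq 2$ has zero differential) --- but your remark that restricting to $f$ with $X_2(z)\neq 0$ is all Proposition~\ref{prop:genericityhormanderwithtwomaps} needs is too quick, since $\mathcal L_2(f)$ always has critical points on the compact $N$ while the rank condition must hold at every $z$; those zeros need a separate argument, e.g.\ also using the brackets $\mathrm{ad}_{X_1}^{k-1}X_2$ on $\{X_1\neq 0\}$ and noting that common zeros of $X_1,X_2$ are excluded for generic $f$ by the codimension count $4d>2d$.
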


To prove Lemma \ref{lem:surjectivitymaptobracket} we will use the following technical result.
\begin{lem}\label{lem:technicalmultilinear}
    Fix $v\in\mathbb R^{2d}\setminus\{0\}$ and let $k\in \mathbb N$. Then, for any $\ell\in (\mathbb R^{2d})^*\simeq \mathbb R^{2d} $ there exists a symmetric $k+1$ linear form $C$ such that 
    \[
    C[\underbrace{v,\dots,v}_{k\text{ times}},\cdot]=\ell(\cdot).
    \]
\end{lem}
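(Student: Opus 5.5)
We need a symmetric $(k+1)$-linear form $C$ on $\mathbb R^{2d}$ with $C[v,\dots,v,w]=\ell(w)$ for all $w$, where $v$ appears $k$ times. We will write $C$ down explicitly by polarizing a suitable homogeneous polynomial.

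\textbf{Step 1: reduce to a polynomial identity.} Symmetric $(k+1)$-linear forms correspond bijectively, via polarization, to homogeneous polynomials $p$ of degree $k+1$, with $p(x)=C[x,\dots,x]$. Differentiating in direction $w$ gives
\[
Dp(x)w=(k+1)\,C[x,\dots,x,w].
\]
So it suffices to find a degree-$(k+1)$ homogeneous polynomial $p$ with
\[
Dp(v)=(k+1)\,\ell.
\]

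\textbf{Step 2: construct $p$.} Since $v\neq0$, pick a linear functional $\mu$ with $\mu(v)=1$, for instance $\mu(x)=\langle v,x\rangle/|v|^2$. Make the ansatz
\[
p(x)=a\,\mu(x)^{k+1}+b\,\mu(x)^k\,\ell(x).
\]
Then
\[
Dp(v)w=(k+1)a\,\mu(w)+b\bigl(k\,\ell(v)\,\mu(w)+\ell(w)\bigr).
\]
Choose $b=k+1$, so the $\ell(w)$ term has the right coefficient. Then choose $a$ to cancel the $\mu(w)$ term:
\[
(k+1)a+(k+1)k\,\ell(v)=0,\qquad\text{i.e.}\qquad a=-k\,\ell(v).
\]
With these choices $Dp(v)=(k+1)\ell$.

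\textbf{Step 3: the explicit form.} Set $C$ equal to the polarization of $p$, that is, the symmetrization of
\[
-k\,\ell(v)\,\mu^{\otimes(k+1)}+(k+1)\,\mu^{\otimes k}\otimes\ell.
\]
One can also verify directly that
\[
\operatorname{Sym}(\mu^{\otimes k}\otimes\ell)[v,\dots,v,w]=\frac{k\,\ell(v)\,\mu(w)+\ell(w)}{k+1},
\]
which reproduces the identity of Step 2.

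The only point requiring care is tracking the combinatorial factors in this symmetrization; there is no genuine obstacle, since the argument needs only $\mu(v)\neq0$, which follows from $v\neq0$.
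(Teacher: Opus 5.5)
Your proof is correct and is essentially the paper's argument written without coordinates: in a basis with $v=e_1$ and $\mu$ the first dual coordinate, your tensor $-k\,\ell(v)\,\mu^{\otimes(k+1)}+(k+1)\,\mathrm{Sym}(\mu^{\otimes k}\otimes\ell)$ has components $C_{i,1,\dots,1}=\ell_i$ (together with their permutations) and all other components zero. That is exactly the symmetric form the paper writes down. The polarization identity $Dp(x)w=(k+1)C[x,\dots,x,w]$ gives a clean verification and explains the correction term $-k\,\ell(v)$, which the paper's coefficient-by-coefficient choice handles implicitly.
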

\begin{proof}
   Let $\{e_i\}_{i=1}^{2d}$ be an orthonormal basis of $(\mathbb R^{2d})^*$. Then, given $v_1,\dots,v_k\in \mathbb R^{2d}$ we write 
   \[
   C[v_1,\dots,v_k,e_i]=\sum_{\sigma\in\{1,\dots,d\}^k} C_{i,\sigma}\  (v_1)_{\sigma_1}\dots (v_k)_{\sigma_k}
   \]
   so $C[v,\dots,v,e_i]=\sum_{\sigma} C_{i,\sigma}\  (v)_{\sigma_1}\dots (v)_{\sigma_k}$. 
   Without loss of generality we may assume that $v=(1,0,\dots,0)$ so 
   \[
   C[v,\dots,v,e_i]=C_{i,1,\dots,1}.
   \]
   Therefore, given $\ell=\sum_{i}\ell_i$ we choose $C_{i,1,\dots,1}=\ell_i$ and extend the definition so $C$ is symmetric.
\end{proof}
\medskip

\begin{proof}[Proof of Lemma \ref{lem:surjectivitymaptobracket}]
Throughout the proof we  write $X_i$ instead of $X_{\mathcal L_i(f)}$ to refer to the vector fields  \eqref{eq:vfieldsscattmaps}. We suppose for simplicity that the symplectic form $\varpi_N$ on $N$ is constant and that we can introduce coordinates $z=(\xi,\eta)$ such that $\varpi_N=\mathrm d\xi \wedge\mathrm d\eta$. We denote by $J$ the $2d$-dimensional matrix such that $\varpi_N(v_\xi,v_\eta)=(v_\xi,v_\eta)J\binom{v_\xi}{v_\eta}$. The proof in the general case can be deduced from this particular case by making use of Darboux's theorem on the local equivalence of symplectic forms.
\medskip

We fix any point $p\in N$ and write the Taylor series expansion of the vector fields $\{X_i\}_i$
\[
X_i=\sum_{k\geq 0} B_i^{(k)} [\underbrace{(z-p),\dots,(z-p)}_{k\text{ times}}]\qquad\qquad B_i^{(k)}=\frac{1}{k!} D^k X_i=\frac{1}{k!} D^{k} J \nabla \mathcal L_i(f).
\]
Since the vector fields $\{X_i\}$ are  given by the differential of a function, each $B_i^{(k)}$ is a $k+1$-linear symmetric operator. It is now an straightforward computation to observe that
\[
[X_i,X_j]=\sum_{k\geq 0} B_{ij}^{(k)} [(z-p),\dots,(z-p)]
\]
where
\[
B_{ij}^{(k)}[\cdot,\dots,\cdot]=B_j^{(k+1)} [B_i^{(0)},\cdot,\dots,\cdot]-B_i^{(k+1)}[B_j^{(0)},\cdot,\dots,\cdot]+R_{ij}^{(k)}[\cdot,\dots,\cdot]
\]
and $R_{ij}^{(k)}$ depends only on $\{B_i^{(m)},B_j^{(m)}\}_{m\leq k}$. For any $n\geq 2$ and $\iota\in\{1,2\}^n$ we define 
\[
Y_\iota=[X_{\iota_n},[\dots[X_{\iota_3},[X_{\iota_2},X_{\iota_1}]]\dots]].
\]
Hence, writing $\tilde \iota=(\iota_1,\dots,\iota_{n-1})$
\[
Y_\iota=\sum_{k\geq 0} B_{\iota}^{(k)} [(z-p),\dots,(z-p)]
\]
where
\[
B_{\iota}^{(k)}[\cdot,\dots,\cdot]= B_{i_n}^{(k+1)} [B_{\tilde\iota}^{(0)},\cdot,\dots,\cdot]-B_{\tilde\iota}^{(k+1)}[B_{i_n}^{(0)},\cdot,\dots,\cdot]+ R_{\iota}^{(k)}[\cdot,\dots,\cdot]
\]
and $R_{\iota}^{(k)}$ depends only on $\{B_i^{(m)},B_j^{(m)}\}_{m\leq k}$. In particular, it is not difficult to verify by induction that
\[
B_\iota^{(k)}[\cdot,\dots,\cdot]=h(\iota) (B_{\iota_1}^{(k+n-1)}[ B_{\iota_2}^{(0)},\cdots B_{i_n}^{(0)},\cdot,\dots,\cdot]-B_{\iota_2}^{(k+n-1)}[B_{\iota_1}^{(0)},\cdots B_{i_n}^{(0)},\cdot,\dots,\cdot])+ \widetilde R_{\iota}^{(k)}[\cdot,\dots,\cdot]
\]
for some $h:\{1,2\}^n\to \{+,-\}$ and  $\widetilde R_{\iota}^{(k)}$ which depend only on $\{B_i^{(m)},B_j^{(m)}\}_{m\leq k+n-2}$. 
\medskip

We now focus our attention to  the set of covectors $B_\iota^{(0)}\in (\mathbb R^{2d})^*$ with $\iota\in\mathcal I$
\[
B_\iota^{(0)}=h(\iota)(B_{1}^{(n-1)}[ B_{2}^{(0)},\cdots ,B_{2}^{(0)}]-B_{2}^{(n-1)}[B_{1}^{(0)},  B_{2}^{(0)},\cdots B_{2}^{(0)}])+ \widetilde R_{\iota}^{(k)}.
\]
Let $\{e_j\}_j$ be the orthonormal basis of $(\mathbb R^{2d})^*$. By Lemma \ref{lem:technicalmultilinear} there exists  a symmetric $n$-linear form $C_j$ such that 
\[
e_j=h(\iota) C_j [B_{\iota_2}^{(0)},\cdots ,B_{i_n}^{(0)}]
\]
We conclude the proof by observing that, in view of Lemma \ref{lem:surjectivity}, it is possible to find a function $g_j\in \mathcal H^\omega(M\times N)$ such that 
\[
\frac{\mathrm{d}}{\mathrm{d}\mu}(B_{1}^{(n-1)}(f+\mu g_j))|_{\mu=0}= C\qquad\qquad \frac{\mathrm{d}}{\mathrm{d}\mu}(B_{2}^{(n-1)}(f+\mu g_j))|_{\mu=0}=0
\]
and for $0\leq k\leq n-2$ and any $i=1,2$
\[
\frac{\mathrm{d}}{\mathrm{d}\mu}(B_{i}^{(k)}(f+\mu g_j))|_{\mu=0}=0
\]
so we are done.
\end{proof}

   \begin{proof}[Proof of Proposition \ref{prop:genericityhormanderwithtwomaps}] 
Making use of use of Lemma \ref{lem:surjectivity} verbatim repetition of the argument in the proof of Proposition \ref{prop:assumptionB6} shows  that for $f$ belonging to an open and dense subset of ${\mathcal B}^\omega(M\times N)$ we have $\mathrm{rank}(\{Y_\iota\}_{\iota\in \mathcal I}) (z)=2d$ for all $z\in N$. The proof of Proposition \ref{prop:genericityhormanderwithtwomaps} is complete.
\end{proof}
    
\bibliography{Biblio}

@misc{guardia2025partiallyhyperbolicdynamics3body,
      title={Partially hyperbolic dynamics in the 3-body problem}, 
      author={Guardia, M. and  Paradela, J.},
      year={2025},
      eprint={2512.02133},
      archivePrefix={arXiv},
      primaryClass={math.DS},
      url={https://arxiv.org/abs/2512.02133}, 
}

@article {MR4624370,
    AUTHOR = {Clarke, A. and Turaev, D.},
     TITLE = {Arnold diffusion in multidimensional convex billiards},
   JOURNAL = {Duke Math. J.},
  FJOURNAL = {Duke Mathematical Journal},
    VOLUME = {172},
      YEAR = {2023},
    NUMBER = {10},
     PAGES = {1813--1878},
      ISSN = {0012-7094,1547-7398},
   MRCLASS = {37C83 (37J40)},
  MRNUMBER = {4624370},
MRREVIEWER = {Jianlu\ Zhang},
       DOI = {10.1215/00127094-2022-0073},
       URL = {https://doi-org.proxy-um.researchport.umd.edu/10.1215/00127094-2022-0073},
}

@article {MR4495839,
    AUTHOR = {Fayad, B.},
     TITLE = {Lyapunov unstable elliptic equilibria},
   JOURNAL = {J. Amer. Math. Soc.},
  FJOURNAL = {Journal of the American Mathematical Society},
    VOLUME = {36},
      YEAR = {2023},
    NUMBER = {1},
     PAGES = {81--106},
      ISSN = {0894-0347,1088-6834},
   MRCLASS = {37J25 (37J06 37J11 37J12 37J30)},
  MRNUMBER = {4495839},
MRREVIEWER = {Diogo\ Pinheiro},
       DOI = {10.1090/jams/997},
       URL = {https://doi-org.proxy-um.researchport.umd.edu/10.1090/jams/997},
}

@article {MR3646879,
    AUTHOR = {Bernard, P. and Kaloshin, V. and Zhang, K.},
     TITLE = {Arnold diffusion in arbitrary degrees of freedom and normally
              hyperbolic invariant cylinders},
   JOURNAL = {Acta Math.},
  FJOURNAL = {Acta Mathematica},
    VOLUME = {217},
      YEAR = {2016},
    NUMBER = {1},
     PAGES = {1--79},
      ISSN = {0001-5962,1871-2509},
   MRCLASS = {37J40 (37J45 70H08 70K30)},
  MRNUMBER = {3646879},
MRREVIEWER = {Mikhail\ B.\ Sevryuk},
       DOI = {10.1007/s11511-016-0141-5},
       URL = {https://doi-org.proxy-um.researchport.umd.edu/10.1007/s11511-016-0141-5},
}

@book {MR4298716,
    AUTHOR = {Kaloshin, V. and Zhang, K.},
     TITLE = {Arnold diffusion for smooth systems of two and a half degrees
              of freedom},
    SERIES = {Annals of Mathematics Studies},
    VOLUME = {208},
 PUBLISHER = {Princeton University Press, Princeton, NJ},
      YEAR = {2020},
     PAGES = {xiii+204},
      ISBN = {978-0-691-20253-2; 978-0-691-20252-5; 978-0-691-20493-2},
   MRCLASS = {37-02 (37J40)},
  MRNUMBER = {4298716},
MRREVIEWER = {Pau\ Mart\'in},
}

@article {MR283825,
    AUTHOR = {Pugh, C. and Shub, M.},
     TITLE = {Linearization of normally hyperbolic diffeomorphisms and
              flows},
   JOURNAL = {Invent. Math.},
  FJOURNAL = {Inventiones Mathematicae},
    VOLUME = {10},
      YEAR = {1970},
     PAGES = {187--198},
      ISSN = {0020-9910,1432-1297},
   MRCLASS = {57.48},
  MRNUMBER = {283825},
MRREVIEWER = {R.\ F.\ Williams},
       DOI = {10.1007/BF01403247},
       URL = {https://doi-org.proxy-um.researchport.umd.edu/10.1007/BF01403247},
}

@article {MR102094,
    AUTHOR = {Whitney, H. and Bruhat, F.},
     TITLE = {Quelques propri\'et\'es fondamentales des ensembles
              analytiques-r\'eels},
   JOURNAL = {Comment. Math. Helv.},
  FJOURNAL = {Commentarii Mathematici Helvetici},
    VOLUME = {33},
      YEAR = {1959},
     PAGES = {132--160},
      ISSN = {0010-2571,1420-8946},
   MRCLASS = {53.00 (32.00)},
  MRNUMBER = {102094},
MRREVIEWER = {S.\ Hitotumatu},
       DOI = {10.1007/BF02565913},
       URL = {https://doi.org/10.1007/BF02565913},
}

@article {MR2163534,
    AUTHOR = {Gidea, M. and de la Llave, R.},
     TITLE = {Topological methods in the instability problem of
              {H}amiltonian systems},
   JOURNAL = {Discrete Contin. Dyn. Syst.},
  FJOURNAL = {Discrete and Continuous Dynamical Systems},
    VOLUME = {14},
      YEAR = {2006},
    NUMBER = {2},
     PAGES = {295--328},
      ISSN = {1078-0947,1553-5231},
   MRCLASS = {37J40 (34C37 37C29 37C50 70H08)},
  MRNUMBER = {2163534},
MRREVIEWER = {Ernest\ Fontich},
       DOI = {10.3934/dcds.2006.14.295},
       URL = {https://doi-org.proxy-um.researchport.umd.edu/10.3934/dcds.2006.14.295},
}

@article {MR222474,
    AUTHOR = {Hörmander, L.},
     TITLE = {Hypoelliptic second order differential equations},
   JOURNAL = {Acta Math.},
  FJOURNAL = {Acta Mathematica},
    VOLUME = {119},
      YEAR = {1967},
     PAGES = {147--171},
      ISSN = {0001-5962,1871-2509},
   MRCLASS = {35.48 (47.00)},
  MRNUMBER = {222474},
MRREVIEWER = {Joel\ Smoller},
       DOI = {10.1007/BF02392081},
       URL = {https://doi-org.proxy-um.researchport.umd.edu/10.1007/BF02392081},
}

@article {MR3917797,
    AUTHOR = {Barrientos, P.G. and Raibekas, A.},
     TITLE = {Robustly non-hyperbolic transitive symplectic dynamics},
   JOURNAL = {Discrete Contin. Dyn. Syst.},
  FJOURNAL = {Discrete and Continuous Dynamical Systems},
    VOLUME = {38},
      YEAR = {2018},
    NUMBER = {12},
     PAGES = {5993--6013},
      ISSN = {1078-0947,1553-5231},
   MRCLASS = {37D30 (37J10 70K50)},
  MRNUMBER = {3917797},
MRREVIEWER = {Sini\v sa\ Slijep\v cevi\'c},
       DOI = {10.3934/dcds.2018259},
       URL = {https://doi-org.proxy-um.researchport.umd.edu/10.3934/dcds.2018259},
}

@book {MR3674984,
    AUTHOR = {McDuff, D. and Salamon, D.},
     TITLE = {Introduction to symplectic topology},
    SERIES = {Oxford Graduate Texts in Mathematics},
   EDITION = {Third},
 PUBLISHER = {Oxford University Press, Oxford},
      YEAR = {2017},
     PAGES = {xi+623},
      ISBN = {978-0-19-879490-5; 978-0-19-879489-9},
   MRCLASS = {53D35 (53D40 57R17 57R57 57R58)},
  MRNUMBER = {3674984},
MRREVIEWER = {Hansj\"org\ Geiges},
       DOI = {10.1093/oso/9780198794899.001.0001},
       URL = {https://doi-org.proxy-um.researchport.umd.edu/10.1093/oso/9780198794899.001.0001},
}

@article {MR4160091,
    AUTHOR = {Gidea, M. and de la Llave, R. and Seara, T.M. },
     TITLE = {A general mechanism of instability in {H}amiltonian systems:
              skipping along a normally hyperbolic invariant manifold},
   JOURNAL = {Discrete Contin. Dyn. Syst.},
  FJOURNAL = {Discrete and Continuous Dynamical Systems},
    VOLUME = {40},
      YEAR = {2020},
    NUMBER = {12},
     PAGES = {6795--6813},
      ISSN = {1078-0947,1553-5231},
   MRCLASS = {37J25 (70H08 70H33)},
  MRNUMBER = {4160091},
MRREVIEWER = {Lennard\ F.\ Bakker},
       DOI = {10.3934/dcds.2020166},
       URL = {https://doi-org.proxy-um.researchport.umd.edu/10.3934/dcds.2020166},
}

@book {MR1345386,
    AUTHOR = {Arnold, V. I.},
     TITLE = {Mathematical methods of classical mechanics},
    SERIES = {Graduate Texts in Mathematics},
    VOLUME = {60},

 PUBLISHER = {Springer-Verlag, New York},
      YEAR = {1989},
     PAGES = {xvi+516},
      ISBN = {0-387-96890-3},
   MRCLASS = {70-02 (58F05 58Fxx 70Hxx)},
  MRNUMBER = {1345386},
}

@article {MR4913967,
    AUTHOR = {Delshams, A. and Zhang, K.},
     TITLE = {Generic global diffusion for analytic a priori unstable
              systems},
   JOURNAL = {Comm. Math. Phys.},
  FJOURNAL = {Communications in Mathematical Physics},
    VOLUME = {406},
      YEAR = {2025},
    NUMBER = {6},
     PAGES = {Paper No. 146, 22},
      ISSN = {0010-3616,1432-0916},
   MRCLASS = {37J40 (70H05 70H08)},
  MRNUMBER = {4913967},
MRREVIEWER = {Qinbo\ Chen},
       DOI = {10.1007/s00220-025-05342-1},
       URL = {https://doi.org/10.1007/s00220-025-05342-1},
}

@article {MR2039999,
    AUTHOR = {Dolgopyat, D. and Wilkinson, A.},
     TITLE = {Stable accessibility is {$C^1$} dense},
   JOURNAL = {Ast\'erisque},
  FJOURNAL = {Ast\'erisque},
    NUMBER = {287},
      YEAR = {2003},
     PAGES = {33--60},
      ISSN = {0303-1179,2492-5926},
   MRCLASS = {37D30 (37C20 37J10)},
  MRNUMBER = {2039999},
MRREVIEWER = {Lorenzo\ J.\ D\'iaz},
}

@book {MR1867362,
    AUTHOR = {Montgomery, R.},
     TITLE = {A tour of subriemannian geometries, their geodesics and
              applications},
    SERIES = {Mathematical Surveys and Monographs},
    VOLUME = {91},
 PUBLISHER = {American Mathematical Society, Providence, RI},
      YEAR = {2002},
     PAGES = {xx+259},
      ISBN = {0-8218-1391-9},
   MRCLASS = {53C17 (37J99 53C60 58E10 70G45 70H05)},
  MRNUMBER = {1867362},
MRREVIEWER = {Andrey\ V.\ Sarychev},
       DOI = {10.1090/surv/091},
       URL = {https://doi-org.proxy-um.researchport.umd.edu/10.1090/surv/091},
}

@book {Cassels72,
    AUTHOR = {Cassels, J. W. S.},
     TITLE = {An introduction to {D}iophantine approximation},
    SERIES = {Cambridge Tracts in Mathematics and Mathematical Physics},
    VOLUME = {No. 45},
 PUBLISHER = {Hafner Publishing Co., New York},
      YEAR = {1972},
     PAGES = {x+169},
   MRCLASS = {10FXX},
  MRNUMBER = {349591},
}

@book {MR1326374,
    AUTHOR = {Katok, A. and Hasselblatt, B.},
     TITLE = {Introduction to the modern theory of dynamical systems},
    SERIES = {Encyclopedia of Mathematics and its Applications},
    VOLUME = {54},
 PUBLISHER = {Cambridge University Press, Cambridge},
      YEAR = {1995},
     PAGES = {xviii+802},
      ISBN = {0-521-34187-6},
   MRCLASS = {58Fxx (34Cxx 34Dxx 58-01 58F11 58F15)},
  MRNUMBER = {1326374},
MRREVIEWER = {Edoh\ Amiran},
       DOI = {10.1017/CBO9780511809187},
       URL = {https://doi-org.proxy-um.researchport.umd.edu/10.1017/CBO9780511809187},
}

@article {MR1880,
    AUTHOR = {Chow, W-L.},
     TITLE = {\"Uber {S}ysteme von linearen partiellen
              {D}ifferentialgleichungen erster {O}rdnung},
   JOURNAL = {Math. Ann.},
  FJOURNAL = {Mathematische Annalen},
    VOLUME = {117},
      YEAR = {1939},
     PAGES = {98--105},
      ISSN = {0025-5831,1432-1807},
   MRCLASS = {36.0X},
  MRNUMBER = {1880},
MRREVIEWER = {E.\ W.\ Titt},
       DOI = {10.1007/BF01450011},
       URL = {https://doi-org.proxy-um.researchport.umd.edu/10.1007/BF01450011},
}

@article {MR3893271,
    AUTHOR = {Zhang, Z.},
     TITLE = {On stable transitivity of finitely generated groups of
              volume-preserving diffeomorphisms},
   JOURNAL = {Ergodic Theory Dynam. Systems},
  FJOURNAL = {Ergodic Theory and Dynamical Systems},
    VOLUME = {39},
      YEAR = {2019},
    NUMBER = {2},
     PAGES = {554--576},
      ISSN = {0143-3857,1469-4417},
   MRCLASS = {37C85 (37D25)},
  MRNUMBER = {3893271},
MRREVIEWER = {S\'ebastien\ Alvarez},
       DOI = {10.1017/etds.2017.28},
       URL = {https://doi-org.proxy-um.researchport.umd.edu/10.1017/etds.2017.28},
}

@article {MR2719428,
    AUTHOR = {Koropecki, A. and Nassiri, M.},
     TITLE = {Transitivity of generic semigroups of area-preserving surface
              diffeomorphisms},
   JOURNAL = {Math. Z.},
  FJOURNAL = {Mathematische Zeitschrift},
    VOLUME = {266},
      YEAR = {2010},
    NUMBER = {3},
     PAGES = {707--718},
      ISSN = {0025-5874,1432-1823},
   MRCLASS = {37E30 (37C85)},
  MRNUMBER = {2719428},
MRREVIEWER = {Jo\~ao\ Lopes Dias},
       DOI = {10.1007/s00209-009-0595-7},
       URL = {https://doi-org.proxy-um.researchport.umd.edu/10.1007/s00209-009-0595-7},
}

@article {MR4422613,
    AUTHOR = {Avila, A. and Crovisier, S. and Wilkinson, A.},
     TITLE = {Symplectomorphisms with positive metric entropy},
   JOURNAL = {Proc. Lond. Math. Soc. (3)},
  FJOURNAL = {Proceedings of the London Mathematical Society. Third Series},
    VOLUME = {124},
      YEAR = {2022},
    NUMBER = {5},
     PAGES = {691--712},
      ISSN = {0024-6115,1460-244X},
   MRCLASS = {37C20 (37C05 37C40 37D30 37J11)},
  MRNUMBER = {4422613},
MRREVIEWER = {Tomasz\ Rybicki},
       DOI = {10.1112/plms.12437},
       URL = {https://doi-org.proxy-um.researchport.umd.edu/10.1112/plms.12437},
}

@book {MR209436,
    AUTHOR = {Arnold, V. I. and Avez, A.},
     TITLE = {Probl\`emes ergodiques de la m\'ecanique classique},
    SERIES = {Monographies Internationales de Math\'ematiques Modernes},
    VOLUME = {9},
 PUBLISHER = {Gauthier-Villars, \'Editeur, Paris},
      YEAR = {1967},
     PAGES = {ii+243},
   MRCLASS = {28.70 (70.00)},
  MRNUMBER = {209436},
MRREVIEWER = {F.\ Hahn},
}

@book {MR1853077,
    AUTHOR = {Cannas da Silva, A.},
     TITLE = {Lectures on symplectic geometry},
    SERIES = {Lecture Notes in Mathematics},
    VOLUME = {1764},
 PUBLISHER = {Springer-Verlag, Berlin},
      YEAR = {2001},
     PAGES = {xii+217},
      ISBN = {3-540-42195-5},
   MRCLASS = {53Dxx (53-01)},
  MRNUMBER = {1853077},
MRREVIEWER = {Brendan\ J.\ Foreman},
       DOI = {10.1007/978-3-540-45330-7},
       URL = {https://doi-org.proxy-um.researchport.umd.edu/10.1007/978-3-540-45330-7},
}

@article {MR3146593,
    AUTHOR = {Eliasson, L. H. and Fayad, B. and Krikorian, R.},
     TITLE = {K{AM}-tori near an analytic elliptic fixed point},
   JOURNAL = {Regul. Chaotic Dyn.},
  FJOURNAL = {Regular and Chaotic Dynamics. International Scientific
              Journal},
    VOLUME = {18},
      YEAR = {2013},
    NUMBER = {6},
     PAGES = {801--831},
      ISSN = {1560-3547,1468-4845},
   MRCLASS = {37J40 (34D45 70H08)},
  MRNUMBER = {3146593},
MRREVIEWER = {S.\ Yu.\ Pilyugin},
       DOI = {10.1134/S1560354713060154},
       URL = {https://doi-org.proxy-um.researchport.umd.edu/10.1134/S1560354713060154},
}

@article {MR4072793,
    AUTHOR = {Bounemoura, A. and Fayad, B. and Niederman, L.},
     TITLE = {Super-exponential stability for generic real-analytic elliptic
              equilibrium points},
   JOURNAL = {Adv. Math.},
  FJOURNAL = {Advances in Mathematics},
    VOLUME = {366},
      YEAR = {2020},
     PAGES = {107088, 30},
      ISSN = {0001-8708,1090-2082},
   MRCLASS = {37J25},
  MRNUMBER = {4072793},
MRREVIEWER = {Miguel\ \'Angel\ L\'opez},
       DOI = {10.1016/j.aim.2020.107088},
       URL = {https://doi-org.proxy-um.researchport.umd.edu/10.1016/j.aim.2020.107088},
}

@incollection {MR1206072,
    AUTHOR = {Yoccoz, J-C.},
     TITLE = {Travaux de {H}erman sur les tores invariants},
      NOTE = {S\'eminaire Bourbaki, Vol.\ 1991/92},
   JOURNAL = {Ast\'erisque},
  FJOURNAL = {Ast\'erisque},
    NUMBER = {206},
      YEAR = {1992},
     PAGES = {Exp. No. 754, 4, 311--344},
      ISSN = {0303-1179,2492-5926},
   MRCLASS = {58F27 (58F05 58F36)},
  MRNUMBER = {1206072},
MRREVIEWER = {Helmut\ R\"ussmann},
}

@article {MR2173426,
    AUTHOR = {Arnaud, M-C. and Bonatti, C. and Crovisier,
              S.},
     TITLE = {Dynamiques symplectiques g\'en\'eriques},
   JOURNAL = {Ergodic Theory Dynam. Systems},
  FJOURNAL = {Ergodic Theory and Dynamical Systems},
    VOLUME = {25},
      YEAR = {2005},
    NUMBER = {5},
     PAGES = {1401--1436},
      ISSN = {0143-3857,1469-4417},
   MRCLASS = {37J10 (37C05 37C20 37C50 37D20 37D30)},
  MRNUMBER = {2173426},
MRREVIEWER = {Lorenzo\ J.\ D\'iaz},
       DOI = {10.1017/S0143385704000975},
       URL = {https://doi-org.proxy-um.researchport.umd.edu/10.1017/S0143385704000975},
}

@article {MR2090361,
    AUTHOR = {Bonatti, C. and Crovisier, S.},
     TITLE = {R\'ecurrence et g\'en\'ericit\'e},
   JOURNAL = {Invent. Math.},
  FJOURNAL = {Inventiones Mathematicae},
    VOLUME = {158},
      YEAR = {2004},
    NUMBER = {1},
     PAGES = {33--104},
      ISSN = {0020-9910,1432-1297},
   MRCLASS = {37C05 (37B20 37C20 37C29 37C50 37C70 37D05 37J10)},
  MRNUMBER = {2090361},
MRREVIEWER = {Lorenzo\ J.\ D\'iaz},
       DOI = {10.1007/s00222-004-0368-1},
       URL = {https://doi-org.proxy-um.researchport.umd.edu/10.1007/s00222-004-0368-1},
}

@article {MR5803,
    AUTHOR = {Oxtoby, J. C. and Ulam, S. M.},
     TITLE = {Measure-preserving homeomorphisms and metrical transitivity},
   JOURNAL = {Ann. of Math. (2)},
  FJOURNAL = {Annals of Mathematics. Second Series},
    VOLUME = {42},
      YEAR = {1941},
     PAGES = {874--920},
      ISSN = {0003-486X},
   MRCLASS = {46.3X},
  MRNUMBER = {5803},
MRREVIEWER = {B.\ O.\ Koopman},
       DOI = {10.2307/1968772},
       URL = {https://doi-org.proxy-um.researchport.umd.edu/10.2307/1968772},
}

@article {MR1790659,
    AUTHOR = {Shang, Z-J.},
     TITLE = {A note on the {KAM} theorem for symplectic mappings},
   JOURNAL = {J. Dynam. Differential Equations},
  FJOURNAL = {Journal of Dynamics and Differential Equations},
    VOLUME = {12},
      YEAR = {2000},
    NUMBER = {2},
     PAGES = {357--383},
      ISSN = {1040-7294,1572-9222},
   MRCLASS = {37J40 (37J10)},
  MRNUMBER = {1790659},
MRREVIEWER = {Guido\ Gentile},
       DOI = {10.1023/A:1009068425415},
       URL = {https://doi.org/10.1023/A:1009068425415},
}

@article {MR668410,
    AUTHOR = {Pöschel, J.},
     TITLE = {Integrability of {H}amiltonian systems on {C}antor sets},
   JOURNAL = {Comm. Pure Appl. Math.},
  FJOURNAL = {Communications on Pure and Applied Mathematics},
    VOLUME = {35},
      YEAR = {1982},
    NUMBER = {5},
     PAGES = {653--696},
      ISSN = {0010-3640,1097-0312},
   MRCLASS = {58F07 (70H05)},
  MRNUMBER = {668410},
MRREVIEWER = {Helmut\ R\"ussmann},
       DOI = {10.1002/cpa.3160350504},
       URL = {https://doi.org/10.1002/cpa.3160350504},
}

@book {MR595866,
    AUTHOR = {Pöschel, J.},
     TITLE = {Über invariante {T}ori in differenzierbaren {H}amiltonschen
              {S}ystemen},
 PUBLISHER = {Universit\"at Bonn, Mathematisches Institut, Bonn},
      YEAR = {1980},
     PAGES = {103},
   MRCLASS = {58F05 (34C35 70Hxx)},
  MRNUMBER = {595866},
MRREVIEWER = {D.\ Schmidt},
}

@misc{LiTuraevPreprint,
      title={Symplectic blenders near whiskered tori and persistence of saddle-center homoclinics}, 
      author={Li, D. and  Turaev, D.},
      year={2026},
      eprint={2603.20830},
      archivePrefix={arXiv},
      primaryClass={math.DS},
      url={https://arxiv.org/abs/2603.20830}, 
}

@article {MR163026,
    AUTHOR = {Arnold, V. I.},
     TITLE = {Instability of dynamical systems with many degrees of freedom},
   JOURNAL = {Dokl. Akad. Nauk SSSR},
  FJOURNAL = {Doklady Akademii Nauk SSSR},
    VOLUME = {156},
      YEAR = {1964},
     PAGES = {9--12},
      ISSN = {0002-3264},
   MRCLASS = {34.65 (57.48)},
  MRNUMBER = {163026},
MRREVIEWER = {J.\ Moser},
}

@article {MR1949441,
    AUTHOR = {Cresson, J.},
     TITLE = {Symbolic dynamics and {A}rnold diffusion},
   JOURNAL = {J. Differential Equations},
  FJOURNAL = {Journal of Differential Equations},
    VOLUME = {187},
      YEAR = {2003},
    NUMBER = {2},
     PAGES = {269--292},
      ISSN = {0022-0396,1090-2732},
   MRCLASS = {37J40 (37B10 37D05 37J45 70H08)},
  MRNUMBER = {1949441},
MRREVIEWER = {Jean-Pierre\ Marco},
       DOI = {10.1016/S0022-0396(02)00053-0},
       URL = {https://doi.org/10.1016/S0022-0396(02)00053-0},
}

@book {MR442980,
    AUTHOR = {Moser, J.},
     TITLE = {Stable and random motions in dynamical systems},
    SERIES = {Annals of Mathematics Studies},
    VOLUME = {No. 77},
 PUBLISHER = {Princeton University Press, Princeton, NJ; University of Tokyo
              Press, Tokyo},
      YEAR = {1973},
     PAGES = {viii+198},
   MRCLASS = {58FXX (34C35 70.58)},
  MRNUMBER = {442980},
MRREVIEWER = {Clark\ Robinson},
}

@article {Clarke25,
    AUTHOR = {Clarke, A. and Fejoz, J. and Guardia, M.},
     TITLE = {Why are inner planets not inclined?},
   JOURNAL = {Publ. Math. Inst. Hautes \'Etudes Sci.},
  FJOURNAL = {Publications Math\'ematiques. Institut de Hautes \'Etudes
              Scientifiques},
    VOLUME = {141},
      YEAR = {2025},
     PAGES = {1--98},
      ISSN = {0073-8301,1618-1913},
   MRCLASS = {70F15 (37N05 85A05)},
  MRNUMBER = {4916233},
       DOI = {10.1007/s10240-024-00151-z},
       URL = {https://doi-org.sire.ub.edu/10.1007/s10240-024-00151-z},
}

@book {MR2269239,
    AUTHOR = {Arnold, V. I. and Kozlov, V. V. and Neishtadt,
              A. I.},
     TITLE = {Mathematical aspects of classical and celestial mechanics},
    SERIES = {Encyclopaedia of Mathematical Sciences},
    VOLUME = {3},
   EDITION = {Third},
 PUBLISHER = {Springer-Verlag, Berlin},
      YEAR = {2006},
     PAGES = {xiv+518},
      ISBN = {978-3-540-28246-4; 3-540-28246-7},
   MRCLASS = {70-02 (37Jxx 70-01 70H03 70H05)},
  MRNUMBER = {2269239},
MRREVIEWER = {Ernesto\ A.\ Lacomba},
}

@article {MR4033892,
    AUTHOR = {Gidea, M. and de la Llave, R. and Seara, T.M.},
     TITLE = {A general mechanism of diffusion in {H}amiltonian systems:
              qualitative results},
   JOURNAL = {Comm. Pure Appl. Math.},
  FJOURNAL = {Communications on Pure and Applied Mathematics},
    VOLUME = {73},
      YEAR = {2020},
    NUMBER = {1},
     PAGES = {150--209},
      ISSN = {0010-3640,1097-0312},
   MRCLASS = {37J06 (70H08)},
  MRNUMBER = {4033892},
MRREVIEWER = {Arturo\ Vieiro},
       DOI = {10.1002/cpa.21856},
       URL = {https://doi.org/10.1002/cpa.21856},
}

@book {BeyondUH,
    AUTHOR = {Bonatti, C. and D\'iaz, L. J. and Viana, M.},
     TITLE = {Dynamics beyond uniform hyperbolicity},
    SERIES = {Encyclopaedia of Mathematical Sciences},
    VOLUME = {102},
 PUBLISHER = {Springer-Verlag, Berlin},
      YEAR = {2005},
     PAGES = {xviii+384},
      ISBN = {3-540-22066-6},
   MRCLASS = {37-02 (37C20 37C29 37D25 37D30)},
  MRNUMBER = {2105774},
MRREVIEWER = {Sheldon\ E.\ Newhouse},
}

@article {DelaLLavegaps,
    AUTHOR = {Delshams, A. and de la Llave, R. and Seara, T.M.},
     TITLE = {A geometric mechanism for diffusion in {H}amiltonian systems
              overcoming the large gap problem: heuristics and rigorous
              verification on a model},
   JOURNAL = {Mem. Amer. Math. Soc.},
  FJOURNAL = {Memoirs of the American Mathematical Society},
    VOLUME = {179},
      YEAR = {2006},
    NUMBER = {844},
      ISSN = {0065-9266,1947-6221},
   MRCLASS = {37J40 (70H09)},
  MRNUMBER = {2184276},
MRREVIEWER = {Jean-Pierre\ Marco},
       DOI = {10.1090/memo/0844},
       URL = {https://doi.org/10.1090/memo/0844},
}

@article{HormanderGdlLS,
AUTHOR={Gidea, M. and de la Llave, R. and Seara , T.M.},
TITLE={Methods of geometric control in the {A}rnold diffusion problem in Hamiltonian systems},
NOTE ={In preparation},
}

@article {MR4069249,
    AUTHOR = {Obata, D.},
     TITLE = {On the stable ergodicity of {B}erger-{C}arrasco's example},
   JOURNAL = {Ergodic Theory Dynam. Systems},
  FJOURNAL = {Ergodic Theory and Dynamical Systems},
    VOLUME = {40},
      YEAR = {2020},
    NUMBER = {4},
     PAGES = {1008--1056},
      ISSN = {0143-3857,1469-4417},
   MRCLASS = {37A25 (37C40 37D25)},
  MRNUMBER = {4069249},
MRREVIEWER = {Xinsheng\ Wang},
       DOI = {10.1017/etds.2018.65},
       URL = {https://doi-org.proxy-um.researchport.umd.edu/10.1017/etds.2018.65},
}

@article {MR3682778,
    AUTHOR = {Horita, V. and Sambarino, M.},
     TITLE = {Stable ergodicity and accessibility for certain partially
              hyperbolic diffeomorphisms with bidimensional center leaves},
   JOURNAL = {Comment. Math. Helv.},
  FJOURNAL = {Commentarii Mathematici Helvetici. A Journal of the Swiss
              Mathematical Society},
    VOLUME = {92},
      YEAR = {2017},
    NUMBER = {3},
     PAGES = {467--512},
      ISSN = {0010-2571,1420-8946},
   MRCLASS = {37D30 (37A25 37C40)},
  MRNUMBER = {3682778},
MRREVIEWER = {Cristina\ Lizana},
       DOI = {10.4171/CMH/417},
       URL = {https://doi-org.proxy-um.researchport.umd.edu/10.4171/CMH/417},
}

@article {MR4544807,
    AUTHOR = {Capi\'nski, M. J. and Gidea, M.},
     TITLE = {Arnold diffusion, quantitative estimates, and stochastic
              behavior in the three-body problem},
   JOURNAL = {Comm. Pure Appl. Math.},
  FJOURNAL = {Communications on Pure and Applied Mathematics},
    VOLUME = {76},
      YEAR = {2023},
    NUMBER = {3},
     PAGES = {616--681},
      ISSN = {0010-3640,1097-0312},
   MRCLASS = {70F07 (37N05)},
  MRNUMBER = {4544807},
MRREVIEWER = {Patricia\ Yanguas},
       DOI = {10.1002/cpa.22014},
       URL = {https://doi-org.proxy-um.researchport.umd.edu/10.1002/cpa.22014},
}

@article {MR2104598,
    AUTHOR = {Marco, J-P. and Sauzin, D.},
     TITLE = {Wandering domains and random walks in {G}evrey near-integrable
              systems},
   JOURNAL = {Ergodic Theory Dynam. Systems},
  FJOURNAL = {Ergodic Theory and Dynamical Systems},
    VOLUME = {24},
      YEAR = {2004},
    NUMBER = {5},
     PAGES = {1619--1666},
      ISSN = {0143-3857,1469-4417},
   MRCLASS = {37J40 (37B10 70H08 70K30)},
  MRNUMBER = {2104598},
MRREVIEWER = {Mikhail\ B.\ Sevryuk},
       DOI = {10.1017/S0143385703000786},
       URL = {https://doi-org.proxy-um.researchport.umd.edu/10.1017/S0143385703000786},
}

@book {MR501173,
    AUTHOR = {Hirsch, M. W. and Pugh, C. C. and Shub, M.},
     TITLE = {Invariant manifolds},
    SERIES = {Lecture Notes in Mathematics},
    VOLUME = {Vol. 583},
 PUBLISHER = {Springer-Verlag, Berlin-New York},
      YEAR = {1977},
     PAGES = {ii+149},
   MRCLASS = {58F15 (58F10)},
  MRNUMBER = {501173},
MRREVIEWER = {M.\ C.\ Irwin},
}

@book {PalisMelo,
    AUTHOR = {Palis, Jr., J. and de Melo, W.},
     TITLE = {Geometric theory of dynamical systems},
 PUBLISHER = {Springer-Verlag, New York-Berlin},
      YEAR = {1982},
     PAGES = {xii+198},
      ISBN = {0-387-90668-1},
   MRCLASS = {58-01 (34C35 58F09 58Fxx)},
  MRNUMBER = {669541},
MRREVIEWER = {Russell\ B.\ Walker},
}

@article {DelaLLaveScattmap,
    AUTHOR = {Delshams, A. and de la Llave, R. and Seara, T.M. },
     TITLE = {Geometric properties of the scattering map of a normally
              hyperbolic invariant manifold},
   JOURNAL = {Adv. Math.},
  FJOURNAL = {Advances in Mathematics},
    VOLUME = {217},
      YEAR = {2008},
    NUMBER = {3},
     PAGES = {1096--1153},
      ISSN = {0001-8708,1090-2082},
   MRCLASS = {37J40 (37D10 37J99 37N20 81U05)},
  MRNUMBER = {2383896},
MRREVIEWER = {Enrico\ Valdinoci},
       DOI = {10.1016/j.aim.2007.08.014},
       URL = {https://doi.org/10.1016/j.aim.2007.08.014},
}

@incollection {MR4729212,
    AUTHOR = {Marco, J-P.},
     TITLE = {A symplectic approach to {A}rnold diffusion problems},
 BOOKTITLE = {Hamiltonian systems: dynamics, analysis, applications},
    SERIES = {Math. Sci. Res. Inst. Publ.},
    VOLUME = {72},
     PAGES = {229--295},
 PUBLISHER = {Cambridge Univ. Press, Cambridge},
      YEAR = {2024},
      ISBN = {978-1-009-32071-9},
   MRCLASS = {53D05 (37J40)},
  MRNUMBER = {4729212},
}

@article {MR4509324,
    AUTHOR = {Gidea, M. and Marco, J-P.},
     TITLE = {Diffusing orbits along chains of cylinders},
   JOURNAL = {Discrete Contin. Dyn. Syst.},
  FJOURNAL = {Discrete and Continuous Dynamical Systems},
    VOLUME = {42},
      YEAR = {2022},
    NUMBER = {12},
     PAGES = {5737--5782},
      ISSN = {1078-0947,1553-5231},
   MRCLASS = {37J40 (37E40 70M20)},
  MRNUMBER = {4509324},
MRREVIEWER = {Peng\ Huang},
       DOI = {10.3934/dcds.2022121},
       URL = {https://doi-org.proxy-um.researchport.umd.edu/10.3934/dcds.2022121},
}

@book {MR240836,
    AUTHOR = {Abraham, R. and Robbin, J.},
     TITLE = {Transversal mappings and flows},
 PUBLISHER = {W. A. Benjamin, Inc., New York-Amsterdam},
      YEAR = {1967},
     PAGES = {x+161},
   MRCLASS = {57.55},
  MRNUMBER = {240836},
MRREVIEWER = {J.\ Palis},
}

@article {MR455049,
    AUTHOR = {Newhouse, S. E.},
     TITLE = {Quasi-elliptic periodic points in conservative dynamical
              systems},
   JOURNAL = {Amer. J. Math.},
  FJOURNAL = {American Journal of Mathematics},
    VOLUME = {99},
      YEAR = {1977},
    NUMBER = {5},
     PAGES = {1061--1087},
      ISSN = {0002-9327,1080-6377},
   MRCLASS = {58F20 (58F10)},
  MRNUMBER = {455049},
MRREVIEWER = {Zbigniew\ Nitecki},
       DOI = {10.2307/2374000},
       URL = {https://doi-org.proxy-um.researchport.umd.edu/10.2307/2374000},
}

@article {MR2981810,
    AUTHOR = {Kaloshin, V. and Saprykina, M.},
     TITLE = {An example of a nearly integrable {H}amiltonian system with a
              trajectory dense in a set of maximal {H}ausdorff dimension},
   JOURNAL = {Comm. Math. Phys.},
  FJOURNAL = {Communications in Mathematical Physics},
    VOLUME = {315},
      YEAR = {2012},
    NUMBER = {3},
     PAGES = {643--697},
      ISSN = {0010-3616,1432-0916},
   MRCLASS = {37J40},
  MRNUMBER = {2981810},
MRREVIEWER = {Paolo\ Perfetti},
       DOI = {10.1007/s00220-012-1532-x},
       URL = {https://doi-org.proxy-um.researchport.umd.edu/10.1007/s00220-012-1532-x},
}

@article {MR3951693,
    AUTHOR = {Guardia, M. and Kaloshin, V. and Zhang, J.},
     TITLE = {Asymptotic density of collision orbits in the restricted
              circular planar 3 body problem},
   JOURNAL = {Arch. Ration. Mech. Anal.},
  FJOURNAL = {Archive for Rational Mechanics and Analysis},
    VOLUME = {233},
      YEAR = {2019},
    NUMBER = {2},
     PAGES = {799--836},
      ISSN = {0003-9527,1432-0673},
   MRCLASS = {70F07 (70F16)},
  MRNUMBER = {3951693},
MRREVIEWER = {Martha\ Alvarez-Ram\'irez},
       DOI = {10.1007/s00205-019-01368-7},
       URL = {https://doi-org.proxy-um.researchport.umd.edu/10.1007/s00205-019-01368-7},
}

@article {MR629685,
    AUTHOR = {Alekseev, V. M.},
     TITLE = {Final motions in the three-body problem and symbolic dynamics},
   JOURNAL = {Uspekhi Mat. Nauk},
  FJOURNAL = {Akademiya Nauk SSSR i Moskovskoe Matematicheskoe Obshchestvo.
              Uspekhi Matematicheskikh Nauk},
    VOLUME = {36},
      YEAR = {1981},
    NUMBER = {4(220)},
     PAGES = {161--176, 248},
      ISSN = {0042-1316},
   MRCLASS = {58F11 (70F07)},
  MRNUMBER = {629685},
MRREVIEWER = {J.\ Dane\v s},
}

@article {MR5013757,
    AUTHOR = {Guardia, M. and Paradela, J. and Seara, T.M.},
     TITLE = {A degenerate {A}rnold diffusion mechanism in the restricted
              3-body problem},
   JOURNAL = {Ann. Sci. \'Ec. Norm. Sup\'er. (4)},
  FJOURNAL = {Annales Scientifiques de l'\'Ecole Normale Sup\'erieure.
              Quatri\`eme S\'erie},
    VOLUME = {58},
      YEAR = {2025},
    NUMBER = {6},
     PAGES = {1401--1478},
      ISSN = {0012-9593,1873-2151},
   MRCLASS = {37J40 (30B40)},
  MRNUMBER = {5013757},
}

@article {GelfreichTuraev,
    AUTHOR = {Gelfreich, V. and Turaev, D.},
     TITLE = {Arnold diffusion in a priori chaotic symplectic maps},
   JOURNAL = {Comm. Math. Phys.},
  FJOURNAL = {Communications in Mathematical Physics},
    VOLUME = {353},
      YEAR = {2017},
    NUMBER = {2},
     PAGES = {507--547},
      ISSN = {0010-3616,1432-0916},
   MRCLASS = {37J40 (37J10)},
  MRNUMBER = {3649479},
MRREVIEWER = {Dongfeng\ Zhang},
       DOI = {10.1007/s00220-017-2867-0},
       URL = {https://doi.org/10.1007/s00220-017-2867-0},
}

@incollection {Moeckeldrift,
    AUTHOR = {Moeckel, R.},
     TITLE = {Generic drift on {C}antor sets of annuli},
 BOOKTITLE = {Celestial mechanics ({E}vanston, {IL}, 1999)},
    SERIES = {Contemp. Math.},
    VOLUME = {292},
     PAGES = {163--171},
 PUBLISHER = {Amer. Math. Soc., Providence, RI},
      YEAR = {2002},
      ISBN = {0-8218-2902-5},
   MRCLASS = {37J40 (70H08)},
  MRNUMBER = {1884898},
MRREVIEWER = {Pere\ Guti\'errez},
       DOI = {10.1090/conm/292/04922},
       URL = {https://doi.org/10.1090/conm/292/04922},
}

@article {DiffusionEllipticKaloshin,
    AUTHOR = {Delshams, A. and Kaloshin, V. and de la Rosa, A.
              and Seara, T.M.},
     TITLE = {Global instability in the restricted planar elliptic three
              body problem},
   JOURNAL = {Comm. Math. Phys.},
  FJOURNAL = {Communications in Mathematical Physics},
    VOLUME = {366},
      YEAR = {2019},
    NUMBER = {3},
     PAGES = {1173--1228},
      ISSN = {0010-3616,1432-0916},
   MRCLASS = {70F07 (37N05 70F15 70K50)},
  MRNUMBER = {3927089},
MRREVIEWER = {Eduardo\ S. G. Leandro},
       DOI = {10.1007/s00220-018-3248-z},
       URL = {https://doi.org/10.1007/s00220-018-3248-z},
}

@incollection {PoschelKAM,
    AUTHOR = {Pöschel, J.},
     TITLE = {A lecture on the classical {KAM} theorem},
 BOOKTITLE = {Smooth ergodic theory and its applications ({S}eattle, {WA},
              1999)},
    SERIES = {Proc. Sympos. Pure Math.},
    VOLUME = {69},
     PAGES = {707--732},
 PUBLISHER = {Amer. Math. Soc., Providence, RI},
      YEAR = {2001},
      ISBN = {0-8218-2682-4},
   MRCLASS = {37J40 (37K55 70H08)},
  MRNUMBER = {1858551},
MRREVIEWER = {Enrico\ Valdinoci},
       DOI = {10.1090/pspum/069/1858551},
       URL = {https://doi.org/10.1090/pspum/069/1858551},
}

@article {BonattiDiazblenders,
    AUTHOR = {Bonatti, C. and D\'iaz, L. J.},
     TITLE = {Persistent nonhyperbolic transitive diffeomorphisms},
   JOURNAL = {Ann. of Math. (2)},
  FJOURNAL = {Annals of Mathematics. Second Series},
    VOLUME = {143},
      YEAR = {1996},
    NUMBER = {2},
     PAGES = {357--396},
      ISSN = {0003-486X,1939-8980},
   MRCLASS = {58F12 (58F10 58F15)},
  MRNUMBER = {1381990},
MRREVIEWER = {Marcelo\ Viana},
       DOI = {10.2307/2118647},
       URL = {https://doi.org/10.2307/2118647},
}

@article {NassiriPujalsTransitivity,
    AUTHOR = {Nassiri, M. and Pujals, E. R.},
     TITLE = {Robust transitivity in {H}amiltonian dynamics},
   JOURNAL = {Ann. Sci. \'Ec. Norm. Sup\'er. (4)},
  FJOURNAL = {Annales Scientifiques de l'\'Ecole Normale Sup\'erieure.
              Quatri\`eme S\'erie},
    VOLUME = {45},
      YEAR = {2012},
    NUMBER = {2},
     PAGES = {191--239},
      ISSN = {0012-9593,1873-2151},
   MRCLASS = {37J10},
  MRNUMBER = {2977619},
MRREVIEWER = {Luigi\ Chierchia},
       DOI = {10.24033/asens.2164},
       URL = {https://doi.org/10.24033/asens.2164},
}
\bibliographystyle{alpha}

\end{document}